\newtheorem{Thm}{Theorem}[section]
\newtheorem{Lem}[Thm]{Lemma}
\newtheorem{Cor}[Thm]{Corollary}
\newtheorem{Prop}[Thm]{Proposition}
\theoremstyle{definition}
\newtheorem{Rem}[Thm]{Remark}
\newtheorem{Def}[Thm]{Definition}
\newcommand{\Z}{\mathbb{Z}}
\newcommand{\Q}{\mathbb{Q}}
\newcommand{\N}{\mathbb{N}}
\newcommand{\C}{\mathbb{C}}
\newcommand{\df}{\colon}
\newcommand{\id}{\operatorname{id}}
\newcommand{\cC}{{\mathcal C}}
\newcommand{\cF}{{\mathcal F}}
\newcommand{\cG}{{\mathcal G}}
\newcommand{\cH}{{\mathcal H}}
\newcommand{\cM}{{\mathcal M}}
\newcommand{\cP}{{\mathcal P}}
\newcommand{\cS}{{\mathcal S}}
\newcommand{\cT}{{\mathcal T}}
\newcommand{\cU}{{\mathcal U}}
\newcommand{\cY}{{\mathcal Y}}
\newcommand{\g}{\mathfrak{g}}
\newcommand{\m}{\mathfrak{m}}
\newcommand{\bi}{{\mathbf i}}
\newcommand{\bj}{{\mathbf j}}
\newcommand{\GG}{\Gamma}
\newcommand{\alp}{\alpha}   
\newcommand{\eps}{\epsilon}
\newcommand{\vep}{\varepsilon}
\newcommand{\la}{\lambda}
\newcommand{\ad}{\operatorname{ad}}
\newcommand{\rk}{\operatorname{rank}}
\newcommand{\md}{\operatorname{mod}}
\newcommand{\rep}{\operatorname{rep}}
\newcommand{\Rep}{\operatorname{Rep}}
\newcommand{\add}{\operatorname{add}}
\newcommand{\pdim}{\operatorname{proj.dim}}
\newcommand{\idim}{\operatorname{inj.dim}}
\newcommand{\dimv}{\underline{\dim}}
\newcommand{\rkv}{\underline{\rk}}
\newcommand{\sgn}{\operatorname{sgn}}
\newcommand{\soc}{\operatorname{soc}}
\newcommand{\rad}{\operatorname{rad}}
\newcommand{\tp}{\operatorname{top}}
\newcommand{\sub}{\operatorname{sub}}
\newcommand{\fac}{\operatorname{fac}}
\newcommand{\proj}{\operatorname{proj}}
\newcommand{\inj}{\operatorname{inj}}
\newcommand{\Hom}{\operatorname{Hom}}
\newcommand{\Ext}{\operatorname{Ext}}
\newcommand{\End}{\operatorname{End}}
\newcommand{\Du}{\operatorname{D}}
\newcommand{\Ima}{\operatorname{Im}}
\newcommand{\Ker}{\operatorname{Ker}}
\newcommand{\Coker}{\operatorname{Cok}}
\newcommand{\ov}{\overline}
\newcommand{\bil}[1]{\langle #1\rangle}
\newcommand{\op}{{\rm op}}
\newcommand{\out}{\rm out}
\newcommand{\inn}{\rm in}
\newcommand{\Res}{\operatorname{Res}}
\newcommand{\bsm}{\begin{smallmatrix}}
\newcommand{\esm}{\end{smallmatrix}}
\newcommand{\bbsm}{\left[\begin{smallmatrix}}
\newcommand{\besm}{\end{smallmatrix}\right]}
\newcommand{\bbm}{\begin{matrix}}
\newcommand{\ebm}{\end{matrix}}
\newcommand{\diag}{\operatorname{diag}}
\newcommand{\wti}{\widetilde}
\newcommand{\Tr}{\operatorname{tr}}
\newcommand{\vp}{{\rm l.f.}}
\newcommand{\can}{{\rm can}}
\newcommand{\ra}{\rightarrow}
\newcommand{\real}{{\rm re}}
\newcommand{\imag}{{\rm im}}
\newcommand{\bbo}{\mathbbm{1}}
\begin{document}

\date{18.10.2016}

\title{Quivers with relations for symmetrizable Cartan matrices I: Foundations}

\author{Christof Gei{\ss}}
\address{Christof Gei{\ss}\newline
Instituto de Matem\'aticas\newline
Universidad Nacional Aut{\'o}noma de M{\'e}xico\newline
Ciudad Universitaria\newline
04510 M{\'e}xico D.F.\newline
M{\'e}xico}
\email{christof@math.unam.mx}

\author{Bernard Leclerc}
\address{Bernard Leclerc\newline
Normandie Univ, France\newline
Unicaen, Lmno, F-14032 Caen France\newline
Cnrs, Umr 6139, F-14032 Caen France\newline
Institut Universitaire de France}
\email{bernard.leclerc@unicaen.fr}

\author{Jan Schr\"oer}
\address{Jan Schr\"oer\newline
Mathematisches Institut\newline
Universit\"at Bonn\newline
Endenicher Allee 60\newline
53115 Bonn\newline
Germany}
\email{schroer@math.uni-bonn.de}

\subjclass[2010]{Primary 16G10, 16G20; Secondary 16G70}


\begin{abstract}
We introduce and study a class of 
Iwanaga-Gorenstein algebras defined via quivers with relations
associated with symmetrizable Cartan matrices.
These algebras generalize the path algebras of quivers  
associated with symmetric Cartan matrices.
We also define a corresponding class of generalized preprojective 
algebras.
For these two classes of algebras we obtain generalizations of classical results of Gabriel, Dlab-Ringel, and Gelfand-Ponomarev.
In particular, we obtain new representation theoretic realizations of all finite root systems without any assumption on the ground field.
\end{abstract}

\maketitle

\setcounter{tocdepth}{1}
\numberwithin{equation}{section}
\tableofcontents

\parskip2mm


\section{Introduction and main results}\label{sec1}


\subsection{Quivers, Coxeter functors and preprojective 
algebras}
\label{intro:1}
Let $Q$ be a finite connected acyclic quiver, and let
$H = KQ$ be the path algebra of $Q$ with coefficients in a field $K$.
The following five results, all proved in the 1970's, form an essential
part of the foundations of modern
representation theory of 
finite-dimensional algebras.
\begin{itemize}

\item[(1)]
Gabriel's Theorem \cite{Ga1}: 
The quiver $Q$ is representation-finite if and only if $Q$ is a Dynkin
quiver of type $A_n$, $D_n$, $E_6$, $E_7$, $E_8$.
In this case, there is a bijection between the isomorphism classes
of indecomposable representations of $Q$ and the set 
of positive roots of the corresponding simple complex 
Lie algebra.\vspace{0.2cm}

\item[(2)]
Bernstein, Gelfand and Ponomarev's \cite{BGP}
discovery of Coxeter functors 
\[
C^\pm(-) = F_{i_n}^\pm \circ \cdots \circ F_{i_1}^\pm:
\rep(H) \to \rep(H),
\] 
which are defined as compositions of reflection functors.
They lead to a more
conceptual proof of Gabriel's Theorem. 
Applied to the indecomposable projective (resp. injective) representations
they yield a family of indecomposable representations, called 
\emph{preprojective} (resp. \emph{preinjective}) representations.\vspace{0.2cm}

\item[(3)]
Gabriel's Theorem \cite{Ga3} saying that there are functorial isomorphisms
$TC^\pm(-) \cong \tau^\pm(-)$,
where $T$ is a twist functor and $\tau(-)$ is the Auslander-Reiten
translation (see also the comment below on an earlier result
by Brenner and Butler).\vspace{0.2cm}

\item[(4)]
Auslander, Platzeck and Reiten's Theorem \cite{APR} saying that
the functors $F_k^+$ and $\Hom_H(T,-)$,
where $F_k^+$ is a BGP-reflection functor and $T$ is the associated
APR-tilting module, are equivalent. 
This result can be  
considered as the starting point of tilting theory.\vspace{0.2cm}

\item[(5)]
Gelfand and Ponomarev's \cite{GP}
discovery of the preprojective algebra
$\Pi(Q)$ of a quiver, and their result that 
$\Pi(Q)$, seen as a module over $H$, is isomorphic
to the direct sum of all preprojective $H$-modules, hence the name
\emph{preprojective algebra}.
The algebra $\Pi(Q)$ 
is isomorphic
to the tensor algebra $T_H(\Ext_H^1(\Du(H),H))$, where $\Du$ denotes the duality
with respect to the base field $K$, see \cite{BGL,CB2,Ri3}.

\end{itemize}
The above results hold for arbitrary ground fields $K$.
At the price of
quite strong assumptions on $K$ they 
were generalized 
from quivers to the more general setup of modulated graphs.
(One needs to assume the existence of finite field extensions
of $K$ with prescribed degrees.)
For the finite type situation,
this extended the theory from the simply laced root systems of
types $A_n$, $D_n$, $E_6$, $E_7$ and $E_8$ to the non-simply
laced root systems $B_n$, $C_n$, $F_4$ and $G_2$.
The definition of a modulated graph (also called \emph{species}) and of its representations
is due to Gabriel \cite{Ga2}.
The theory itself has been developed by Dlab and Ringel, who
generalized (1), (2) and (5) to modulated graphs \cite{D,DR1,DR2,DR3,Ri1}.
Brenner and Butler \cite{BB} proved an earlier result closely related to (3),
which is also valid for modulated graphs.
(They don't treat~$C^\pm$ as endofunctors, and the twist automorphism
$T$ does not appear.)

\subsection{Hereditary, selfinjective and Iwanaga-Gorenstein algebras}
\label{intro:2}
In this section, by an \emph{algebra} we mean a 
finite-dimensional $K$-algebra.

An algebra $A$ is \emph{hereditary} if all $A$-modules have
projective and injective dimension at most $1$.
The representation theory of quivers and species corresponds to the
representation theory of finite-dimensional hereditary algebras.

An algebra $A$ is \emph{selfinjective}
if the classes of projective and injective $A$-modules coincide.
This implies that all modules (except the projective-injectives) have 
infinite projective and injective dimension.
Despite being opposite homological extremes, hereditary and selfinjective 
algebras are often intimately linked.
For example the path algebra $KQ$ is always hereditary,
and in contrast, if $Q$ is a Dynkin quiver, then the closely related 
preprojective algebra $\Pi(Q)$ is selfinjective. 
Also, the classification of representation-finite selfinjective algebras
shows striking similarities to the classification of representation-finite
hereditary algebras.

An algebra $A$ is $m$-\emph{Iwanaga-Gorenstein} if
\[
\idim(A) \le m\quad \mbox{and} \quad \pdim(\Du(A)) \le m.
\]
These algebras were first studied by Iwanaga \cite{I1,I2}.
In this case, \cite[Lemma~6.9]{AR} implies that
$\idim(A) = \pdim(\Du(A))$, 
and by \cite[Theorem~5]{I2} for any $A$-module $M$ the following are equivalent:
\begin{itemize}

\item[(i)]
$\pdim(M) \le m$;

\item[(ii)]
$\idim(M) \le m$;

\item[(iii)]
$\pdim(M) < \infty$;

\item[(iv)]
$\idim(M) < \infty$.

\end{itemize}
Note that with this definition a given
algebra can be $m$-Iwanaga-Gorenstein for different values of $m$.
An algebra is selfinjective if and only if it is $0$-Iwanaga-Gorenstein.
All hereditary algebras and also all selfinjective algebras are 
$1$-Iwanaga-Gorenstein. 
Now let $A$ be a $1$-Iwanaga-Gorenstein algebra.
Then 
there are two subcategories of the category $\rep(A)$ of finite-dimensional $A$-modules which are of interest:
\begin{itemize}

\item[(a)]
The subcategory 
\[
\cH(A) := \{ M \in \rep(A) \mid
\pdim(M) \le 1 \text{ and } \idim(M) \le 1 \}.
\]

\item[(b)]
The subcategory 
\[
{\cG\cP}(A) := \{ M \in \rep(A) \mid \Ext_A^1(M,A) = 0 \}
\]
of \emph{Gorenstein-projective} modules.

\end{itemize}
Let $\cP(A)$ be the subcategory of projective $A$-modules.
We have
\[
\cP(A) = \cH(A) \cap \cG\cP(A).
\]
For each $M \in \rep(A)$ there are short exact sequences
\[
0 \to H_M \to G_M \to M \to 0
\]
and 
\[
0 \to M \to H^M \to G^M \to 0
\]
with $H_M,H^M \in \cH(A)$ and $G_M,G^M \in \cG\cP(A)$, see
\cite[8.1]{AB}.

The category $\cH(A)$ carries the homological features of module
categories of hereditary algebras, whereas $\cG\cP(A)$ is a Frobenius category,
thus displaying the homological features of module categories of
selfinjective algebras.
We have
$\cG\cP(A) = \cP(A)$ and $\cH(A) = \rep(A)$ if and only if
$A$ is hereditary,
and in the other extreme we have
$\cG\cP(A) = \rep(A)$ and $\cH(A) = \cP(A)$ if and only if $A$
is selfinjective.

The stable category of $\cG\cP(A)$ is a triangulated category, which is
triangle equivalent to the \emph{singularity category}
\[
D_{\rm sg}(A) := D^b(A)/K^b(\proj(A))
\]
defined and studied by Buchweitz \cite{Bu}, see also \cite{O}.
(Here $D^b(A)$ denotes the derived category of bounded complexes
of finite-dimensional $A$-modules,
and $K^b(\proj(A))$ is the homotopy category of bounded complexes of
finite-dimensional projective $A$-modules.)
It follows that $D_{\rm sg}(A) = 0$ if and only if $A$ is hereditary.

Thus
the class of $1$-Iwanaga-Gorenstein algebras can be seen as an intermediary 
class sitting between the hereditary and the selfinjective
algebras, and the singularity category $D_{\rm sg}(A)$ can be considered
as a measure of how far $A$ is from being hereditary.

\subsection{$1$-Iwanaga-Gorenstein algebras attached to Cartan 
matrices}\label{intro:3}
To each symmetrizable generalized Cartan matrix $C$ and an
orientation $\Omega$ of $C$ we attach an
infinite series of $1$-Iwanaga-Gorenstein algebras $H = H(C,D,\Omega)$
indexed by the different symmetrizers $D$ of $C$.
These algebras are defined by quivers with relations over an 
arbitrary field $K$.

If $C$ is symmetric and connected, then $(C,\Omega)$ corresponds
to a connected acyclic quiver $Q$, and the series
of algebras $H$ consists of the
algebras of the form
\[
A_m \otimes_K KQ,\qquad (m\ge 1),
\] 
where $A_m := K[X]/(X^m)$ is a truncated polynomial ring.
Representations of such algebras are nothing else than representations of 
$Q$ over the ground rings $A_m$.

In the general case of a symmetrizable matrix $C$, the algebras $H$ can be 
identified with tensor algebras of modulations of the oriented valued graph 
$\Gamma$ corresponding to $(C,\Omega)$.
However, in contrast to 
the classical notion of a modulation, 
the rings attached to vertices 
of $\Gamma$ are truncated polynomial rings instead of division rings.

We also introduce a series of algebras $\Pi = \Pi(C,D)$, again
defined by quivers with relations, which can be regarded as preprojective
algebras of quivers (or more generally of modulated graphs) over truncated 
polynomial rings.

We show that analogues of all five results mentioned in Section~\ref{intro:1}
also hold for our algebras $H$ and $\Pi$.
However certain definitions must be adapted. 
For example, we say that $H$ has \emph{finite $\tau$-representation type} if 
its Auslander-Reiten quiver has only finitely many $\tau$-orbits consisting
entirely of modules of finite homological dimension.
The analogue of (1) states that $H$ has finite $\tau$-representation type if 
and only if $C$ is of Dynkin type.
In this case, there is
a bijection between the isomorphism classes of 
indecomposable modules sitting on these $\tau$-orbits and the  positive roots
of the simple Lie algebra associated with $C$.
So for each Cartan matrix $C$ of Dynkin type we get an infinite family
of new representation theoretic incarnations of the 
root system of $C$.
Let us stress that even in the non-simply laced case these incarnations
are defined without any assumption on the ground field $K$.  
To prove this theorem, we define analogues of the reflection functors and 
Coxeter functors of (2), and we give an analogue of Gabriel's Theorem (3) for 
the subcategory of $H$-modules of finite homological dimension. This yields 
alternative descriptions of the preprojective algebra $\Pi$ similar to (5).
We also obtain an analogue of (4) describing the reflection functors in terms 
of certain tilting $H$-modules.

In the rest of this section we give precise definitions of the algebras 
$H$ and $\Pi$, and we
state our main results in more detail. 
We then point out 
previous appearances of some of the algebras $H$ and $\Pi$ in 
the literature. 

\subsection{Definition of $H$ and $\Pi$} \label{def:HPi}
A matrix $C = (c_{ij}) \in M_n(\Z)$ is a  
\emph{symmetrizable generalized Cartan matrix}
provided the following hold:
\begin{itemize}

\item[(C1)]
$c_{ii} = 2$ for all $i$;

\item[(C2)] 
$c_{ij} \le 0$ for all $i \not= j$;

\item[(C3)]
$c_{ij} \not= 0$ if and only if $c_{ji} \not= 0$.

\item[(C4)]
There is a diagonal integer matrix $D = \diag(c_1,\ldots,c_n)$ with
$c_i \ge 1$ for all $i$ such that
$DC$ is symmetric. 

\end{itemize}
The matrix $D$ appearing in (C4) is called a \emph{symmetrizer} of $C$.
The symmetrizer $D$ is \emph{minimal} if $c_1 + \cdots + c_n$ is
minimal.
From now on, by a \emph{Cartan matrix} we always mean a
symmetrizable generalized Cartan matrix.
In this case, define for all $c_{ij} < 0$
\[
g_{ij} := |\gcd(c_{ij},c_{ji})|,\qquad
f_{ij} := |c_{ij}|/g_{ij},\qquad
k_{ij} := \gcd(c_i,c_j).
\]
Note that we have
\[
g_{ij} = g_{ji}, \qquad
k_{ij} = k_{ji}, \qquad
c_i = k_{ij}f_{ji}.
\]

Let $C = (c_{ij}) \in M_n(\Z)$ be a Cartan matrix. 
An \emph{orientation of} $C$ is a subset 
$\Omega \subset  \{ 1,2,\ldots,n \} \times \{ 1,2,\ldots,n \}$
such that the following hold:
\begin{itemize}

\item[(i)]
$\{ (i,j),(j,i) \} \cap \Omega \not= \varnothing$
if and only if $c_{ij}<0$;

\item[(ii)]
For each sequence $((i_1,i_2),(i_2,i_3),\ldots,(i_t,i_{t+1}))$ with
$t \ge 1$ and $(i_s,i_{s+1}) \in \Omega$ for all $1 \le s \le t$ we have
$i_1 \not= i_{t+1}$.

\end{itemize}
For an orientation $\Omega$ of $C$ let
$Q := Q(C,\Omega) := (Q_0,Q_1,s,t)$ be the quiver with the
set of vertices $Q_0 := \{ 1,\ldots, n\}$ and 
with the set of arrows 
\[
Q_1 := \{ \alpha_{ij}^{(g)}\df j \to i \mid (i,j) \in \Omega, 1 \le g \le g_{ij} \}
\cup \{ \vep_i\df i \to i \mid 1 \le i \le n \}.
\]
(Thus we have $s(\alpha_{ij}^{(g)}) = j$ and $t(\alpha_{ij}^{(g)}) = i$ and
$s(\vep_i) = t(\vep_i) = i$, where $s(a)$ and $t(a)$ denote the
starting and terminal vertex of an arrow $a$, respectively.)
If $g_{ij}=1$, we also write $\alpha_{ij}$ instead of $\alpha_{ij}^{(1)}$.
We call
$Q$ a \emph{quiver of type} $C$.
Let $Q^\circ := Q^\circ(C,\Omega)$ be the quiver obtained from $Q$ by deleting
all loops $\vep_i$.
Clearly,
$Q^\circ$ is an acyclic quiver.
Having said that, one might want to call $\Omega$ an \emph{acyclic orientation}. 
Future research might require to modify the definition of an orientation
and drop the acyclicity assumption.

Throughout let $K$ be a field.
For a quiver $Q = Q(C,\Omega)$ and a symmetrizer $D = \diag(c_1,\ldots,c_n)$ of 
$C$, let
\[
H := H(C,D,\Omega) := KQ/I
\] 
where $KQ$ is the path algebra of $Q$, and $I$ is the ideal of $KQ$
defined by the following
relations:
\begin{itemize}

\item[(H1)]
For each $i$ we have the \emph{nilpotency relation}
\[
\vep_i^{c_i} = 0.
\]

\item[(H2)]
For each $(i,j) \in \Omega$ and each $1 \le g \le g_{ij}$ we have
the \emph{commutativity relation}
\[
\vep_i^{f_{ji}}\alpha_{ij}^{(g)} = \alpha_{ij}^{(g)}\vep_j^{f_{ij}}.
\]

\end{itemize}
The following remarks are straightforward.
\begin{itemize}

\item[(i)]
$H$ is a finite-dimensional $K$-algebra.

\item[(ii)]
$H$ depends on the chosen symmetrizer $D$.
But note that the relation (H2) does not depend on $D$.

\item[(iii)]
The relation (H2)  
becomes redundant for all
$(i,j) \in \Omega$ with $k_{ij} = 1$.

\item[(iv)]
If $C$ is symmetric and if $D$ is minimal, 
then $H$ is isomorphic to the path algebra $KQ^\circ$.

\end{itemize}
The behaviour of $H$ under change of the symmetrizer
$D$ is studied in \cite{GLS1}.

The \emph{opposite orientation} of an orientation $\Omega$ is defined as
$\Omega^* := \{ (j,i) \mid (i,j) \in \Omega \}$.
Let
$\overline{\Omega} := \Omega \cup \Omega^*$.
For later use, let us define
\begin{align*}
\Omega(i,-) &:= \{ j \in Q_0 \mid (i,j) \in \Omega \}, &
\Omega(-,j) &:= \{ i \in Q_0 \mid  (i,j) \in \Omega \},\\
\overline{\Omega}(i,-) &:= \{ j \in Q_0 \mid (i,j) \in \overline{\Omega} \}, &
\overline{\Omega}(-,j) &:= \{ i \in Q_0 \mid  (i,j) \in \overline{\Omega} \}.
\end{align*}
Observe that $\overline{\Omega}(i,-) = \overline{\Omega}(-,i)$.

For $(i,j) \in \overline{\Omega}$ define
\[
\sgn(i,j) :=
\begin{cases}
1 & \text{if $(i,j) \in \Omega$},\\
-1 & \text{if $(i,j) \in \Omega^*$}.
\end{cases}
\]

For $Q = Q(C,\Omega)$ and a symmetrizer $D = \diag(c_1,\ldots,c_n)$ of $C$, 
we define an algebra
\[
\Pi := \Pi(C,D,\Omega) := K\overline{Q}/\overline{I}
\] 
as follows.
The \emph{double quiver} $\overline{Q} = \ov{Q}(C)$ is obtained from 
$Q$ by adding a new arrow 
$\alpha_{ji}^{(g)}\df i \to j$
for each arrow $\alpha_{ij}^{(g)}\df j \to i$ of $Q^\circ$. 
(Note that we did not add any new loops to the quiver $Q$.)
The ideal $\overline{I}$ of the path algebra
$K\overline{Q}$ is defined by the following
relations:
\begin{itemize}

\item[(P1)]
For each $i$ we have the \emph{nilpotency relation}
\[
\vep_i^{c_i} = 0.
\]

\item[(P2)]
For each $(i,j) \in \overline{\Omega}$ and each $1 \le g \le g_{ij}$ we have
the \emph{commutativity relation}
\[
\vep_i^{f_{ji}}\alpha_{ij}^{(g)} = \alpha_{ij}^{(g)}\vep_j^{f_{ij}}.
\]

\item[(P3)]
For each $i$ we have the \emph{mesh relation}
\[
\sum_{j\in \ov{\Omega}(-,i)} \sum_{g=1}^{g_{ji}} \sum_{f=0}^{f_{ji}-1} \sgn(i,j)
\vep_i^f\alpha_{ij}^{(g)} \alpha_{ji}^{(g)} \vep_i^{f_{ji}-1-f} = 0.
\]

\end{itemize}
We call $\Pi$ a \emph{preprojective algebra} of type $C$.
Here are again some straightforward remarks:
\begin{itemize}

\item[(i)]
Up to isomorphism,
the algebra $\Pi := \Pi(C,D) := \Pi(C,D,\Omega)$ does not depend on
the orientation $\Omega$ of $C$.

\item[(ii)]
In general, $\Pi$ can be infinite-dimensional.

\item[(iii)]
$\Pi$ depends on the chosen symmetrizer $D$.
But note that the relations (P2) and (P3) do not depend on $D$.

\item[(iv)]
If $C$ is symmetric and if $D$ is minimal, 
then $\Pi$ is isomorphic to the
classical preprojective algebra $\Pi(Q^\circ)$ associated with $Q^\circ$.

\end{itemize}
For an example illustrating the above definitions, see below 
Section~\ref{example-def-H-Pi}.

\subsection{Main results}\label{sec:main}

Let $e_1,\ldots,e_n$ be the idempotents in $H$ (resp. $\Pi$) corresponding to the vertices of $Q$ (resp. $\ov{Q}$).
Define $H_i := e_iHe_i$.
Clearly, $H_i$ is isomorphic to the truncated polynomial ring
$K[\vep_i]/(\vep_i^{c_i})$.
For each representation $M$ of $H$ or $\Pi$ we get an
$H_i$-module structure on $M_i := e_iM$.
The following definition is of central importance.

\begin{Def}\label{def:locallyfree}
A module $M \in \rep(H)$ or $M \in \rep(\Pi)$ is called
\emph{locally free} if $M_i$ is a free $H_i$-module
for every $i$.
\end{Def}

Let $\rep_\vp(H)$ (resp. $\rep_\vp(\Pi)$) be the subcategory of all locally free $M \in \rep(H)$ (resp. $M \in \rep(\Pi)$).

\begin{Thm}\label{thm:main0}
The algebra $H$ is a $1$-Iwanaga-Gorenstein algebra.
For $M \in \rep(H)$ the following are equivalent:
\begin{itemize}

\item[(i)]
$\pdim(M) \le 1$;

\item[(ii)]
$\idim(M) \le 1$;

\item[(iii)]
$\pdim(M) < \infty$;

\item[(iv)]
$\idim(M)  < \infty$;

\item[(v)]
$M$ is locally free.

\end{itemize}
\end{Thm}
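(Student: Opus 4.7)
The plan is to realize $H$ as a tensor algebra over the non-semisimple base ring $R := \prod_{i=1}^n H_i$, which absorbs the loops $\vep_i$, and then exploit the standard resolution for tensor algebras. Let $B := \bigoplus_{(i,j) \in \Omega} \bigoplus_{g=1}^{g_{ij}} {}_iB_j^{(g)}$, where ${}_iB_j^{(g)}$ is the $H_i$-$H_j$-bimodule generated by $\alpha_{ij}^{(g)}$ modulo the commutativity relation (H2). A direct monomial computation using (H1), (H2) shows that each ${}_iB_j^{(g)}$ is free of rank $f_{ij}$ as a left $H_i$-module and of rank $f_{ji}$ as a right $H_j$-module, and that
\[
H \;\cong\; T_R(B),
\]
which is finite-dimensional because $Q^\circ$ is acyclic.

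For (v) $\Rightarrow$ (i), I would invoke the standard short exact sequence for tensor algebras,
\[
0 \longrightarrow H \otimes_R B \otimes_R M \longrightarrow H \otimes_R M \longrightarrow M \longrightarrow 0,
\]
valid for every $H$-module $M$, with right map the action and left map $h \otimes b \otimes m \mapsto hb \otimes m - h \otimes b\cdot m$. Exactness is a general property of tensor algebras. If $M$ is locally free, then $M$ is projective as a left $R$-module, so $H \otimes_R M$ is $H$-projective. Using the right $R$-freeness of $B$, the $H_i$-freeness of each $M_i$ propagates to show that $B \otimes_R M$ is left $R$-projective, so $H \otimes_R B \otimes_R M$ is also $H$-projective. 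Hence the sequence is a projective resolution of $M$ of length at most one.

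To establish the Iwanaga-Gorenstein property I would first verify by direct computation of $e_j H e_i$, reading off an explicit $H_j$-basis, that each indecomposable projective $P_i = He_i$ is locally free. Since $H(C,D,\Omega)^{\op} \cong H(C,D,\Omega^*)$ (the relations are preserved under reversing the multiplication and reindexing), the same statement holds for indecomposable projective right $H$-modules; applying $\Du$ then shows that each indecomposable injective $I_i = \Du(e_iH)$ is locally free as a left $H$-module. Combining local freeness of $I_i$ with (v) $\Rightarrow$ (i) gives $\pdim(\Du H) \le 1$, while running the same argument on $H^{\op}$ gives the dual implication (v) $\Rightarrow$ (ii), hence $\idim(H) \le 1$ since the $P_i$ are locally free. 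Thus $H$ is $1$-Iwanaga-Gorenstein, and the equivalences (i) $\Leftrightarrow$ (ii) $\Leftrightarrow$ (iii) $\Leftrightarrow$ (iv) follow from the general theory recalled in Section~\ref{intro:2}.

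It remains to prove (iii) $\Rightarrow$ (v). If $\pdim_H(M) < \infty$, pick a finite projective resolution of $M$ and apply the exact functor $e_i(-)$; each term becomes $H_i$-free because the $e_iP_j$ are $H_i$-free, so $\pdim_{H_i}(M_i) < \infty$. But $H_i = K[\vep_i]/(\vep_i^{c_i})$ is a commutative local selfinjective algebra, so every module of finite projective dimension is free, forcing $M_i$ to be $H_i$-free. The main obstacle I anticipate is the first step, namely the careful bookkeeping for the bimodule ${}_iB_j^{(g)}$: the commutativity relation (H2) couples the left $H_i$-action and the right $H_j$-action in a nontrivial way via the exponents $f_{ji}$ and $f_{ij}$, and one must check that the quotient by this single family of relations produces a bimodule which is free of the expected rank from both sides, with no hidden torsion, since both the tensor-algebra presentation $H \cong T_R(B)$ and the exactness and projectivity of the standard resolution depend on this.
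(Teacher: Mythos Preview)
Your proposal is correct and matches the paper's global strategy: the tensor-algebra presentation $H\cong T_S(B)$ is exactly Proposition~\ref{prop:tensor1}, the standard resolution you invoke is Proposition~\ref{prop:resolH} and Corollary~\ref{cor:resolHM}, the bimodule bookkeeping you flag as the main obstacle is worked out with explicit left/right bases in Section~\ref{sec5.1}, and your argument for (iii)~$\Rightarrow$~(v) via restriction to the selfinjective $H_i$ is verbatim the second half of the proof of Proposition~\ref{vpdim}.

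The one substantive difference is in the implication (v)~$\Rightarrow$~(i). You deduce it in one stroke from the tensor-algebra resolution, which is clean and uniform. The paper's official proof (Proposition~\ref{vpdim}) instead proceeds by induction on vertices: pick a sink $i$ in $Q^\circ$, split off the submodule $e_iM\cong E_i^{a_i}$, and use the explicit length-one resolution of $E_i$ from Proposition~\ref{resolutionE}. Your route is arguably more conceptual and immediately gives a functorial resolution (which the paper needs later anyway for Theorem~\ref{thm:functoriso}), while the paper's inductive route is more hands-on and simultaneously yields the injective version by the dual construction, without appealing to $H^{\op}\cong H(C,D,\Omega^*)$. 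Both establish that $P_i$ and $I_i$ are locally free by direct inspection of multiplicative bases, and both then read off the $1$-Iwanaga-Gorenstein property as a corollary.
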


Let $M$ be a locally free module. For each $i\in Q_0$ let $r_i$ be the rank of the free $H_i$-module~$M_i$. 
Thus $\dim_K(M_i) = r_ic_i$. We put 
\[
\rkv(M) := (r_1,\ldots,r_n).
\]

Let $\tau$ be the Auslander-Reiten
translation for the algebra $H$, and let $\tau^-$ be the  
inverse Auslander-Reiten translation.
An indecomposable $H$-module $M$ is \emph{preprojective}
(resp. \emph{preinjective}) if there exists some
$k \ge 0$ such that $M \cong \tau^{-k}(P)$ (resp.
$M \cong \tau^k(I)$) for some indecomposable projective
$H$-module $P$ (resp. indecomposable injective
$H$-module $I$). 
Let us warn the reader that
the usual definition of a preprojective or preinjective module $M$
requires some additional conditions on the Auslander-Reiten
component containing $M$.

In general, the Auslander-Reiten translates $\tau^k(M)$ of an indecomposable  
locally free $H$-module $M$ are not locally free, see the
example in Section~\ref{example:typeA2}.
An indecomposable $H$-module $M$ is called $\tau$-\emph{locally free}, if
$\tau^k(M)$ is locally free for all $k \in \Z$.

A module $M$ over an algebra $A$ is called \emph{rigid} if
$\Ext_A^1(M,M) = 0$.

The following result is an analogue for the algebras $H=H(C,D,\Omega)$
of Gabriel's Theorem (1) for quivers and of its generalization
by Dlab and Ringel to modulated graphs.

\begin{Thm}\label{thm:main1}
There are only finitely many isomorphism classes of $\tau$-locally free 
$H$-modules if and only if $C$ is of Dynkin type.
In this case, the following hold:
\begin{itemize}

\item[(i)]
The map 
$M \mapsto \rkv(M)$ 
yields a bijection between the set of isomorphism classes of 
$\tau$-locally free $H$-modules
and the set $\Delta^+(C)$ of positive roots of the semisimple complex Lie
algebra associated with $C$.

\item[(ii)]
For an indecomposable $H$-module $M$ the following are equivalent:
\begin{itemize}

\item[(a)]
$M$ is preprojective;

\item[(b)]
$M$ is preinjective;

\item[(c)]
$M$ is $\tau$-locally free;

\item[(d)]
$M$ is locally free and rigid.

\end{itemize}
\end{itemize}
\end{Thm}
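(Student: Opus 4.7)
The proof plan is to follow the BGP/Dlab--Ringel strategy adapted to the present setup. First one builds a family of reflection functors $F_i^+$ defined at sinks of $\Omega$ and $F_i^-$ defined at sources, going between $\rep_\vp(H(C,D,\Omega))$ and $\rep_\vp(H(C,D,s_i\Omega))$, where $s_i\Omega$ denotes the orientation obtained from $\Omega$ by reversing all arrows incident to $i$. These functors are modelled on the classical BGP construction but twisted by the loop relations (H1)--(H2): concretely, at the distinguished vertex $F_i^+M$ replaces $M_i$ by the kernel of a canonical map $\bigoplus_{j \in \Omega(i,-)} \bigoplus_{g=1}^{g_{ij}} H_i \otimes_{H_j} M_j \to M_i$ assembled from the arrows $\alpha_{ij}^{(g)}$, and dually for $F_i^-$. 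After verifying that these functors preserve $\rep_\vp$, induce quasi-inverse equivalences between the subcategories with no summand isomorphic to the simple module $E_i$, and act on $\rkv$ as the simple reflection $s_i$ of the Weyl group of $C$, one assembles them along an admissible ordering $i_1,\dots,i_n$ into Coxeter functors $C^\pm$ on $\rep_\vp(H)$.

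The decisive numerical input is the Euler form identity $\dim_K\Hom_H(M,N) - \dim_K\Ext_H^1(M,N) = \bil{\rkv(M),\rkv(N)}_C$ for $M,N \in \rep_\vp(H)$, where $\bil{-,-}_C$ is the non-symmetric form associated with $(C,\Omega)$, whose symmetrisation equals the quadratic form $q_C$ of $C$. Combined with Theorem~\ref{thm:main0}, this shows that $\Ext^1$-vanishing for $\tau$-locally free modules translates into $q_C(\rkv M) = 1$. The other key input, playing the role of Gabriel's item~(3) of Section~\ref{intro:1}, is that $C^+$ agrees, up to a canonical twist, with the Auslander--Reiten translation $\tau$ when restricted to $\tau$-locally free modules. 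Together with the fact that $F_i^+$ acts by $s_i$ on rank vectors, this ensures that the $\tau$-orbit of a $\tau$-locally free indecomposable $M$ produces the orbit of $\rkv(M)$ in $\Z^n$ under the Coxeter element $c = s_{i_n}\cdots s_{i_1}$.

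Given these ingredients the theorem follows by the standard orbit argument. If $C$ is of Dynkin type, the Weyl group is finite, hence the Coxeter element has finite order and $q_C$ is positive definite. For any $\tau$-locally free indecomposable $M$, iterated application of $C^+$ must therefore reach a module on which some $F_{i_k}^+$ is undefined, forcing $M$ to be isomorphic to $\tau^{-\ell}(P_{i_k})$ for the indecomposable projective $P_{i_k}$; this establishes (a)$\Leftrightarrow$(c), and dually (b)$\Leftrightarrow$(c). The implication (c)$\Rightarrow$(d) is rigidity via the Euler form identity and the Auslander--Reiten formula, while (d)$\Rightarrow$(a) uses the Coxeter orbit once more to land on a projective. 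The rank-vector map is surjective onto $\Delta^+(C)$ because every positive root has the form $c^{-\ell}(\alpha_{i_k})$, and injective because each $\tau$-orbit contains exactly one indecomposable projective, namely the one selected by the iteration. For $C$ not of Dynkin type one at least produces infinitely many preprojective modules $\tau^{-\ell}(P_i)$ with pairwise distinct rank vectors $c^{-\ell}(\alpha_i)$, giving the converse finiteness statement.

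The hard part is the construction and analysis of the reflection functors themselves: one needs an intrinsic definition of $F_i^\pm$ on $\rep_\vp(H)$ that simultaneously respects the $H_j$-module structures at all vertices and the commutativity constraints (H2), and one must verify that the resulting Coxeter functor is indeed identifiable with $\tau$ up to twist on the $\tau$-locally free subcategory. Since $H$ is only $1$-Iwanaga-Gorenstein rather than hereditary, this identification is more delicate than in the classical case and will require a careful APR-type tilting interpretation of $F_i^+$, i.e.\ a replacement for the Auslander--Platzeck--Reiten tilting module in which the simple projective summand is swapped for a locally free rank-one module supported at $i$. Once these points are settled, the remainder of the argument is formal.
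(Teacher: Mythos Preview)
Your overall architecture matches the paper's: build reflection functors $F_i^\pm$, assemble Coxeter functors, identify $C^+$ with $T\tau$ on locally free modules, and run an orbit argument. However, there is a genuine gap at the first step. You assert that $F_i^\pm$ preserve $\rep_\vp(H)$ and induce quasi-inverse equivalences on the subcategories with no $E_i$-summand, acting by $s_i$ on rank vectors. This is \emph{false}. A locally free indecomposable $M\not\cong E_i$ can have $M_{i,\inn}$ neither surjective nor split as an $H_i$-map, so that $F_i^+(M)$ fails to be locally free; already for type $B_3$ with minimal symmetrizer there exists a locally free indecomposable with $\rkv(M)=(1,2,1)\notin\Delta^+(C)$ (see Section~\ref{secB3ex}), which would be impossible if your orbit argument applied to all of $\rep_\vp$. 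The paper replaces your claim by the substantially harder Proposition~\ref{prop:reflvp}: $F_i^\pm$ preserve the class of modules that are \emph{locally free and rigid}. The proof is not formal---it proceeds by an inductive reduction on the number of vertices together with a $\tau$-tilting argument (via \cite[Proposition~2.5]{AIR}) to force $\Hom_H(M,E_i)=0$, hence $M_{i,\inn}$ surjective. This is the missing engine behind (d)$\Rightarrow$(c). For (c)$\Rightarrow$(a) your orbit argument is salvageable, but only because Theorem~\ref{thm:main2} together with Lemma~\ref{lem:8.7} guarantees that on a $\tau$-locally free module each intermediate $F_{i_k}^+\cdots F_{i_1}^+(M)$ stays locally free, which then forces $M_{i_k,\inn}$ surjective at each step; this is a consequence of $\tau$-local freeness, not of local freeness alone.

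Two secondary points. First, the assertion $q_C(\rkv M)=1$ is wrong: since $\End_H(E_i)\cong H_i$ has $K$-dimension $c_i$, one has $q_C(\alpha_i)=c_i$, and more generally $q_C(d)=c_i$ for $d\in W(\alpha_i)$. Second, your route (c)$\Rightarrow$(d) ``via the Euler form identity and the Auslander--Reiten formula'' does not work: positivity of $q_C(\rkv M)$ only gives $\dim\Hom_H(M,M)>\dim\Ext^1_H(M,M)$, not $\Ext^1=0$. The paper instead deduces rigidity \emph{after} establishing (c)$\Rightarrow$(a), since preprojectives are rigid by Proposition~\ref{prop:12.9}. (Incidentally, the paper's argument for (c)$\Rightarrow$(a) is slightly different from yours: rather than using finiteness of the Coxeter orbit directly, it observes that a non-preprojective $\tau$-locally free $M$ would satisfy $\Hom_H(\tau^k(M),P_j)\neq 0$ for suitable $k,j$, contradicting Corollary~\ref{cor:12.3}.) Finally, your description of the kernel defining $F_i^+$ should read $\bigoplus_j {}_iH_j\otimes_{H_j}M_j\to M_i$ with the specific bimodules ${}_iH_j$ of Section~\ref{sec5.1}; the expression $H_i\otimes_{H_j}M_j$ is not well-formed since $H_i$ carries no natural $H_j$-structure.
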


Crawley-Boevey \cite{CBRigid} studied representations of quivers
over principal ideal domains.
There are some striking analogies between Theorem~\ref{thm:main1} 
and his results.

Note that the algebras $H$ are usually representation infinite, even
if $C$ is a Cartan matrix of Dynkin type with a minimal
symmetrizer $D$.
Already for $C$ of type $B_3$ with
minimal symmetrizer,
there exist indecomposable locally free 
$H$-modules $M$ with $\rkv(M) \notin \Delta^+(C)$,
see Section~\ref{secB3ex}.
Furthermore, for $C$ of type $B_5$ with minimal symmetrizer there exists a $K^*$-family of pairwise non-isomorphic indecomposable locally free $H$-modules, all having the same dimension vector.

Inspired by
the classical theory for path algebras and modulated graphs 
we define \emph{Coxeter functors}
\[
C^+,C^-\df \rep(H) \to \rep(H)
\]
as products of reflection functors, see Section~\ref{sec8}.
Let 
\[
T\df \rep(H) \to \rep(H)
\] 
be the \emph{twist automorphism} 
induced from the algebra automorphism $H \to H$ defined by
$\vep_i \mapsto \vep_i$ and $\alpha_{ij}^{(g)} \mapsto -\alpha_{ij}^{(g)}$.
In other words, $T$
sends
a representation $(M_i,M(\alpha_{ij}^{(g)}),M(\vep_i))$ of $H$ to 
$(M_i,-M(\alpha_{ij}^{(g)}),M(\vep_i))$.
The following theorem, analogous to Gabriel's Theorem (3),
relates Coxeter functors to the Auslander-Reiten translation, and
provides the main step in proving Theorem~\ref{thm:main1}.

\begin{Thm}\label{thm:main2}
For $M \in \rep(H)$ there are functorial isomorphisms
\[
\Du\Ext_H^1(M,H) \cong TC^+(M)
\text{\;\;\; and \;\;\;}
\Ext_H^1(\Du(H),M) \cong TC^-(M).
\]
Furthermore, if $M \in \rep_\vp(H)$,
then there are functorial isomorphisms
\[
\tau(M) \cong TC^+(M)
\text{\;\;\; and \;\;\;}
\tau^{-}(M) \cong TC^-(M).
\]
Vice versa, if $\tau(M) \cong TC^+(M)$ or
$\tau^{-}(M) \cong TC^-(M)$ for some $M \in \rep(H)$, then 
$M \in \rep_\vp(H)$.
\end{Thm}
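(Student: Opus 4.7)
The strategy is to first establish the functorial isomorphism $\Du\Ext^1_H(M,H) \cong TC^+(M)$ for arbitrary $M \in \rep(H)$, and then to derive the comparison with $\tau$ via the Auslander--Bridger formula $\tau(M) \cong \Du\Tr(M)$ combined with Theorem~\ref{thm:main0}.

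For the functorial isomorphism, I fix a minimal projective presentation $P_1 \xrightarrow{d} P_0 \to M \to 0$. Since $\Hom_H(He_i,H) \cong e_iH$ and $\Du(e_iH)$ is the indecomposable injective $H$-module at vertex $i$, applying $\Du \circ \Hom_H(-,H)$ converts $\Du\Ext^1_H(M,H) = \Du\Coker(d^\ast)$ into an explicit left $H$-representation whose components at each vertex $i$ are canonically built from the components $M(\alpha^{(g)}_{ij})$ of $d$ and the loop action of $\vep_i$. In parallel, choose a linear order $(i_1,\ldots,i_n)$ of $Q_0$ adapted to $\Omega$ (so that at each stage the current vertex is a sink of the reflected quiver) and write $C^+ = F^+_{i_n} \circ \cdots \circ F^+_{i_1}$. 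Using the definitions of the reflection functors from Section~\ref{sec8}, I would compute $C^+(M)$ step by step. The key structural observation is that, after composing all reflections, the resulting representation at vertex $i$ is a cokernel assembled from the same data as on the other side; the alternating $\sgn(i,j)$ signs coming from the mesh relations (P3) at each reflection step are precisely what the twist automorphism $T$ absorbs. Matching the two descriptions yields a natural isomorphism $\Du\Ext^1_H(M,H) \cong TC^+(M)$.

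For the $\tau$-statement, suppose first $M \in \rep_\vp(H)$. Then $\pdim(M) \le 1$ by Theorem~\ref{thm:main0}, so the minimal projective presentation is already a minimal projective resolution, hence $\Tr(M) \cong \Ext^1_H(M,H)$. Therefore $\tau(M) \cong \Du\Tr(M) \cong \Du\Ext^1_H(M,H) \cong TC^+(M)$. Conversely, assume $\tau(M) \cong TC^+(M) \cong \Du\Ext^1_H(M,H)$. Dualizing gives $\Tr(M) \cong \Ext^1_H(M,H)$; using $\idim(H_H) \le 1$ (so that $\Ext^2_H(-,H) = 0$), the natural short exact sequence $0 \to \Ext^1_H(M,H) \to \Tr(M) \to \Hom_H(\Omega^2 M, H) \to 0$, where $\Omega^2 M$ denotes the second syzygy of $M$ in the minimal projective resolution, then forces $\Hom_H(\Omega^2 M, H) = 0$ by comparing $K$-dimensions. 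Since $\Omega^2 M$ embeds into the projective $P_1$, this in turn forces $\Omega^2 M = 0$, so $\pdim(M) \le 1$ and $M \in \rep_\vp(H)$ by Theorem~\ref{thm:main0}. The statements for $C^-$ and $\tau^-$ are proved dually, using minimal injective copresentations and $\Ext^1_H(\Du H, -)$ in place of $\Du\Ext^1_H(-,H)$.

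The principal obstacle is the middle step: one must carefully track how a single reflection functor at a sink transforms the locally free structure and the loop actions, and then verify that the cumulative effect of the full composition matches the transpose description on the nose, including all signs. Once this comparison is in place, functoriality is automatic and the remainder of the theorem is a formal consequence of the $1$-Iwanaga--Gorenstein property established in Theorem~\ref{thm:main0}.
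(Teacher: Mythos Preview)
Your proposal has one genuine error and one genuine gap; your converse argument, on the other hand, is correct and gives an alternative to the paper's.

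\textbf{The error.} You write $\Du\Ext^1_H(M,H) = \Du\Coker(d^*)$ for a minimal projective \emph{presentation} $P_1 \xrightarrow{d} P_0 \to M \to 0$. But $\Coker(d^*) = \Tr(M)$, and $\Ext^1_H(M,H)$ is only the submodule $\ker(d_2^*)/\Ima(d^*)$ of $\Tr(M)$; the two coincide precisely when $d$ is injective, i.e.\ when $\pdim(M)\le 1$, i.e.\ when $M$ is locally free. So your computation of $\Du\Ext^1_H(M,H)$ is only valid on $\rep_\vp(H)$, not on all of $\rep(H)$. (Indeed, if it held for all $M$ you would get $\tau(M)\cong\Du\Ext^1_H(M,H)$ unconditionally, contradicting the very ``vice versa'' clause you are proving.) The fix is standard: establish the isomorphism first for locally free $M$, then extend to arbitrary $M$ using that both $\Ext^1_H(\Du(H),-)$ and $TC^-(-)$ are right exact (since $\pdim(\Du(H))\le 1$) and agree on projectives; dualize for $TC^+$. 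This is exactly how the paper passes from the locally free case to Corollary~\ref{cor:10.8}.

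\textbf{The gap.} Your ``principal obstacle'' is in fact the whole proof. The iterated reflections $F_{i_k}^+$ take place over a sequence of \emph{different} algebras $s_{i_{k-1}}\cdots s_{i_1}(H)$, and the resulting iterated kernel at each vertex has no transparent direct description that one can match by inspection with the transpose side. The paper does not attempt this direct matching. Instead it follows Gabriel's strategy: build an auxiliary algebra $\wti{H}$ containing two copies $H_{(0)},H_{(1)}$ of $H$, define the $H$-$H$-bimodule $X=\bbo_0\wti{H}\bbo_1$, and prove $C^+(M)\cong\Hom_H(X,M)$ by factoring the adjoint $\Res_0^*$ through $n$ intermediate restriction functors, each of which is compared to a single $F_{i_k}^+$ (Lemma~\ref{lem9.2}). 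The identification with $\tau$ then comes from a second, independent description of $\Res_0^*$ which exhibits $\Res_1\circ\Res_0^*(M)$ as the kernel of a map that, under the trace pairing, coincides with $\Du\Hom_H(d\otimes TM,H)$ for the canonical bimodule resolution of Corollary~\ref{cor:resolHM} (Proposition~\ref{prop:gabriel1}). The twist $T$ enters here through a single sign, not through tracking $\sgn(i,j)$ across $n$ reflections. Your outline gives no mechanism for organizing the iterated kernels into a single functor of this shape.

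\textbf{Your converse.} Your argument for the final clause is correct and different from the paper's. The exact sequence $0\to\Ext^1_H(M,H)\to\Tr(M)\to\Hom_H(\Omega^2 M,H)\to 0$ does hold (surjectivity on the right uses $\Ext^2_H(M,H)=0$, i.e.\ $\idim({}_HH)\le 1$), and comparing dimensions then forces $\Omega^2 M=0$ since $\Omega^2 M$ embeds in a projective. The paper instead argues via Proposition~\ref{prop:12.1}: embed $M$ into a locally free $N$, use left exactness of $TC^+$ to get $\tau(M)\hookrightarrow\tau(N)$, and conclude $\Hom_H(\Du(H),\tau(M))=0$. Your route is a pleasant, self-contained alternative.
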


Recall that $H$ is a $1$-Iwanaga-Gorenstein algebra, and that $\cG\cP(H)$
denotes the subcategory of Gorenstein-projective $H$-modules.

\begin{Cor}
For $M \in \rep(H)$ the following are equivalent:
\begin{itemize}

\item[(i)]
$C^+(M) = 0$;

\item[(ii)]
$M \in \cG\cP(H)$.

\end{itemize}
\end{Cor}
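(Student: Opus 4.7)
The plan is to derive this corollary as an immediate formal consequence of the first part of Theorem~\ref{thm:main2}, together with the characterization of Gorenstein-projective modules recalled in Section~\ref{intro:2}. By definition, a module $M \in \rep(H)$ lies in $\cG\cP(H)$ precisely when $\Ext_H^1(M,H) = 0$.

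My proof would proceed through the chain of equivalences
\begin{equation*}
M \in \cG\cP(H) \iff \Ext_H^1(M,H) = 0 \iff \Du\Ext_H^1(M,H) = 0 \iff TC^+(M) = 0 \iff C^+(M) = 0.
\end{equation*}
The first equivalence is simply the definition of $\cG\cP(H)$. The second equivalence uses that the $K$-linear duality $\Du = \Hom_K(-,K)$ is a contravariant equivalence between finite-dimensional left and right $H$-modules; in particular, a finite-dimensional module vanishes if and only if its $K$-dual does. The third equivalence is exactly the content of the first functorial isomorphism of Theorem~\ref{thm:main2}, namely $\Du\Ext_H^1(M,H) \cong TC^+(M)$. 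The fourth equivalence follows from the fact that the twist functor $T$ is an auto-equivalence of $\rep(H)$, since it is induced by the algebra automorphism of $H$ given by $\vep_i \mapsto \vep_i$ and $\alpha_{ij}^{(g)} \mapsto -\alpha_{ij}^{(g)}$; this automorphism is clearly invertible (it is its own inverse), so $T$ sends a representation to zero if and only if that representation is itself zero.

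No substantive obstacle is expected, as all of the deep content has been absorbed into Theorem~\ref{thm:main2}. The corollary is essentially a translation of that theorem into the language of Gorenstein-projective modules, using the defining vanishing condition for $\cG\cP(H)$ and the trivial observations that $\Du$ reflects zero objects and $T$ is invertible.
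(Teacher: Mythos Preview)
Your proof is correct and matches the paper's approach. The paper states this corollary in the introduction without proof and later proves the slightly stronger Theorem~\ref{thm:GP} (which adds a third equivalent condition) as ``an immediate consequence'' of Theorem~\ref{thm:functoriso}(a) and Corollary~\ref{cor:10.8}; these two ingredients together yield exactly the first isomorphism of Theorem~\ref{thm:main2} that you invoke, so the underlying argument is the same.
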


For an algebra $A$ and an $A$-$A$-bimodule $M$, let $T_A(M)$
denote the corresponding tensor algebra.
Theorem~\ref{thm:main2} implies the following description of the preprojective algebra
$\Pi=\Pi(C,D)$ associated with $H$.

\begin{Thm}\label{thm:main3}
$\Pi \cong T_H(\Ext_H^1(\Du(H),H))$.
\end{Thm}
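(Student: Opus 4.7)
The strategy follows the classical pattern of Baer--Gei\ss--Lenzing, Crawley-Boevey and Ringel \cite{BGL,CB2,Ri3}, adapted to the $1$-Iwanaga-Gorenstein setting of Theorem~\ref{thm:main0}. Set $M := \Ext_H^1(\Du(H), H)$, regarded as an $H$-$H$-bimodule: the right $H$-action comes from the second argument, the left $H$-action from the natural right $H$-module structure on $\Du(H)$. My plan is to construct a graded algebra homomorphism $\Phi\colon T_H(M) \to \Pi$ extending $\id_H$ and then verify that it is a bijection.

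The first step is an explicit computation of $M$ as a bimodule. Dualising the injective resolution of $H$ yields, by Theorem~\ref{thm:main0}, a projective presentation
\[
P_1 \longrightarrow P_0 \longrightarrow \Du(H) \longrightarrow 0.
\]
Analysing this presentation vertex by vertex—the syzygy of the indecomposable summand of $\Du(H)$ attached to $i$ being controlled by the arrows of $\overline{\Omega}$ incident to~$i$ together with the loop $\vep_i$—identifies $M$ with the $H$-$H$-bimodule having generators $\alpha_{ji}^{(g),*} \in e_j M e_i$ for each $(i,j) \in \Omega$ and $1 \le g \le g_{ij}$, subject to the bimodule relations $\vep_j^{f_{ij}}\,\alpha_{ji}^{(g),*} = \alpha_{ji}^{(g),*}\,\vep_i^{f_{ji}}$. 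These are exactly the generators and relations of type (P2) appearing in the definition of $\Pi$.

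Define $\Phi$ on degree~$0$ as $\id_H$ and on degree~$1$ by $\alpha_{ji}^{(g),*} \mapsto \sgn(i,j)\,\alpha_{ji}^{(g)}$. By the universal property of the tensor algebra, $\Phi$ extends to an algebra map provided the relations (P1)--(P3) of $\Pi$ hold in its image. Relations (P1) and (P2) are built into $H \hookrightarrow \Pi$ and into the bimodule structure of $M$, so the only nontrivial check is the mesh relation (P3). This reduces to showing that the element $\sum_{j,g,f} \sgn(i,j)\,\vep_i^f\alpha_{ij}^{(g)}\alpha_{ji}^{(g),*}\vep_i^{f_{ji}-1-f}$ of $M^{\otimes_H 2}$ maps to zero, which amounts to recognising the mesh sum as the image of the boundary map $\Hom_H(P_0, H) \to \Hom_H(P_1, H)$ applied to the relation at vertex~$i$ in the presentation of $\Du(H)$. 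The signs $\sgn(i,j)$ arise from whether the relevant arrow points into or out of~$i$ in $\Omega$.

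Surjectivity of $\Phi$ is immediate since $\Pi$ is generated over $K$ by $H$ together with the symbols $\alpha_{ji}^{(g)}$ for $(i,j) \in \Omega$. Injectivity is the main obstacle. Both $T_H(M)$ and $\Pi$ carry a natural $\Z_{\ge 0}$-grading (by tensor degree, respectively by the number of reverse arrows used in a path), and $\Phi$ respects it. By Theorem~\ref{thm:main2} the degree-one piece $M$ is isomorphic as a right $H$-module to $TC^-(H)$, and iterating one expects $M^{\otimes_H k} \cong TC^{-k}(H)$ for all $k \ge 0$. The argument then matches this with a decomposition $\Pi \cong \bigoplus_{k\ge 0}\Pi^{(k)}$ by showing inductively that $\Pi^{(k)} \cong TC^{-k}(H)$ as right $H$-modules, via a short exact sequence of $H$-bimodules linking consecutive tensor powers of $M$. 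This yields a Gelfand--Ponomarev-type description of $\Pi$ as a direct sum of twisted preprojective modules. The main delicate point will be that Theorem~\ref{thm:main2} identifies $\tau$ with $TC^+$ only on the locally free subcategory, so the bimodule-level iteration must be controlled through $\Ext_H^1(\Du(H),-)$ rather than through $\tau^{-1}$ directly.
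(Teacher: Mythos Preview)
Your first step---the explicit presentation of $M = \Ext_H^1(\Du(H), H)$ by generators $\alpha_{ji}^{(g),*}$ subject only to the commutativity relations of type (P2)---is incorrect, and this is where the argument breaks down. Under the grading in which forward arrows have degree~$0$ and backward arrows have degree~$1$, each summand $\vep_i^f\alpha_{ij}^{(g)}\alpha_{ji}^{(g)}\vep_i^{f_{ji}-1-f}$ of the mesh relation (P3) has degree~$1$, not degree~$2$: exactly one of the two arrows $\alpha_{ij}^{(g)}$, $\alpha_{ji}^{(g)}$ is backward. Hence (P3) is a relation inside the degree-$1$ bimodule itself, not an element of $M^{\otimes_H 2}$ as you wrote. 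Concretely, take $C$ of type $A_2$ with minimal symmetrizer and $\Omega = \{(1,2)\}$: the $H$-$H$-bimodule freely generated by $\alpha_{21}^*$ (there are no (P2) relations since $c_1=c_2=1$) is $He_2 \otimes_K e_1H$, which is $2$-dimensional, while a direct computation gives $\dim_K \Ext_H^1(\Du(H),H) = 1$; the missing relation $\alpha_{21}^*\alpha_{12} = 0$ is precisely (P3) at vertex~$2$. This also untangles the confusion in your second paragraph: by the universal property of $T_H(M)$, the map $\Phi\df T_H(M) \to \Pi$ exists automatically once you have any $H$-$H$-bimodule map $M \to \Pi$; there is no condition on (P3) to check for the \emph{existence} of~$\Phi$. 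What actually requires (P3) is the correct identification of~$M$, or equivalently the construction of an inverse to~$\Phi$.

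The paper sidesteps the explicit bimodule computation. It first observes (Proposition~\ref{prop:7.5}) that since all defining relations of $\Pi$ are homogeneous of degree $0$ or~$1$ in the above grading, one has $\Pi \cong T_H(\Pi_1)$ for free, where $\Pi_1$ is the degree-$1$ subspace of~$\Pi$. All the content is then in Theorem~\ref{thm:10.6}, which shows $\Pi_1 \cong \Ext_H^1(\Du(H),H)$ as $H$-$H$-bimodules via an intermediate bimodule $X^T$ coming from the Gabriel--Riedtmann construction of Section~\ref{ssec:GaRie}: one has $\Pi_1 \cong X^T$ by inspection, and $X^T \cong \Ext_H^1(\Du(H),H)$ because the right exact functors $X^T \otimes_H -$ and $\Ext_H^1(\Du(H),-)$ agree on locally free (hence on projective) modules by Theorem~\ref{thm:functoriso}(d). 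Your later invocation of Theorem~\ref{thm:main2} to get $M \cong TC^-(H)$ as a right $H$-module is correct and close in spirit to this, but note that it already contradicts your claimed (P2)-only presentation, and that the tensor-power iteration you propose for injectivity needs the full bimodule structure, not just the one-sided module structure.
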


The algebra $\Pi$ contains $H$ as a subalgebra in an
obvious way.
Let ${_H}\Pi$ be the algebra $\Pi$ considered as a left module
over $H$.
The following result says
that ${_H}\Pi$ is isomorphic to the direct sum of all
preprojective $H$-modules.
This justifies that $\Pi$ is called a \emph{preprojective algebra}.

\begin{Thm}\label{thm:main3a}
We have ${_H}\Pi \cong \bigoplus_{m \ge 0} \tau^{-m}({_H}H)$.
In particular, $\Pi$ is finite-dimen\-sio\-nal if and only if
$C$ is of Dynkin type.
\end{Thm}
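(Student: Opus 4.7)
The plan is to combine Theorem~\ref{thm:main3} with Theorem~\ref{thm:main2}. By Theorem~\ref{thm:main3}, $\Pi \cong T_H(X)$ for $X := \Ext_H^1(\Du(H), H)$, hence as a left $H$-module
$$
{_H}\Pi \;\cong\; \bigoplus_{m \ge 0} X^{\otimes_H m}.
$$
It therefore suffices to establish a natural isomorphism of left $H$-modules $X^{\otimes_H m} \cong \tau^{-m}({_H}H)$ for every $m \ge 0$.

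The key intermediate step is the natural isomorphism of left $H$-modules
$$
X \otimes_H N \;\cong\; \Ext_H^1(\Du(H), N), \qquad N \in \rep(H).
$$
Both functors are right exact — $\Ext_H^1(\Du(H),-)$ because Theorem~\ref{thm:main0} forces $\pdim(\Du(H)) \le 1$ (so $\Ext_H^2(\Du(H),-) = 0$), and $X \otimes_H -$ trivially — and both send $H$ to $X$. A Watts-type comparison via a free presentation $H^a \to H^b \to N \to 0$ then yields the isomorphism. Combined with Theorem~\ref{thm:main2} this gives $X \otimes_H N \cong TC^-(N)$, which coincides with $\tau^-(N)$ whenever $N \in \rep_\vp(H)$.

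With these two ingredients I would prove $X^{\otimes_H m} \cong \tau^{-m}({_H}H)$ by induction on $m$. The case $m=0$ is immediate, and ${_H}H$ is locally free. For the inductive step, assuming $\tau^{-(m-1)}({_H}H)$ is locally free and isomorphic to $X^{\otimes_H(m-1)}$, one computes
$$
X^{\otimes_H m} \;\cong\; X \otimes_H \tau^{-(m-1)}({_H}H) \;\cong\; \tau^-(\tau^{-(m-1)}({_H}H)) \;=\; \tau^{-m}({_H}H).
$$
The main obstacle is keeping the induction alive: one must know that each $\tau^{-m}({_H}H)$ remains in $\rep_\vp(H)$. I would extract this from the reflection-functor construction of $C^-$ developed in the body of the paper: each $F_i^-$ preserves local freeness on a suitable full subcategory, hence so does $C^-$, and then Theorem~\ref{thm:main2} transfers this to $\tau^-$ on locally-free modules. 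Thus $\tau^{-m}({_H}H) \in \rep_\vp(H)$ for all $m \ge 0$.

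Finally, the dichotomy follows from Theorem~\ref{thm:main1}. In Dynkin type, the set of isomorphism classes of $\tau$-locally-free indecomposables is finite; since every indecomposable summand of every $\tau^{-m}({_H}H)$ is of this form and $\tau^-$ eventually annihilates any preprojective module, only finitely many $\tau^{-m}({_H}H)$ are nonzero and $\Pi$ is finite-dimensional. In non-Dynkin type, the preprojective component of the Auslander-Reiten quiver of $H$ is infinite, producing infinitely many pairwise non-isomorphic indecomposable summands in ${_H}\Pi$, so $\Pi$ is infinite-dimensional.
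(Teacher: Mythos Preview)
Your argument is essentially the paper's own proof (Theorem~\ref{thm:preproj} and Corollary~\ref{Cor:Pi-fd}): identify $\Pi$ with $T_H(X)$, show $X\otimes_H - \cong \Ext_H^1(\Du(H),-)$ by comparing right exact functors, and use $\Ext_H^1(\Du(H),N)\cong\tau^-(N)$ for locally free $N$ to run an induction on $m$. The one point to sharpen is your appeal to ``$F_i^-$ preserves local freeness on a suitable full subcategory'': the correct subcategory is that of locally free \emph{rigid} modules (Proposition~\ref{prop:reflvp}), and this is exactly what the paper invokes via Proposition~\ref{prop:12.9}; since ${_H}H$ is projective it is rigid, so rigidity propagates along with local freeness under $\tau^-$, and your induction goes through. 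Without the rigidity hypothesis the claim is false in general (already in type $B_3$ there are locally free modules whose reflections are not locally free), so you should state it explicitly.
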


Finally, we obtain the following analogue for locally free $\Pi$-modules
of the classical important Ext-symmetry of preprojective algebras.

\begin{Thm}\label{thm:main5}
For $M,N \in \rep_\vp(\Pi)$ we have a functorial isomorphism
\[
\Ext_\Pi^1(M,N) \cong \Du\Ext_\Pi^1(N,M).
\]
\end{Thm}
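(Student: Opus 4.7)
The strategy is to adapt to our locally-free setting the classical Ext-symmetry argument for preprojective algebras of quivers (due to Crawley-Boevey) and of modulated graphs (Ringel and others). The key step is to produce, for each $M \in \rep_\vp(\Pi)$, a projective resolution of length at most two as a left $\Pi$-module, built from the mesh relations (P3) in a self-dual way, so that applying $\Hom_\Pi(-, N)$ yields a two-term complex whose $K$-linear dual computes the same Ext group with $M$ and $N$ exchanged.

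Concretely, I would construct a short exact sequence of left $\Pi$-modules of the form
$$0 \to \bigoplus_i \Pi e_i \otimes_{H_i} \Du(e_i M) \xrightarrow{d_2} \bigoplus_{\substack{(i,j)\in\ov{\Omega}\\ 1\le g\le g_{ij}}} \Pi e_i \otimes e_j M \xrightarrow{d_1} \bigoplus_i \Pi e_i \otimes_{H_i} e_i M \xrightarrow{d_0} M \to 0,$$
where $d_0$ is the natural multiplication $x \otimes m \mapsto xm$, the map $d_1$ sends $x \otimes m$ in the $(i,j,g)$-summand to $x \alpha_{ji}^{(g)} \otimes m - \sgn(i,j)\, x \otimes \alpha_{ij}^{(g)} m$ (with the appropriate $\vep$-twists dictated by (P2)), and $d_2$ encodes the mesh relations (P3). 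The local freeness hypothesis on $M$ (each $e_i M$ is free over $H_i$) ensures that $\Pi e_i \otimes_{H_i} e_i M$ is a direct summand of a power of $\Pi e_i$, so all three non-$M$ terms are projective left $\Pi$-modules. The sign antisymmetry $\sgn(i,j) = -\sgn(j,i)$ built into (P3) is precisely what makes $d_1 \circ d_2 = 0$ and gives the complex its self-dual structure.

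Applying $\Hom_\Pi(-, N)$ and using the adjunction $\Hom_\Pi(\Pi e_i \otimes_{H_i} U, N) \cong \Hom_{H_i}(U, e_i N)$ for a finite-rank free $H_i$-module $U$, one obtains a two-term complex whose middle cohomology computes $\Ext^1_\Pi(M, N)$. Since each $H_i$ is a symmetric Frobenius algebra, $\Hom_{H_i}(U, V)$ and $\Hom_{H_i}(V, U)$ are naturally $K$-linearly dual for finite-rank free $H_i$-modules $U, V$. Combined with the self-duality of the bimodule complex underlying the resolution, this identifies the two-term complex for $(M, N)$ with the $K$-linear dual of the corresponding complex for $(N, M)$, yielding the stated natural isomorphism $\Ext^1_\Pi(M, N) \cong \Du\Ext^1_\Pi(N, M)$.

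The main obstacle will be verifying exactness of the resolution, particularly at the middle term $\Ker(d_1) = \Ima(d_2)$. This requires a careful normal-form analysis of paths in $\Pi$ combining the orientation $\Omega$, the commutativity relations (P2), and the nilpotency $\vep_i^{c_i}=0$; the identities $c_i = k_{ij} f_{ji}$ recalled in Section~\ref{def:HPi} and the sign antisymmetry $\sgn(i,j) = -\sgn(j,i)$ of (P3) must be used decisively in this step. Once the resolution and its self-duality are established, the Ext-symmetry on locally free modules follows formally.
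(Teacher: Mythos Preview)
Your overall strategy matches the paper's: build a three-term projective complex resolving $M$ from the bimodule presentation of $\Pi$, apply $\Hom_\Pi(-,N)$, and identify the resulting complex with the $K$-dual of the complex for $(N,M)$ via the trace pairing on free $H_i$-modules. A few points where your sketch diverges from, or is less precise than, what actually works:

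\textbf{The leftmost ``$0\to$'' is false in general.} In the Dynkin case the map $d_2$ (the paper's $f_M$) is \emph{not} injective; its kernel is $\Hom_\Pi(\Du(\Pi),\Pi)\otimes_\Pi M$, which is nonzero since $\Pi$ is selfinjective. Fortunately this does not matter for $\Ext^1$: you only need exactness at the two rightmost positions together with $d_1\circ d_2=0$, so that the middle cohomology of $\Hom_\Pi(-,N)$ still computes $\Ext^1_\Pi(M,N)$. Drop the claim of a short exact sequence and speak instead of the beginning of a projective resolution.

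\textbf{Formulate the middle term via the bimodules ${_j}H_i$.} Indexing by individual arrows $\alpha_{ij}^{(g)}$ and writing $\Pi e_i\otimes e_jM$ (over $K$?) loses the $H_i$-action coming from the $\vep$-twists; the correct middle term is $\bigoplus_{(j,i)\in\ov{\Omega}} \Pi e_j\otimes_{H_j}{_j}H_i\otimes_{H_i}e_iM$, and the differentials are the ones induced from the tensor-algebra presentation $\Pi\cong T_S(\ov B)/(\rho_i)$. With this formulation the self-duality you need becomes the statement that, under the trace pairing, the adjunction isomorphism $\ad_{ji}$ intertwines $\Hom_{H_j}({_j}H_i\otimes M_i,N_j)$ with $\Du\Hom_{H_i}({_i}H_j\otimes N_j,M_i)$; this is the technical heart of the argument and is not automatic---it is Proposition~\ref{prop:adj1} in the paper.

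\textbf{Exactness.} Your planned ``normal-form analysis of paths'' would be painful. The paper instead obtains the bimodule complex $P_\bullet$ and its exactness directly from Schofield's general results on tensor algebras modulo relations (so exactness of $P_\bullet\otimes_\Pi M$ for locally free $M$ follows because $P_\bullet$ splits as a sequence of right modules). This bypasses all combinatorics with (P2) and $\vep_i^{c_i}=0$.
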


\subsection{Previous appearances of $H(C,D,\Omega)$}

\subsubsection{}
Let $Q$ be a quiver without oriented cycles.
Ringel and Zhang \cite{RiZ} study representations of $Q$
over the algebra $A:=K[X]/(X^2)$ of dual numbers.
This can be interpreted as the category of $\Lambda$-modules with 
$\Lambda := A\otimes_K KQ$. 
It is shown in \cite{RiZ} that $\Lambda$ is a $1$-Iwanaga-Gorenstein algebra, 
and that the stable category of $\cG\cP(\Lambda)$ is triangle equivalent 
to the orbit category $D^b(KQ)/[1]$ of the bounded derived category
$D^b(KQ)$ of the path algebra $KQ$ modulo the shift functor $[1]$.
In our setup, if we take symmetric Cartan matrices $C$ with
symmetrizer $D = \diag(2,\ldots,2)$, then the class of algebras 
$H(C,D,\Omega)$ coincides with the class of algebras studied by Ringel and 
Zhang.
Fan \cite{F} studies the Hall algebra of representations of $Q$ over
$K[X]/(X^m)$ with $m \ge 1$.
Again this is a special case of our setup with $C$ symmetric and
$D = \diag(m,\ldots,m)$.
For $Q$ a quiver of type $A_2$, $A = K[X]/(X^m)$ and $\Lambda := A\otimes_K KQ$
the category
$\cG\cP(\Lambda)$ is studied in work of Ringel and Schmidmeier
\cite{RiS}.
Note also that in this case we have $\Lambda \cong T_2(A)$, where
$T_2(A)$ is the algebra of upper triangular $2 \times 2$-matrices
with entries in $A$.
More generally, the algebras $T_2(A)$ with $A$ a Nakayama algebra have been studied by Skowro\'nski \cite{S},
and the algebras $T_n(A)$ have been studied by Leszczy\'nski and Skowro\'nski \cite{LS}.

\subsubsection{}
A general framework for studying cluster structures arising from 2-Calabi-Yau categories with loops has been provided by \cite{BMV}.
As an example they study the cluster category 
$\cC := D^b(\cT_n)/\tau^-[1]$ of 
the mesh category of a tube $\cT_n$ of rank $n \ge 2$.
The endomorphism algebras of the maximal rigid objects in
$\cC$ have been studied by Vatne and Yang \cite{V,Yan}.
It turns out that
there exists a maximal rigid object $T$ in $\cC$
such that $\End_\cC(T)$ is isomorphic to one of our algebras
$H(C,D,\Omega)$, where $C$ is of Dynkin type $C_{n-1}$ and $D$
is minimal.
(We identify the types $C_1 = A_1$ and $C_2 = B_2$.)

\subsubsection{}
Let $Q$ be a Dynkin quiver of type $E_8$, and let
$F := S^4\Sigma^{-4}$, where
$S$ is the Serre functor and $\Sigma$ is the translation functor for
the bounded derived category $D^b(KQ)$ of the path algebra $KQ$.
Ladkani \cite[Section~2.6]{La} studies the orbit category $\cC := D^b(KQ)/F$.
He shows that $\cC$ is a triangulated $2$-Calabi-Yau category containing
exactly $6$ cluster-tilting objects.
Ladkani shows that $\cC$ categorifies a cluster algebra
of Dynkin type $G_2$.
He also shows that the cluster tilting-objects in $\cC$
have an endomorphism algebra isomorphic to 
$A = KQ/I$, where $Q$ is a quiver of the form
\[
\xymatrix{
1 & 2 \ar[l] \ar@(ul,ur)^{\vep}
}
\text{\;\;\ or \;\;\;} 
\xymatrix{
1 \ar[r] & 2 \ar@(ul,ur)^{\vep}
}
\]
and $I$ is generated by $\vep^3$.
Note that the algebras $A$ are isomorphic to the algebras $H(C,D,\Omega)$ with
$C$ of type $G_2$ and $D$ minimal.

\subsection{Previous appearances of $\Pi(C,D)$}

\subsubsection{}\label{sec1.5.2}
Let $C$ be a Cartan matrix of Dynkin type. 
In \cite{HL}, an algebra $A=A(C)$ was introduced, by means of an infinite quiver with potential. 
Certain finite-dimensional $A$-modules (the \emph{generic kernels} $K^{(i)}_{k,m}$, see \cite[Definition~4.5]{HL}) 
were shown to encode the $q$-characters of the Kirillov-Reshetikhin modules of the quantum loop algebra 
$U_q(L\mathfrak{g})$, where $\mathfrak{g}$ is the complex simple Lie algebra with Cartan matrix $C$. 
The connection with the algebras considered in this article is the following:
Let $\widetilde{\Pi}(C)$ denote the algebra $K\overline{Q}/\widetilde{I}$, where $\widetilde{I}$ is the two-sided ideal 
defined by the relations (P2) and (P3) only. 
(Thus, the nilpotency relation (P1) is omitted.)
Then $A(C)$ is a truncation of a 
$\Z$-covering of $\wti{\Pi}(C^*)$, where $C^*$ 
is the transposed Cartan matrix, in other words, the Cartan matrix of the Langlands dual 
$\mathfrak{g}^L$ of $\mathfrak{g}$.
In particular, for $m\ll 0$ the generic kernels 
$K^{(i)}_{c_i,m}$ of $A(C)$ 
coincide with the indecomposable projective $\Pi(C^*,D)$-modules
regarded as $\Z$-graded $\wti{\Pi}(C^*)$-modules 
(compare for instance \cite[Section 6.5]{HL} to Figure~\ref{Fig:projPC3} below).
This generalizes \cite[Example~4.7]{HL}.

\subsubsection{}
The algebras $\wti{\Pi}(C)$ mentioned in
Section~\ref{sec1.5.2} were defined and studied independently 
by Cecotti \cite[Section~3.4]{C} and Cecotti and del Zotto 
\cite[Section~5.1]{CD}.
In \cite{C} they are called \emph{generalized preprojective algebras}.

\subsubsection{}
For $(C,\Omega)$ let 
\[
W(C,\Omega) :=
\sum_{(j,i) \in \ov{\Omega}} \sum_{g=1}^{g_{ji}} \sgn(i,j)
\vep_i^{f_{ji}}\alpha_{ij}^{(g)} \alpha_{ji}^{(g)}.
\]
Then the cyclic derivatives of the potential $W(C,\Omega)$ yield the defining 
relations (P2) and (P3) of $\Pi(C,D)$, compare \cite{C,CD,HL}, where these 
relations are also encoded via potentials.

\subsubsection{}
After the first version of this article appeared on arXiv, we were informed by 
D. Yamakawa of the following connection between $\Pi(C,D)$ and some quiver 
varieties for quivers with multiplicities introduced in \cite{Yam}. 
Suppose that $C$ is a generalized Cartan matrix of the form 
$C = (c_{ij}) = 2I_n - AD$, where $I_n$ is the unit matrix, $A=(a_{ij})$ is a 
symmetric matrix with $a_{ij}\in\N$ and $a_{ii}=0$, and 
$D = \diag(c_1,\ldots,c_n)$ with positive integers $c_i$. 
Then $C$ is symmetrizable with symmetrizer $D$. 
(Note that not every symmetrizable Cartan matrix is of this form. 
For instance Cartan matrices of type $C_n$ are of this form, but not Cartan 
matrices of type $B_n$.) 
Assume further that $k_{ij} = \gcd(c_i,c_j) = 1$ whenever $c_{ij} < 0$. 
Then the defining relations (P2) of $\Pi(C,D)$ are redundant, and the mesh 
relations (P3) can be regarded as the vanishing of the moment map of some 
Hamiltonian space considered in \cite{Yam}. 
As a result, in this case, the isomorphism classes of locally free 
$\Pi(C,D)$-modules of rank vector ${\mathbf r}$ are parametrized by the set 
theoretical quotient $\mathcal{N}^{\mathrm set}(\mathbf{r},0)$ 
of \cite{Yam}.

\subsection{Future directions}

This article intends to provide the
foundation for 
generalizing many of the connections between path algebras,
preprojective algebras, Lie algebras and cluster algebras from the symmetric
to the symmetrizable case.

In particular, since the algebras $H$ and $\Pi$ are defined via quivers with
relations, one can study their module varieties over an arbitrary field $K$.
Taking $K=\C$, one can hope to
generalize Lusztig's nilpotent varieties and Nakajima quiver varieties from
the symmetric to the symmetrizable case.

As a first step in this direction, in 
\cite{GLS2} we
construct the enveloping algebra of the positive part of an arbitrary 
simple finite-dimensional complex Lie algebra as an algebra of constructible functions on varieties of locally free $H$-modules.

\subsection{}
The article is organized as follows.
In Section~\ref{sec2} we recall some definitions and basic facts on
Cartan matrices, quadratic forms and Weyl groups.
A description of the projective and injective $H$-modules and
some fundamental results on locally free $H$-modules
are obtained in Section~\ref{sec3}.
In particular, Section~\ref{sec3} contains the proof of Theorem~\ref{thm:main0} (combine Proposition~\ref{vpdim} and Corollary~\ref{Gorenstein}).
In Section~\ref{sec4} we show that the quadratic form $q_C$ associated with
a Cartan matrix $C$ coincides with the restriction of the 
homological Euler form of $H$ to the subcategory of locally free $H$-modules.
The representation theory of the algebras $H$ and $\Pi$
can be reformulated in terms of a generalization of the representation
theory of modulated graphs.
This point of view, which is of central importance for proving (and in part also for formulating) our main results, is explained in Section~\ref{sec6}.
An interpretation of $H$ and $\Pi$
as tensor algebras is discussed in Section~\ref{sec7}.
Section~\ref{newsec7} provides a bimodule resolution of $H$.
In Section~\ref{newsec8} we introduce a trace pairing for $H_i$-modules
and relate it with the adjunction isomorphisms.
Section~\ref{sec8} contains some fundamental properties of
generalizations of BBK-reflection functors to our algebras $\Pi$.
(The letters BBK stand for Baumann and Kamnitzer \cite{BK1,BK2} and 
Bolten \cite{Bo}. 
Independently from each other they introduced reflection
functors for the classical preprojective algebras associated with quivers.)
The reflection functors for $\Pi$ restrict to reflection functors for $H$.
We show that the latter are generalized versions of
APR-tilting functors.
The intimate relation between Coxeter functors and the Auslander-Reiten translation for $H$ is studied in Section~\ref{sec10}.
Theorem~\ref{thm:main2} follows from
Theorem~\ref{thm:functoriso} and Proposition~\ref{prop:10.9}.
We also prove some crucial properties of the
algebras $\Pi$.
In particular, 
Theorem~\ref{thm:main3} corresponds to Corollary~\ref{cor:10.7}.
In Section~\ref{sec12} we use the previous constructions for proving 
Theorem~\ref{thm:main1} (see Theorem~\ref{Dynkinroots}).
The proof of Theorem~\ref{thm:main3a} can be found in Section~\ref{sec:10.3}.
We also obtain some first results on the Auslander-Reiten theory of $H$.
Section~\ref{sec9} contains the construction of a bimodule resolution
for $\Pi$.
This resolution plays an important part in the study of locally free representations.
In particular, Section~\ref{sec:extsymm} contains the proof of
Theorem~\ref{thm:main5} (see Theorem~\ref{thm:9.11}).
Finally, Section~\ref{sec13} contains a collection of examples.

\subsection{Notation}
By a \emph{subcategory} we always mean a full subcategory which is closed
under isomorphisms and direct summands.
By an \emph{algebra} we mean an associative $K$-algebra with $1$.
For a $K$-algebra $A$ let $\md(A)$ be the category of 
finite-dimensional left $A$-modules.
If $A = KQ/I$ is the path algebra of a quiver $Q$ modulo some
ideal $I$, then $\rep(A)$ denotes the category of finite-dimensional
representations of $(Q,I)$.
By definition these are the representations of $Q$ which are annihilated by
$I$.
We often identify $\md(A)$ and $\rep(A)$.
Let $\proj(A)$ and $\inj(A)$ be the full subcategories of $\rep(A)$
with objects the projective and injective $A$-modules, respectively.
Let $\Du := \Hom_K(-,K)$ be the usual $K$-duality.
For a finite-dimensional algebra $A$ let $\tau(-) = \tau_A(-)$ be the Auslander-Reiten
translation of $A$.
For a module $X$ we denote by $\add(X)$ the subcategory of modules 
which are isomorphic to finite direct sums of direct summands of $X$.
As a general reference for the representation theory of finite-dimensional
algebras we refer to the books \cite{ARS,Ri2}.
The composition of maps $f\df X \to Y$ and $g\df Y \to Z$
is denoted by $gf\df X \to Z$.
For arrows $\alpha\df i \to j$ and $\beta\df j \to k$ in a quiver, 
we write their composition as $\beta\alpha\df i \to k$.
By $\N$ we denote the natural numbers including $0$.


\section{Cartan matrices and the Weyl group}\label{sec2}


\subsection{Cartan matrices and valued graphs}
Let $C = (c_{ij}) \in M_n(\Z)$ be a 
Cartan matrix, and let $D = \diag(c_1,\ldots,c_n)$ 
be a symmetrizer of $C$.
The \emph{valued graph} $\GG(C)$ of $C$ 
has vertices $1,\ldots,n$ and an (unoriented) edge between $i$ and $j$
if and only if $c_{ij} < 0$. 
An edge $
\xymatrix{
i \ar@{-}[r] & j
}
$
has \emph{value} $(|c_{ji}|,|c_{ij}|)$.
In this case, we display this valued edge as
\[
\xymatrix@-0.5pc{
i \ar@{-}[rr]^{(|c_{ji}|,|c_{ij}|)} && j
}
\]
and we just write  
$
\xymatrix{
i \ar@{-}[r] & j
}
$
if $(|c_{ji}|,|c_{ij}|) = (1,1)$.

A Cartan matrix $C$ is \emph{connected} if $\GG(C)$ is a connected graph.
In this case, the symmetrizer $D$ is
uniquely determined up to multiplication with a positive integer.
More precisely, if $D$ is a minimal symmetrizer of a connected Cartan matrix 
$C$, then the other symmetrizers of $C$
are given by $mD$ with $m \ge 1$.

\subsection{The quadratic form}
Define the quadratic form $q_C\df \Z^n \to \Z$ of $C$ by
\[
q_C := \sum_{i=1}^n c_i X_i^2 - \sum_{i<j} c_i|c_{ij}| X_iX_j.
\]
(Recall that $c_i|c_{ij}| = c_j|c_{ji}| $.)
The quadratic form $q_C$ plays a crucial role in the representation theory
of the quivers of Cartan type $C$ and more generally of the species 
(see for example~\cite{DR1}) of type $C$.

A Cartan matrix $C$ is of \emph{Dynkin} or \emph{Euclidean type}
if $q_C$ is positive definite or positive semidefinite, respectively.
It is well known that $C$ is of Dynkin type if and only if $\GG(C)$
is a disjoint union of Dynkin graphs.
(The Dynkin graphs are listed in Section~\ref{secDynkin}.)

\subsection{The Weyl group}
As before let $C = (c_{ij})$ be a Cartan matrix, and let
$\alpha_1,\ldots,\alpha_n$ be the positive simple roots of the
Kac-Moody algebra $\g(C)$ associated with $C$.
For $1 \le i,j \le n$ define
\[
s_i(\alpha_j) := \alpha_j - c_{ij} \alpha_i.
\]
This yields a reflection $s_i\df \Z^n \to \Z^n$ on the \emph{root lattice}
\[
\Z^n = \sum_{i=1}^n \Z \alpha_i
\]
where we identify $\alpha_i$ with the $i$th standard basis vector of $\Z^n$.
The \emph{Weyl group} $W(C)$ of $\g(C)$ is the subgroup of
${\rm Aut}(\Z^n)$ generated by $s_1,\ldots,s_n$.
The Weyl group is finite if and only if $C$ is of Dynkin type.

\subsection{Roots}
Let 
\[
\Delta_\real(C) := \bigcup_{i=1}^n W(\alpha_i)
\] 
be the set of \emph{real roots} of $C$.

Let 
\[
(-,-)_C\df \Z^n \times \Z^n \to \Z
\] 
be the 
\emph{symmetric bilinear form} of $C$ defined by 
$(\alpha_i,\alpha_j)_C := c_ic_{ij}$.
The \emph{fundamental region} of $C$ is defined by
\[
F := \{ d \in \N^n \mid d \not= 0,\; {\rm supp}(d) \text{ connected}, 
(d,\alpha_i)_C \le 0 \text{ for all } 1 \le i \le n \},
\]
where ${\rm supp}(d)$ is the full subgraph of $\GG(C)$ given by the
vertices $i$ with $d_i \not= 0$.
Then 
\[
\Delta_\imag(C) := W(F) \cup W(-F)
\] 
is the set of \emph{imaginary roots} of $C$.

Let
\[
\Delta_\real^+(C) := \Delta_\real(C) \cap \N^n
\text{\;\;\; and \;\;\;}
\Delta_\imag^+(C) := \Delta_\imag(C) \cap \N^n
\] 
be the set of \emph{positive real roots} and \emph{positive
imaginary roots}, respectively.
It turns out that 
\[
\Delta_\real(C) = \Delta_\real^+(C) \cup
-\Delta_\real^+(C)
\text{\;\;\; and \;\;\;}
\Delta_\imag(C) = \Delta_\imag^+(C) \cup
-\Delta_\imag^+(C).
\]
Finally, let
\[
\Delta(C) := \Delta_\real(C) \cup \Delta_\imag(C)
\] 
be the set of \emph{roots} of $C$, and 
\[
\Delta^+(C) := \Delta(C) \cap \N^n = \Delta_\real^+(C) \cup
\Delta_\imag^+(C)
\] 
is the set of \emph{positive roots}.

By definition, for $w \in W(C)$ and $d \in \Delta(C)$ we have
$w(d) \in \Delta(C)$.
We have $q_C(d) = c_i$ if $d \in W(\alpha_i)$ is a real root, and 
$q_C(d) \le 0$ if $d$ is an imaginary root.
The following are equivalent:
\begin{itemize}

\item[(i)]
$C$ is of Dynkin type;

\item[(ii)]
$\Delta(C)$ is finite;

\item[(iii)]
$\Delta_\real(C) = \Delta(C)$.

\end{itemize}
%
\subsection{Coxeter transformations}\label{sec:ct}
For an orientation $\Omega$ of $C$ and some $1 \le i \le n$ let
\[
s_i(\Omega) :=
\{ (r,s) \in \Omega \mid i \notin \{r,s\} \}
\cup \{ (s,r) \in \Omega^* \mid i \in \{r,s\}  \}.
\]
If $i$ is a sink or source in $Q^\circ(C,\Omega)$, then $s_i(\Omega)$
is again an orientation of $C$.
A sequence $\bi = (i_1,\ldots,i_n)$ is a $+$-\emph{admissible sequence} 
for $(C,\Omega)$
if
$\{i_1,\ldots,i_n\} = \{1,\ldots,n\}$, $i_1$ is a sink in $Q^\circ(C,\Omega)$
and
$i_k$ is a sink in the acyclic quiver 
$Q^\circ(C,s_{i_{k-1}} \cdots s_{i_1}(\Omega))$
for $2 \le k \le n$.
For such a sequence $\bi$ let 
\[
\beta_{\bi,k} := \beta_k :=
\begin{cases} 
\alpha_{i_1} & \text{if $k=1$},\\
s_{i_1}s_{i_2} \cdots s_{i_{k-1}}(\alpha_{i_k}) & \text{if $2 \le k \le n$}
\end{cases}
\]
where $s_1,\ldots,s_n \in W(C)$.
Similarly, define 
\[
\gamma_{\bi,k} := \gamma_k :=
\begin{cases}
s_{i_n} \cdots s_{i_{k+1}}(\alpha_{i_k}) & \text{if $1 \le k \le n-1$},\\
\alpha_{i_n} & \text{if $k=n$}.
\end{cases}
\]

Let
\[
c^+ : = s_{i_n}s_{i_{n-1}} \cdots s_{i_1}\df \Z^n \to \Z^n
\text{\;\;\; and \;\;\;}
c^- : = s_{i_1}s_{i_2} \cdots s_{i_n}\df \Z^n \to \Z^n.
\]
be the \emph{Coxeter transformations}.
For $k \in \Z$ we set
\[
c^k := 
\begin{cases}
(c^+)^k& \text{if $k > 0$},\\
(c^-)^{-k}& \text{if $k < 0$},\\
\id & \text{if $k=0$}.
\end{cases}
\]
We get
\begin{align*}
c^+(\beta_{\bi,k}) &= (s_{i_n}s_{i_{n-1}} \cdots s_{i_1})(s_{i_1}s_{i_2} 
\cdots s_{i_{k-1}}(\alpha_{i_k}))\\
&= s_{i_n}s_{i_{n-1}} \cdots s_{i_k}(\alpha_{i_k})\\
&= - s_{i_n}s_{i_{n-1}} \cdots s_{i_{k+1}}(\alpha_{i_k})\\
&= - \gamma_{\bi,k}.
\end{align*}

The following two lemmas are well known.
For example, they are a consequence of the study of
preprojective and preinjective representations of species without oriented
cycles.

\begin{Lem}\label{lem:ct1}
Suppose $C$ is not of Dynkin type.
Then the elements $c^{-r}(\beta_i)$ and $c^s(\gamma_j)$
with $r,s \ge 0$ and $1 \le i,j \le n$ are pairwise different elements in $\Delta_\real^+(C)$.  
\end{Lem}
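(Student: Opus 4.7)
The plan is to identify the two sets $\{c^{-r}(\beta_{\bi,k}) : r \ge 0,\, 1 \le k \le n\}$ and $\{c^s(\gamma_{\bi,l}) : s \ge 0,\, 1 \le l \le n\}$ as inversion sets of powers of $c^\pm$, and then to invoke standard Coxeter-theoretic facts valid in non-Dynkin type. Using the $+$-admissibility of $\bi$, a straightforward induction on $k$ shows that $s_{i_1} s_{i_2} \cdots s_{i_k}$ is a reduced expression in $W(C)$; hence each $\beta_{\bi,k} = s_{i_1} \cdots s_{i_{k-1}}(\alpha_{i_k})$ lies in $\Delta_\real^+(C)$. An analogous argument applied to the reversed sequence $(i_n, i_{n-1}, \ldots, i_1)$ shows that $s_{i_n} s_{i_{n-1}} \cdots s_{i_k}$ is reduced, so that each $\gamma_{\bi,k}$ belongs to $\Delta_\real^+(C)$ as well.

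Since $C$ is not of Dynkin type, $W(C)$ is infinite and the Coxeter element $c^+$ has infinite order. A classical fact then implies that $(c^-)^N = (s_{i_1} s_{i_2} \cdots s_{i_n})^N$ and $(c^+)^N = (s_{i_n} s_{i_{n-1}} \cdots s_{i_1})^N$ are reduced expressions of length $nN$ for every $N \ge 1$. Reading off the positive roots produced by successive partial products in these reduced expressions, a direct computation identifies the inversion set $\Phi((c^+)^N) := \{\alpha \in \Delta_\real^+(C) : (c^+)^N(\alpha) \notin \Delta_\real^+(C)\}$ with $\{c^{-r}(\beta_{\bi,k}) : 0 \le r \le N-1,\, 1 \le k \le n\}$, and dually identifies $\Phi((c^-)^N)$ with $\{c^s(\gamma_{\bi,l}) : 0 \le s \le N-1,\, 1 \le l \le n\}$. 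Since the inversions of a reduced word are pairwise distinct positive roots, letting $N \to \infty$ shows that each of the two families consists of distinct positive real roots.

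Finally, no preprojective root equals a preinjective one. For $\alpha = c^{-r}(\beta_{\bi,k})$, the previous paragraph shows $(c^-)^m(\alpha) = c^{-(r+m)}(\beta_{\bi,k}) > 0$ for every $m \ge 0$, since this element remains an inversion of a higher power of $c^+$. On the other hand, the identity $c^+(\beta_{\bi,l}) = -\gamma_{\bi,l}$ recalled in the excerpt yields $c^-(\gamma_{\bi,l}) = -\beta_{\bi,l}$, and hence $(c^-)^{s+1}(c^s(\gamma_{\bi,l})) = c^-(\gamma_{\bi,l}) = -\beta_{\bi,l} < 0$. An equality $c^{-r}(\beta_{\bi,k}) = c^s(\gamma_{\bi,l})$ would therefore be simultaneously $(c^-)^m$-positive for all $m \ge 0$ and driven to negative by $(c^-)^{s+1}$, a contradiction. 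The main obstacle in the whole argument is establishing the reducedness of all powers $(c^\pm)^N$; this is where the non-Dynkin hypothesis truly enters, but once it is granted the rest of the proof is straightforward bookkeeping of inversion sets.
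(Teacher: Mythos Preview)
Your argument is correct, and it supplies a proof where the paper supplies none: the paper simply declares the lemma ``well known'' and points to the Dlab--Ringel theory of preprojective and preinjective representations of species. Your route is genuinely different, being purely Coxeter-theoretic rather than representation-theoretic.

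The one point that deserves a reference is the ``classical fact'' that every power $(c^\pm)^N$ is reduced when $W(C)$ is infinite. This is true, but you should be aware that some of the older arguments for it in the crystallographic case pass through exactly the species representation theory the paper invokes, which would make your proof circular. A clean, purely combinatorial source is Speyer's theorem (\emph{Powers of Coxeter elements in infinite groups are reduced}, Proc.\ Amer.\ Math.\ Soc.\ 137 (2009), 1295--1302), valid for arbitrary infinite irreducible Coxeter groups. With that citation in place your proof is self-contained and independent of the module-theoretic approach. Your separation of the two families via the sign of $c^-$-iterates, using the identity $c^+(\beta_{\bi,k}) = -\gamma_{\bi,k}$ already recorded in the paper, is a nice touch and more transparent than extracting the same conclusion from the representation theory.
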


Let $C$ be of Dynkin type.
For $1 \le i \le n$ let $p_i \ge 1$ be minimal with
$c^{-{p_i}}(\beta_i) \notin \N^n$, and let
$q_j \ge 1$ be minimal such that $c^{q_j}(\gamma_j) \notin \N^n$.
It is well known that such $p_i$ and $q_j$ exist.
The elements $c^{-r}(\beta_i)$ with $1 \le i \le n$ and $0 \le r \le p_i-1$
are pairwise different, and the elements $c^s(\gamma_j)$ with $1 \le j \le n$ 
and $0 \le s \le q_j-1$ are pairwise different.
 
\begin{Lem}\label{lem:ct2}
Assume that $C$ is of Dynkin type.
Then 
\[
\Delta^+(C) = 
\{ c^{-r}(\beta_i) \mid 1 \le i \le n,\, 0 \le r \le p_i-1 \} = 
\{ c^s(\gamma_j) \mid 1 \le j \le n,\, 0 \le s \le q_j-1 \}.
\]
\end{Lem}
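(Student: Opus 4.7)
The plan is to establish the first equality; the second follows from an analogous argument in which the forward reflection word $s_{i_1}\cdots s_{i_n}$, the family $\{\beta_i\}$, and the element $c^+$ are replaced by the reversed word $s_{i_n}\cdots s_{i_1}$, the family $\{\gamma_j\}$, and the element $c^-$ (the reversed sequence $(i_n,\ldots,i_1)$ being $-$-admissible for $(C,\Omega)$, with the $\gamma_j$ playing the role of the $\beta_j$). Since distinctness of the elements on the right and their membership in $\Delta^+(C)$ are already built into the setup preceding the lemma (the former is asserted there, the latter follows from the very definition of $p_i$ and the fact that Weyl group elements send roots to roots), only the inclusion $\Delta^+(C) \subseteq \{c^{-r}(\beta_i)\}$ requires proof. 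The strategy is to take any $\alpha\in\Delta^+(C)$, apply the infinite periodic reflection word $s_{i_1}s_{i_2}\cdots s_{i_n}s_{i_1}\cdots$ step by step, and extract the desired decomposition from the first index at which the result becomes a negative root.

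Concretely, set $\alpha^{(0)}:=\alpha$ and inductively $\alpha^{(m)} := s_{i_{j(m)}}(\alpha^{(m-1)})$ with $j(m):=((m-1)\bmod n)+1$, so that $\alpha^{(kn)}=(c^+)^k(\alpha)$. The essential input, valid because $C$ is of Dynkin type, is the classical theorem of Coxeter asserting that $c^+$ has finite order $h$ and no nonzero fixed vector in $\mathbb{R}^n$; equivalently, $\sum_{k=0}^{h-1}(c^+)^k = 0$ on $\mathbb{R}^n$. If every $(c^+)^k(\alpha)$ lay in $\N^n$, their sum would be a nonzero nonnegative vector, contradicting this identity. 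Hence some $\alpha^{(m)}$ is a negative root; let $M$ be the minimal such $m$. Since a simple reflection $s_j$ sends a positive root to a negative root only when the root equals $\alpha_j$, we must have $\alpha^{(M-1)}=\alpha_{i_{j(M)}}$. Writing $M-1 = qn + (r-1)$ with $q\ge 0$ and $1\le r\le n$ gives $j(M)=r$, and the identity $\alpha_{i_r} = s_{i_{r-1}}\cdots s_{i_1}(c^+)^q(\alpha)$ rearranges to $(c^+)^q(\alpha) = s_{i_1}\cdots s_{i_{r-1}}(\alpha_{i_r}) = \beta_r$, so $\alpha = c^{-q}(\beta_r)$. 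The range condition $0\le q\le p_r-1$ is automatic, since $\alpha\in\N^n$ forces $q \le p_r-1$ by the very definition of $p_r$.

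The main obstacle is the fixed-point-freeness of $c^+$ on $\mathbb{R}^n$, a classical result stemming from the description of its eigenvalues as $\exp(2\pi i m_j/h)$ with exponents $m_j$ satisfying $1\le m_j\le h-1$. Granting this ingredient, the rest is elementary bookkeeping along the periodic reflection word. One could alternatively deduce the lemma from the classical theory of preprojective and preinjective representations of the species of $(C,\Omega)$ over a sufficiently large field (as the authors note), but the above combinatorial route remains self-contained within the Weyl-group framework set up in Section~\ref{sec2}.
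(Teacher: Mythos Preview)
Your argument is essentially correct and fills in what the paper leaves out: the paper does not prove this lemma at all, merely stating that it is well known and follows from the study of preprojective and preinjective representations of species. Your direct Weyl-group argument---iterate the reduced word for $c^+$, use Coxeter's fixed-point-freeness to force a sign change, and read off the index via the standard fact that $s_j$ negates only $\alpha_j$ among positive roots---is the classical combinatorial route, and it is self-contained modulo the cited result on Coxeter eigenvalues.

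One point deserves sharpening. Your final sentence claims that $\alpha\in\N^n$ alone forces $q\le p_r-1$ ``by the very definition of $p_r$''. This is not quite right as stated: a priori nothing prevents $c^{-q}(\beta_r)$ from being positive for some $q\ge p_r$ even though $c^{-p_r}(\beta_r)$ is negative. What actually gives the bound is the \emph{minimality of $M$}: for every $0\le k\le q$ the element $\alpha^{(kn)}=(c^+)^k(\alpha)=c^{-(q-k)}(\beta_r)$ is a positive root, so in particular $c^{-j}(\beta_r)\in\N^n$ for all $0\le j\le q$, whence $q<p_r$ by the defining minimality of $p_r$. With this correction the proof is complete.
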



\section{Locally free $H$-modules}
\label{sec3}


For the whole section, let $H = H(C,D,\Omega)$ and 
$Q = Q(C,\Omega)$ as defined in Section~\ref{def:HPi}.

\subsection{Modules defined by idempotents}\label{sec3.0}
Let $M \in \rep(H)$.
For $1 \le i \le n$ let $M_i := e_iM$.
The $H$-module structure on $M$ is described by 
the spaces $M_i$ with $1 \le i \le n$ and by $K$-linear maps
$M(\alpha)\df M_{s(\alpha)} \to M_{t(\alpha)}$ with
$\alpha$ running through the arrows of the quiver $Q$.
(Of course, these maps need to satisfy the defining relations for $H$.)

For a non-empty subset $J$ of $\{ 1,\ldots,n \}$ let
$e := \sum_{j \in J} e_j$.
Thus $e$ is an idempotent in $H$.
We get a vector space decomposition
\[
eM = \bigoplus_{j \in J} M_j.
\]
For $1 \le i \le n$ we set 
\[
(eM)_i := 
\begin{cases}
M_i & \text{if $i \in J$},
\\
0 & \text{otherwise},
\end{cases}
\]
and for each arrow $\alpha$ of $Q$ we define a map 
$(eM)(\alpha)\df (eM)_{s(\alpha)} \to  (eM)_{t(\alpha)}$
by
\[
(eM)(\alpha) := 
\begin{cases}
M(\alpha) & \text{if $s(\alpha),t(\alpha) \in J$},
\\
0 & \text{otherwise}.
\end{cases}
\]
This defines an $H$-module structure on $eM$.
This follows from the nature of the defining relations 
for $H$.
Namely, for a given $i$, there are no relations passing through $i$, i.e. any 
of the relations either starts in $i$, ends in $i$,
or does not involve $i$.

\subsection{Description of the 
projective and injective modules}\label{sec3.1}
The algebra $H$ 
is by definition a path algebra modulo an admissible ideal generated
by zero relations and commutativity relations.
This implies that each indecomposable projective $H$-module $P_i := He_i$ has a basis $B_i$
with the following properties: 
For each path $p$ in $Q$ and each $b \in B_i$ we have $p \cdot b \in B_i \cup \{0\}$.
In particular, we can visualize $P_i$ by drawing a graph with vertices the elements
in $B_i$, and an arrow $b \xrightarrow{a} b'$ if for an arrow $a \in Q_1$ and
$b,b' \in B_i$ we have
$a \cdot b = b'$.
We say that $P_i$ has a \emph{multiplicative basis}.
Similarly, the indecomposable injective $H$-modules $I_i := \Du(e_iH)$ have
a multiplicative basis.

Let $i$ be a vertex of $Q$.
We define an $H$-module ${_-}H_i$ as follows:
A basis of $e_i{_-}H_i$ is given by vectors $a_{i,c}$ with
$1 \le c \le c_i$, and for $j \in \Omega(-,i)$
a basis of $e_j{_-}H_i$ is given by vectors $b_{j,c}^{f,g}$ with $1 \le c \le c_j$, $1 \le f \le f_{ji}$ and $1 \le g \le g_{ji}$, and for 
$s \notin \Omega(-,i)$ define $e_s{_-}H_i := 0$.
The
arrows of $Q$ act as follows:
\begin{align*}
\vep_ia_{i,c} &:= 
{\begin{cases}
a_{i,c-1} & \text{if $c\ge 2$},\\
0 & \text{if $c=1$},
\end{cases}}\\
\vep_jb_{j,c}^{f,g} &:= 
{\begin{cases}
b_{j,c-1}^{f,g} & \text{if $c\ge 2$},\\
0 & \text{if $c=1$},
\end{cases}}
\end{align*}
and for $0 \le k < k_{ji}$, $0 \le f < f_{ji}$ and $1 \le g \le g_{ji}$ 
we have
\[
\alpha_{ji}^{(g)} a_{i,c_i-f-kf_{ji}} := b_{j,c_j-kf_{ij}}^{f_{ji}-f,g}.
\]

For $c_i=9$, $c_j=6$, $f_{ji}=3$, $f_{ij}=2$ (and therefore $k_{ji} = 3$)
we display a part of the module $(e_i+e_j){_-}H_i$ in Figure~\ref{fig:P(i,j,g)}.
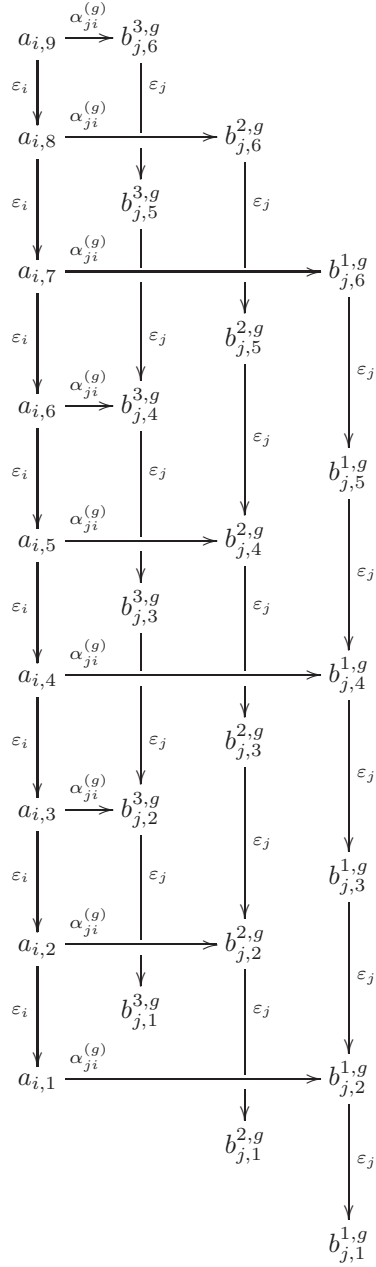
\begin{figure}[!htb]
\small
\[
\xymatrix@-1.5pc{
a_{i,9} \ar[dd]_{\vep_i}\ar[rr]^<<<<{\alpha_{ji}^{(g)}} && b_{j,6}^{3,g} 
\ar@{->}'[dd]^{\vep_j}[ddd]\\
&&\\
a_{i,8} \ar[dd]_{\vep_i}\ar[rrrr]^<<<<{\alpha_{ji}^{(g)}} &&&& b_{j,6}^{2,g} 
\ar@{->}'[dd]^{\vep_j}[ddd]\\
&& b_{j,5}^{3,g} \ar@{->}'[d][ddd]^{\vep_j}\\
a_{i,7} \ar[dd]_{\vep_i}\ar[rrrrrr]^<<<<{\alpha_{ji}^{(g)}} &&&&&& b_{j,6}^{1,g} \ar[ddd]^{\vep_j}\\
&&&& b_{j,5}^{2,g}  \ar[ddd]^{\vep_j}\\
a_{i,6} \ar[dd]_{\vep_i}\ar[rr]^<<<<{\alpha_{ji}^{(g)}} && b_{j,4}^{3,g} \ar@{->}'[dd]^{\vep_j}[ddd]\\
&&&&&& b_{j,5}^{1,g} \ar[ddd]^{\vep_j}\\
a_{i,5} \ar[dd]_{\vep_i}\ar[rrrr]^<<<<{\alpha_{ji}^{(g)}} &&&& b_{j,4}^{2,g} \ar@{->}'[dd]^{\vep_j}[ddd]\\
&& b_{j,3}^{3,g} \ar@{->}'[d][ddd]^{\vep_j}\\
a_{i,4} \ar[dd]_{\vep_i}\ar[rrrrrr]^<<<<{\alpha_{ji}^{(g)}} &&&&&& b_{j,4}^{1,g} \ar[ddd]^{\vep_j} \\
&&&& b_{j,3}^{2,g} \ar[ddd]^{\vep_j} \\
a_{i,3} \ar[dd]_{\vep_i}\ar[rr]^<<<<{\alpha_{ji}^{(g)}} && b_{j,2}^{3,g} \ar@{->}'[dd]^{\vep_j}[ddd]\\
&&&&&& b_{j,3}^{1,g} \ar[ddd]^{\vep_j}\\
a_{i,2} \ar[dd]_{\vep_i}\ar[rrrr]^<<<<{\alpha_{ji}^{(g)}} &&&& b_{j,2}^{2,g} \ar@{->}'[dd]^{\vep_j}[ddd]\\
&& b_{j,1}^{3,g} \\
a_{i,1} \ar[rrrrrr]^<<<<{\alpha_{ji}^{(g)}}&&&&&& b_{j,2}^{1,g} \ar[ddd]^{\vep_j}\\
&&&& b_{j,1}^{2,g}&&\\
&&&&&&\\
&&&&&& b_{j,1}^{1,g}
}
\]
\caption{Construction of the $H$-module $(e_i+e_j){_-}H_i$ with $c_i=9$, $c_j=6$, $f_{ji}=3$, $f_{ij}=2$ and $k_{ji} = 3$.}
\label{fig:P(i,j,g)}
\end{figure}
The module ${_-}H_i$ has one $i$-\emph{column} with basis
$(a_{i,1},\ldots,a_{i,c_i})$, and for each $j \in \Omega(-,i)$ it has a $j$-\emph{column} with basis
$(b_{j,1}^{f,g},\ldots,b_{j,c_j}^{f,g})$
for each $1 \le f \le f_{ji}$ and each $1 \le g \le g_{ji}$.
By definition we have
\[
\dim {_-}H_i = c_i + \sum_{j \in \Omega(-,i)} f_{ji}g_{ji}c_j.
\]
The number of $j$-columns of ${_-}H_i$ is $f_{ji}g_{ji} = |c_{ji}|$.

Suppose $j$ is a sink in $Q^\circ$.
The module $P_j = e_jP_j$ has a basis $a_{j,1},\ldots,a_{j,c_j}$ such that
\[
\vep_ja_{j,c} := 
\begin{cases}
a_{j,c-1} & \text{if $c\ge 2$},\\
0 & \text{if $c=1$},
\end{cases}.
\]
Then $(a_{j,1},\ldots,a_{j,c_j})$ is the $j$-\emph{column} of $P_j$.

Next, assume that $i$ is a vertex of $Q$ such that for each 
$j \in \Omega(-,i)$
the projective module $P_j$ is already constructed, and
$P_j$ has a distinguished basis including a $j$-column
$(a_{j,1},\ldots,a_{j,c_j})$, which forms a basis of $e_jP_j$.

Then $P_i$ is constructed as follows:
We take 
for each $j \in \Omega(-,i)$, $1 \le f \le f_{ji}$ and 
$1 \le g \le g_{ji}$
a copy $P_j^{f,g}$ of $P_j$ and
identify the $j$-column $(b_{j,1}^{f,g},\ldots,b_{j,c_j}^{f,g})$
of ${_-}H_i$ with the $j$-column $(a_{j,1},\ldots,a_{j,c_j})$ of $P_j^{f,g}$.
The resulting module is our indecomposable projective $H$-module 
$P_i$,
and by definition its $i$-\emph{column} is the $i$-column 
$(a_{i,1},\ldots,a_{i,c_i})$ of the module ${_-}H_i$.

The indecomposable injective $H$-modules $I_j$ are constructed dually by gluing
modules ${_j}H_-$, which are dual to the modules ${_-}H_i$.
Again, for 
$c_i=9$, $c_j=6$, $f_{ji}=3$, $f_{ij}=2$ and $k_{ji} = 3$
we display a part of the module $(e_i+e_j){_j}H_-$ in Figure~\ref{fig:I(i,j,g)}.
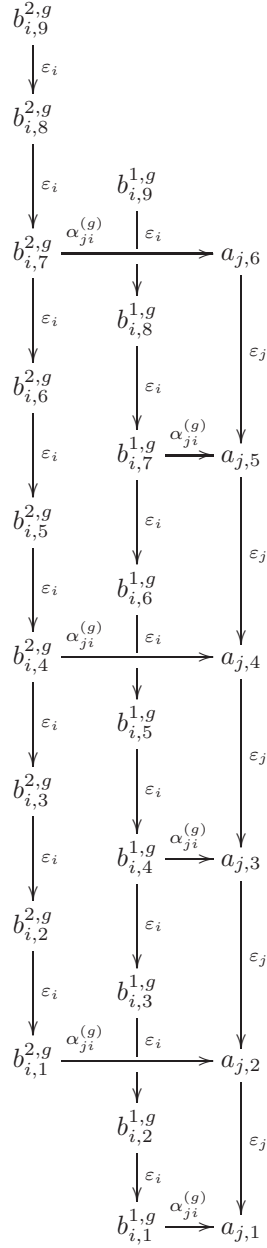
\begin{figure}[!htb]
\small
\[
\xymatrix@-1.5pc{
b_{i,9}^{2,g} \ar[dd]^{\vep_i} &&&&\\
&&&&\\
b_{i,8}^{2,g} \ar[dd]^{\vep_i} &&&&\\
&&  b_{i,9}^{1,g} \ar@{->}'[d]^<<<<{\vep_i}[dd] &&\\
b_{i,7}^{2,g} \ar[rrrr]^<<<<{\alpha_{ji}^{(g)}} \ar[dd]^{\vep_i} &&&& a_{j,6} \ar[ddd]^{\vep_j}\\
&& b_{i,8}^{1,g} \ar[dd]^{\vep_i} &&\\
b_{i,6}^{2,g} \ar[dd]^{\vep_i} &&&&\\
&& b_{i,7}^{1,g} \ar[rr]^{\alpha_{ji}^{(g)}} \ar[dd]^{\vep_i} && a_{j,5} \ar[ddd]^{\vep_j}\\
b_{i,5}^{2,g} \ar[dd]^{\vep_i} &&&&\\
&& b_{i,6}^{1,g} \ar@{->}'[d]^<<<<{\vep_i}[dd]&&\\
b_{i,4}^{2,g} \ar[rrrr]^<<<<{\alpha_{ji}^{(g)}} \ar[dd]^{\vep_i} &&&& a_{j,4} \ar[ddd]^{\vep_j}\\
&& b_{i,5}^{1,g} \ar[dd]^{\vep_i} &&\\
b_{i,3}^{2,g} \ar[dd]^{\vep_i} &&&&\\
&& b_{i,4}^{1,g} \ar[rr]^{\alpha_{ji}^{(g)}} \ar[dd]^{\vep_i} && a_{j,3} \ar[ddd]^{\vep_j}\\
b_{i,2}^{2,g} \ar[dd]^{\vep_i} &&&&\\
&& b_{i,3}^{1,g} \ar@{->}'[d]^<<<<{\vep_i}[dd]&&\\
b_{i,1}^{2,g}\ar[rrrr]^<<<<{\alpha_{ji}^{(g)}} &&&& a_{j,2} \ar[ddd]^{\vep_j}\\
&& b_{i,2}^{1,g} \ar[dd]^{\vep_i} &&\\
&&&&\\
&& b_{i,1}^{1,g} \ar[rr]^{\alpha_{ji}^{(g)}} && a_{j,1}
}
\]
\caption{Construction of the $H$-module $(e_i+e_j){_j}H_-$ with $c_i=9$, $c_j=6$, $f_{ji}=3$, $f_{ij}=2$ and $k_{ji} = 3$.}
\label{fig:I(i,j,g)}
\end{figure}

Recall that the notion of a locally free module can be found in
Definition~\ref{def:locallyfree}.
Let $S_1,\ldots,S_n$ be the simple $H$-modules with
$\dimv(S_i) = \alpha_i$, and let
$E_1,\ldots,E_n$ be the (indecomposable) locally free $H$-modules
with $\rkv(E_i) = \alpha_i$.
(Here $\alpha_1,\ldots,\alpha_n$ is the standard basis of $\Z^n$.)
We refer to the $E_i$ as the \emph{generalized simple} $H$-modules.
Thus $E_i$ corresponds to the regular representation of $H_i$.
More precisely, we have $E_i = e_iE_i$, and 
$e_iE_i$ has a basis
$a_{i,1},\ldots,a_{i,c_i}$ such that
\[
\vep_ia_{i,c} := 
\begin{cases}
a_{i,c-1} & \text{if $c\ge 2$},\\
0 & \text{if $c=1$}.
\end{cases}
\]
In particular, if $i$ is a sink in $Q^\circ(C,\Omega)$, then $E_i = P_i$.
Dually, if $i$ is a source in $Q^\circ(C,\Omega)$, then $E_i = I_i$.

It follows from our construction of $P_i$ and $I_i$ 
that these modules are locally free.
Furthermore, we get the following result, which again follows directly
from our construction.

\begin{Prop}\label{resolutionE}
For every $i\in Q_0$, the canonical exact sequence
\[
0 \to \bigoplus_{j \in \Omega(-,i)} P_j^{|c_{ji}|} \to P_i \to E_i \to 0
\]
is a minimal projective resolution of $E_i$,
and the canonical exact sequence
\[
0 \to E_i \to I_i \to \bigoplus_{j \in \Omega(i,-)} I_j^{|c_{ji}|} \to 0
\]
is a minimal injective resolution of $E_i$.
\end{Prop}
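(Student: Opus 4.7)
The plan is to derive this directly from the explicit inductive construction of the indecomposable projective (and dually, injective) $H$-modules given in Section~\ref{sec3.1}. Recall that $P_i$ carries a multiplicative basis in which the $i$-column $(a_{i,1},\ldots,a_{i,c_i})$ sits on top, with one full copy $P_j^{f,g}$ of $P_j$ attached along a designated $j$-column for each $j\in\Omega(-,i)$, $1\le f\le f_{ji}$, $1\le g\le g_{ji}$. The decisive identity built into the construction is $\alpha_{ji}^{(g)}\vep_i^{f}e_i=b_{j,c_j}^{f_{ji}-f,g}$ for $0\le f<f_{ji}$, which asserts that applying $\alpha_{ji}^{(g)}$ to an element of the $i$-column lands at the top of one of the attached $P_j$-copies.

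First I would define $\pi\df P_i\to E_i$ by sending the $i$-column basis to the basis of $E_i$ and annihilating everything in the attached copies; well-definedness as an $H$-morphism reduces to the observation $e_jE_i=0$ for $j\ne i$. Next I would define $\phi\df\bigoplus_{j\in\Omega(-,i)}P_j^{|c_{ji}|}\to P_i$ on the $(f,g)$-summand, where $(f,g)$ ranges over $0\le f<f_{ji}$ and $1\le g\le g_{ji}$ (giving exactly $f_{ji}g_{ji}=|c_{ji}|$ summands), by $e_j\mapsto\alpha_{ji}^{(g)}\vep_i^{f}e_i\in e_jP_i$; well-definedness is automatic since each $P_j=He_j$ is a cyclic left module and the prescribed target sits in $e_jP_i$. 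The decisive identity then shows that $\Ima(\phi)$ is precisely the sum of the attached copies, which equals $\ker(\pi)$, and injectivity of $\phi$ follows from the dimension identity $\dim P_i=c_i+\sum_{j\in\Omega(-,i)}|c_{ji}|\dim P_j$ already built into the construction. Minimality is immediate: $\Ima(\phi)\subseteq\rad(P_i)$ since each generator begins with a non-loop arrow $\alpha_{ji}^{(g)}$, and $\pi$ is the projective cover of $E_i$ because $\tp(P_i)=\tp(E_i)=S_i$.

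Local freeness $P_i\in\rep_\vp(H)$ then follows from the resolution by induction on the partial order on vertices induced by $\Omega$, using that $\rep_\vp(H)$ is closed under extensions. Indeed, if $A,C\in\rep_\vp(H)$ and $0\to A\to B\to C\to 0$ is exact in $\rep(H)$, then applying the exact functor $e_i(-)$ yields a short exact sequence of $H_i$-modules whose third term $e_iC$ is free, hence projective, so the sequence splits and $e_iB\cong e_iA\oplus e_iC$ is free over $H_i$. The base case $E_i\in\rep_\vp(H)$ holds by definition. The injective statement follows by dualising the entire argument, using the modules ${_j}H_-$ in place of ${_-}H_i$.

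The main place where genuine care is needed is the identification $\Ima(\phi)=\ker(\pi)$: one has to check that the $f_{ji}g_{ji}$ image generators produce disjoint, isomorphic copies of $P_j$ whose bases, together with the $i$-column, exhaust the multiplicative basis of $P_i$. Once the combinatorics of Section~\ref{sec3.1} are aligned correctly, this amounts to careful bookkeeping rather than genuine difficulty; conceptually the entire proposition reduces to the fact that the construction in Section~\ref{sec3.1} is precisely designed so that $P_i$ is the gluing of $E_i$ on top of a direct sum of $|c_{ji}|$ copies of $P_j$ for each $j\in\Omega(-,i)$.
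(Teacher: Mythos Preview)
Your proposal is correct and follows exactly the approach the paper intends: the paper offers no proof beyond the sentence ``Our construction of $P_i$ and $I_i$ in Section~\ref{sec3.1} yields the following result,'' and your argument simply unpacks that construction with the requisite bookkeeping. Your inline verification that $\rep_\vp(H)$ is closed under extensions anticipates Lemma~\ref{vpclosed}, which the paper states separately a bit later.
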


\subsection{Example}
Let 
\[
C = \left(\bbm 2&-1&0\\-2&2&-1\\0&-1&2 \ebm\right) 
\]
with symmetrizer $D = \diag(4,2,2)$ and $\Omega = \{ (2,1), (3,2) \}$.
Thus $C$ is a Cartan matrix of type $C_3$.
Then $H = H(C,D,\Omega)$ is given by the quiver
\[
\xymatrix{
1 \ar@(ul,ur)^{\vep_1} \ar[r]_{\alpha_{21}}& 2 \ar@(ul,ur)^{\vep_2} \ar[r]_{\alpha_{32}}& 3 
\ar@(ul,ur)^{\vep_3}
}
\]
with relations $\vep_1^4 = 0$, $\vep_2^2 = \vep_3^2 = 0$,
$\vep_2\alpha_{21} = \alpha_{21}\vep_1^2$ and
$\vep_3\alpha_{32} = \alpha_{32}\vep_2$.
The indecomposable projective $H$-module $P_1$ and the
indecomposable injective $H$-module $I_3$ are displayed in
Figure~\ref{fig:exampleProjInj}.
The modules $P_2$ and $P_3$ are submodules of $P_1$, and
$I_1$ and $I_2$ are factor modules of $I_3$.
\begin{figure}[!htb]
\small
\[
\xymatrix@-0.5pc{
P_1\df& 1 \ar[d]\ar[r] & 2 \ar@{->}'[d][dd]\ar[r] & 3 \ar@{->}'[d][dd] &&&&
I_3\df & 1 \ar[d]
\\
& 1\ar[d]\ar[rrr] &&& 2 \ar[dd]\ar[r] & 3 \ar[dd] &&
& 1 \ar[d]\ar[r] & 2 \ar[dd]\ar[r] & 3 \ar[dd]
\\
& 1 \ar[d]\ar[r] & 2 \ar[r]  & 3 &&&&
& 1 \ar[d]
\\
& 1 \ar[rrr] &&& 2 \ar[r] & 3 &&
& 1 \ar[r]& 2 \ar[r] & 3 
}
\]
\caption{A projective and an injective $H$-module for type $C_3$} 
\label{fig:exampleProjInj}.
\end{figure}
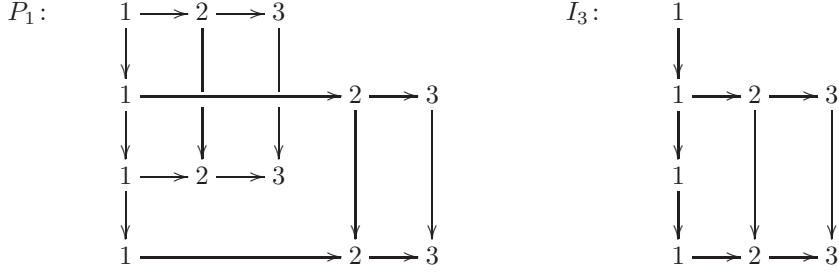
%

\subsection{The rank vectors of 
projective and injective modules}\label{sec3.3}
Assume that $\bi = (i_1,\ldots,i_n)$ is a $+$-admissible sequence
for $(C,\Omega)$.
Without loss of generality, assume that $i_k = k$ for $1 \le k \le n$.
Recall that we defined some positive roots $\beta_k,\gamma_k \in \Delta_\real^+(C)$ in Section~\ref{sec:ct}.

\begin{Lem}\label{dimP_k}
We have $\rkv(P_k) = \beta_k$.
\end{Lem}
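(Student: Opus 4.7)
The plan is to induct on $k$, feeding the rank-vector recursion coming from Proposition~\ref{resolutionE} into a short telescoping computation of $\beta_k$. After relabeling (WLOG $i_k = k$), taking rank vectors in the minimal projective resolution
$$
0 \to \bigoplus_{j \in \Omega(-,k)} P_j^{|c_{jk}|} \to P_k \to E_k \to 0
$$
(whose terms are locally free by Proposition~\ref{resolutionE}) yields
$$
\rkv(P_k) \;=\; \alpha_k + \sum_{j \in \Omega(-,k)} |c_{jk}|\,\rkv(P_j).
$$

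The first step is to identify $\Omega(-,k)$ explicitly under $+$-admissibility. An edge between $k$ and a neighbor $j$ is toggled by the reflection sequence $s_{k-1}\cdots s_1$ exactly when $j\in\{1,\dots,k-1\}$ (since $s_k$ does not appear). Demanding that $k$ be a sink of $Q^\circ(C, s_{k-1}\cdots s_1(\Omega))$ forces every edge at $k$ to be incoming there, which back in $\Omega$ means $(j,k)\in\Omega$ whenever $j<k$ is a neighbor, and $(k,j)\in\Omega$ whenever $j>k$ is a neighbor. Therefore $\Omega(-,k)=\{j<k : c_{jk}<0\}$, and the recursion above rewrites as
$$
\rkv(P_k) \;=\; \alpha_k - \sum_{j=1}^{k-1} c_{jk}\,\rkv(P_j).
$$

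The second step is the induction. The base case $k=1$ is immediate: $1$ is a sink, $\Omega(-,1)=\varnothing$, hence $P_1=E_1$ and $\rkv(P_1)=\alpha_1=\beta_1$. For the inductive step, the hypothesis $\rkv(P_j)=\beta_j$ for $j<k$ reduces everything to the Weyl group identity
$$
\beta_k \;=\; \alpha_k - \sum_{j=1}^{k-1} c_{jk}\,\beta_j.
$$
This is a telescoping computation: using $s_{j}(\alpha_k) = \alpha_k - c_{jk}\alpha_j$ and $\beta_j = s_1\cdots s_{j-1}(\alpha_j)$, one peels off reflections from the outside,
$$
s_1\cdots s_{k-1}(\alpha_k) \;=\; s_1\cdots s_{k-2}(\alpha_k) - c_{k-1,k}\,\beta_{k-1},
$$
then iterates down to $s_1\cdots s_0(\alpha_k)=\alpha_k$, producing precisely the displayed expression.

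There is no real obstacle: the proof is essentially a bookkeeping exercise combining Proposition~\ref{resolutionE} with the above Weyl-group telescoping. The only point that needs care is the identification $\Omega(-,k)=\{j<k : c_{jk}<0\}$ from $+$-admissibility, which rests on the observation that a reflection $s_l$ affects the edge between $j$ and $k$ only when $l\in\{j,k\}$.
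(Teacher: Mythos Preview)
Your proof is correct and follows essentially the same approach as the paper: derive the recursion $\rkv(P_k)=\alpha_k+\sum_{j\in\Omega(-,k)}|c_{jk}|\,\rkv(P_j)$ from Proposition~\ref{resolutionE}, telescope $\beta_k=s_1\cdots s_{k-1}(\alpha_k)$ to obtain $\beta_k=\alpha_k-\sum_{j<k}c_{jk}\beta_j$, and conclude by induction. The only difference is that you spell out the identification $\Omega(-,k)=\{j<k:c_{jk}<0\}$ coming from $+$-admissibility, whereas the paper leaves this implicit in the passage from $s_1\cdots s_{k-2}(\alpha_k)-c_{k-1,k}\beta_{k-1}$ to $\alpha_k-\sum_{j\in\Omega(-,k)}c_{jk}\beta_j$.
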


\begin{proof}
By our construction of the 
indecomposable projective $H$-modules $P_i$
we get
\begin{align*}
\rkv(P_k) &= \rkv(E_k) + \sum_{j \in \Omega(-,k)} g_{jk}f_{jk} \rkv(P_j) 
\\
&= 
\rkv(E_k) + \sum_{j \in \Omega(-,k)} |c_{jk}| \rkv(P_j).
\end{align*}
For $k=1$ we have $P_k \cong E_k$.
Thus we have $\rkv(P_k) = \alpha_1 = \beta_1$.
For $k \ge 2$ we have
\begin{align*}
\beta_k &= s_1 \cdots s_{k-2}(\alpha_k-c_{k-1,k}\alpha_{k-1})\\
&= s_1 \cdots s_{k-2}(\alpha_k) - c_{k-1,k}\beta_{k-1}\\
&= \alpha_k - \sum_{j \in \Omega(-,k)} c_{jk} \beta_j\\
&= \alpha_k + \sum_{j \in \Omega(-,k)} |c_{jk}| \beta_j
\end{align*}
The claim follows by induction.
\end{proof}

The proof of the next result is similar to the proof of Lemma~\ref{dimP_k}.

\begin{Lem}\label{dimI_k}
We have $\rkv(I_k) = \gamma_k$.
\end{Lem}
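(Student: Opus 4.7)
The plan is to mirror the proof of Lemma~\ref{dimP_k} on the injective side. By Proposition~\ref{resolutionE}, the minimal injective resolution
$$
0 \to E_k \to I_k \to \bigoplus_{j \in \Omega(k,-)} I_j^{|c_{jk}|} \to 0
$$
yields the additive recursion
$$
\rkv(I_k) = \alpha_k + \sum_{j \in \Omega(k,-)} |c_{jk}|\,\rkv(I_j),
$$
so it suffices to check that the tuple $(\gamma_k)_{k=1}^n$ satisfies the same recursion, with the same base case, relative to the appropriate induction order.

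The first step is to show that the induction must run downward, starting at $k = n$. Since $\bi$ is $+$-admissible and $i_k = k$, the vertex $k$ is a sink in the quiver obtained from $Q^\circ(C,\Omega)$ after reflecting at $1,\ldots,k-1$; as those reflections do not alter the orientation of any edge between $k$ and a vertex of $\{k+1,\ldots,n\}$, every such edge must already be oriented towards $k$ in $\Omega$, i.e.\ $\Omega(k,-) \subseteq \{k+1,\ldots,n\}$. In particular $\Omega(n,-) = \varnothing$, so $n$ is a source of $Q^\circ(C,\Omega)$, $E_n = I_n$, and the base case $\rkv(I_n) = \alpha_n = \gamma_n$ holds.

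For the inductive step, assume $\rkv(I_j) = \gamma_j$ for every $j > k$; I need the identity
$$
\gamma_k = \alpha_k + \sum_{j \in \Omega(k,-)} |c_{jk}|\,\gamma_j.
$$
Setting $f_m := s_n s_{n-1} \cdots s_m(\alpha_k)$ for $k+1 \le m \le n$ and $f_{n+1} := \alpha_k$, the reflection formula $s_m(\alpha_k) = \alpha_k - c_{mk}\alpha_m$ (for $m \ne k$) together with $s_n \cdots s_{m+1}(\alpha_m) = \gamma_m$ produces the telescoping recursion $f_m = f_{m+1} - c_{mk}\gamma_m$. Iterating from $m = n$ down to $m = k+1$ yields
$$
\gamma_k = f_{k+1} = \alpha_k - \sum_{j=k+1}^n c_{jk}\gamma_j,
$$
and since $c_{jk} = 0$ unless $j$ is a neighbour of $k$ in $\GG(C)$, while every such neighbour with $j > k$ lies in $\Omega(k,-)$ by the previous paragraph, this collapses to the sum over $j \in \Omega(k,-)$ with each $-c_{jk}$ replaced by $|c_{jk}|$. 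Comparison with the recursion for $\rkv(I_k)$ then closes the induction.

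The only genuinely new ingredient beyond Lemma~\ref{dimP_k} is the combinatorial observation $\Omega(k,-) \subseteq \{k+1,\ldots,n\}$, which reverses the direction of the induction; everything else is a direct dualisation of the argument for $\beta_k$, with $s_1 \cdots s_{k-1}$ replaced by $s_n \cdots s_{k+1}$. I do not anticipate any technical obstacle.
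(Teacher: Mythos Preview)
Your proof is correct and is precisely the dualisation of the argument for Lemma~\ref{dimP_k} that the paper intends. One small wording issue: the sentence preceding ``i.e.\ $\Omega(k,-)\subseteq\{k+1,\ldots,n\}$'' actually establishes the \emph{reverse} inclusion (every neighbour $j>k$ lies in $\Omega(k,-)$); the stated inclusion needs instead the complementary observation that an edge between $k$ and $j<k$ is flipped exactly once (by $s_j$) and hence points \emph{away} from $k$ in $\Omega$---but both facts are immediate from $+$-admissibility and you use both, so this is purely cosmetic.
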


As a consequence of Lemmas~\ref{dimP_k} and \ref{dimI_k}
we get the following result.

\begin{Prop}\label{projinjroots}
We have $\rkv(P_i),\rkv(I_i) \in \Delta_\real^+(C)$.
\end{Prop}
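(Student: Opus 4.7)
My plan is to deduce the proposition directly from the two preceding lemmas, combined with the general fact that a non-zero element of $\N^n$ which is the image of a simple root under a Weyl group element must be a positive real root.

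First, I would recall that by Lemma~\ref{dimP_k} and Lemma~\ref{dimI_k} we have
$$
\rkv(P_k) = \beta_k = s_{i_1}s_{i_2}\cdots s_{i_{k-1}}(\alpha_{i_k}),
\qquad
\rkv(I_k) = \gamma_k = s_{i_n}s_{i_{n-1}}\cdots s_{i_{k+1}}(\alpha_{i_k}),
$$
where $\bi = (i_1,\ldots,i_n)$ is a $+$-admissible sequence for $(C,\Omega)$ (which exists because $Q^\circ$ is acyclic). By definition, $\beta_k \in W(C) \cdot \alpha_{i_k} \subseteq \Delta_\real(C)$, and likewise $\gamma_k \in \Delta_\real(C)$.

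Next, I would invoke Proposition~\ref{resolutionE} (or the explicit constructions in Section~\ref{sec3.1}) to conclude that $P_k,I_k \in \rep_\vp(H)$, so their rank vectors are well-defined elements of $\N^n$. Since $P_k$ and $I_k$ are non-zero, we have $\rkv(P_k), \rkv(I_k) \in \N^n \setminus \{0\}$. Combining with $\rkv(P_k) = \beta_k \in \Delta_\real(C)$ and using the decomposition
$$
\Delta_\real(C) = \Delta_\real^+(C) \cup \bigl(-\Delta_\real^+(C)\bigr)
$$
recalled in Section~\ref{sec2}, the only possibility is $\beta_k \in \Delta_\real^+(C)$; the same argument applies to $\gamma_k$.

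There is essentially no obstacle here since all the work has been done in Lemmas~\ref{dimP_k} and~\ref{dimI_k}; the only minor point to check is that the rank vectors are automatically non-negative (immediate from the definition $\rkv(M) = (r_1,\ldots,r_n)$ with $r_i \in \N$) and non-zero (since $P_k, I_k \neq 0$). Thus the proposition follows.
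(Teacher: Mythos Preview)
Your proof is correct and follows essentially the same approach as the paper, which simply states that the proposition is a consequence of Lemmas~\ref{dimP_k} and~\ref{dimI_k}. You have merely spelled out the trivial concluding step (that $\beta_k,\gamma_k$ lie in $W(C)\cdot\alpha_{i_k}\subseteq\Delta_\real(C)$ and, being in $\N^n\setminus\{0\}$, must lie in $\Delta_\real^+(C)$), which the paper leaves implicit.
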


\subsection{The Coxeter matrix}\label{subsec-Coxeter}
The
\emph{Cartan matrix} $C_H$ of $H$ is the $(n\times n)$-matrix
with $k$th column the dimension vector $\dimv(P_k)$, $1 \le k \le n$.
(This is not to be confused with the Cartan matrix $C$.)
It follows that the $k$th row of $C_H$ is $\dimv(I_k)$, $1 \le k \le n$,
see for example \cite[Section~2.4, p.70]{Ri3}.
The matrix $C_H$ is invertible over $\Q$ (but not necessarily over
$\Z$). 
(We can choose a numbering of the vertices of $Q(C,\Omega)$ 
such that $C_H$ is an upper triangular matrix with only non-zero entries on 
the diagonal.)
The \emph{Coxeter matrix} of $H$ is defined as
\[
\Phi_H := -C_H^TC_H^{-1}
\]
where $C_H^T$ denotes the transpose of $C_H$.
It follows that
\[
\Phi_H(\dimv(P_k)) = - \dimv(I_k)
\]
(Here we treat $\dimv(P_k)$ as a column vector.)

Next, let $C_{H,P}$ be the $(n\times n)$-matrix
with $k$th column the rank vector $\rkv(P_k)$, 
and let $C_{H,I}$ be the $(n\times n)$-matrix
with $k$th row the rank vector $\rkv(I_k)$, 
$1 \le k \le n$.
We have 
\[
C_{H,P} = D^{-1}C_H
\text{\;\;\; and \;\;\;}
C_{H,I} = C_HD^{-1}.
\]
We get
\[
D^{-1}\Phi_HD = -D^{-1}C_H^TC_H^{-1}D =
- C_{H,I}^TC_{H,P}^{-1}
\]
and this matrix satisfies 
\[
D^{-1}\Phi_HD(\rkv(P_k)) := -\rkv(I_k).
\]
Thus by Lemmas~\ref{dimP_k} and \ref{dimI_k}
we can identify
$D^{-1}\Phi_HD$ with the Coxeter transformation $c^+$.

\subsection{Homological characterization of locally free modules}

\begin{Prop}\label{vpdim}
For $M \in \rep(H)$ the following are equivalent:
\begin{itemize}

\item[(i)]
$\pdim(M) \le 1$;

\item[(ii)]
$\idim(M) \le 1$;

\item[(iii)]
$\pdim(M) < \infty$;

\item[(iv)]
$\idim(M) < \infty$;

\item[(v)]
$M$ is locally free.

\end{itemize}
\end{Prop}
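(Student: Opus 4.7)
The plan is to prove $(\mathrm{v}) \Rightarrow (\mathrm{i})$ and $(\mathrm{v}) \Rightarrow (\mathrm{ii})$ by a filtration argument, to note that $(\mathrm{i}) \Rightarrow (\mathrm{iii})$ and $(\mathrm{ii}) \Rightarrow (\mathrm{iv})$ are trivial, and to close the loop with $(\mathrm{iii}) \Rightarrow (\mathrm{v})$ and $(\mathrm{iv}) \Rightarrow (\mathrm{v})$ by propagating local freeness along projective, respectively injective, resolutions. All the inputs come from Proposition~\ref{resolutionE}: each generalized simple $E_i$ satisfies $\pdim(E_i) \le 1$ and $\idim(E_i) \le 1$, and every indecomposable projective $P_i$ as well as every indecomposable injective $I_i$ is locally free.

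The technical engine is the following two-out-of-three lemma: if $0 \to A \to B \to C \to 0$ is a short exact sequence of $H$-modules and two of the three terms are locally free, then so is the third. To prove it, apply $e_i\cdot-$ to obtain a short exact sequence of $H_i$-modules. Since $H_i \cong K[\vep_i]/(\vep_i^{c_i})$ is a local self-injective (Frobenius) algebra, finitely generated free, projective and injective $H_i$-modules coincide. Hence freeness of $A_i$ (injectivity) or freeness of $C_i$ (projectivity) splits the sequence; in every case the remaining term is a direct summand of a finitely generated free module over the local ring $H_i$, and is therefore itself free.

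For $(\mathrm{v}) \Rightarrow (\mathrm{i})$ I induct on $\sum_k r_k$, where $(r_1,\ldots,r_n) = \rkv(M)$, by producing a submodule $N \cong E_j$ with $M/N$ locally free. Choose $j$ to be a sink of the acyclic subquiver $Q^\circ|_{\mathrm{supp}(M)}$; then every arrow of $Q^\circ$ leaving $j$ has target outside $\mathrm{supp}(M)$ and thus acts as zero on $M_j$, so every $H_j$-submodule of $M_j$ is automatically $H$-stable. Since $M_j$ is a nonzero free $H_j$-module and free $H_j$-modules are injective, $M_j$ admits a free rank-one direct summand, which supplies $N \cong H_j \cong E_j$. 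The two-out-of-three lemma then gives $M/N$ locally free with strictly smaller rank vector. Combining the inductive bound $\pdim(M/N) \le 1$, the bound $\pdim(E_j) \le 1$, and the standard long-exact-sequence inequality $\pdim(M) \le \max(\pdim(N),\pdim(M/N))$ yields $\pdim(M) \le 1$. The implication $(\mathrm{v}) \Rightarrow (\mathrm{ii})$ is dual: pick $j$ as a source of $Q^\circ|_{\mathrm{supp}(M)}$ so that incoming arrows to $j$ vanish on $M_j$, decompose $M_j = H_j v \oplus W$ as $H_j$-modules, check that $M' := W \oplus \bigoplus_{k\neq j} M_k$ is an $H$-submodule of $M$ with $M/M' \cong E_j$, and invoke $\idim(E_j) \le 1$.

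For $(\mathrm{iii}) \Rightarrow (\mathrm{v})$ take any finite projective resolution $0 \to P_m \to \cdots \to P_0 \to M \to 0$ and break it into short exact sequences $0 \to K_{k+1} \to P_k \to K_k \to 0$ (with $K_0 = M$ and $K_m = P_m$). Each $P_k$ is locally free by Proposition~\ref{resolutionE}, and applying the two-out-of-three lemma from $k = m-1$ downward shows that every $K_k$, and in particular $M = K_0$, is locally free. The implication $(\mathrm{iv}) \Rightarrow (\mathrm{v})$ is exactly dual via an injective resolution and the local freeness of the $I_i$. The most delicate point throughout is the construction of the $E_j$-submodule in $(\mathrm{v}) \Rightarrow (\mathrm{i})$: one must select $j$ relative to $\mathrm{supp}(M)$ rather than to $Q^\circ$ itself, and exploit the Frobenius structure of $H_j$ both to produce a free rank-one summand of $M_j$ and to ensure that it forms a genuine $H$-submodule of $M$. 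Once this is in place, the rest of the argument is essentially formal.
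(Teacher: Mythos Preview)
Your proof is correct and close in spirit to the paper's. For $(\mathrm{v})\Rightarrow(\mathrm{i}),(\mathrm{ii})$ the paper also filters by generalized simples, but peels off the entire $e_iM$ at once for a sink $i$ of the full quiver $Q^\circ$, via the sequence $0\to e_iM\to M\to (1-e_i)M\to 0$, and then inducts on the number of vertices; your version, removing one copy of $E_j$ at a time at a sink of $\mathrm{supp}(M)$, is a mild refinement of the same idea. The genuine difference is in $(\mathrm{iii}),(\mathrm{iv})\Rightarrow(\mathrm{v})$: the paper argues the contrapositive by noting that the exact functor $e_i(-)$ sends any $H$-projective resolution of $M$ to an $H_i$-projective resolution of $e_iM$, and that over the selfinjective ring $H_i$ a non-free module has infinite projective dimension. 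Your two-out-of-three descent along a finite resolution is an equally clean alternative; both arguments rest on the Frobenius property of $H_i$, and your two-out-of-three lemma is in fact recorded separately in the paper as Lemma~\ref{vpclosed}.
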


\begin{proof}
For $M= 0$, all properties ${\rm (i)},\ldots,{\rm (v)}$ hold.
Thus we assume that $M$ is non-zero.

Let $M$ be locally free.
Then there exists a vertex $i$ of the quiver $Q(C,\Omega)$
of $H$ such that $e_iM \not= 0$ and $e_jM = 0$ for all
$j \in \Omega(-,i)$.
(Here we used that $Q^\circ(C,\Omega)$ is acyclic.)
It follows that $e_iM$ is a submodule of $M$, and $(1-e_i)M$ is
isomorphic to the factor module $M/e_iM$ of $M$.
So we get a short exact sequence
\[
0 \to e_iM \to M \to (1-e_i)M \to 0
\]
of $H$-modules.
Note that $e_iM$ and $(1-e_i)M$ are both locally free.
If $(1-e_i)M = 0$, then $e_iM = M$.
In this case, we have $M \cong E_i^m$ for some $m \ge 1$,
and Proposition~\ref{resolutionE} yields
that ${\rm (i)},\ldots,{\rm (iv)}$ hold for $M$.
If $(1-e_i)M \not= 0$, then by induction on the dimension we get that
${\rm (i)},\ldots,{\rm (iv)}$ hold for $e_iM$ and for $(1-e_i)M$.
Now one uses long exact homology sequences associated with the short exact sequence above to show that 
${\rm (i)},\ldots,{\rm (iv)}$ also hold for $M$.

Next, assume that $M$ is not locally free.
Let $i$ be a vertex of $Q$ such that $e_iM$ is not a free
$H_i$-module.
Any projective resolution
\begin{equation}\label{resol1}
\cdots \to P_2 \to P_1 \to P_0 \to M \to 0
\end{equation}
of $M$ yields a projective resolution of $H_i$-modules
\begin{equation}\label{resol2}
\cdots \to e_iP_2 \to e_iP_1 \to e_iP_0 \to e_iM \to 0.
\end{equation}
But $H_i$ is a selfinjective algebra, and $e_iM$ is not a 
projective $H_i$-module.
Thus the resolution (\ref{resol2}) and therefore also the resolution (\ref{resol1})
has to be infinite.
This implies $\pdim(M) = \infty$.
Dually, one shows that $\idim(M) = \infty$.
\end{proof}

For a finite-dimensional algebra $A$, let $\tau = \tau_A$ denote its Auslander-Reiten translation. 
Recall that for $X,Y \in \md(A)$ there are functorial
isomorphisms
\[
\Ext_A^1(X,Y) \cong \Du\ov{\Hom}_A(Y,\tau(X)) \cong
\Du\underline{\Hom}_A(\tau^-(Y),X),
\]
see for example \cite[Section~2.4]{Ri3} for details.
These isomorphisms are often referred to as 
\emph{Auslander-Reiten formulas}.
If $\pdim(X) \le 1$, we get a functorial isomorphism
\[
\Ext_A^1(X,Y) \cong \Du\Hom_A(Y,\tau(X)),
\] 
and if
$\idim(Y) \le 1$, then 
\[
\Ext_A^1(X,Y) \cong \Du\Hom_A(\tau^-(Y),X).
\]
Recall that an $A$-module $X$ is $\tau$-\emph{rigid}
(resp. $\tau^-$-\emph{rigid}) if
$\Hom_A(X,\tau(X)) = 0$ (resp. $\Hom_A(\tau^-(X),X) = 0$) \cite{AIR}.
Clearly, if $X$ is $\tau$-rigid or $\tau^-$-rigid, then $X$ is rigid.

\begin{Cor}\label{cor:3.9}
For $M \in \rep_\vp(H)$ the following are equivalent:
\begin{itemize}

\item[(i)]
$M$ is rigid;

\item[(ii)]
$M$ is $\tau$-rigid;

\item[(iii)]
$M$ is $\tau^-$-rigid.
\end{itemize}
\end{Cor}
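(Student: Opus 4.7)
The plan is to deduce the corollary directly from the Auslander--Reiten formulas, exploiting that locally free $H$-modules have finite homological dimensions via Proposition~\ref{vpdim}.

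More precisely, since $M \in \rep_\vp(H)$, Proposition~\ref{vpdim} gives $\pdim(M) \le 1$ and $\idim(M) \le 1$. Setting $X = Y = M$ in the two refined Auslander--Reiten formulas recalled just above the corollary, I obtain the functorial isomorphisms
\[
\Ext_H^1(M,M) \;\cong\; \Du\Hom_H(M,\tau(M)) \;\cong\; \Du\Hom_H(\tau^-(M),M).
\]
The first isomorphism uses $\pdim(M) \le 1$ and the second uses $\idim(M) \le 1$. Taking $K$-duals, the vanishing of any one of the three groups $\Ext_H^1(M,M)$, $\Hom_H(M,\tau(M))$, $\Hom_H(\tau^-(M),M)$ is equivalent to the vanishing of the other two, which is exactly the equivalence of (i), (ii), and (iii).

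In essence there is no real obstacle: the content is entirely packaged in Proposition~\ref{vpdim} (which guarantees the hypotheses needed to strip the stable quotient from the Hom in the AR-formulas) together with the two versions of the AR-formula itself. The only thing to be careful about is that Proposition~\ref{vpdim} applies to \emph{all} locally free modules simultaneously, so both $\pdim(M)\le 1$ and $\idim(M)\le 1$ hold at the same time, which is what allows both AR-formulas to be applied for the same module $M$. Note also that the implications (ii)$\Rightarrow$(i) and (iii)$\Rightarrow$(i) are the trivial implication (stated right before the corollary) that $\tau$-rigid or $\tau^-$-rigid modules are rigid, so the new content is precisely the two converses, which the AR-formulas supply.
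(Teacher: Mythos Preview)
Your proof is correct and follows exactly the argument the paper intends: the corollary is stated without proof immediately after the relevant Auslander--Reiten formulas and the observation that $\pdim(M)\le 1$ and $\idim(M)\le 1$ for locally free $M$ (Proposition~\ref{vpdim}), and your write-up simply spells out this implicit deduction.
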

Combining Propositions~\ref{resolutionE} and \ref{vpdim} yields the following
result.

\begin{Cor}\label{Gorenstein}
The algebra $H$ is a $1$-Iwanaga-Gorenstein algebra.
\end{Cor}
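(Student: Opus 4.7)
The plan is to simply combine the two indicated results. By definition, I need to verify that $\idim({_H}H) \le 1$ and $\pdim(\Du(H)) \le 1$, where $\Du(H)$ is viewed as a left $H$-module, i.e. the direct sum of the indecomposable injectives.

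First I would note that by Proposition~\ref{resolutionE}, every indecomposable projective $P_i$ sits in a short exact sequence
\[
0 \to \bigoplus_{j \in \Omega(-,i)} P_j^{|c_{ji}|} \to P_i \to E_i \to 0,
\]
so in particular $P_i$ is locally free (being an extension of the locally free modules $E_i$ and $\bigoplus_j P_j^{|c_{ji}|}$, or alternatively by the explicit description in Section~\ref{sec3.1}). Dually, each $I_i$ is locally free by the injective resolution of $E_i$ given in the same proposition.

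Then I would invoke Proposition~\ref{vpdim}: since ${_H}H \cong \bigoplus_{i=1}^n P_i$ is locally free, we obtain $\idim({_H}H) \le 1$; likewise, since $\Du(H) \cong \bigoplus_{i=1}^n I_i$ is locally free as a left $H$-module, we get $\pdim(\Du(H)) \le 1$. This is exactly the definition of $H$ being $1$-Iwanaga-Gorenstein.

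There is essentially no obstacle here — the whole content of the corollary is packaged in Propositions~\ref{resolutionE} and \ref{vpdim}. The only small point worth spelling out is that local freeness passes between projectives/injectives and the full regular bimodule (which is immediate since $e_iP_j$ is free over $H_i$ iff the same holds for $P_j$, and direct sums of locally free modules are locally free).
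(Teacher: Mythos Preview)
Your proof is correct and follows exactly the paper's approach: the paper's proof is the single sentence ``Combining Propositions~\ref{resolutionE} and \ref{vpdim} yields the following result,'' and you have simply spelled out this combination. Note that local freeness of $P_i$ and $I_i$ is already recorded in the last sentence of Proposition~\ref{resolutionE}, so you need not rederive it.
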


\begin{Lem}\label{vpclosed}
The subcategory $\rep_\vp(H)$ is closed under extensions,
kernels of epimorphisms and cokernels of monomorphisms.
\end{Lem}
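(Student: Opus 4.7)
The plan is to reduce everything to a statement about modules over the local selfinjective algebra $H_i = K[\vep_i]/(\vep_i^{c_i})$, exploiting the fact that the idempotent functor $e_i(-) \colon \rep(H) \to \md(H_i)$ is exact. Concretely, given a short exact sequence
\[
0 \to L \to M \to N \to 0
\]
in $\rep(H)$, for every vertex $i$ I get an exact sequence of $H_i$-modules
\[
0 \to e_iL \to e_iM \to e_iN \to 0,
\]
so the question becomes whether freeness over $H_i$ is preserved under the relevant operations.

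The two ingredients I will use repeatedly are (a) $H_i$ is a truncated polynomial ring, hence a local ring, so every finitely generated projective $H_i$-module is free; and (b) $H_i$ is a Frobenius algebra, so projective $H_i$-modules coincide with injective $H_i$-modules. Combined, this says: free $=$ projective $=$ injective for finitely generated $H_i$-modules, and direct summands of free $H_i$-modules are free.

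With these facts the three closure properties fall out in parallel. For extensions, if $e_iL$ and $e_iN$ are free, then $e_iN$ is projective over $H_i$, the sequence of $H_i$-modules splits, and $e_iM \cong e_iL \oplus e_iN$ is free. For kernels of epimorphisms, the hypotheses give $e_iM$ and $e_iN$ free; again $e_iN$ projective forces the sequence to split, so $e_iL$ is a direct summand of a free $H_i$-module, hence free. For cokernels of monomorphisms, I use injectivity instead: $e_iL$ and $e_iM$ are free, hence $e_iL$ is injective over $H_i$, the sequence splits, and $e_iN$ is a direct summand of the free module $e_iM$, therefore free.

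There is no serious obstacle here; the only thing to be slightly careful about is to invoke both sides of the selfinjectivity of $H_i$ (projectivity of $e_iN$ for the extension and kernel statements, injectivity of $e_iL$ for the cokernel statement) and to note that over the local ring $H_i$, summands of free modules are free. Since the argument is done vertex by vertex, it applies uniformly to all $i\in Q_0$ and yields membership of the resulting module in $\rep_\vp(H)$.
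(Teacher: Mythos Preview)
Your proof is correct and follows essentially the same approach as the paper: both reduce to the vertex level via the exact functor $e_i(-)$ and use that $H_i$ is selfinjective (so free $=$ projective $=$ injective for finitely generated $H_i$-modules) to conclude that in any short exact sequence of $H_i$-modules, if two of the three terms are free then so is the third. The paper states this ``two-out-of-three'' principle in one line, while you spell out the three cases separately, but the content is the same.
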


\begin{proof}
Let 
\[
0 \to X \xrightarrow{f} Y \xrightarrow{g} Z \to 0 
\] 
be a short exact sequence
in $\rep(H)$.
For each $1 \le i \le n$ this induces a short exact sequence 
\[
0 \to e_iX \to e_iY \to e_iZ \to 0
\]
of $H_i$-modules.
Recall that $M \in \rep(H)$ is locally free if and only if $e_iM$ is
a projective (and therefore also an injective) $H_i$-module for all $i$.
It follows that if any two of the three modules $e_iX$, $e_iY$ and $e_iZ$ are
projective $H_i$-modules, then the third module is also projective as
an $H_i$-modules.
This finishes the proof.
\end{proof}

For the following definitions, see for example \cite{AS}.
Let $A$ be a finite-dimensional $K$-algebra, and let
$\cU$ be a subcategory of $\md(A)$. 
Then $\cU$ is a \emph{resolving subcategory}
if the following hold:
\begin{itemize}

\item[(i)]
${_A}A \in \cU$;

\item[(ii)]
$\cU$ is closed under extensions (i.e. for a short exact sequence 
$0 \to X \to Y \to Z \to 0$ of $A$-modules, 
if $X,Z \in \cU$, then $Y \in \cU$); 

\item[(iii)]
$\cU$ is closed under kernels of epimorphisms.

\end{itemize}
Dually, $\cU$ is \emph{coresolving} if
\begin{itemize}

\item[(i)]
$\Du(A_A) \in \cU$;

\item[(ii)]
$\cU$ is closed under extensions;

\item[(iii)]
$\cU$ is closed under cokernels of monomorphisms.

\end{itemize}
For $X \in \md(A)$
a homomorphism $f\df X \to U$ is a \emph{left $\cU$-approximation} of $X$ if
$U \in \cU$ and
\[
\Hom_A(U,U') \xrightarrow{\Hom_A(f,U')}  \Hom_A(X,U') \to 0 
\]
is exact for all $U' \in \cU$.
Dually,
a homomorphism
$g\df U \to X$ is a \emph{right $\cU$-approximation} of $X$ if
$U \in \cU$ and
\[
\Hom_A(U',U) \xrightarrow{\Hom_A(U',g)} \Hom_A(U',X) \to 0
\]
is exact for all $U' \in \cU$.
The subcategory $\cU$ is 
\emph{covariantly finite} if 
every $X \in \md(A)$ has a left $\cU$-approximation.
Dually, $\cU$ is \emph{contravariantly finite} if
every $X \in \md(A)$ has a right $\cU$-approximation.
Finally, $\cU$ is \emph{functorially finite}
if $\cU$ is covariantly and contravariantly finite.

\begin{Thm}
The subcategory $\rep_\vp(H)$ is resolving, coresolving and
functorially finite.
In particular, $\rep_\vp(H)$ has Auslander-Reiten sequences.
\end{Thm}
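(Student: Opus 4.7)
The resolving and coresolving conditions follow immediately from results already established. By Proposition~\ref{resolutionE} every indecomposable projective $P_i$ and every indecomposable injective $I_i$ lies in $\rep_\vp(H)$, so ${_H}H$ and $\Du(H_H)$ belong to $\rep_\vp(H)$. The three closure properties—under extensions, under kernels of epimorphisms, and under cokernels of monomorphisms—are exactly the content of Lemma~\ref{vpclosed}. So only functorial finiteness really needs an argument.

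The heart of the proof is functorial finiteness, which I would establish by invoking the Auslander-Buchweitz approximation machinery for the $1$-Iwanaga-Gorenstein algebra $H$ (Corollary~\ref{Gorenstein}), together with the identification $\rep_\vp(H)=\cH(H)$ from Proposition~\ref{vpdim}. For every $M \in \rep(H)$ one has the short exact sequence
$$
0 \to M \to H^M \to G^M \to 0
$$
recalled in Section~\ref{intro:2}, with $H^M \in \rep_\vp(H)$ and $G^M \in \cG\cP(H)$. I claim $M \to H^M$ is a left $\rep_\vp(H)$-approximation. Applying $\Hom_H(-,U)$ for an arbitrary $U \in \rep_\vp(H)$, the obstruction to the requisite surjectivity is $\Ext_H^1(G^M,U)$; choosing a projective resolution $0 \to Q_1 \to Q_0 \to U \to 0$ (available because $\pdim(U)\le 1$) and using the defining vanishing $\Ext_H^i(G^M,Q)=0$ for all $i\ge 1$ and projective $Q$, one dimension-shifts to conclude $\Ext_H^1(G^M,U)=0$.

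For right $\rep_\vp(H)$-approximations, the symmetric Auslander-Buchweitz sequence $0 \to H_M \to G_M \to M \to 0$ is \emph{not} directly useful, because $G_M$ is only Gorenstein-projective and generally not locally free. The plan is to dualize. The opposite algebra $H^{\op}$ is again $1$-Iwanaga-Gorenstein, so the previous paragraph, applied to $\Du(M) \in \rep(H^{\op})$, produces a left $\rep_\vp(H^{\op})$-approximation $\Du(M) \to N$ with cokernel Gorenstein-projective over $H^{\op}$. Applying $\Du$ back yields a short exact sequence
$$
0 \to G' \to \Du(N) \to M \to 0
$$
in $\rep(H)$ with $\Du(N) \in \rep_\vp(H)$ and $G'$ Gorenstein-\emph{injective}. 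I would check that $\Du(N) \to M$ is a right $\rep_\vp(H)$-approximation by verifying $\Ext_H^1(U,G')=0$ for every $U \in \rep_\vp(H)$; this is dual to the previous step, using an injective coresolution $0 \to U \to I^0 \to I^1 \to 0$ (which exists because $\idim(U)\le 1$) and the vanishing of $\Ext_H^i(I,G')$ for injective $I$ and Gorenstein-injective $G'$. Existence of Auslander-Reiten sequences in $\rep_\vp(H)$ is then automatic from general theory, as $\rep_\vp(H)$ is functorially finite and closed under extensions.

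The one delicate point is the symmetric treatment: Auslander-Buchweitz natively produces left $\cH$-approximations and right $\cG\cP$-approximations, but what we need is right $\cH$-approximations. Extracting these requires passing through the $K$-duality exchanging $\cG\cP$ and $\cG\cI$, and the plan hinges on knowing that $H^{\op}$ falls under the same Iwanaga-Gorenstein framework, which holds because $m$-Iwanaga-Gorensteinness is preserved under taking opposite algebras. Apart from this bookkeeping, every ingredient—locally freeness of $P_i$, $I_i$, the closure lemma, and the Iwanaga-Gorenstein property—is already available in the paper.
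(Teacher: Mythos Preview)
Your proof is correct. The resolving/coresolving part is identical to the paper's. For functorial finiteness the paper takes a shorter route: it identifies $\rep_\vp(H)$ with $\{M : \pdim(M)\le 1\}$ (resp.\ $\{M : \idim(M)\le 1\}$) via Proposition~\ref{vpdim} and then cites \cite[Proposition~4.2]{AR} (resp.\ its dual), which says directly that this subcategory is covariantly (resp.\ contravariantly) finite. Your approach instead unpacks the approximations explicitly via the Auslander--Buchweitz sequences from \cite{AB}, then dualises through $H^{\op}$ to get the other side. Both arguments rest on the $1$-Iwanaga-Gorenstein property, but yours is more constructive (you actually exhibit the approximating map), while the paper's is a clean black-box citation. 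One small simplification available to you: rather than invoking the general left--right symmetry of the Iwanaga--Gorenstein condition, you can note that $H(C,D,\Omega)^{\op}\cong H(C^T,D,\Omega^*)$, so Corollary~\ref{Gorenstein} applies verbatim to $H^{\op}$. For the final sentence, the paper cites \cite[Theorem~2.4]{AS}; your ``functorially finite and closed under extensions'' is exactly the hypothesis needed there.
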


\begin{proof}
By Lemma~\ref{vpclosed} and Proposition~\ref{resolutionE} we get that
$\rep_\vp(H)$ is a resolving and coresolving subcategory of $\rep(H)$.
Furthermore, by Proposition~\ref{vpdim} we know that $\rep_\vp(H)$ coincides
with the subcategory of all $H$-modules with projective dimension $1$.
Thus $\rep_\vp(H)$ is covariantly finite by \cite[Proposition~4.2]{AR}.
Since $\rep_\vp(H)$ also coincides
with the subcategory of all $H$-modules with injective dimension $1$, the dual
of \cite[Proposition~4.2]{AR} yields that
$\rep_\vp(H)$ is contravariantly finite.
Thus $\rep_\vp(H)$ is functorially finite in $\rep(H)$.
Now it follows from \cite[Theorem~2.4]{AS} that $\rep_\vp(H)$ has
Auslander-Reiten sequences. 
\end{proof}


\section{The homological bilinear form}\label{sec4}


As before, let
$H  = H(C,D,\Omega)$.
For $M,N \in \rep_\vp(H)$ define
\begin{align*}
\bil{M,N}_H &:= \dim \Hom_H(M,N) - \dim \Ext_H^1(M,N),\\
(M,N)_H &:= \bil{M,N}_H + \bil{N,M}_H,\\
q_H(M) &:= \bil{M,M}_H.
\end{align*}

\begin{Prop}\label{quadratic1}
For $M,N \in \rep_\vp(H)$ we have
\[
\bil{M,N}_H = \sum_{i=1}^n c_ia_ib_i - \sum_{(j,i) \in \Omega} c_i|c_{ij}| a_ib_j
\]
where
$\rkv(M) = (a_1,\ldots,a_n)$ and
$\rkv(N) = (b_1,\ldots,b_n)$.
\end{Prop}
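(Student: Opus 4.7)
The strategy is to recognize $\bil{-,-}_H$ as the Euler form on the exact subcategory $\rep_\vp(H)$, reduce the calculation via a filtration by the generalized simples $E_1,\ldots,E_n$, and then compute $\bil{E_i,N}_H$ directly from the projective resolution provided by Proposition~\ref{resolutionE}.

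First, I would establish additivity. By Proposition~\ref{vpdim}, every module in $\rep_\vp(H)$ has projective and injective dimension at most $1$, so $\Ext_H^k(X,Y) = 0$ for $k \ge 2$ whenever $X,Y \in \rep_\vp(H)$. Combined with Lemma~\ref{vpclosed} (closure of $\rep_\vp(H)$ under extensions), the long exact sequences of $\Ext_H^*(-,N)$ and $\Ext_H^*(M,-)$ yield
$$
\bil{Y,N}_H = \bil{X,N}_H + \bil{Z,N}_H, \qquad \bil{M,Y}_H = \bil{M,X}_H + \bil{M,Z}_H,
$$
for any short exact sequence $0 \to X \to Y \to Z \to 0$ in $\rep_\vp(H)$.

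Next I would show that $M$ admits a filtration whose successive quotients are generalized simples, with $E_i$ occurring $a_i$ times. Let $S := \{j \mid a_j > 0\}$ be the support of $M$. Since $Q^\circ$ is acyclic, so is its restriction to $S$, hence this restriction has a sink $i \in S$. Any non-loop arrow of $Q^\circ$ starting at $i$ then targets a vertex outside $S$, and therefore acts as zero on $M$. Consequently $e_i M$ is an $H$-submodule on which only $\vep_i$ acts nontrivially; being a free $H_i$-module of rank $a_i$, it is isomorphic to $E_i^{a_i}$. The quotient $M/e_iM$ is locally free with $i$-th rank $0$, so induction on $\sum_j a_j$ completes the filtration.

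Combining these two steps, it suffices to compute $\bil{E_i,N}_H$. Applying $\Hom_H(-,N)$ to the resolution of Proposition~\ref{resolutionE} and using $\dim_K \Hom_H(P_k,N) = \dim_K e_k N = c_k b_k$, I get
$$
\bil{E_i,N}_H = c_i b_i - \sum_{j \in \Omega(-,i)} |c_{ji}|\, c_j b_j = c_i b_i - \sum_{j \in \Omega(-,i)} c_i |c_{ij}|\, b_j,
$$
where the last equality uses the symmetrizer identity $c_j |c_{ji}| = c_i |c_{ij}|$. Summing over $i$ with multiplicities $a_i$ yields
$$
\bil{M,N}_H = \sum_{i=1}^n c_i a_i b_i - \sum_{(j,i) \in \Omega} c_i |c_{ij}|\, a_i b_j,
$$
as required. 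The only delicate point is the filtration argument, where one must select a sink of $Q^\circ$ \emph{within} $S$ (an arbitrary sink of $Q^\circ$ need not lie in the support); all remaining steps are standard applications of the results already cited.
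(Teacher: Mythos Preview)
Your proof is correct and follows essentially the same approach as the paper: both use additivity of the Euler form on $\rep_\vp(H)$ (via $\pdim \le 1$, $\idim \le 1$) together with a filtration by the generalized simples $E_i$ and the projective resolution of Proposition~\ref{resolutionE}. The only minor difference is that the paper filters both $M$ and $N$ (using a sink for $M$ and a source for $N$) to reduce to $\bil{E_i,E_j}_H$, whereas you filter only $M$ and compute $\bil{E_i,N}_H$ directly from the resolution; your version is a slight streamlining of the same argument.
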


\begin{proof}
Let $Q = Q(C,\Omega)$.
Let $i_1$ be a sink in $Q^\circ$, and let $i_n$ be a source of $Q^\circ$.
We get short exact sequences
\begin{equation}\label{seq1}
0 \to E_{i_1}^{a_{i_1}} \xrightarrow{f_1} M \xrightarrow{f_2} M' \to 0
\end{equation}
and
\begin{equation}\label{seq2}
0 \to N' \xrightarrow{g_1} N \xrightarrow{g_2} E_{i_n}^{b_{i_n}} \to 0
\end{equation}
where $f_1$ is the obvious canonical inclusion, $f_2$ is the canonical
projection onto $\Coker(f_1)$, $g_2$ is the obvious canonical projection, and
$g_1$ is the canonical inclusion of $\Ker(g_2)$.
Applying $\Hom_H(-,N)$ to sequence (\ref{seq1}) and $\Hom_H(M,-)$ to
the sequence (\ref{seq2}) yields the long exact cohomology sequences
\begin{multline}
0 \to \Hom_H(M',N) \to \Hom_H(M,N) \to \Hom_H(E_{i_1}^{a_{i_1}},N) \\
\to \Ext_H^1(M',N) \to \Ext_H^1(M,N) \to \Ext_H^1(E_{i_1}^{a_{i_1}},N) \to 0
\end{multline}
and
\begin{multline}
0 \to \Hom_H(M,N') \to \Hom_H(M,N) \to \Hom_H(M,E_{i_n}^{b_{i_n}}) \\
\to  \Ext_H^1(M,N') \to \Ext_H^1(M,N) \to \Ext_H^1(M,E_{i_n}^{b_{i_n}}) \to 0.
\end{multline}
For the exactness of the first cohomology sequence we used that $\idim(N) \le 1$, and for the 
second sequence we needed that $\pdim(M) \le 1$, compare
Proposition~\ref{vpdim}.
The first sequence implies that 
\[
\bil{M,N}_H = \bil{M',N}_H + \bil{E_{i_1}^{a_{i_1}},N}_H,
\]
and the second sequence  yields
\[
\bil{M,N}_H = \bil{M,N'}_H + \bil{M,E_{i_n}^{b_{i_n}}}_H.
\]
Thus by induction we get
\[
\bil{M,N}_H = \sum_{1 \le i,j \le n} a_ib_j \bil{E_i,E_j}_H.
\]
For $1 \le j \le n$ we have 
\[
\dim \Hom_H(E_i,E_j) = \dim \Hom_H(P_i,E_j) = 
\begin{cases}
c_i & \text{if $i=j$},\\
0 & \text{otherwise}.
\end{cases}
\]
Recall that the minimal projective resolution of $E_i$ has the form
\begin{equation}\label{seq3}
0 \to \bigoplus_{j \in \Omega(-,i)} P_j^{|c_{ji}|} \to P_i \to E_i \to 0.
\end{equation}
Applying $\Hom_H(-,E_j)$ for $1 \le j \le n$ yields
\[
\dim \Ext_H^1(E_i,E_j) = 
\begin{cases}
c_j|c_{ji}| & \text{if $j \in \Omega(-,i)$},\\
0 & \text{otherwise}.
\end{cases}
\]
Since $c_jc_{ji} = c_ic_{ij}$,
the result follows.
\end{proof}

Proposition~\ref{quadratic1} shows that for $M, N \in \rep_\vp(H)$ the 
number $\bil{M,N}_H$ depends only on the rank vectors 
$\rkv(M)$ and $\rkv(N)$. This implies:

\begin{Cor}\label{quadratic2}
The map $(M,N) \mapsto \bil{M,N}_H$ descends to the Grothendieck group
$\Z^n$ of $\rep_\vp(H)$ and induces a bilinear form 
$\Z^n \times \Z^n \to \Z$ still denoted by $\bil{-,-}_H$.
This bilinear form is characterized 
by $\bil{\alpha_i,\alpha_j}_H = \bil{E_i,E_j}_H$,
where $\alpha_1,\ldots,\alpha_n$ is the standard basis of $\Z^n$.
\end{Cor}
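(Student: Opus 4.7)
The plan is to extract the corollary directly from the explicit formula in Proposition~\ref{quadratic1} together with a description of the Grothendieck group of $\rep_\vp(H)$ in terms of the generalized simples $E_1,\ldots,E_n$.

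First, I would observe that the formula
$$
\bil{M,N}_H = \sum_{i=1}^n c_i a_i b_i - \sum_{(j,i)\in\Omega} c_i|c_{ij}| a_i b_j
$$
shows that $\bil{M,N}_H$ depends only on the rank vectors $\rkv(M)=(a_1,\ldots,a_n)$ and $\rkv(N)=(b_1,\ldots,b_n)$. So the pairing factors through $\rkv \times \rkv$, and the only thing to verify is that $\rkv$ identifies $K_0(\rep_\vp(H))$ with $\Z^n$.

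Since $\rep_\vp(H)$ is closed under extensions, kernels of epimorphisms and cokernels of monomorphisms (Lemma~\ref{vpclosed}), it is an exact subcategory of $\rep(H)$ and its Grothendieck group is generated by the classes $[M]$ modulo the relations coming from short exact sequences in $\rep_\vp(H)$. The map $M \mapsto \rkv(M)$ is additive on such sequences, because for each $i$ the functor $M \mapsto e_iM$ is exact and sends a short exact sequence in $\rep_\vp(H)$ to a short exact sequence of free $H_i$-modules, where $K$-dimension (and hence $H_i$-rank) is additive. This yields a group homomorphism $\rkv \colon K_0(\rep_\vp(H)) \to \Z^n$ sending $[E_i]$ to the $i$th standard basis vector $\alpha_i$, hence surjective. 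For the reverse direction I would invoke the inductive argument already used in the proof of Proposition~\ref{quadratic1}: choosing a sink $i_1$ in $Q^\circ$ produces a short exact sequence $0 \to E_{i_1}^{a_{i_1}} \to M \to M' \to 0$ in $\rep_\vp(H)$, and iterating shows that every $M \in \rep_\vp(H)$ admits a filtration whose factors are copies of the $E_i$. Consequently the classes $[E_1],\ldots,[E_n]$ generate $K_0(\rep_\vp(H))$, and since their rank vectors form the standard basis of $\Z^n$, the homomorphism $\rkv$ is an isomorphism.

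Now define $\bil{-,-}_H \colon \Z^n \times \Z^n \to \Z$ bilinearly by $\bil{\alpha_i,\alpha_j}_H := \bil{E_i,E_j}_H$. Expanding Proposition~\ref{quadratic1} as
$$
\bil{M,N}_H = \sum_{1\le i,j\le n} a_i b_j \bil{E_i,E_j}_H
$$
(which is precisely the identity established in its proof) shows that the original pairing coincides with this bilinear extension under the isomorphism $\rkv$, so the claim follows. There is no real obstacle here; the content is entirely packaged in Proposition~\ref{quadratic1}, and the only care needed is in recording that $\rep_\vp(H)$ is exact (so $K_0$ makes sense as claimed) and that the $E_i$ generate its Grothendieck group.
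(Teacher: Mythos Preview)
Your argument is correct and follows the same route as the paper: the corollary is stated there as an immediate consequence of Proposition~\ref{quadratic1}, since the explicit formula depends only on $\rkv(M)$ and $\rkv(N)$. You have simply spelled out in more detail why $\rkv$ identifies $K_0(\rep_\vp(H))$ with $\Z^n$, which the paper leaves implicit.
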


Let 
\[
(-,-)_H\df \Z^n \times \Z^n \to \Z
\] 
be the symmetrization of $\bil{-,-}_H$ defined by
$(a,b)_H := \bil{a,b}_H + \bil{b,a}_H$, and let
$q_H\df \Z^n \to \Z$ be the quadratic form defined by
$q_H(a) := \bil{a,a}_H$.
The forms $q_H$ and $\bil{-,-}_H$ are called the \emph{homological bilinear 
forms} of $H$.

\begin{Cor}\label{quadratic3}
We have $q_H = q_C$ and $(-,-)_H = (-,-)_C$.
\end{Cor}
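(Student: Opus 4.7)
The plan is to verify the identity directly on the canonical generators $\alpha_1,\ldots,\alpha_n$ of the Grothendieck group $\Z^n$, using the explicit computations of $\Hom$ and $\Ext^1$ between generalized simples that were already carried out inside the proof of Proposition~\ref{quadratic1}. By Corollary~\ref{quadratic2}, the bilinear form $\bil{-,-}_H$ (and hence its symmetrization and associated quadratic form) is determined by its values $\bil{\alpha_i,\alpha_j}_H = \bil{E_i,E_j}_H$, so it suffices to match these with the corresponding values of $\bil{-,-}_C$ coming from the defining formulas for $q_C$ and $(-,-)_C$.

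First I would record, from the proof of Proposition~\ref{quadratic1}, the explicit values
\[
\dim\Hom_H(E_i,E_j) = \delta_{ij}\, c_i,
\qquad
\dim\Ext_H^1(E_i,E_j) = \begin{cases} c_j|c_{ji}| & \text{if } (j,i)\in\Omega,\\ 0 & \text{otherwise,} \end{cases}
\]
where the first equality uses that $\Hom_H(E_i,E_j) = \Hom_H(P_i,E_j)$ and $E_i=P_i/\rad^{c_i}P_i$ has $c_i$-dimensional $i$-component, and the second is obtained by applying $\Hom_H(-,E_j)$ to the minimal projective resolution of $E_i$ in Proposition~\ref{resolutionE}. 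From this I get
\[
\bil{\alpha_i,\alpha_i}_H = c_i,
\qquad
\bil{\alpha_i,\alpha_j}_H = \begin{cases} -c_j|c_{ji}| & \text{if } (j,i)\in\Omega,\\ 0 & \text{otherwise,} \end{cases} \quad (i\neq j).
\]

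Next I compute the symmetrization. By condition (ii) of an orientation, for $i\neq j$ with $c_{ij}\neq 0$ exactly one of $(i,j),(j,i)$ lies in $\Omega$, so
\[
(\alpha_i,\alpha_j)_H = \bil{\alpha_i,\alpha_j}_H + \bil{\alpha_j,\alpha_i}_H = -c_j|c_{ji}| = -c_i|c_{ij}|,
\]
using the symmetrizer identity $c_i|c_{ij}|=c_j|c_{ji}|$; for $c_{ij}=0$ both terms vanish, and for $i=j$ we get $2c_i$. On the $C$-side, $(\alpha_i,\alpha_j)_C = c_i c_{ij}$, which equals $2c_i$ when $i=j$, equals $-c_i|c_{ij}|$ when $c_{ij}<0$, and vanishes otherwise. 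Hence $(-,-)_H = (-,-)_C$ on the basis $\alpha_1,\ldots,\alpha_n$, and therefore everywhere by bilinearity.

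For the quadratic form, I would simply expand
\[
q_H\Bigl(\sum_i a_i\alpha_i\Bigr) = \sum_i a_i^2\, q_H(\alpha_i) + \sum_{i<j} a_ia_j\,(\alpha_i,\alpha_j)_H
= \sum_i c_i a_i^2 - \sum_{i<j} c_i|c_{ij}|\, a_ia_j,
\]
which is precisely $q_C$. There is no real obstacle here beyond careful bookkeeping of the orientation and the symmetrizer identity $c_i|c_{ij}|=c_j|c_{ji}|$; the only subtle point worth highlighting is that the asymmetry of $\Ext^1$ with respect to $\Omega$ disappears upon symmetrization, which is exactly why the orientation-independent form $(-,-)_C$ is recovered.
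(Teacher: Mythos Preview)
Your proof is correct and takes essentially the same approach as the paper: both rely on the explicit values of $\bil{E_i,E_j}_H$ established in the proof of Proposition~\ref{quadratic1} together with the symmetrizer identity $c_i|c_{ij}|=c_j|c_{ji}|$, the only cosmetic difference being that the paper reads off $q_H$ directly from the polynomial formula of Proposition~\ref{quadratic1} and then handles $(-,-)_H$ ``similarly'', whereas you first compute $(-,-)_H$ on basis vectors and then recover $q_H$ by polarization. (One small aside: your parenthetical description $E_i=P_i/\operatorname{rad}^{c_i}P_i$ is not correct in general---for instance it fails whenever $i$ is not a sink and $c_i\ge 2$---but this is harmless since your argument only uses $\dim e_iE_i=c_i$, which holds by the definition of $E_i$.)
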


\begin{proof}
By definition we have
\[
q_C := \sum_{i=1}^n c_i X_i^2 - \sum_{i<j} c_i|c_{ij}| X_iX_j, 
\]
and we know from Proposition~\ref{quadratic1} that
\[
q_H = \sum_{i=1}^n c_iX_i^2 - \sum_{(j,i) \in \Omega} c_i|c_{ij}| X_iX_j.
\]
Note that $q_H$ does not depend on the orientation $\Omega$, since 
$c_ic_{ij} = c_jc_{ji}$ for all $i,j$.
Thus we have
$q_H = q_C$.
Similarly, one also shows easily that $(-,-)_H = (-,-)_C$.
\end{proof}


\section{An analogy to the representation theory of 
modulated graphs}\label{sec6}


The constructions and results of this section form a crucial part 
of this article.
For example, it contains the foundation for defining reflection functors and Coxeter functors for the algebras $H(C,D,\Omega)$.

\subsection{The bimodules ${_j}H_i$}\label{sec5.1}
Let $C = (c_{ij}) \in M_n(\Z)$ be a Cartan matrix with symmetrizer 
$D = \diag(c_1,\ldots,c_n)$, and
let $\Omega$ be an orientation of $C$, and let $\Omega^*$ be the opposite orientation.
Let $H := H(C,D,\Omega)$ and $H^* := H(C,D,\Omega^*)$.
Recall that for $1 \le i \le n$ we have 
\[
H_i := e_iHe_i = K[\vep_i]/(\vep_i^{c_i}).
\]
In the following we write $\otimes_i$ for a tensor product $\otimes_{H_i}$
over $H_i$.
If there is no danger of misunderstanding, we also just write $\otimes$
instead of $\otimes_i$.

For $(j,i) \in \Omega$ we define
\[
{_j}H_i := H_j {\rm Span}_K(\alpha_{ji}^{(g)} \mid 1 \le g \le g_{ji})H_i 
= {\rm Span}_K(\vep_j^{f_j}\alpha_{ji}^{(g)}\vep_i^{f_i} \mid
f_j,f_i \ge 0, 1 \le g \le g_{ji}).
\]
Our considerations in Section~\ref{sec3.1} show that
${_j}H_i$
is an $H_j$-$H_i$-bimodule, which is free as a left $H_j$-module
and free as a right $H_i$-module.
Let ${_i}H_j$ be the corresponding $H_i$-$H_j$-bimodule
coming from $H^*$. 
We get
\[
{_j}H_i = \bigoplus_{g=1}^{g_{ij}} \bigoplus_{f=0}^{f_{ji}-1} H_j(\alpha_{ji}^{(g)}\vep_i^f)
= \bigoplus_{g=1}^{g_{ij}}\bigoplus_{f=0}^{f_{ij}-1} (\vep_j^f\alpha_{ji}^{(g)})H_i
\]
and 
\[
{_i}H_j = \bigoplus_{g=1}^{g_{ij}} \bigoplus_{f=0}^{f_{ij}-1} H_i(\alpha_{ij}^{(g)}\vep_j^f)
= \bigoplus_{g=1}^{g_{ij}}\bigoplus_{f=0}^{f_{ji}-1} (\vep_i^f\alpha_{ij}^{(g)})H_j.
\]
So we have
\begin{align*}
{_{H_j}}({_j}H_i) &\cong H_j^{|c_{ji}|} \cong ({_i}H_j)_{H_j},
\\
{_{H_i}}({_i}H_j) &\cong H_i^{|c_{ij}|} \cong ({_j}H_i)_{H_i}.
\end{align*}
Define
\begin{align*}
{_j}L_i &:= \{ \alpha_{ji}^{(g)}, \alpha_{ji}^{(g)}\vep_i,\ldots,
\alpha_{ji}^{(g)}\vep_i^{f_{ji}-1} \mid 1 \le g \le g_{ij} \},
\\
{_i}L_j &:= \{ \alpha_{ij}^{(g)}, \alpha_{ij}^{(g)}\vep_j,\ldots,
\alpha_{ij}^{(g)}\vep_j^{f_{ij}-1} \mid 1 \le g \le g_{ij} \},
\\
{_j}R_i &:= \{ \alpha_{ji}^{(g)}, \vep_j\alpha_{ji}^{(g)},\ldots,
\vep_j^{f_{ij}-1}\alpha_{ji}^{(g)} \mid 1 \le g \le g_{ij} \},
\\
{_i}R_j &:= \{ \alpha_{ij}^{(g)}, \vep_i\alpha_{ij}^{(g)},\ldots,
\vep_i^{f_{ji}-1}\alpha_{ij}^{(g)} \mid 1 \le g \le g_{ij} \}.
\end{align*}
Then ${_j}L_i$ (resp. ${_j}R_i$) is a basis of ${_j}H_i$
as a left $H_j$-modules (resp. as a right $H_i$-module).
We have
$|{_j}L_i| = |{_i}R_j| = |c_{ji}|$ and
$|{_i}L_j| = |{_j}R_i| = |c_{ij}|$.

Let $({_j}L_i)^*$ and $({_j}R_i)^*$ be the dual basis of $\Hom_{H_j}({_j}H_i,H_j)$ and $\Hom_{H_i}({_j}H_i,H_i)$, respectively.
For $b \in {_j}L_i$ or $b \in {_j}R_i$ let $b^*$ be 
the corresponding dual basis vector.
Similarly, define $({_i}L_j)^*$ and $({_i}R_j)^*$.

There is an $H_i$-$H_j$-bimodule isomorphism
\[
\rho\df {_i}H_j \to \Hom_{H_j}({_j}H_i,H_j) 
\]
given by
\[
\rho\left(\vep_i^{f_{ji}-1-f}\alpha_{ij}^{(g)}\right) = (\alpha_{ji}^{(g)}\vep_i^f)^* 
\]
for $0 \le f \le f_{ji}-1$ and $1 \le g \le g_{ij}$.
Indeed, for the left $H_i$-module structure on $\Hom_{H_j}({_j}H_i,H_j)$ 
one has 
\[
\vep_i \cdot (\alpha_{ji}^{(g)}\vep_i^f)^* = 
\begin{cases}
(\alpha_{ji}^{(g)}\vep_i^{f-1})^*& \text{if $f>0$,}\\[2mm]
(\alpha_{ji}^{(g)}\vep_i^{f_{ji}-1})^*\cdot \vep_j^{f_{ij}} & \text{if $f=0$.}
\end{cases}
\]
Similarly there is an $H_i$-$H_j$-bimodule isomorphism
\[
\lambda\df {_i}H_j \to \Hom_{H_i}({_j}H_i,H_i)
\]
given by
\[
\lambda\left(\alpha_{ij}^{(g)}\vep_j^{f_{ij}-1-f}\right) = 
(\vep_j^f\alpha_{ji}^{(g)})^* 
\]
for $0 \le f \le f_{ij}-1$ and $1 \le g \le g_{ij}$.
In particular, we get $ \rho({_i}R_j) = ({_j}L_i)^*$ and
$\lambda({_i}L_j) = ({_j}R_i)^*$.
In the following, we sometimes identify the spaces 
$\Hom_{H_j}({_j}H_i,H_j)$, ${_i}H_j$ and $\Hom_{H_i}({_j}H_i,H_i)$ via
$\rho$ and $\lambda$.
For example, for $b \in {_j}L_i$, we consider 
$b^* \in \Hom_{H_j}({_j}H_i,H_j)$ as an element in ${_i}H_j$.

If $N_j$ is an $H_j$-module, then we have a natural isomorphism of $H_i$-modules
\[
\Hom_{H_j}({_j}H_i,N_j) \to {_i}H_j \otimes_j N_j
\]
defined by 
\[
f \mapsto \sum_{b \in {_j}L_i} b^* \otimes_j f(b).
\]
Now, if in addition $M_i$ is an $H_i$-module, the adjunction map gives an 
isomorphism of $K$-vector spaces:
\[
\Hom_{H_j}({_j}H_i\otimes_i M_i, N_j) \to \Hom_{H_i}(M_i, \Hom_{H_j}({_j}H_i,N_j)). 
\]
Combining these two maps we get a functorial isomorphism of $K$-vector spaces
\[
\ad_{ji} := \ad_{ji}(M_i,N_j)\df 
\Hom_{H_j}({_j}H_i \otimes_i M_i,N_j) \to  \Hom_{H_i}(M_i,{_i}H_j \otimes_j N_j)
\]
given by 
\[
f \mapsto \left(f^\vee\df m \mapsto \sum_{b \in {_j}L_i} b^* \otimes_j f(b \otimes_i m)\right).
\]
The inverse $\ad_{ji}^{-1}$ of $\ad_{ji}$ is given by
\[
g \mapsto \left(g^\vee\df h \otimes_i m \mapsto \sum_{b \in {_j}L_i} b^*(h) (g(m))_b\right)
\]
where the elements  
$(g(m))_b \in N_j$ are uniquely determined by
\[
g(m) = \sum_{b \in {_j}L_i} b^* \otimes_j (g(m))_b. 
\]
Here we used that each element $x$ in ${_i}H_j \otimes_j N_j$ can be written 
uniquely as a sum of the form 
\[
x = \sum_{b \in {_j}L_i} b^* \otimes_j x_b.
\]

\subsection{Representation theory of modulated graphs}\label{subsec5.2}
The tuple
$(H_i,{_i}H_j,{_j}H_i)$ defined in Section~\ref{sec5.1}
is called a \emph{modulation} of $C$
and is denoted by $\cM(C,D)$.

For an orientation $\Omega$ of $C$, a \emph{representation}
$M = (M_i,M_{ij})$ 
of $(\cM(C,D),\Omega)$ is given by a finite-dimensional
$H_i$-module $M_i$ for each $1 \le i \le n$ and an $H_i$-linear map
\[
M_{ij}\df {_i}H_j \otimes_j M_j \to M_i
\]
for each $(i,j) \in \Omega$.
A morphism $f\df M \to N$ of representations $M = (M_i,M_{ij})$ and 
$N = (N_i,N_{ij})$ of $(\cM(C,D),\Omega)$ is a tuple $f = (f_i)_i$ of 
$H_i$-linear maps
$f_i\df M_i \to N_i$ for $1 \le i \le n$ such that for each $(i,j) \in \Omega$ 
the diagram
\[
\xymatrix{
{_i}H_j \otimes_j M_j \ar[d]^{M_{ij}} \ar[r]^{1 \otimes_j f_j} & 
{_i}H_j \otimes_j N_j \ar[d]^{N_{ij}} 
\\
M_i \ar[r]^{f_i} & N_i
}
\]
commutes.
One easily checks that the representations of $(\cM(C,D),\Omega)$ form
an abelian category $\rep(C,D,\Omega)$.

For $(M_i,M_{ij}) \in \rep(C,D,\Omega)$
define a representation 
\[
(M_i,M(\alpha_{ij}^{(g)}),M(\vep_i))
\] 
of
$H(C,D,\Omega)$ as follows:
Define a $K$-linear map $M(\vep_i)\df M_i \to M_i$ by
\[
M(\vep_i)(m) := \vep_i m.
\]
(Here we use that $M_i$ is an $H_i$-module.)
Let $(i,j) \in \Omega$.
Recall that ${_i}H_j$ has an $H_i$-basis
\[
{_i}L_j = \{ \alpha_{ij}^{(g)}, \alpha_{ij}^{(g)}\vep_j,\ldots,
\alpha_{ij}^{(g)}\vep_j^{f_{ij}-1} \mid 1 \le g \le g_{ij} \}.
\]
Define a $K$-linear map $M(\alpha_{ij}^{(g)})\df M_j \to M_i$ by
\[
M(\alpha_{ij}^{(g)})(m) := M_{ij}(\alpha_{ij}^{(g)} \otimes_j m).
\]
Now one can check that the relations (H1) and (H2) are satisfied.
In other words, $(M_i,M(\alpha_{ij}^{(g)}),M(\vep_i))$ is a
representation of $H(C,D,\Omega)$.

Conversely, let $(M_i,M(\alpha_{ij}^{(g)}),M(\vep_i))$ be  a
representation of $H(C,D,\Omega)$.
Note that $M_i$ is an $H_i$-module via the map $M(\vep_i)$.
For $(i,j) \in \Omega$ define an $H_i$-linear map
\[
M_{ij}\df {_i}H_j \otimes_j M_j \to M_i
\]
by 
\[
M_{ij}(\alpha_{ij}^{(g)}\vep_j^f \otimes m) := 
(M(\alpha_{ij}^{(g)}) \circ M(\vep_j)^f)(m).
\]
Then $(M_i,M_{ij}) \in \rep(C,D,\Omega)$.

These two constructions yield obviously mutually inverse bijections between the
representations of $(\cM(C,D),\Omega)$ and $H(C,D,\Omega)$.
It is also clear how to associate to a morphism in $\rep(C,D,\Omega)$ a morphism
in $\rep(H(C,D,\Omega))$ and vice versa.
Now it is straightforward to verify the following statement.

\begin{Prop}
The categories $\rep(C,D,\Omega)$ and $\rep(H(C,D,\Omega))$ are isomorphic. 
\end{Prop}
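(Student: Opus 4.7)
The plan is to define explicit functors $\Phi\df \rep(C,D,\Omega) \to \rep(H(C,D,\Omega))$ and $\Psi$ in the reverse direction following the two constructions given in the excerpt, and to verify that they are mutually inverse on both objects and morphisms. Since the assignment on underlying $K$-vector spaces is essentially the identity (namely $M_i$ on both sides), checking strict inverseness (not just natural equivalence) should be straightforward once well-definedness is established.

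For $\Phi$ on objects, given $(M_i,M_{ij})$, define $M(\vep_i)$ as multiplication by $\vep_i$ on $M_i$ and $M(\alpha_{ij}^{(g)})(m) := M_{ij}(\alpha_{ij}^{(g)} \otimes_j m)$. The nilpotency relation (H1) is immediate from $M_i$ being an $H_i$-module. For the commutativity relation (H2), I would compute
$$
M(\vep_i)^{f_{ji}} M(\alpha_{ij}^{(g)})(m) = \vep_i^{f_{ji}} \cdot M_{ij}(\alpha_{ij}^{(g)}\otimes_j m) = M_{ij}(\vep_i^{f_{ji}}\alpha_{ij}^{(g)}\otimes_j m)
$$
using $H_i$-linearity of $M_{ij}$, while
$$
M(\alpha_{ij}^{(g)}) M(\vep_j)^{f_{ij}}(m) = M_{ij}(\alpha_{ij}^{(g)} \otimes_j \vep_j^{f_{ij}} m) = M_{ij}(\alpha_{ij}^{(g)}\vep_j^{f_{ij}} \otimes_j m)
$$
using the balance over $H_j$. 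Both agree because the identity $\vep_i^{f_{ji}}\alpha_{ij}^{(g)} = \alpha_{ij}^{(g)}\vep_j^{f_{ij}}$ already holds inside the bimodule ${_i}H_j$ (this is how ${_i}H_j$ was cut out of $H$ in Section~\ref{sec5.1}).

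For $\Psi$ on objects, one defines $M_{ij}$ on the $H_i$-basis $\{\alpha_{ij}^{(g)}\vep_j^f : 1\le g\le g_{ij},\ 0\le f < f_{ij}\}$ of ${_i}H_j$ by the given formula and extends $H_i$-linearly. The main thing to check is that the resulting $K$-bilinear map ${_i}H_j \times M_j \to M_i$ is $H_j$-balanced, i.e.\ that $M_{ij}(x\vep_j \otimes m) = M_{ij}(x \otimes \vep_j m)$ for all $x$ in the basis. For $x = \alpha_{ij}^{(g)}\vep_j^f$ with $f < f_{ij}-1$ this is trivial; the only nontrivial case is $f = f_{ij}-1$, where $x\vep_j = \alpha_{ij}^{(g)}\vep_j^{f_{ij}} = \vep_i^{f_{ji}}\alpha_{ij}^{(g)}$ in ${_i}H_j$, and then $H_i$-linearity of the extension together with relation (H2) gives exactly what is needed. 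Relation (H1) ensures the $H_i$-module structure on $M_i$ is well-defined.

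It is then immediate from the construction that $\Psi\circ\Phi$ and $\Phi\circ\Psi$ act as the identity on objects (not merely isomorphic), since the underlying spaces $M_i$ are literally preserved and the structure maps determine each other via $M(\alpha_{ij}^{(g)})(m) = M_{ij}(\alpha_{ij}^{(g)}\otimes m)$ and, conversely, $M_{ij}(\alpha_{ij}^{(g)}\vep_j^f\otimes m)=M(\alpha_{ij}^{(g)})M(\vep_j)^f(m)$ uniquely extend to each other. For morphisms, a family $(f_i)$ is a morphism in $\rep(C,D,\Omega)$ iff each $f_i$ is $H_i$-linear and commutes with $M_{ij}$; unwinding the definition of $M(\alpha_{ij}^{(g)})$ in terms of $M_{ij}$ shows at once that this is equivalent to $(f_i)$ being a morphism of $H(C,D,\Omega)$-representations (commutation with all $\vep_i$ and all $\alpha_{ij}^{(g)}$). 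Compatibility with composition is clear, so $\Phi$ and $\Psi$ are mutually inverse functors and we obtain an isomorphism of categories.

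The only delicate point, which I view as the main obstacle, is the $H_j$-balance argument in the definition of $\Psi$: it is here that the precise match between the defining relations (H1)--(H2) of $H(C,D,\Omega)$ and the bimodule structure of ${_i}H_j$ established in Section~\ref{sec5.1} is used, and it is worth writing out carefully to confirm that no extra relations are needed and that $(|c_{ij}|,|c_{ji}|)$-sized data is handled consistently for each $g$.
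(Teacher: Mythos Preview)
Your proof is correct and follows exactly the approach the paper takes: the two constructions $\Phi$ and $\Psi$ you describe are precisely the ones spelled out in the text immediately preceding the proposition, and the paper simply declares the verification ``straightforward'' without writing it out. Your careful check of the $H_j$-balance condition in the definition of $\Psi$ (via the boundary case $f=f_{ij}-1$ and relation (H2)) is exactly the detail the paper elides.
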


Thus the representation theory of the algebras $H(C,D,\Omega)$
shows a striking analogy to the representation theory of modulated
graphs in the sense of Dlab and Ringel \cite{DR1}.
The main difference is that in Dlab and Ringel's theory, the 
rings $H_i$ would be division rings, whereas in our case they are
commutative symmetric algebras, or more precisely, truncations of polynomial
rings.
Generalizations of the representation theory of modulated graphs
have been formulated already in \cite{Li}.

\subsection{}\label{Minn}
Next, we want to interpret the category $\rep(\Pi(C,D))$ of finite-dimensional
representations of $\Pi(C,D)$ as a category of representations of modulated graphs.
Let $\rep(C,D,\overline{\Omega})$ be the category with objects
$M = (M_i,M_{ij},M_{ji})$ with $(i,j) \in \Omega$
such that
$(M_i,M_{ij}) \in \rep(C,D,\Omega)$
and $(M_i,M_{ji}) \in \rep(C,D,\Omega^*)$.
Given two such objects $M$ and $N$ a tuple $f = (f_i)_i$ is a homomorphism $f\df M \to N$ if 
$f$ is both a homomorphism $(M_i,M_{ij}) \to (N_i,N_{ij})$ in
$\rep(C,D,\Omega)$ and a 
homomorphism $(M_i,M_{ji}) \to (N_i,N_{ji})$ in
$\rep(C,D,\Omega^*)$.

For an object $M = (M_i,M_{ij},M_{ji})$ in $\rep(C,D,\overline{\Omega})$ let
\[
M_{i,\inn} := (\sgn(i,j)M_{ij})_j\df \bigoplus_{j \in \overline{\Omega}(i,-)} {_i}H_j \otimes M_j \to M_i 
\]
and 
\[
M_{i,\out} := (M_{ji}^\vee)_j\df 
M_i \to \bigoplus_{j \in \overline{\Omega}(-,i)} {_i}H_j \otimes M_j.
\]
These are both $H_i$-module homomorphisms.
(Recall that $M_{ji}^\vee = \ad_{ji}(M_{ji})$, see Section~\ref{sec5.1}.)
Set
\[
\widetilde{M_i} := \bigoplus_{j \in \overline{\Omega}(i,-)} {_i}H_j \otimes M_j.
\]
Since $\overline{\Omega}(i,-) = \overline{\Omega}(-,i)$, we have
\[
\bigoplus_{j \in \overline{\Omega}(i,-)} {_i}H_j \otimes M_j
=
\bigoplus_{k \in \overline{\Omega}(-,i)} {_i}H_k \otimes M_k.
\]
Thus we get a diagram
\[
\widetilde{M_i} \xrightarrow{M_{i,\inn}} 
M_i \xrightarrow{M_{i,\out}} \widetilde{M_i}.
\]

\begin{Prop}\label{prop3.3}
The category $\rep(\Pi(C,D))$ is isomorphic to the full subcategory of
$\rep(C,D,\overline{\Omega})$ with objects 
$M = (M_i,M_{ij},M_{ji})$ such that 
\[
M_{i,\inn} \circ M_{i,\out} = 0
\]
for all $i$.
\end{Prop}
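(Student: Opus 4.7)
The plan is to bootstrap from the isomorphism $\rep(C,D,\Omega)\cong\rep(H(C,D,\Omega))$ established earlier in this section, applied simultaneously to $\Omega$ and to $\Omega^*$. A representation of $\Pi(C,D)$ is the same data as maps $M(\alpha_{ij}^{(g)})$, $M(\alpha_{ji}^{(g)})$, and $M(\vep_i)$ for $(i,j)\in\Omega$, subject to (P1), (P2), (P3). Now (P1) coincides with (H1), and (P2) is exactly (H2) read for both $\Omega$ and $\Omega^*$. Consequently, the data satisfying only (P1) and (P2) correspond bijectively, via the constructions already in place, to tuples $(M_i,M_{ij},M_{ji})$ in $\rep(C,D,\overline{\Omega})$, and morphisms match under the same dictionary. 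So the proof reduces to matching the single remaining relation (P3) at each vertex $i$ with the condition $M_{i,\inn}\circ M_{i,\out}=0$.

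To carry this out, I would unravel the adjunction $\ad_{ji}$ from Section~\ref{sec5.1}. For $m\in M_i$, the $j$-component of $M_{i,\out}(m)$ is
$$
M_{ji}^\vee(m)=\sum_{g=1}^{g_{ji}}\sum_{f=0}^{f_{ji}-1}(\alpha_{ji}^{(g)}\vep_i^f)^*\otimes_j M_{ji}(\alpha_{ji}^{(g)}\vep_i^f\otimes_i m).
$$
Under the identification $\rho\df{_i}H_j\xrightarrow{\sim}\Hom_{H_j}({_j}H_i,H_j)$ one has $(\alpha_{ji}^{(g)}\vep_i^f)^*=\vep_i^{f_{ji}-1-f}\alpha_{ij}^{(g)}$. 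Applying the $H_i$-linear component $\sgn(i,j)\,M_{ij}$ of $M_{i,\inn}$, and using the translation formulas $M_{ij}(\alpha_{ij}^{(g)}\otimes n)=M(\alpha_{ij}^{(g)})(n)$ and $M_{ji}(\alpha_{ji}^{(g)}\vep_i^f\otimes m)=M(\alpha_{ji}^{(g)})M(\vep_i)^f(m)$, a short calculation gives
$$
(M_{i,\inn}\circ M_{i,\out})(m)=\sum_{j\in\overline{\Omega}(-,i)}\sum_{g=1}^{g_{ji}}\sum_{f=0}^{f_{ji}-1}\sgn(i,j)\,M(\vep_i)^{f_{ji}-1-f}M(\alpha_{ij}^{(g)})M(\alpha_{ji}^{(g)})M(\vep_i)^f(m),
$$
which is precisely the left-hand side of (P3) evaluated at $m$. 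Hence $M_{i,\inn}\circ M_{i,\out}=0$ for all $i$ if and only if (P3) holds in the associated $\Pi$-representation, and the proposition follows.

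The main (and essentially only) delicate point is the sign-and-index bookkeeping that turns $M_{ij}\circ M_{ji}^\vee$ into a sum indexed by $(g,f)$ with the correct exponents $f_{ji}-1-f$ on the left and $f$ on the right. Everything else is direct from the equivalences and from $\overline{\Omega}(i,-)=\overline{\Omega}(-,i)$, which ensures that the source and target of $M_{i,\inn}$ and $M_{i,\out}$ indeed match; once the identification of $({_j}L_i)^*$ with $\{\vep_i^{f_{ji}-1-f}\alpha_{ij}^{(g)}\}$ via $\rho$ is in hand, the computation is forced.
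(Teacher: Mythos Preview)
Your proof is correct and follows essentially the same approach as the paper: both unwind $M_{i,\inn}\circ M_{i,\out}$ via the explicit adjunction formula and the identification $(\alpha_{ji}^{(g)}\vep_i^f)^*=\vep_i^{f_{ji}-1-f}\alpha_{ij}^{(g)}$, arriving at the action of (P3) on $m$. The only cosmetic difference is that your summation index $f$ is related to the paper's by $f\leftrightarrow f_{ji}-1-f$, which of course does not affect the sum.
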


\begin{proof}
For an object $M = (M_i,M_{ij},M_{ji})$ in $\rep(C,D,\overline{\Omega})$,
the composition 
\[
M_{i,\inn} \circ M_{i,\out} = 
\sum_{j \in \overline{\Omega}(-,i)} \sgn(i,j)M_{ij} \circ M_{ji}^\vee
\]
is in $\End_{H_i}(M_i)$ and maps an element $m \in M_i$ to
\[
M_{i,\inn} \circ M_{i,\out}(m) = 
\sum_{j \in \overline{\Omega}(-,i)} 
\sgn(i,j)\sum_{b \in {_j}L_i} M_{ij}\left(b^* \otimes_j M_{ji}(b \otimes_i m)\right).
\]
Let $b \in {_j}L_i$.
Thus we have
$b = \alpha_{ji}^{(g)}\vep_i^{f_{ji}-1-f}$ for some $0 \le f \le f_{ji}-1$.
This implies that 
$b^* = \vep_i^f\alpha_{ij}^{(g)} \in {_i}R_j$.
It follows that
\[
\sgn(i,j)M_{ij}\left(b^* \otimes_j M_{ji}(b \otimes_i m)\right)
=
\sgn(i,j)M(\vep_i)^fM(\alpha_{ij}^{(g)})M(\alpha_{ji}^{(g)})M(\vep_i)^{f_{ji}-1-f}(m).
\]
In view of the defining relation (P3) of $\Pi(C,D)$, this yields the result.
\end{proof}


\section{The algebras $H$ and $\Pi$ are tensor algebras}
\label{sec7}

Let $A$ be a $K$-algebra, and let $M = {_A}M_A$ be an $A$-$A$-bimodule.
The \emph{tensor algebra} $T_A(M)$ is defined as
\[
T_A(M) := \bigoplus_{k \ge 0} M^{\otimes k}
\]
where $M^0 := A$, and 
$M^{\otimes k}$ is the $k$-fold tensor product of $M$
for $k \ge 1$.
The multiplication of $T_A(M)$ is defined as follows:
For $r,s \ge 1$, $m_i,m_i' \in M$ and $a,a' \in A$ let 
\[
(m_1 \otimes \cdots \otimes m_r) \cdot (m_1' \otimes \cdots \otimes m_s') := (m_1 \otimes \cdots \otimes m_r \otimes m_1' \otimes \cdots \otimes m_s')
\]
and 
\[
a(m_1 \otimes \cdots \otimes m_r)a' := (am_1 \otimes \cdots \otimes m_ra').
\]
Recall that the modules over a tensor algebra $T_A(M)$ are
given by the $A$-module homomorphisms
$M \otimes_A X \to X$, where
$X$ is an $A$-module. 

Let $A$ be a $K$-algebra, $A_0$ a subalgebra and $A_1$ an
$A_0$-$A_0$-subbimodule of $A$.
Following \cite{BSZ} we say that
$A$ is \emph{freely generated by $A_1$ over $A_0$}
if the following holds:
For every $K$-algebra $B$ and any pair $(f_0,f_1)$ with
$f_0\df A_0 \to B$ an algebra homomorphism, and
$f_1\df A_1 \to B$ an $A_0$-$A_0$-bimodule homomorphism
(with the $A_0$-$A_0$-bimodule structure on $B$ given by $f_0$)
there exists a unique $K$-algebra homomorphism $f\df A \to B$
which extends $f_0$ and $f_1$.
The following two lemmas can be found in \cite[Section~1]{BSZ}.

\begin{Lem}\label{bimodule1}
For any $K$-algebra $A$ and any $A$-$A$-bimodule $M$ the
tensor algebra $T_A(M)$ is freely generated by $M$ over $A$.
\end{Lem}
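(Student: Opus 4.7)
The plan is to verify the universal property directly by constructing the extension level by level. Given a pair $(f_0, f_1)$ as in the statement, I would define, for each $k \ge 1$, a $K$-linear map $f_k \df M^{\otimes k} \to B$ by
$$
f_k(m_1 \otimes \cdots \otimes m_k) := f_1(m_1) f_1(m_2) \cdots f_1(m_k),
$$
and then set $f := f_0 \oplus \bigoplus_{k \ge 1} f_k \df T_A(M) \to B$.

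The first thing to check is that $f_k$ is well defined on the tensor product $M^{\otimes k} = M \otimes_A M \otimes_A \cdots \otimes_A M$. For this it suffices to verify that $f_1(m a) f_1(m') = f_1(m) f_1(a m')$ for all $m,m' \in M$ and $a \in A$, which follows from $f_1$ being an $A$-$A$-bimodule homomorphism together with $f_0$ being an algebra map (so $f_1(ma) = f_1(m) f_0(a)$ and $f_1(am') = f_0(a) f_1(m')$). Once this is established, multilinearity and the balancing over $A$ at each interior slot is a straightforward induction.

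Next I would check that $f$ is a $K$-algebra homomorphism. Additivity is immediate by construction. Multiplicativity splits into cases according to which homogeneous components the arguments lie in: for two pure tensors of positive degree the identity $f(x \cdot y) = f(x) f(y)$ follows directly from the definition of the product in $T_A(M)$ and the associativity of multiplication in $B$, while the mixed cases with a factor in $A = M^{\otimes 0}$ reduce, via the bimodule relations $a(m_1 \otimes \cdots \otimes m_r) = am_1 \otimes \cdots \otimes m_r$ and its right-sided analogue, to the bimodule property of $f_1$ already used above. Finally, uniqueness holds because $A$ together with $M$ generates $T_A(M)$ as a $K$-algebra, so any algebra homomorphism $T_A(M) \to B$ extending $f_0$ and $f_1$ must agree with $f$ on every pure tensor, hence everywhere.

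The only real obstacle is the well-definedness of $f_k$ on the balanced tensor product, since everything else is formal; but this is precisely what the bimodule hypothesis on $f_1$ is designed to handle, so the argument goes through without difficulty.
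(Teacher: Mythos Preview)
Your argument is correct and is the standard verification of the universal property. The paper does not give its own proof of this lemma but simply refers to \cite[Section~1]{BSZ}, so there is nothing to compare against beyond noting that your direct construction is exactly what one finds there.
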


\begin{Lem}\label{bimodule2}
Let $A$ be a $K$-algebra which is freely generated by $A_1$ over $A_0$.
Then $A$ is isomorphic to the tensor algebra $T_{A_0}(A_1)$.
\end{Lem}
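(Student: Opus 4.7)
The plan is to exploit the universal property of free generation symmetrically: both $A$ and $T_{A_0}(A_1)$ are freely generated by $A_1$ over $A_0$, so each satisfies the universal property appealed to by the other, which will yield mutually inverse $K$-algebra homomorphisms between them.

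More concretely, I would proceed as follows. First, by Lemma~\ref{bimodule1}, the tensor algebra $T := T_{A_0}(A_1)$ is freely generated by $A_1$ over $A_0$. Let $\iota_0\df A_0 \to T$ and $\iota_1\df A_1 \to T$ denote the canonical inclusions, and let $j_0\df A_0 \to A$ and $j_1\df A_1 \to A$ denote the inclusions coming from the hypothesis that $A$ is freely generated by $A_1$ over $A_0$. Applying the universal property of $A$ to the pair $(\iota_0,\iota_1)$ yields a unique $K$-algebra homomorphism
\[
\varphi\df A \to T
\]
with $\varphi|_{A_0} = \iota_0$ and $\varphi|_{A_1} = \iota_1$. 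Applying the universal property of $T$ (provided by Lemma~\ref{bimodule1}) to the pair $(j_0, j_1)$ yields a unique $K$-algebra homomorphism
\[
\psi\df T \to A
\]
with $\psi|_{A_0} = j_0$ and $\psi|_{A_1} = j_1$.

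To finish, I would invoke uniqueness twice. The composition $\psi\circ\varphi\df A\to A$ is a $K$-algebra homomorphism restricting to $j_0$ on $A_0$ and to $j_1$ on $A_1$; but the identity $\id_A$ also has this property. By the uniqueness clause in the universal property for $A$, we conclude $\psi\circ\varphi = \id_A$. Symmetrically, $\varphi\circ\psi$ and $\id_T$ both restrict to $\iota_0$ and $\iota_1$, so the universal property of $T$ forces $\varphi\circ\psi = \id_T$. Hence $\varphi$ is an isomorphism of $K$-algebras.

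This is a routine Yoneda-style argument and I do not anticipate any real obstacle; the only point worth checking carefully is that when we apply each universal property to the identity map, the $A_0$-$A_0$-bimodule structure on the target (induced by $j_0$ respectively $\iota_0$) agrees with its intrinsic bimodule structure, so that $\id_A$ and $\id_T$ genuinely qualify as the unique extensions guaranteed by the respective universal properties.
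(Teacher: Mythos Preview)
Your argument is correct: this is exactly the standard Yoneda-style proof that two objects satisfying the same universal property are uniquely isomorphic, and you have been careful about the one genuine subtlety (that the induced $A_0$-$A_0$-bimodule structures match so that $\id_A$ and $\id_T$ are legitimate candidates in the uniqueness clause). The paper itself does not prove this lemma at all but simply cites \cite[Section~1]{BSZ}; your proof is the expected one and there is nothing further to compare.
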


Let $Q$ be a finite quiver, and let
$w\df Q_1 \to \{0,1\}$ be a map assigning to each arrow
of $Q$ a \emph{degree}.
Then the path algebra $KQ$ is naturally $\N$-graded:
Each path gets as degree the sum of the degrees of its arrows.
By definition the paths of length $0$ have degree $0$.
Let $r_1,\ldots,r_m$ be a set of relations for $KQ$ which are
homogeneous with respect to this grading.
Suppose that there is some $1 \le l \le m$ such that 
$\deg(r_i) = 0$ for $1 \le i \le l$ and
$\deg(r_j) = 1$ for $l+1 \le j \le m$.

Let $A := KQ/I$, where $I$ is the ideal generated by $r_1,\ldots,r_m$.
Clearly, $A$ is again $\N$-graded.
Let $A_i$ be the subspace of elements with degree $i$.
Observe that $A_1$ is naturally an $A_0$-$A_0$-bimodule.
Now Lemmas~\ref{bimodule1},\ref{bimodule2} yield the following result.

\begin{Prop}\label{bimodule3}
$A$ is isomorphic to the tensor algebra $T_{A_0}(A_1)$.
\end{Prop}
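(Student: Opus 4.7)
My approach is to invoke Lemma~\ref{bimodule2}: it suffices to show that $A$ is freely generated by $A_1$ over $A_0$. Concretely, given any $K$-algebra $B$, an algebra homomorphism $f_0 \colon A_0 \to B$, and an $A_0$-$A_0$-bimodule homomorphism $f_1 \colon A_1 \to B$ (with the bimodule structure on $B$ induced by $f_0$), I must produce a unique $K$-algebra homomorphism $f \colon A \to B$ extending $f_0$ and $f_1$.

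To construct $f$, I would first lift to a homomorphism $\tilde f \colon KQ \to B$ via the universal property of the path algebra. Set $\tilde f(e_i) := f_0(\bar e_i)$ for each vertex $i$, $\tilde f(a) := f_0(\bar a)$ for each degree-$0$ arrow $a$, and $\tilde f(b) := f_1(\bar b)$ for each degree-$1$ arrow $b$, where $\bar x$ denotes the image of $x \in KQ$ in $A$. These assignments respect the idempotent and source/target relations of $KQ$ because the analogous identities hold in $A_0$ (for $f_0$) and because $f_1$ is an $A_0$-$A_0$-bimodule homomorphism (for the degree-$1$ arrows).

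The crux is to show that $\tilde f$ kills the ideal $I = (r_1,\ldots,r_m)$, and for this it is enough to check the generators. For a degree-$0$ relation $r_i$ with $1 \le i \le l$, the element $\tilde f(r_i)$ is $f_0$ evaluated on the image of $r_i$ in $A_0$, which vanishes. For a degree-$1$ relation $r_j$ with $l+1 \le j \le m$, every path $p$ appearing in $r_j$ has degree $1$ and therefore factors uniquely as $p = p' b p''$ with $b$ the unique degree-$1$ arrow of $p$ and $p', p''$ products of degree-$0$ arrows (possibly empty). The bimodule compatibility of $f_1$ then yields
\begin{equation*}
\tilde f(p) \;=\; f_0(\bar{p'}) \, f_1(\bar b) \, f_0(\bar{p''}) \;=\; f_1(\bar{p'} \cdot \bar b \cdot \bar{p''}) \;=\; f_1(\bar p),
\end{equation*}
and summing over the paths appearing in $r_j$ gives $\tilde f(r_j) = f_1(\bar{r_j}) = 0$, since $r_j$ maps to zero in $A_1$.

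Thus $\tilde f$ factors through $A = KQ/I$, producing the desired $f \colon A \to B$ with $f|_{A_0} = f_0$ and $f|_{A_1} = f_1$. Uniqueness is automatic: any algebra homomorphism on $A$ is determined by its values on the images of the vertices and arrows of $Q$, and these generators all lie in $A_0 \cup A_1$. I expect no serious obstacle in executing this plan; the only step requiring genuine care is the degree-$1$ computation above, which is precisely where the grading hypothesis — that every relation is homogeneous of degree $0$ or $1$ — is used, for it is this assumption that lets one translate path multiplication in $KQ$ into the $A_0$-bimodule action on $A_1$ via $f_0$ and $f_1$.
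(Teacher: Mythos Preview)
Your proposal is correct and follows exactly the route the paper indicates: the paper's proof consists solely of the sentence ``Now Lemmas~\ref{bimodule1},\ref{bimodule2} yield the following result,'' so the entire content is to verify that $A$ is freely generated by $A_1$ over $A_0$ and then invoke Lemma~\ref{bimodule2}. You have simply filled in the verification that the paper leaves implicit, and your argument for it---lifting to $KQ$ via the universal property, checking the degree-$0$ and degree-$1$ relations separately using that $f_0$ is multiplicative and $f_1$ is an $A_0$-bimodule map---is the natural one and is carried out correctly.
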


As before, let $H = H(C,D,\Omega)$.
Define
\[
S := \prod_{i=1}^n H_i
\text{\;\;\; and \;\;\;}
B := \bigoplus_{(i,j) \in \Omega} {_i}H_j.
\]
Clearly, $B$ is an $S$-$S$-bimodule.

\begin{Prop}\label{prop:tensor1}
$H \cong T_S(B)$.
\end{Prop}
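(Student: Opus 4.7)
The plan is to apply Proposition~\ref{bimodule3} directly. I would first equip the path algebra $KQ$ of $Q = Q(C,\Omega)$ with the $\N$-grading determined by the degree function
\[
w(\vep_i) := 0 \quad (1 \le i \le n), \qquad w(\alpha_{ij}^{(g)}) := 1 \quad ((i,j) \in \Omega,\ 1 \le g \le g_{ij}),
\]
extended to paths by additivity. Then I would check that both families of defining relations of $H$ are homogeneous: the nilpotency relations $\vep_i^{c_i}$ in (H1) have degree $0$, while the commutativity relations $\vep_i^{f_{ji}}\alpha_{ij}^{(g)} - \alpha_{ij}^{(g)}\vep_j^{f_{ij}}$ in (H2) have degree $1$. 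Hence the ideal $I$ is homogeneous, and $H$ inherits the $\N$-grading with components $H = \bigoplus_{k \ge 0} H_k$, where $H_k$ is spanned by (the classes of) paths containing exactly $k$ arrows of the form $\alpha_{ij}^{(g)}$.

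The next step is to identify $H_0$ with $S$ and $H_1$ with $B$ as $S$-$S$-bimodules. For $H_0$, the degree-zero part of $KQ$ is $\bigoplus_{i=1}^n K[\vep_i]$, and the only degree-zero relations are the $\vep_i^{c_i} = 0$; so $H_0 = \bigoplus_{i=1}^n K[\vep_i]/(\vep_i^{c_i}) = \prod_i H_i = S$. For $H_1$, a basis of the degree-one part of $KQ$ is given by paths of the form $\vep_i^a \alpha_{ij}^{(g)} \vep_j^b$ with $(i,j)\in\Omega$, $1\le g \le g_{ij}$, $a,b \ge 0$; after imposing (H1) and (H2), the description of ${_i}H_j$ in Section~\ref{sec5.1} shows precisely that the summand of $H_1$ at $(i,j)$ is isomorphic to ${_i}H_j$ as an $H_i$-$H_j$-bimodule. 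Summing over $(i,j)\in\Omega$ gives $H_1 \cong B$ as $S$-$S$-bimodules.

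With these identifications in place, Proposition~\ref{bimodule3} applied to the presentation $H = KQ/I$ with $l$ taken so that (H1) accounts for the degree-zero relations and (H2) for the degree-one relations yields $H \cong T_{H_0}(H_1) \cong T_S(B)$, which is the claim.

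The only genuinely delicate point is the identification of $H_1$ with $B$: one has to verify that the relations (H2), viewed inside the bimodule generated in degree $1$, cut out exactly the bimodule $\bigoplus_{(i,j)\in\Omega} {_i}H_j$ and introduce no further coincidences between paths of the form $\vep_i^a \alpha_{ij}^{(g)}\vep_j^b$. This is where one uses that the explicit $K$-bases $\{\vep_i^f \alpha_{ij}^{(g)} \vep_j^{f'} \mid 0 \le f < f_{ji},\ 0 \le f' < f_{ij},\ 1 \le g \le g_{ij}\}$ for ${_i}H_j$ recorded in Section~\ref{sec5.1} are genuinely $K$-linearly independent in $H$; this independence was already established in the construction of the projective modules in Section~\ref{sec3.1}. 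Everything else is formal from the universal property in Lemmas~\ref{bimodule1} and \ref{bimodule2}.
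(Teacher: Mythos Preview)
Your proposal is correct and follows essentially the same approach as the paper: define the $\N$-grading by $\deg(\vep_i)=0$ and $\deg(\alpha_{ij}^{(g)})=1$, observe that the relations (H1) and (H2) are homogeneous of degrees $0$ and $1$ respectively, identify $H_0=S$ and $H_1=B$, and apply Proposition~\ref{bimodule3}. Your additional care about the identification $H_1\cong B$ is not really needed, since in the paper ${_i}H_j$ is \emph{defined} as a subspace of $H$ (see Section~\ref{sec5.1}), so $B$ is literally the degree-one part of $H$ by construction.
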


\begin{proof}
The algebra $H$ is graded by defining $\deg(\vep_i) := 0$ and
$\deg(\alpha_{ij}^{(g)}) := 1$ for all $(i,j) \in \Omega$ and
all $g$.
The defining relations for $H$ are homogeneous,
$S$ is the subalgebra of elements of degree $0$, and
$B$ is the subspace of elements of degree $1$.
Now we can apply Proposition~\ref{bimodule3}.
\end{proof}

Let $\Pi = \Pi(C,D,\Omega)$ be the preprojective algebra.
Define $\deg(\vep_i) := 0$ for all $i$, and for $(i,j) \in \Omega$ let
$\deg(\alpha_{ij}^{(g)}) := 0$ and $\deg(\alpha_{ji}^{(g)}) := 1$
for all $g$.
Let 
\[
\Pi_1 := \Pi(C,D,\Omega)_1
\] 
be the subspace of $\Pi$ consisting of the elements of 
degree $1$.
Note that $\Pi_1$ is an $H$-$H$-bimodule.
Again we can apply Proposition~\ref{bimodule3} and get the following
result.

\begin{Prop}\label{prop:7.5}
$\Pi \cong T_H(\Pi_1)$.
\end{Prop}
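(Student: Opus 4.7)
The plan is to apply Proposition~\ref{bimodule3} directly, in complete analogy with the proof of Proposition~\ref{prop:tensor1}. For this it suffices to check that each defining relation of $\Pi$ is homogeneous of degree $0$ or $1$ with respect to the stated grading on $K\overline{Q}$, and that the resulting degree-$0$ subalgebra $\Pi_0$ is canonically isomorphic to $H$.

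I would first verify homogeneity of the three families of defining relations. The nilpotency relation (P1) involves only the $\vep_i$ and is homogeneous of degree $0$. A commutativity relation (P2) $\vep_i^{f_{ji}}\alpha_{ij}^{(g)} = \alpha_{ij}^{(g)}\vep_j^{f_{ij}}$ for $(i,j) \in \overline{\Omega}$ is homogeneous of degree $0$ when $(i,j) \in \Omega$ and of degree $1$ when $(i,j) \in \Omega^*$, since in the latter case $\alpha_{ij}^{(g)}$ has degree $1$ while the $\vep$'s are of degree $0$. For the mesh relation (P3) at a vertex $i$, each summand $\vep_i^f \alpha_{ij}^{(g)}\alpha_{ji}^{(g)}\vep_i^{f_{ji}-1-f}$ contains a product $\alpha_{ij}^{(g)}\alpha_{ji}^{(g)}$ in which exactly one of the two arrows lies in $\Omega^*$ (depending on whether $(i,j)\in\Omega$ or $(j,i)\in\Omega$), so every summand, and hence (P3) itself, is homogeneous of degree $1$.

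Since $\overline{I}$ is thus generated by homogeneous elements, it is a graded ideal, and $\Pi$ inherits an $\N$-grading. The degree-$0$ part of $K\overline{Q}$ is the path algebra of the subquiver $Q = Q(C,\Omega)$ of $\overline{Q}$ obtained by deleting the arrows of degree $1$ but keeping the loops $\vep_i$, and the degree-$0$ relations among the generators of $\overline{I}$ are exactly (P1) and the instances of (P2) with $(i,j) \in \Omega$; these coincide with the defining relations (H1), (H2) of $H$. Hence $\Pi_0 \cong KQ/I = H$, so $\Pi_1$ is naturally an $H$-$H$-bimodule, and Proposition~\ref{bimodule3} yields $\Pi \cong T_{\Pi_0}(\Pi_1) = T_H(\Pi_1)$. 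The one subtle point is this last identification $\Pi_0 \cong H$: one must check that the degree-$1$ generators of $\overline{I}$ do not impose any new degree-$0$ relations on $K\overline{Q}$, which is exactly what is guaranteed by $\overline{I}$ being a graded ideal with generators of degrees $0$ and $1$.
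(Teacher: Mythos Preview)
Your proof is correct and follows exactly the approach the paper intends: the paper's entire proof is the single sentence ``Again we can apply Proposition~\ref{bimodule3}'' preceding the statement, and you have spelled out precisely the verifications needed to make that application legitimate (homogeneity of (P1)--(P3) in degrees $0$ or $1$, and the identification $\Pi_0\cong H$). Your final remark about the degree-$1$ generators not imposing extra degree-$0$ relations is the right observation and is indeed automatic once the ideal is graded.
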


Define
\[
\overline{B} := \bigoplus_{(i,j) \in \overline{\Omega}} {_i}H_j.
\]
Next, for $1 \le i \le n$ let
\[
\rho_i := \sum_{j \in \overline{\Omega}(-,i)} \sgn(i,j)\sum_{b \in {_j}L_i}
b^* \otimes_j b \in T_S(\ov{B}).
\]
Every $b \in {_j}L_i$ is of the form $b = \alpha_{ji}^{(g)}\vep_i^{f_{ji}-1-f}$ 
for some $0 \le f \le f_{ji}-1$ and $1 \le g \le g_{ij}$.
Then $b^* \in {_i}R_j$ is equal to $\vep_i^f\alpha_{ij}^{(g)}$.
Thus $\rho_i$ translates to the defining relation (P3)
\[
\sum_{j\in \ov{\Omega}(-,i)} \sum_{g=1}^{g_{ij}} \sum_{f=0}^{f_{ji}-1} \sgn(i,j)
\vep_i^f\alpha_{ij}^{(g)} \alpha_{ji}^{(g)} \vep_i^{f_{ji}-1-f} = 0.
\]
of $\Pi$.

The algebra $T_S(\overline{B})/(\rho_1,\ldots,\rho_n)$ is an
analogue of Dlab and Ringel's \cite{DR3} 
definition of a preprojective algebra
of a modulated graph.

\begin{Prop}
$\Pi \cong T_S(\overline{B})/(\rho_1,\ldots,\rho_n)$.
\end{Prop}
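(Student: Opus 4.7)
The plan is to mimic the proof of Proposition~\ref{prop:tensor1}, but applied to the double quiver $\overline{Q}$ and the smaller ideal generated by relations (P1) and (P2) only. Concretely, I would put on $K\overline{Q}$ the $\N$-grading defined by $\deg(\vep_i) := 0$ for all $i$ and $\deg(\alpha_{ij}^{(g)}) := 1$ for every arrow $\alpha_{ij}^{(g)}$ with $(i,j) \in \overline{\Omega}$, $1 \le g \le g_{ij}$. With respect to this grading, relation (P1) is homogeneous of degree $0$ and relation (P2) is homogeneous of degree $1$, so the quotient
\[
A := K\overline{Q}/(\text{P1},\text{P2})
\]
inherits an $\N$-grading and Proposition~\ref{bimodule3} applies, yielding $A \cong T_{A_0}(A_1)$, where $A_d$ denotes the $d$-homogeneous part of $A$.

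Next I would identify $A_0$ and $A_1$ explicitly. The degree-zero paths involve only the loops $\vep_i$, and the only relation among them in $A$ is $\vep_i^{c_i}=0$, whence
\[
A_0 = \prod_{i=1}^n K[\vep_i]/(\vep_i^{c_i}) = \prod_{i=1}^n H_i = S.
\]
The degree-one part $A_1$ is the $S$-$S$-sub-bimodule spanned by monomials $\vep_i^a \alpha_{ij}^{(g)} \vep_j^b$ with $(i,j) \in \overline{\Omega}$, subject only to (P2). Comparing with the explicit description of ${_i}H_j = H_i\,\Span_K(\alpha_{ij}^{(g)})\,H_j$ from Section~\ref{sec5.1}, whose defining commutativity rule is precisely (P2) and which admits the free left $H_i$-basis displayed there, one recognizes $A_1$ as exactly $\overline{B} = \bigoplus_{(i,j) \in \overline{\Omega}} {_i}H_j$ as $S$-$S$-bimodules. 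Hence $A \cong T_S(\overline{B})$.

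Finally, I would transport relation (P3) across this isomorphism. Using the identifications $b = \alpha_{ji}^{(g)}\vep_i^{f_{ji}-1-f}$ for $b \in {_j}L_i$ and $b^* = \vep_i^f\alpha_{ij}^{(g)} \in {_i}R_j$ recorded in Section~\ref{sec5.1} (and already spelled out in the paragraph preceding the statement), a direct expansion shows that the element
\[
\rho_i = \sum_{j \in \overline{\Omega}(-,i)} \sgn(i,j) \sum_{b \in {_j}L_i} b^* \otimes_j b \;\in\; T_S(\overline{B})
\]
corresponds under $A \cong T_S(\overline{B})$ to the left-hand side of relation (P3) at the vertex $i$. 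Since $\Pi = K\overline{Q}/\overline{I}$ is obtained from $A$ by further factoring out the two-sided ideal generated by the mesh relations (P3), passing to quotients on both sides gives $\Pi \cong T_S(\overline{B})/(\rho_1,\ldots,\rho_n)$, as required. The only genuinely delicate point I foresee is the identification $A_1 \cong \overline{B}$: one must check that no relation beyond (P2) is forced between degree-one monomials $\vep_i^a\alpha_{ij}^{(g)}\vep_j^b$. This, however, is immediate from the explicit $H_i$-basis $\{\vep_i^f\alpha_{ij}^{(g)} \mid 0 \le f \le f_{ji}-1,\ 1 \le g \le g_{ij}\}$ of $({_i}H_j)_{H_j}$ recorded in Section~\ref{sec5.1}.
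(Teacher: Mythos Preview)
Your proof is correct and shares its first step with the paper's: both identify $K\overline{Q}/(\text{P1},\text{P2})$ with $T_S(\overline{B})$ via Proposition~\ref{bimodule3}, exactly as in the proof of Proposition~\ref{prop:tensor1}. From there, however, the paper takes a module-theoretic detour: it computes the action of $\rho_i$ on an arbitrary $T_S(\overline{B})$-module $M$, rewrites $\rho_i m = 0$ as $(M_{i,\inn}\circ M_{i,\out})(m)=0$, and then invokes Proposition~\ref{prop3.3} to identify this with $\rep(\Pi)$. Your route is more direct: you simply transport the generator $\rho_i$ across the already-established algebra isomorphism and recognise it as the left-hand side of (P3), a computation the paper has in fact already carried out in the paragraph immediately preceding the statement. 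Your argument thereby bypasses Proposition~\ref{prop3.3} entirely and yields the algebra isomorphism on the nose, whereas the paper's phrasing via equality of module categories implicitly relies on the same element-level identification to pass from ``same representations'' to ``same ideal''. Both are valid; yours is the more streamlined of the two.
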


\begin{proof}
Similarly as in the proof of Proposition~\ref{prop:tensor1} one shows
that $T_S(\ov{B})$ is isomorphic to the path algebra
$K\ov{Q}$ modulo the defining relations (P1) and (P2) of $\Pi$.

Let $M$ be a module over the tensor algebra
$T_S(\overline{B})$. 
Then $M$ is defined by the structure maps
\[
M_{ij}\df {_i}H_j \otimes_j M_j \to M_i
\]
for each $(i,j) \in \overline{\Omega}$.
This yields maps
\[
M_{iji} := M_{ij} \circ (\id_{{_i}H_j} \otimes_j M_{ji})\df 
{_i}H_j \otimes_j {_j}H_i \otimes_i M_i \to M_i.
\]
Now $M$ is a module over $T_S(\overline{B})/(\rho_1,\ldots,\rho_n)$ if and only
if 
for each vertex $i$ and each $m \in M_i$ we have
$\rho_im = 0$.
This is equivalent to 
\[
\sum_{j \in \overline{\Omega}(-,i)} \sgn(i,j)M_{iji}(\sum_{b\in {_j}L_i} b^* \otimes_j b \otimes_i m) 
= 0.
\]
It follows from the definitions that
\[
\sum_{j \in \overline{\Omega}(-,i)} \sgn(i,j)M_{iji}(\sum_{b\in {_j}L_i} b^* \otimes_j b \otimes_i m) =
(M_{i,\inn} \circ M_{i,\out})(m).
\]
Now Proposition~\ref{prop3.3} yields the result. 
\end{proof}


\section{Projective resolutions of $H$-modules}
\label{newsec7}


\begin{Prop}\label{prop:resolH}
We have a short exact sequence
of $H$-$H$-bimodules
\begin{equation}
P_\bullet: \;\;\;\;\;\;
0 \to \bigoplus_{(j,i) \in \Omega} He_j \otimes_j {_j}H_i \otimes_i e_iH
\xrightarrow{d} \bigoplus_{k=1}^n He_k \otimes_k e_kH 
\xrightarrow{{\rm mult}} H \to 0
\end{equation}
where 
\[
d(p \otimes_j h \otimes_i q) := ph \otimes_i q - p \otimes_j hq.
\]
\end{Prop}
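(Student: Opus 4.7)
The plan is to recognize $P_\bullet$ as the canonical bimodule resolution
\[
0 \to T \otimes_S B \otimes_S T \xrightarrow{d} T \otimes_S T \xrightarrow{\mu} T \to 0
\]
of the tensor algebra $T = T_S(B)$, and then to prove exactness of this resolution. By Proposition~\ref{prop:tensor1} we have $H \cong T_S(B)$ with $S = \prod_{k=1}^n H_k$ and $B = \bigoplus_{(i,j) \in \Omega} {_i}H_j$. Using the orthogonal idempotent system $\{e_k\}$ of $S$, together with $e_i \cdot {_i}H_j = {_i}H_j = {_i}H_j \cdot e_j$, tensor products over $S$ split as
\[
H \otimes_S H \cong \bigoplus_k He_k \otimes_k e_kH, \qquad H \otimes_S B \otimes_S H \cong \bigoplus_{(i,j) \in \Omega} He_i \otimes_i {_i}H_j \otimes_j e_jH,
\]
the latter matching the stated source of $d$ after the index swap $(i,j) \mapsto (j,i)$. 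The formula for $d$ in the proposition and the multiplication map then translate exactly to the standard tensor-algebra differentials $t_1 \otimes b \otimes t_2 \mapsto t_1 b \otimes t_2 - t_1 \otimes b t_2$ and $t_1 \otimes t_2 \mapsto t_1 t_2$.

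The easy verifications are immediate: $\mu \circ d = 0$ by direct computation $(phq - phq = 0)$, and $\mu$ is surjective since every $a \in H$ equals $\sum_k \mu(ae_k \otimes_k e_k)$.

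The substantive step is exactness in the middle together with injectivity of $d$. For this I would exploit the natural $\N$-grading on $T$ with $\deg(S) = 0$ and $\deg(B) = 1$, under which the entire complex is graded with homogeneous differentials. In degree $n \geq 1$ it reduces, via $B^{\otimes_S p} \otimes_S B^{\otimes_S q} \cong B^{\otimes_S(p+q)}$, to
\[
0 \to \bigoplus_{\substack{p+q = n-1 \\ p,q \ge 0}} B^{\otimes n} \xrightarrow{d_n} \bigoplus_{\substack{p+q = n \\ p,q \ge 0}} B^{\otimes n} \xrightarrow{\mu_n} B^{\otimes n} \to 0,
\]
where $\mu_n$ sums all components and $d_n$ sends the $(p,q)$-summand to the identity into the $(p+1,q)$-summand minus the identity into the $(p,q+1)$-summand. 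Given $(v_0, \ldots, v_n) \in \ker(\mu_n)$, i.e.\ with $\sum_p v_p = 0$, the element $w_p := -(v_0 + \cdots + v_p)$ for $0 \le p \le n-1$ is an explicit preimage under $d_n$; and $d_n$ is injective by inspection (reading off $w_0$ from the $(0,n)$-component, then $w_1$ from the $(1,n-1)$-component, and so on). The degree-zero piece is trivially $0 \to 0 \to S \to S \to 0$. The main obstacle is the bookkeeping: carefully matching the formula for $d$ in the proposition with the abstract tensor-algebra differential and confirming that the $S$-tensor products really decompose summand-by-summand as claimed, rather than any deeper homological input.
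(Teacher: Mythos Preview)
Your proposal is correct and follows the same strategy as the paper: identify $H$ with the tensor algebra $T_S(B)$ via Proposition~\ref{prop:tensor1}, and recognize $P_\bullet$ as the standard bimodule resolution $0 \to T \otimes_S B \otimes_S T \to T \otimes_S T \to T \to 0$. The only difference is that the paper dispatches the exactness of this sequence by citing Schofield \cite[Theorems~10.1 and 10.5]{Sch}, whereas you supply a direct graded argument. Your degree-by-degree reduction to a purely combinatorial complex of copies of $B^{\otimes n}$, with the explicit preimage $w_p = -(v_0 + \cdots + v_p)$ and the triangular injectivity check, is valid and makes the proof entirely self-contained; the trade-off is a little more bookkeeping in exchange for avoiding an external reference.
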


\begin{proof}
We know that $H = T_S(B)$.
The sequence $P_\bullet$ is isomorphic to 
the sequence
\[
0 \to H \otimes_S B \otimes_S H \xrightarrow{d} H \otimes_S H \xrightarrow{{\rm mult}} H
\to 0
\]
of $H$-$H$-bimodules,
where
$d(h \otimes b \otimes h') := (hb \otimes h' - h \otimes bh')$.
Now the statement follows from
\cite[Theorems~10.1 and 10.5]{Sch}.
\end{proof}

The components of $P_\bullet$ are projective as left $H$-modules and as right 
$H$-modules.
However, the components are not projective as $H$-$H$-bimodules.
(Except, if $S$ is semisimple, then the first two components are in fact
projective bimodules.)
In any case, viewed as a short exact sequence of left or right modules, 
$P_\bullet$ splits as an exact sequence of projective modules.

\begin{Cor}\label{cor:resolHM}
If $M \in \rep_\vp(H)$, then $P_\bullet \otimes_H M$ is a projective
resolution of $M$.
Explicitly, $P_\bullet \otimes_H M$ looks as follows
\begin{equation}
0 \to
\bigoplus_{(j,i) \in \Omega} H e_j \otimes_j {_j}H_i \otimes_i M_i
\xrightarrow{d \otimes M}
\bigoplus_{k=1}^n He_k \otimes_k M_k \xrightarrow{{\rm mult}} M \to 0
\end{equation}
where
\[
(d \otimes M)(p \otimes_j h \otimes_i m) = ph \otimes_i m - p \otimes_j 
M_{ji}(h \otimes_i m).
\]
(Here $M_{ji}\df {_j}H_i \otimes_i M_i \to M_j$ is the $H_j$-linear structure map
of $M$ associated with $(j,i) \in \Omega$.)
\end{Cor}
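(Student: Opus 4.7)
The plan is to apply the right exact functor $-\otimes_H M$ to the bimodule short exact sequence $P_\bullet$ from Proposition~\ref{prop:resolH} and verify two points: (i) exactness is preserved, and (ii) the first two terms become projective left $H$-modules. Identification of the terms is routine: by associativity of tensor products together with the isomorphism $e_iH\otimes_H M \cong e_iM = M_i$ (of left $H_i$-modules), one gets
\[
(He_k \otimes_k e_kH)\otimes_H M \cong He_k\otimes_k M_k,\qquad
(He_j\otimes_j {_j}H_i\otimes_i e_iH)\otimes_H M \cong He_j\otimes_j {_j}H_i\otimes_i M_i.
\]
Applying $d$ to an elementary tensor $p\otimes_j h\otimes_i q$ gives $ph\otimes_i q - p\otimes_j hq$; after tensoring with $m\in M$ and using that the action of $hq$ on $m$ is $h\cdot(qm) = M_{ji}(h\otimes_i qm)$, we obtain precisely the stated formula for $d\otimes M$.

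For exactness, I would invoke the remark immediately preceding the corollary: as a short exact sequence of \emph{right} $H$-modules, $P_\bullet$ splits (indeed, its terms are projective right $H$-modules). A right-split short exact sequence of right $H$-modules remains exact after applying $-\otimes_H M$ for any left $H$-module $M$, which gives the desired short exact sequence. Note that one cannot simply invoke flatness of the bimodule components, because they are in general not projective as bimodules; the right-module splitting is the essential point.

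For projectivity, this is where the hypothesis $M\in\rep_\vp(H)$ enters. Since each $M_k$ is a free $H_k$-module, say of rank $r_k$, we have $He_k\otimes_k M_k \cong (He_k)^{r_k}$, which is projective in $\rep(H)$. For the other term, recall from Section~\ref{sec5.1} that ${_j}H_i$ is free as a left $H_j$-module of rank $|c_{ji}|$, and $M_i$ is free of rank $r_i$ as a right $H_i$-module; hence
\[
He_j\otimes_j {_j}H_i\otimes_i M_i \;\cong\; (He_j)^{\,|c_{ji}|\, r_i},
\]
again projective. Combined with (i), this shows that $P_\bullet\otimes_H M$ is a projective resolution of $M$ of length at most one, consistent with Proposition~\ref{vpdim}.

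The main obstacle is really just the right-module splitting of $P_\bullet$; once that is used, everything else is a matter of carefully unravelling the tensor products and applying local freeness of $M$ to convert the bimodule factors into free modules over the relevant $H_i$.
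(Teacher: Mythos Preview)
Your proof is correct and follows essentially the same route as the paper: the paper's argument is exactly that $P_\bullet$ splits as a sequence of right $H$-modules (so exactness is preserved under $-\otimes_H M$), and then that local freeness of $M$ makes $e_kM$ and ${_j}H_i\otimes_i M_i$ free over $H_k$ and $H_j$, whence the components $He_k\otimes_k M_k$ and $He_j\otimes_j {_j}H_i\otimes_i M_i$ are projective. One small slip: you write ``$M_i$ is free of rank $r_i$ as a right $H_i$-module'', but $M_i$ is a left $H_i$-module; since $H_i$ is commutative this is harmless, and the computation ${_j}H_i\otimes_i M_i \cong ({_j}H_i)^{r_i} \cong H_j^{|c_{ji}|r_i}$ goes through.
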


\begin{proof}
By the remarks above, $P_\bullet \otimes_H M$ is always exact.
If $M$ is locally free, then 
$e_kH \otimes_H M = e_kM$ and 
${_j}H_i \otimes_i e_iH \otimes_H M = {_j}H_i \otimes_i M_i$
are free $H_k$- resp. $H_j$-modules.
Thus the relevant components of $P_\bullet \otimes_H M$ are indeed
projective. 
\end{proof}


\section{The trace pairing}\label{newsec8}


\subsection{The trace pairing for homomorphisms between free $H_i$-modules}
\label{sec:trace1}
For each $i=1,2,\ldots, n$ we have the $K$-linear map 
\[
t_i^{\rm max}\df H_i \to K
\]
defined by 
\[
\sum_{j=0}^{c_i-1} \lambda_j\vep_i^j \mapsto \lambda_{c_i-1}.
\]
For  free $H_i$-modules $U$ and $V$, with $V$ finitely generated, 
the \emph{trace pairing}
is the non-degenerate, bilinear form
\[
\Tr := \Tr_{U,V}\df \Hom_{H_i}(U,V) \times \Hom_{H_i}(V,U) \to K
\]
defined by 
\[
(f,g) \mapsto t_i^{\rm max}(\operatorname{Tr}_{H_i}(f \circ g)).
\]
It induces an isomorphism
\[
\Hom_{H_i}(U,V) \ra \Du\Hom_{H_i}(V,U), f\mapsto \Tr(f,-).
\]
Note, that for $U=\bigoplus_{j\in J} H_i u_j$ and $V=\bigoplus_{k=1}^r H_i v_k$ we have
\begin{align*}
\Hom_{H_i}(U,V)&=\prod_{j\in J}\bigoplus_{k=1}^r\Hom_{H_i}(H_iu_j,H_iv_k) \text{ and}\\
\Du\Hom_{H_i}(V,U) &= \Du(\bigoplus_{j\in J}\bigoplus_{k=1}^r\Hom_{H_i}(H_i v_k, H_iu_j))\\&=
\prod_{j\in J}\bigoplus_{k=1}^r\Du\Hom_{H_i}(H_i v_k, H_iu_j).
\end{align*}
Let $W$ be another finitely generated free $H_i$-module.
The following lemma is easily verified:

\begin{Lem}\label{lem:dualmap}
For $f \in \Hom_{H_i}(V,W)$ the following diagram of natural morphisms
commutes:
\[\xymatrix@C+5ex{
 {\Hom_{H_i}(U,V)}\ar[r]^{\Hom(U,f)}\ar[d] &{\Hom_{H_i}(U,W)}\ar[d]\\
{\Du\Hom_{H_i}(V,U)} \ar[r]^{\Du\Hom(f,U)}     &{\Du\Hom_{H_i}(W,U)}
}\]
In other words, under the trace pairing the transpose of $\Hom_{H_i}(f,U)$ is
identified with $\Hom_{H_i}(U,f)$.
\end{Lem}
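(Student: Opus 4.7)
The plan is to chase an arbitrary element $g \in \Hom_{H_i}(U,V)$ around the square and reduce the required identity to the cyclic property of the $H_i$-trace. Going right then down, $g$ is first mapped to $f \circ g \in \Hom_{H_i}(U,W)$, and then to the linear form
$$h \mapsto \Tr(f \circ g, h) = t_i^{\max}(\Tr_{H_i}((f \circ g) \circ h))$$
on $\Hom_{H_i}(W,U)$; here $(f \circ g) \circ h \in \End_{H_i}(W)$. Going down then right, $g$ is sent to $\Tr(g,-)$ and then, since $\Du\Hom(f,U)$ is precomposition with $\Hom(f,U)\colon h \mapsto h \circ f$, to the linear form
$$h \mapsto \Tr(g, h \circ f) = t_i^{\max}(\Tr_{H_i}(g \circ (h \circ f)))$$
on $\Hom_{H_i}(W,U)$, where $g \circ (h \circ f) \in \End_{H_i}(V)$.

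Commutativity of the diagram therefore reduces to the equality
$$\Tr_{H_i}((f \circ g) \circ h) = \Tr_{H_i}(g \circ (h \circ f))$$
for all $g \in \Hom_{H_i}(U,V)$ and $h \in \Hom_{H_i}(W,U)$. I would introduce the auxiliary maps $\alpha := f \in \Hom_{H_i}(V,W)$ and $\beta := g \circ h \in \Hom_{H_i}(W,V)$. Associativity of composition identifies the left side with $\Tr_{H_i}(\alpha \circ \beta)$, where $\alpha \circ \beta \in \End_{H_i}(W)$, and the right side with $\Tr_{H_i}(\beta \circ \alpha)$, where $\beta \circ \alpha \in \End_{H_i}(V)$. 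These two traces coincide by the standard cyclic-trace identity for $H_i$-linear maps between finitely generated free $H_i$-modules, and applying the $K$-linear projection $t_i^{\max}$ yields the desired pairing identity.

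The argument is essentially bookkeeping, with the substantive content being the cyclic property of $\Tr_{H_i}$. The only point meriting a moment's care is the direction of composition (the paper's convention is right-to-left) and the observation that the relevant endomorphisms live on the finitely generated modules $V$ and $W$, so that their $H_i$-traces are defined, even though $U$ may be of infinite rank. This is precisely the setup under which the trace pairing was introduced, so no real obstacle arises.
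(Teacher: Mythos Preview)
Your proof is correct and is exactly the straightforward verification the paper has in mind; the paper itself omits the argument entirely, merely stating that the lemma ``is easily verified.'' Your care about $U$ possibly having infinite rank and the endomorphisms landing in $\End_{H_i}(V)$ and $\End_{H_i}(W)$ is well placed.
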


\subsection{Adjunction and trace pairing}
\label{sec:trace2}

Recall from Section~\ref{sec5.1} that for $(j,i)\in\overline{\Omega}$ we have 
isomorphisms of $H_i\text{-}H_j$-bimodules
\[
\ad_{ji}\df \Hom_{H_j}({_jH_i}, H_j) \ra \Hom_{H_i}(H_i,{_iH_j}),
\]
where we abbreviate ${_jH_i}\otimes_i H_i={_jH_i}$ and 
${_iH_j}\otimes_j H_j = {_iH_j}$.

\begin{Lem} \label{lem:adj-com}
The  diagram of natural isomorphisms
\[\xymatrix{
\Hom_{H_j}({_jH_i}, H_j)\ar[r]^{\ad_{ji}}\ar[d] &\Hom_{H_i}(H_i, {_iH_j})\ar[d]\\
\Du\Hom_{H_j}(H_j,{_jH_i})\ar[r]^{\Du(\ad_{ji})} & \Du\Hom_{H_i}({_iH_j},H_i)
}\]
with the vertical isomorphisms induced by the respective trace pairings, 
commutes.
\end{Lem}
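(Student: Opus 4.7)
The plan is to verify commutativity of the diagram by a direct computation, reducing to a Frobenius-type symmetry between $t_i^{\rm max}$ and $t_j^{\rm max}$ coupled with the bimodule structure of ${_j}H_i$. Since $H_i$ and $H_j$ are cyclic as modules over themselves (with generator $1$), for any $\varphi \in \End_{H_i}(H_i)$ or $\End_{H_j}(H_j)$ we have $\Tr_{H_i}(\varphi) = \varphi(1)$ and likewise for $H_j$. Consequently, if I chase $f \in \Hom_{H_j}({_j}H_i, H_j)$ through the diagram both ways and evaluate the two resulting functionals at an arbitrary $g \in \Hom_{H_i}({_i}H_j, H_i)$, the commutativity reduces to the scalar identity
\[
t_i^{\rm max}\!\bigl(g(\ad_{ji}(f)(1))\bigr) \;=\; t_j^{\rm max}\!\bigl(f(\ad_{ji}(g)(1))\bigr).
\]

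Next, using the explicit formulas from Section~\ref{sec5.1}, $\ad_{ji}(f)(1) = \sum_{b \in {_j}L_i} b^* \otimes_j f(b) \in {_i}H_j$, and symmetrically (applying the $(i,j)$-swapped adjunction) $\ad_{ji}(g)(1) = \sum_{a \in {_i}L_j} a^* \otimes_i g(a) \in {_j}H_i$, I expand both sides. By $K$-bilinearity in $(f,g)$ and the fact that $\rho$ and $\lambda$ are bimodule isomorphisms, it suffices to verify the identity when $f$ corresponds under $\rho$ to an element $y \in {_i}H_j$ and $g$ corresponds (under the $(i,j)$-swapped $\lambda$) to an element $x \in {_j}H_i$. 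The identification then turns the two sides into the same bilinear expression on ${_i}H_j \times {_j}H_i$, modulo the assertion
\[
t_i^{\rm max}\bigl(\lambda(y)(x)\bigr) \;=\; t_j^{\rm max}\bigl(\rho(y)(x)\bigr) \qquad \text{for all } y \in {_i}H_j,\ x \in {_j}H_i,
\]
which is an intrinsic Frobenius-type compatibility for the bimodule ${_j}H_i$ with respect to the symmetric Frobenius forms $t_i^{\rm max}$ on $H_i$ and $t_j^{\rm max}$ on $H_j$.

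This compatibility is then verified on basis vectors: for $x = \alpha_{ji}^{(g)}\vep_i^{f_2}\in {_j}L_i$ and $y = \vep_i^{f_3}\alpha_{ij}^{(g')}\in {_i}R_j$ (and more generally after allowing $H_j$-scalars on the right), both $\lambda(y)(x)$ and $\rho(y)(x)$ vanish unless $g=g'$ and the exponents are such that one obtains a ``top monomial'', in which case (H2) lets me rewrite $\vep_i^{f_{ji}}\alpha_{ij}^{(g)} = \alpha_{ij}^{(g)}\vep_j^{f_{ij}}$ and thereby match a top power $\vep_i^{c_i-1}$ with a top power $\vep_j^{c_j-1}$ up to the same nonzero scalar. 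Applying $t_i^{\rm max}$ on the $H_i$-side and $t_j^{\rm max}$ on the $H_j$-side then extracts the same coefficient.

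The main obstacle is precisely this last basis-level bookkeeping: $\lambda$ and $\rho$ pair dual basis vectors from different one-sided bases (${_j}L_i$ vs.\ ${_j}R_i$, and ${_i}L_j$ vs.\ ${_i}R_j$), so the reversed exponent convention $(f\mapsto f_{ji}{-}1{-}f)$ used in their definitions has to be tracked carefully, and the right $H_i$- or $H_j$-actions must be propagated through (H2) until the resulting element lies in the top degree with respect to the correct variable. Once this routine but careful calculation is carried out, the Frobenius compatibility, and hence the commutativity of the diagram, follows.
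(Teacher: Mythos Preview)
Your proposal is correct and takes essentially the same approach as the paper: both reduce commutativity of the diagram to the identity $\Tr_{H_i,{_iH_j}}(\ad_{ji}(\phi),\psi)=\Tr_{{_jH_i},H_j}(\phi,\ad_{ij}(\psi))$ and then verify it by an explicit basis-level computation using the formulas for $\rho$, $\lambda$, and $\ad_{ji}$ from Section~\ref{sec5.1}. Your extra conceptual layer (cyclicity of trace, reformulation as a Frobenius compatibility $t_i^{\rm max}(\lambda(y)(x))=t_j^{\rm max}(\rho(y)(x))$) is a pleasant way to organize the calculation, but it does not bypass the ``tedious'' coordinate check that the paper carries out directly; in particular your final paragraph is exactly that computation.
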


\begin{proof}
We have to show that for any $\phi \in \Hom_{H_j}({_jH_i},H_j)$ and 
$\psi \in \Hom_{H_i}({_iH_j},H_i)$ we have
\begin{equation}\label{eq:trace-id}
\Tr_{H_i,{_iH_j}}(\ad_{ji}(\phi),\psi)=
\Tr_{{_jH_i},H_j}(\phi,\ad_{ij}(\psi))
\end{equation}
To this end, write 
\begin{alignat*}{2}
\phi &=\sum_{g=1}^{g_{ij}} \sum_{k=0}^{f_{ji}-1} 
(\alp_{ji}^{(g)}\eps_i^k)^*\phi^{(g,k)} &\quad\text{with }
\phi^{(g,k)}&=\sum_{l=0}^{c_j-1} \phi^{(g,k)}_l \eps_j^l\in H_j \text{ and}\\
\psi &=\sum_{g=1}^{g_{ij}}\sum_{k'=0}^{f_{ij}-1} 
(\alp_{ij}^{(g)}\eps_j^{k'})^*\psi^{(g,k')} &\quad\text{with }
\psi^{(g,k')}&=\sum_{l'=0}^{c_i-1} \psi^{(g,k')}_{l'} \eps_i^{l'}\in H_i,\\
\end{alignat*}
where we use heavily the notation from Sections~\ref{def:HPi} and~\ref{sec5.1}.
Now a straightforward, though tedious, calculation shows that both sides 
of~\eqref{eq:trace-id} yield
\[
\sum_{g=1}^{g_{ij}} \sum_{k'=0}^{f_{ij}-1}\sum_{k=0}^{f_{ji}-1} \sum_{l=0}^{k_{ij}-1}
\psi^{(g,k')}_{k+f_{ji}l}\; \phi^{(g,k)}_{k'+f_{ij}(k_{ij}-1-l)}.
\]
\end{proof}

As a direct consequence we obtain the following:

\begin{Prop}\label{prop:adj1}
Let $M$ be a free $H_i$-module, and $N$ a finitely generated free $H_j$-module,
and denote by
$\Du(\ad_{ij})\colon 
\Du\Hom_{H_j}(N,{_j}H_i\otimes_i M) \to \Du\Hom_{H_i}({_i}H_j\otimes_j N, M)$ the 
transpose of $\ad_{ij}=\ad_{ij}(N,M)$. We have a commutative diagram
\[
\xymatrix{
{\Hom_{H_j}({_j}H_i\otimes_i M , N)}\ar[r]^{\ad_{ji}}\ar[d]&
{\Hom_{H_i}(M, {_i}H_j\otimes_j N)}\ar[d] \\
{\Du\Hom_{H_j}(N,{_j}H_i\otimes_i M)}  \ar[r]^{\Du(\ad_{ij})} & 
{\Du\Hom_{H_i}({_i}H_j\otimes_j N, M)}
}\]
where the vertical arrows are the isomorphisms induced by the trace pairings.
\end{Prop}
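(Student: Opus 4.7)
The plan is to reduce to the case $M = H_i$ and $N = H_j$, where the statement is precisely Lemma~\ref{lem:adj-com}. The reduction is based on the fact that all four isomorphisms in the diagram are natural (additively) in $M$ and $N$, so that the whole square decomposes as a direct sum/product of squares indexed by pairs of rank-one summands of $M$ and $N$.

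Concretely, I would fix free-basis decompositions $M = \bigoplus_{a \in A} H_i u_a$ and $N = \bigoplus_{b \in B} H_j v_b$ with $|B|<\infty$. Since $B$ is finite, each of the four corners of the diagram splits canonically; for instance
\[
\Hom_{H_j}({_j}H_i \otimes_i M,\, N) \;\cong\; \prod_{a \in A}\bigoplus_{b \in B} \Hom_{H_j}({_j}H_i, H_j),
\]
and analogously for the other three corners, with $\Du$ in the lower row exchanging $\bigoplus_A$ with $\prod_A$ while leaving the finite $B$-sum invariant. The horizontal maps $\ad_{ji}$ and $\Du(\ad_{ij})$ are natural in both arguments, hence correspond under these identifications to the direct product/sum of copies of the adjunction isomorphism for the rank-one pair $(H_i, H_j)$.

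For the vertical trace-pairing isomorphisms, the requisite componentwise compatibility is exactly the content of Lemma~\ref{lem:dualmap} (together with its symmetric variant obtained by reversing the roles of the two $\Hom$-arguments), applied to the canonical inclusions and projections for the summands $H_i u_a \subset M$ and $H_j v_b \subset N$. With these two naturality statements in hand, the square decomposes as an $A \times B$-indexed family of copies of the square of Lemma~\ref{lem:adj-com}, each of which commutes by that lemma, and the claim follows. The main subtlety, and the only step requiring real care, is the bookkeeping in the decomposition of the four corners --- in particular the commutation of $\Du$ with the $A$-indexed sum/product --- which is precisely where the finiteness of $N$ over $H_j$ is used.
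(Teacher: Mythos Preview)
Your proposal is correct and follows essentially the same approach as the paper: the paper's proof is a single sentence observing that the result follows from Lemma~\ref{lem:adj-com} because $\ad_{ji}$ and the trace-pairing isomorphisms are natural transformations between the relevant bifunctors on pairs of free modules. Your write-up simply spells out this naturality reduction in more detail.
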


\begin{proof}
In fact, this follows from Lemma~\ref{lem:adj-com} since $\ad_{ji}$ and the
isomorphisms induced by the trace pairing are in fact natural transformations
between the corresponding bifunctors defined on pairs of free modules.
\end{proof}


\section{Reflection functors}\label{sec8}


In this section, let $H = H(C,D,\Omega)$ and $\Pi = \Pi(C,D)$.

\subsection{Reflection functors for $\Pi$}\label{sec8.1}
We keep the notations of Section~\ref{Minn}.
Let $M \in \rep(\Pi)$.
Thus we have
$M = (M_i,M_{ij},M_{ji})$, where $(i,j)$ runs over $\Omega$, such that 
$M_{i,\inn} \circ M_{i,\out} = 0$ for each $i$.
Hence for every $i$, we have $M_{i,\out}(M_i) \subseteq \Ker(M_{i,\inn})$.

Generalizing the construction in \cite[Section~2.2]{BK1}, see also \cite{Bo}, we fix some
vertex $i$ and construct a new $\Pi$-module by replacing the diagram
\[
\widetilde{M_i} \xrightarrow{M_{i,\inn}} 
M_i \xrightarrow{M_{i,\out}} \widetilde{M_i}
\]
by
\[
\widetilde{M_i} \xrightarrow{\overline{M}_{i,\out}M_{i,\inn}} 
\Ker(M_{i,\inn}) \xrightarrow{\can} \widetilde{M_i}
\]
where $\ov{M}_{i,\out}\df M_i \to \Ker(M_{i,\inn})$ is induced by $M_{i,\out}$ and 
$\can$ is the canonical inclusion.
Gluing this new datum with the remaining part of $M$ gives
a new $\Pi$-module $\Sigma_i^+(M)$.

Similarly, replacing 
\[
\widetilde{M_i} \xrightarrow{M_{i,\inn}} 
M_i \xrightarrow{M_{i,\out}} \widetilde{M_i}
\]
by
\[
\widetilde{M_i} \xrightarrow{\can} 
\Coker(M_{i,\out})  \xrightarrow{M_{i,\out}\ov{M}_{i,\inn}} \widetilde{M_i}
\]
where $\ov{M}_{i,\inn}\df \Coker(M_{i,\out}) \to M_i$ is induced by $M_{i,\inn}$ 
and $\can$ is the canonical projection.
Gluing this new datum with the remaining part of $M$ gives
a new $\Pi$-module denoted by $\Sigma_i^-(M)$.

The above constructions are obviously functorial.
It is straightforward to show that $\Sigma_i^+$ is left exact,
and $\Sigma_i^-$ is right exact. 
Both functors are covariant, $K$-linear and additive.

The commutative diagram
\[
\xymatrix{
\widetilde{M_i} \ar@{=}[d]\ar[rr]^{\can}&& 
\Coker(M_{i,\out}) \ar[rr]^<<<<<<<<{M_{i,\out}\overline{M}_{i,\inn}}\ar[d]^{\overline{M}_{i,\inn}}&&
\widetilde{M_i} \ar@{=}[d]\\
\widetilde{M_i} \ar@{=}[d]\ar[rr]^{M_{i,\inn}} && M_i \ar[rr]^{M_{i,\out}}\ar[d]^{\overline{M}_{i,\out}} &&
\widetilde{M_i} \ar@{=}[d]\\
\widetilde{M_i} \ar[rr]^<<<<<<<<<{\overline{M}_{i,\out}M_{i,\inn}} && \Ker(M_{i,\inn}) \ar[rr]^\can &&
\widetilde{M_i} 
}
\]
of $H_i$-module homomorphisms
summarizes the situation and also shows the existence of canonical
homomorphisms $\Sigma_i^-(M) \to M \to \Sigma_i^+(M)$.

For $M \in \rep(\Pi)$ let $\sub_i(M)$ be the largest submodule
$U$ of $M$ such that $e_iU = U$, and let
$\soc_i(M)$ be the largest submodule $V$ of $M$ such that
$V$ is isomorphic to a direct sum of copies of $S_i$.
For example, we have $\sub_i(E_i) = E_i$ and $\soc_i(E_i) \cong S_i$.
Dually, let $\fac_i(M)$ be the largest factor module $M/U$ of
$M$ such that $e_i(M/U) = M/U$, and
let $\tp_i(M)$ be the largest factor module $M/V$ of $M$ such that
$M/V$ is isomorphic to a direct sum of copies of $S_i$.
All these constructions are functorial.

The proof of the following proposition follows almost word by word the
proof of Baumann and Kamnitzer \cite[Proposition~2.5]{BK1}, who deal with 
classical preprojective algebras associated with Dynkin quivers.
One difference is that we need to work with $\sub_i$ and $\fac_i$
instead of $\soc_i$ and $\tp_i$.

\begin{Prop}\label{prop:refl1}
For each $i$ the following hold:
\begin{itemize}

\item[(i)]
The pair $(\Sigma_i^-,\Sigma_i^+)$ is a pair of adjoint functors, i.e.
there is a functorial isomorphism
\[
\Hom_\Pi(\Sigma_i^-(M),N) \cong
\Hom_\Pi(M,\Sigma_i^+(N)).
\]

\item[(ii)]
The adjunction morphisms $\id \to \Sigma_i^+\Sigma_i^-$ and 
$\Sigma_i^-\Sigma_i^+ \to \id$ can be inserted in functorial short
exact sequences
\[
0 \to \sub_i \to \id \to \Sigma_i^+\Sigma_i^- \to 0
\]
and
\[
0 \to \Sigma_i^-\Sigma_i^+ \to \id \to \fac_i \to 0. 
\]
\end{itemize}
\end{Prop}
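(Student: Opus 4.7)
The plan is to follow the strategy of Baumann--Kamnitzer \cite[Proposition~2.5]{BK1} adapted to the present setting. The key observation is that $\Sigma_i^\pm$ only modify the $i$-th component of a representation, so via the dictionary of Proposition~\ref{prop3.3}, both $\Hom_\Pi(\Sigma_i^-(M),N)$ and $\Hom_\Pi(M,\Sigma_i^+(N))$ are described by tuples $(f_k)_{k\neq i}$ satisfying the $\Pi$-compatibilities at edges not touching $i$ (identical on both sides), together with an extra piece of data at vertex $i$ subject to compatibilities with the modified inn- and out-maps.

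For (i), I would show that both Hom-sets are naturally in bijection with the same set: the tuples $(f_k)_{k\neq i}$ satisfying the non-$i$ compatibilities for which the induced $\widetilde{f}_i\df\widetilde{M_i}\to\widetilde{N_i}$ satisfies the two equations
\[
N_{i,\inn}\circ\widetilde{f}_i\circ M_{i,\out}=0 \quad\text{and}\quad
\widetilde{f}_i\circ M_{i,\out}\circ M_{i,\inn}=N_{i,\out}\circ N_{i,\inn}\circ\widetilde{f}_i.
\]
On the $\Sigma_i^-$-side, the first equation says $N_{i,\inn}\widetilde{f}_i$ vanishes on $\Ima(M_{i,\out})$ and hence factors uniquely through $\pi\df\widetilde{M_i}\twoheadrightarrow\Coker(M_{i,\out})$ as $f_i\pi$, supplying the inn-compatibility of $f\df\Sigma_i^-(M)\to N$; the second, after pre-composition with $\pi$ and the identity $\overline{M}_{i,\inn}\pi=M_{i,\inn}$, is the out-compatibility. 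On the $\Sigma_i^+$-side the roles of $\pi$ and of $\can\df\Ker(N_{i,\inn})\hookrightarrow\widetilde{N_i}$ are swapped: the first equation forces $\widetilde{f}_i\circ M_{i,\out}$ to factor uniquely through $\can$ as $\can\circ g_i$, giving the out-compatibility of $g\df M\to\Sigma_i^+(N)$, while the second (post-composed with $\can$ and using $\can\circ\overline{N}_{i,\out}=N_{i,\out}$) is the inn-compatibility. Since $\pi$ is epic and $\can$ is monic, these implications are reversible, and the bijection is natural in $M$ and $N$.

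For (ii) I would read off the unit and counit from this bijection. Tracing $\id_{\Sigma_i^-(M)}$ through the previous paragraph yields the unit $\eta_M\df M\to\Sigma_i^+\Sigma_i^-(M)$ which is the identity on $k\neq i$, and at vertex $i$ equals the surjection
\[
M_i\twoheadrightarrow\Ima(M_{i,\out})=\Ker(\pi)=\Ker(\Sigma_i^-(M)_{i,\inn})=\Sigma_i^+\Sigma_i^-(M)_i
\]
induced by $M_{i,\out}$. Hence the kernel of $\eta_M$ is zero off $i$ and equals $\Ker(M_{i,\out})$ at $i$. I would then identify $\sub_i(M)_i=\Ker(M_{i,\out})$ as follows: a submodule $U\subseteq M$ satisfies $e_iU=U$ iff $U_k=0$ for $k\neq i$, which by the submodule property forces $M_{i,\out}(U_i)=0$; conversely every $H_i$-submodule $V\subseteq\Ker(M_{i,\out})$ is stable. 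This yields the first short exact sequence, and naturality is transparent. The second sequence follows by the dual argument: the counit $\Sigma_i^-\Sigma_i^+(M)\to M$ is the inclusion $\Ima(M_{i,\inn})\hookrightarrow M_i$ at vertex $i$, with cokernel $\Coker(M_{i,\inn})=\fac_i(M)_i$.

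The main obstacle is the symmetric bookkeeping in the second paragraph: one must verify that a single pair of conditions on $\widetilde{f}_i$ controls both adjunction sides, using the factorizations $M_{i,\inn}=\overline{M}_{i,\inn}\circ\pi$ and $N_{i,\out}=\can\circ\overline{N}_{i,\out}$ together with the relations $N_{i,\inn}N_{i,\out}=0=M_{i,\inn}M_{i,\out}$ coming from (P3). The only point at which the argument departs from Baumann--Kamnitzer is the correct identification of the kernel/cokernel of the unit/counit as $\sub_i(M)$ and $\fac_i(M)$ (rather than the simple-module analogues $\soc_i$ and $\tp_i$ used in the classical case), which reflects the fact that here $H_i=K[\vep_i]/(\vep_i^{c_i})$ is a truncated polynomial ring instead of the ground field $K$.
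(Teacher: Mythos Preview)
Your proposal is correct and follows essentially the same approach as the paper's proof. The only cosmetic difference is that in part~(i) you factor the adjunction bijection through a common ``middle'' description of both Hom-sets (tuples $(f_k)_{k\neq i}$ with $\widetilde{f}_i$ satisfying two equations), whereas the paper constructs the bijection directly by a diagram chase producing the unique $g_i$ (respectively $f_i$) making the relevant square commute; for part~(ii) your computation of the unit/counit and the identifications $\sub_i(M)_i=\Ker(M_{i,\out})$, $\fac_i(M)_i=\Coker(M_{i,\inn})$ match the paper exactly.
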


\begin{proof}
To establish (i), it is enough to define a pair of mutually inverse bijections between $\Hom_\Pi(\Sigma_i^-(M),N)$ and
$\Hom_\Pi(M,\Sigma_i^+(N))$ 
for any $\Pi$-modules $M$ and $N$, which are functorial 
in $M$ and $N$. 
The construction looks as follows. 
Consider a morphism $f\df M \to \Sigma_i^+(N)$. 
By definition, this is a collection of $H_j$-module homomorphisms
\[
f_j\df M_j \to (\Sigma_i^+(N))_j
\] 
with $1 \le j \le n$ such that the diagram
\[
\xymatrix{
{_i}H_j \otimes_j M_j \ar[r]^{1 \otimes f_j}\ar[d]^{M_{ij}} &
{_i}H_j \otimes_j (\Sigma_i^+(N))_j \ar[d]^{(\Sigma_i^+(N))_{ij}}
\\
M_i \ar[r]^{f_i} & (\Sigma_i^+(N))_i
}
\]
commutes for all $(i,j) \in \ov{\Omega}$.
Recall that
\[
\wti{M}_i = \bigoplus_{j \in \ov{\Omega}(i,-)} {_i}H_j \otimes_j M_j.
\]
Set 
\[
\widetilde{f_i} := \bigoplus_{j \in \ov{\Omega}(i,-)} 1 \otimes f_j 
\df \widetilde{M_i} \to  
\widetilde{N_i}. 
\]
In the diagram
\[
\xymatrix@-0.8pc{
\widetilde{M_i} \ar[dd]^{\widetilde{f_i}}\ar[rrr]^{M_{i,\inn}} &&& 
M_i \ar[dd]^{f_i}\ar[rr]^{M_{i,\out}} && 
\widetilde{M_i} \ar[dd]^{\widetilde{f_i}}\ar[rr]^<<<<<<<{\pi} &&
\Coker(M_{i,\out}) \ar@{-->}[dd]^{g_i}\ar[rrr]^<<<<<<<<<<{M_{i,\out}\overline{M}_{i,\inn}} &&& \widetilde{M_i}\ar[dd]^{\widetilde{f_i}}
\\
&&&&&&&&&&&\\
\widetilde{N_i} \ar[rrr]^<<<<<<<<<<{\overline{N}_{i,\out}N_{i,\inn}} &&& 
\Ker(N_{i,\inn}) \ar[rr]^<<<<<<{\iota} && 
\widetilde{N_i} \ar[rr]^{N_{i,\inn}} &&
N_i \ar[rrr]^{N_{i,\out}} &&& \widetilde{N_i}
}
\]
the two left squares commute. 

There is thus a unique map $g_i$ making the third square commutative.
(Observe that $N_{i,\inn}\wti{f_i}M_{i,\out} = N_{i,\inn}\iota f_i = 0$.
Thus $N_{i,\inn}\wti{f_i}$ factors through the cokernel of $M_{i,\out}$.)

The fourth square also commutes. 
Thus if we set $g_j := f_j$ for all vertices $j \not= i$
we get a homomorphism $g\df \Sigma_i^-(M) \to N$.
Conversely, consider a homomorphism $g\df \Sigma_i^-(M) \to N$ and set
\[
\widetilde{g_i} := \bigoplus_{j \in \ov{\Omega}(-,i)} 1 \otimes g_j\df 
\widetilde{M_i} \to  
\widetilde{N_i}. 
\]
In the diagram
\[
\xymatrix@-0.8pc{
\widetilde{M_i} \ar[dd]^{\widetilde{g_i}}\ar[rrr]^{M_{i,\inn}} &&& 
M_i \ar@{-->}[dd]^{f_i}\ar[rr]^{M_{i,\out}} && 
\widetilde{M_i} \ar[dd]^{\widetilde{g_i}}\ar[rr]^<<<<<<<{\pi} &&
\Coker(M_{i,\out}) \ar[dd]^{g_i}\ar[rrr]^<<<<<<<<<<{M_{i,\out}\overline{M}_{i,\inn}} &&& \widetilde{M_i}\ar[dd]^{\widetilde{g_i}}
\\
&&&&&&&&&&&\\
\widetilde{N_i} \ar[rrr]^<<<<<<<<<<{\overline{N}_{i,\out}N_{i,\inn}} &&& 
\Ker(N_{i,\inn}) \ar[rr]^<<<<<<{\iota} && 
\widetilde{N_i} \ar[rr]^{N_{i,\inn}} &&
N_i \ar[rrr]^{N_{i,\out}} &&& \widetilde{N_i}
}
\]
the two right squares commute. 
Thus there is a unique map $f_i$ making the second square commutative. 
The first square then also commutes.
Thus if we set $f_j := g_j$ for all the vertices 
$j \not= i$, we get a morphism $f\df M \to \Sigma_i^+(N)$.
To establish (ii), one checks that $\Sigma_i^-\Sigma_i^+(M)$ is the $\Pi$-module obtained by replacing in $M$ the part
\[
\widetilde{M_i} \xrightarrow{M_{i,\inn}} 
M_i \xrightarrow{M_{i,\out}} \widetilde{M_i}
\]
with
\[
\widetilde{M_i} \xrightarrow{M_{i,\inn}} \Ima(M_{i,\inn}) \xrightarrow{M_{i,\out}} 
\widetilde{M_i}
\]
and that $\Sigma_i^+\Sigma_i^-(M)$ is the $\Pi$-module obtained by replacing in
$M$ the part 
\[
\widetilde{M_i} \xrightarrow{M_{i,\inn}} 
M_i \xrightarrow{M_{i,\out}} \widetilde{M_i}
\]
with
\[
\widetilde{M_i} \xrightarrow{M_{i,\inn}} \Ima(M_{i,\out}) \xrightarrow{M_{i,\out}} 
\widetilde{M_i}.
\]
It remains to observe that as vector spaces, $\fac_i(M) \cong \Coker(M_{i,\inn})$
and $\sub_i(M) \cong \Ker(M_{i,\out})$.
\end{proof}

For the following corollary, observe that $\sub_i(M) = 0$ if and only if
$\soc_i(M) = 0$. Dually, $\fac_i(M) = 0$ if and only if $\tp_i(M) = 0$.

\begin{Cor}\label{cor:refl2}
The functors $\Sigma_i^+\df \cT_i \to \cS_i$ and $\Sigma_i^-\df \cS_i \to \cT_i$
define inverse equivalences of the subcategories 
\[
\cT_i := \left\{ M \in \rep(\Pi) \mid \tp_i(M) = 0 \right\}
\]
and 
\[
\cS_i :=
\left\{ M \in \rep(\Pi) \mid \soc_i(M) = 0 \right\}.
\]
\end{Cor}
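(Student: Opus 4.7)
The plan is to deduce the corollary almost immediately from the two functorial short exact sequences of Proposition~\ref{prop:refl1}(ii), together with two identifications already recorded in (or just before) that result: (a) for any $N \in \rep(\Pi)$ one has $\sub_i(N) \cong \Ker(N_{i,\out})$ and $\fac_i(N) \cong \Coker(N_{i,\inn})$, and (b) $\sub_i(N) = 0 \Iff \soc_i(N) = 0$, and dually $\fac_i(N) = 0 \Iff \tp_i(N) = 0$ (any nonzero module supported at the single vertex $i$ has both $S_i$ as submodule and as quotient).

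First, I would check that the two functors actually land in the prescribed target subcategories — in fact unconditionally, without using any hypothesis on the input. By construction, the outgoing map at vertex $i$ of $\Sigma_i^+(M)$ is the canonical inclusion $\Ker(M_{i,\inn}) \hookrightarrow \widetilde{M_i}$, which is a monomorphism; via (a) this gives $\sub_i(\Sigma_i^+(M)) = 0$, hence $\Sigma_i^+(M) \in \cS_i$ by (b). Dually, the incoming map at $i$ of $\Sigma_i^-(M)$ is the canonical surjection $\widetilde{M_i} \twoheadrightarrow \Coker(M_{i,\out})$, so $\fac_i(\Sigma_i^-(M)) = 0$ and $\Sigma_i^-(M) \in \cT_i$.

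Next, I would feed objects from $\cS_i$ and $\cT_i$ into the two exact sequences. For $M \in \cS_i$, the vanishing $\sub_i(M) = 0$ collapses the first sequence to a natural isomorphism $M \xrightarrow{\sim} \Sigma_i^+ \Sigma_i^-(M)$. Dually, for $M \in \cT_i$, the vanishing $\fac_i(M) = 0$ collapses the second sequence to $\Sigma_i^- \Sigma_i^+(M) \xrightarrow{\sim} M$. Combining these natural isomorphisms with the targeting statement from the previous step yields that $\Sigma_i^+ \df \cT_i \to \cS_i$ and $\Sigma_i^- \df \cS_i \to \cT_i$ are mutually quasi-inverse equivalences. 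No real obstacle is expected here: once Proposition~\ref{prop:refl1}(ii) is in hand, the corollary is essentially formal, and the only point to be careful with is the modulation-graph identification of $\sub_i$ and $\fac_i$ with $\Ker(M_{i,\out})$ and $\Coker(M_{i,\inn})$, which is what controls the whole argument.
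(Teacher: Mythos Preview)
Your proposal is correct and follows essentially the same route as the paper: the corollary is stated without proof, immediately after the observation that $\sub_i(M)=0\Iff\soc_i(M)=0$ and $\fac_i(M)=0\Iff\tp_i(M)=0$, so it is meant to be read off directly from Proposition~\ref{prop:refl1}(ii). Your extra verification that $\Sigma_i^+$ and $\Sigma_i^-$ land in the correct targets (via the injectivity/surjectivity of the canonical maps built into the definitions) is a detail the paper leaves implicit, and it is handled correctly.
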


\begin{Cor}\label{cor:refl3}
For $M,N \in \rep(\Pi)$ the following hold:
\begin{itemize}

\item[(i)]
If $M,N \in \cT_i$, then $\Sigma_i^+$ induces an isomorphism
\[
\Ext_\Pi^1(M,N) \cong \Ext_\Pi^1(\Sigma_i^+(M),\Sigma_i^+(N)).
\]

\item[(ii)]
If $M,N \in \cS_i$, then $\Sigma_i^-$ induces an isomorphism
\[
\Ext_\Pi^1(M,N) \cong \Ext_\Pi^1(\Sigma_i^-(M),\Sigma_i^-(N)).
\]

\end{itemize}
\end{Cor}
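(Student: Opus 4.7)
The plan is to recognize $\cT_i$ and $\cS_i$ as extension-closed subcategories of $\rep(\Pi)$, and to upgrade the equivalence $\Sigma_i^+ \colon \cT_i \to \cS_i$ of Corollary~\ref{cor:refl2} to an \emph{exact} equivalence. Yoneda's characterization of $\Ext^1$ in an extension-closed subcategory then delivers the two isomorphisms immediately. I describe part~(i); part~(ii) is dual.

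The identifications $\sub_i(X) \cong \Ker(X_{i,\out})$ and $\fac_i(X) \cong \Coker(X_{i,\inn})$ noted in the proof of Proposition~\ref{prop:refl1}, together with the observation that $\tp_i(X) = 0$ is equivalent to $\fac_i(X) = 0$ for finite-dimensional $X$, translate the conditions into $X \in \cT_i \Leftrightarrow X_{i,\inn}$ surjective and $X \in \cS_i \Leftrightarrow X_{i,\out}$ injective. Given an exact sequence $0 \to N \to E \to M \to 0$ with $M,N \in \cT_i$, I would apply the snake lemma at vertex $i$ to the commutative diagram with rows $0 \to \widetilde{N_i} \to \widetilde{E_i} \to \widetilde{M_i} \to 0$ and $0 \to N_i \to E_i \to M_i \to 0$ joined by the inn-maps. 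The resulting six-term sequence forces $E_{i,\inn}$ to be surjective, proving $E \in \cT_i$ and so that $\cT_i$ is extension-closed; the same six-term sequence delivers the short exact sequence
$$0 \to \Ker(N_{i,\inn}) \to \Ker(E_{i,\inn}) \to \Ker(M_{i,\inn}) \to 0,$$
which is the value at vertex $i$ of $0 \to \Sigma_i^+(N) \to \Sigma_i^+(E) \to \Sigma_i^+(M) \to 0$. Since $\Sigma_i^+$ is the identity at every vertex $j \neq i$, this shows that $\Sigma_i^+$ preserves short exact sequences in $\cT_i$. The dual chase, applied with the out-maps as vertical arrows, shows that $\Sigma_i^- \colon \cS_i \to \cT_i$ is likewise exact and that $\cS_i$ is extension-closed.

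Combining this with Corollary~\ref{cor:refl2} yields that $\Sigma_i^+$ and $\Sigma_i^-$ are mutually inverse exact equivalences between the extension-closed subcategories $\cT_i$ and $\cS_i$ of $\rep(\Pi)$. For $M, N \in \cT_i$, extension-closedness gives a canonical identification $\Ext^1_{\cT_i}(M,N) = \Ext^1_\Pi(M,N)$ (every extension in $\rep(\Pi)$ lies in $\cT_i$, and equivalence of extensions is detected by the same Baer data in either ambient), and analogously for $\cS_i$. The exact equivalence then transports one $\Ext^1$ onto the other, proving (i); part~(ii) follows by the symmetric argument with in- and out-maps interchanged. The only real computation is the snake-lemma diagram chase at vertex $i$, which is purely mechanical; I anticipate no conceptual obstacle.
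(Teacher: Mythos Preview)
Your argument is correct and is precisely the natural way to supply the details; the paper states this corollary without proof, implicitly relying on exactly the facts you spell out (extension-closure of $\cT_i$ and $\cS_i$, exactness of $\Sigma_i^\pm$ on these subcategories via the snake lemma at vertex~$i$, and transport of Yoneda extensions along an exact equivalence).
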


\begin{Prop}\label{prop:refl4}
For $M \in \rep(\Pi)$ 
the following are equivalent:
\begin{itemize}

\item[(i)]
$\tp_i(M) = 0$;

\item[(ii)]
$M \cong \Sigma_i^-\Sigma_i^+(M)$.

\end{itemize}
Furthermore, if $M \in \rep_\vp(\Pi)$, then {\rm (i)} and {\rm (ii)} are
equivalent to the following:
\begin{itemize}

\item[(iii)]
$\rkv(\Sigma_i^+(M)) = s_i(\rkv(M))$.

\end{itemize}
Dually, the following are equivalent:
\begin{itemize}

\item[(i)]
$\soc_i(M) = 0$;

\item[(ii)]
$M \cong \Sigma_i^+\Sigma_i^-(M)$.

\end{itemize}
Furthermore, if $M \in \rep_\vp(\Pi)$, then {\rm (i)} and {\rm (ii)} are
equivalent to the following:
\begin{itemize}

\item[(iii)]
$\rkv(\Sigma_i^-(M)) = s_i(\rkv(M))$.

\end{itemize}
\end{Prop}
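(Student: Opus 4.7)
The strategy is to prove the first chain of equivalences; the second follows by the exactly dual argument. As a preliminary reduction, (i) is equivalent to $\fac_i(M)=0$: one direction is clear since $S_i$ is supported at $i$, and for the other, if $\fac_i(M)\neq 0$ then $\fac_i(M)$ is a non-zero finite-dimensional $\Pi$-module supported only at $i$, so every composition factor equals $S_i$ and in particular it admits $S_i$ as a quotient, forcing $\tp_i(M)\neq 0$. Granting this, the equivalence (i)$\Leftrightarrow$(ii) falls out of the functorial short exact sequence
\[
0 \to \Sigma_i^-\Sigma_i^+(M) \to M \to \fac_i(M) \to 0
\]
from Proposition~\ref{prop:refl1}(ii): if $\fac_i(M)=0$ the natural inclusion is an isomorphism, while any abstract isomorphism $M\cong \Sigma_i^-\Sigma_i^+(M)$ forces $\fac_i(M)=0$ by a comparison of $K$-dimensions in the sequence.

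For the equivalence with (iii) under the hypothesis $M\in\rep_\vp(\Pi)$, write $\rkv(M)=(r_1,\ldots,r_n)$. Since $\Sigma_i^+$ only modifies the $i$-th component, $\Sigma_i^+(M)_j=M_j$ for $j\neq i$, and at $i$ one has the four-term exact sequence of $H_i$-modules
\[
0 \to \Ker(M_{i,\inn}) \to \widetilde{M_i} \xrightarrow{M_{i,\inn}} M_i \to \fac_i(M) \to 0,
\]
in which $\widetilde{M_i}$ and $M_i$ are free $H_i$-modules of ranks $\sum_{j}|c_{ij}|r_j$ and $r_i$ respectively (using ${}_{H_i}({}_iH_j)\cong H_i^{|c_{ij}|}$ from Section~\ref{sec5.1}, together with local freeness of $M$). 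If (i) holds, then $M_{i,\inn}$ is surjective; since $H_i$ is a local selfinjective ring, finitely generated free $H_i$-modules are simultaneously projective and injective, so any short exact sequence of such modules splits. Hence $\Ker(M_{i,\inn})=\Sigma_i^+(M)_i$ is a direct summand of $\widetilde{M_i}$, in particular free, of rank $\sum_{j}|c_{ij}|r_j-r_i=(s_i\rkv(M))_i$, which is precisely (iii). Conversely, if (iii) holds, then $\dim\Sigma_i^+(M)_i=c_i\bigl(\sum_j|c_{ij}|r_j-r_i\bigr)$, and the four-term sequence then forces $\dim\Ima(M_{i,\inn})=c_ir_i=\dim M_i$, so $M_{i,\inn}$ is surjective and $\fac_i(M)=0$.

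The dual triple is handled by running the identical argument with the companion sequence $0\to\sub_i(M)\to M\to\Sigma_i^+\Sigma_i^-(M)\to 0$ of Proposition~\ref{prop:refl1}(ii) and the dual identifications $\sub_i(M)\cong\Ker(M_{i,\out})$ and $\Sigma_i^-(M)_i\cong\Coker(M_{i,\out})$; the injectivity of $M_{i,\out}$ replaces the surjectivity of $M_{i,\inn}$. The critical freeness input is once more that, by selfinjectivity of $H_i$, the short exact sequence $0\to M_i\to\widetilde{M_i}\to\Coker(M_{i,\out})\to 0$ (valid when $M_{i,\out}$ is injective) splits, exhibiting $\Coker(M_{i,\out})$ as a direct summand of the free module $\widetilde{M_i}$ and hence as free of rank $(s_i\rkv(M))_i$. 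This propagation of local freeness across the reflection functors, resting solely on selfinjectivity of $H_i$, is the only non-formal ingredient anywhere in the argument and I expect it to be the principal -- though very mild -- obstacle.
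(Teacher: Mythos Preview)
Your proof is correct and follows essentially the same approach as the paper: the equivalence (i)$\Leftrightarrow$(ii) comes from the functorial short exact sequence of Proposition~\ref{prop:refl1}(ii), and (i)$\Leftrightarrow$(iii) from the surjectivity of $M_{i,\inn}$ together with a rank/dimension count. Your argument is in fact slightly more explicit than the paper's, which simply asserts that $\Sigma_i^+(M)$ is locally free once $M_{i,\inn}$ is surjective; your justification via selfinjectivity of $H_i$ (equivalently, the content of Lemma~\ref{lem:8.5}) is exactly the right reason.
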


\begin{proof}
The equivalence of (i) and (ii) follows directly from
Proposition~\ref{prop:refl1} and Corollary~\ref{cor:refl2}.

Suppose (i) holds for some vertex $i$ of $Q(C,\Omega)$.
Let $a = (a_1,\ldots,a_n) = \rkv(M)$. 
Recall that we have the $H_i$-module homomorphism
\[
M_{i,\inn}\df \bigoplus_{j \in \ov{\Omega}(i,-)} {_i}H_j \otimes_j M_j 
\to M_i.
\]
Since $\tp_i(M) = 0$, the map $M_{i,\inn}$ is surjective.
This implies that $\Sigma_i^+(M)$ is again locally free with
\[
(\rkv(\Sigma_i^+(M)))_i 
= \sum_{j \in \ov{\Omega}(i,-)} |c_{ij}|a_j - a_i 
=
(s_i(\rkv(\Sigma_i^+(M))))_i.
\]
(Here we used that ${_i}H_j \otimes_j M_j$ is a free $H_i$-module of rank
$|c_{ij}|a_j$.)
Thus (iii) holds.

Vice versa, the
equality 
\[
(\rkv(\Sigma_i^+(M)))_i = (s_i(\rkv(\Sigma_i^+(M))))_i
\]
implies that $M_{i,\inn}$ is surjective.
Thus (iii) implies (i).
\end{proof}

Let $M = (M_i,M_{ij},M_{ji}) \in \rep(C,D,\overline{\Omega})$.
We say that $M_{i,\inn}$ (resp. $M_{i,\out}$) 
\emph{splits} if the image of $M_{i,\inn}$  (resp. 
$M_{i,\out}$) 
is a free $H_i$-module.
The following lemma is straightforward.

\begin{Lem}\label{lem:8.5}
Let $M \in \rep(\Pi)$ be locally free.
For each $i$ the following hold:
\begin{itemize}

\item[(i)]
$\Sigma_i^+(M)$ is locally free if and only if $M_{i,\inn}$ splits.

\item[(ii)]
$\Sigma_i^-(M)$ is locally free if and only if $M_{i,\out}$ splits.
\end{itemize}
\end{Lem}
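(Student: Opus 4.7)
The plan is to reduce both assertions to an elementary fact about the local self-injective ring $H_i = K[\vep_i]/(\vep_i^{c_i})$: in a short exact sequence $0 \to A \to B \to C \to 0$ of finitely generated $H_i$-modules with $B$ free, the kernel $A$ is free if and only if the image term $C$ is free. Indeed, since $H_i$ is self-injective, free = projective = injective for finitely generated $H_i$-modules, so freeness of $A$ (resp.\ $C$) forces the sequence to split, exhibiting $C$ (resp.\ $A$) as a direct summand of the free module $B$, hence free.

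First I would observe that only the $i$-th component of $\Sigma_i^\pm(M)$ differs from the corresponding component of $M$, so $\Sigma_i^\pm(M)$ is locally free if and only if its $i$-th component is a free $H_i$-module (the remaining components $M_j$ for $j\ne i$ are free by hypothesis). Next I would check that $\widetilde{M_i} = \bigoplus_{j\in\overline{\Omega}(i,-)} {_i}H_j\otimes_j M_j$ is free as a left $H_i$-module: writing $M_j \cong H_j^{r_j}$ (where $r_j$ is the rank of $M_j$ over $H_j$) gives ${_i}H_j\otimes_j M_j \cong ({_i}H_j)^{r_j}$, and from Section~\ref{sec5.1} we know ${_i}H_j$ is free of rank $|c_{ij}|$ as a left $H_i$-module. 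Hence $\widetilde{M_i}$ is free of rank $\sum_{j\in\overline{\Omega}(i,-)}|c_{ij}|r_j$.

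For (i), I then apply the key fact to the short exact sequence
\[
0 \to \Ker(M_{i,\inn}) \to \widetilde{M_i} \xrightarrow{M_{i,\inn}} \Ima(M_{i,\inn}) \to 0
\]
of $H_i$-modules, whose middle term is free: $\Ker(M_{i,\inn}) = (\Sigma_i^+(M))_i$ is free if and only if $\Ima(M_{i,\inn})$ is free, which by definition is exactly the condition that $M_{i,\inn}$ splits. For (ii), I apply the same reasoning to
\[
0 \to \Ima(M_{i,\out}) \to \widetilde{M_i} \to \Coker(M_{i,\out}) \to 0,
\]
concluding that $\Coker(M_{i,\out}) = (\Sigma_i^-(M))_i$ is free if and only if $\Ima(M_{i,\out})$ is, i.e.\ if and only if $M_{i,\out}$ splits.

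There is no real obstacle here; the content of the lemma is simply the combination of the self-injectivity of the truncated polynomial rings $H_i$ with the freeness of $\widetilde{M_i}$. The only point that merits care is verifying this latter freeness through the explicit bimodule description of ${_i}H_j$ given in Section~\ref{sec5.1}.
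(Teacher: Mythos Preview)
Your proof is correct and is precisely the straightforward argument the paper has in mind (the paper gives no proof, merely asserting that the lemma is straightforward). The only ingredients are the self-injectivity of $H_i$ and the freeness of $\widetilde{M_i}$ as an $H_i$-module, both of which you have verified.
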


\subsection{Reflection functors for $H$}
We keep the notations of Section~\ref{subsec5.2}.
Recall that
for an orientation $\Omega$ of $C$ and some $1 \le i \le n$ we defined
\[
s_i(\Omega) :=
\{ (r,s) \in \Omega \mid i \notin \{r,s\} \}
\cup \{ (s,r) \in \Omega^* \mid i \in \{r,s\}  \}.
\]
Define
\begin{equation}\label{eq:s_i(H)}
s_i(H) := s_i(H(C,D,\Omega)) := H(C,D,s_i(\Omega)).
\end{equation}

Now let
$k$ be a sink in $Q^\circ(C,\Omega)$.
Then $\Sigma_k^+$ obviously restricts to a 
\emph{reflection functor}
\[
F_k^+\df \rep(H) \to \rep(s_k(H))
\]
which can also be described as follows.
Let $M = (M_i,M_{ij}) \in \rep(C,D,\Omega)$.
Recall that
\[
M_{k,\inn} = (\sgn(k,j)M_{kj})_j\df \bigoplus_{j \in \Omega(k,-)} {_k}H_j \otimes M_j \to M_k. 
\]
(Note that $\Omega(k,-) = \ov{\Omega}(k,-)$, since $k$ is a sink.)
Let $N_k := \Ker(M_{k,\inn})$.
We obtain an exact sequence
\[
0 \to N_k 
\to
\bigoplus_{j \in \Omega(k,-)}{_k}H_j \otimes_j M_j \xrightarrow{M_{k,\inn}} M_k. 
\]
Let us denote by $(N_{jk}^\vee)_j$ the inclusion map 
$N_k \to \bigoplus_{j \in \Omega(k,-)}{_k}H_j \otimes_j M_j$.
Then we have $F_k^+(M) = (N_r,N_{rs})$ with $(r,s) \in s_k(\Omega)$, where
\[
N_r :=
\begin{cases}
M_r & \text{if $r \not= k$},\\
N_k & \text{if $r=k$}
\end{cases}
\quad \mbox{and} \quad 
N_{rs} :=
\begin{cases} 
M_{rs} & \text{if $(r,s) \in \Omega$ and $r \not= k$},\\
(N_{rs}^\vee)^\vee & \text{if $(r,s) \in \Omega^*$ and $s=k$}.
\end{cases}
\]
Similarly, if $k$ is a source in $Q^\circ(C,\Omega)$, then
$\Sigma_k^-$ restricts to a 
\emph{reflection functor}
\[
F_k^-\df \rep(H) \to \rep(s_k(H)).
\]

\begin{Prop}\label{prop:reflvp}
Let $M$ be locally free and rigid in $\rep(H)$.
Then $F_k^\pm(M)$ is locally free and rigid.
\end{Prop}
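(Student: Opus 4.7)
The plan is to reduce to the case where $M$ has no indecomposable summand isomorphic to $E_k$, and then exploit the results of Section~\ref{sec8.1} for the ambient functor $\Sigma_k^+$ on $\rep(\Pi)$. I will treat $F_k^+$ with $k$ a sink; the case of $F_k^-$ with $k$ a source is entirely dual, replacing $\tp_k$ by $\soc_k$ and using that $I_k = E_k$ at a source.

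First, since $k$ is a sink in $Q^\circ(C,\Omega)$, Section~\ref{sec3.1} yields $P_k = E_k \cong K[\vep_k]/(\vep_k^{c_k})$, so any indecomposable $H$-module with simple top $S_k$ is a quotient of $E_k$. The $H$-module quotients of $E_k$ are the truncated modules $K[\vep_k]/(\vep_k^s)$ with $0 \le s \le c_k$, and among these only $0$ and $E_k$ itself are locally free. Writing $M = M' \oplus E_k^{\,a}$ so that $M'$ contains no summand isomorphic to $E_k$, one therefore has $\tp_k(M') = 0$, while $M'$ remains locally free and rigid as a direct summand of $M$. A direct inspection of the definitions gives $F_k^+(E_k) = 0$, hence $F_k^+(M) = F_k^+(M')$, and it suffices to establish the conclusion for $M'$.

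For the local freeness of $F_k^+(M')$, I would apply (the argument of) Proposition~\ref{prop:refl4}: the vanishing $\tp_k(M') = 0$ forces $M'_{k,\inn}$ to be surjective, and since its source is a free $H_k$-module while its target $M'_k$ is free (hence injective over the self-injective ring $H_k$), the resulting short exact sequence of $H_k$-modules splits. Its kernel $N_k$ is then a free $H_k$-module, which by Lemma~\ref{lem:8.5}(i) is precisely the condition for $F_k^+(M') = \Sigma_k^+(M')$ to be locally free; Proposition~\ref{prop:refl4}(iii) moreover identifies $\rkv(F_k^+(M')) = s_k\,\rkv(M')$.

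For the rigidity of $F_k^+(M')$ I would combine the equivalence $\Sigma_k^+\colon \cT_k \xrightarrow{\sim} \cS_k$ of Corollary~\ref{cor:refl2} with the Euler form identity. Since $M' \in \rep(H)$ and $F_k^+(M') \in \rep(s_k(H))$ have vanishing structure maps along the arrows of $\Omega^*$ and of $(s_k\Omega)^*$ respectively, $\Pi$-homomorphisms between the respective modules agree with $H$-homomorphisms (resp.\ $s_k(H)$-homomorphisms), so the equivalence yields $\dim\Hom_H(M',M') = \dim\Hom_{s_k(H)}(F_k^+(M'), F_k^+(M'))$. By Corollary~\ref{quadratic3} both Euler forms $q_H$ and $q_{s_k(H)}$ coincide with the $W(C)$-invariant Cartan form $q_C$, so $q_C(\rkv(F_k^+(M'))) = q_C(s_k\,\rkv(M')) = q_C(\rkv(M'))$. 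Substituting into $\dim\Ext^1 = \dim\Hom - q$ for both pairs, and using $\Ext^1_H(M',M') = 0$, gives $\Ext^1_{s_k(H)}(F_k^+(M'), F_k^+(M')) = 0$. The only genuinely non-formal ingredient in the plan is the initial observation isolating the $E_k$-summands of $M$; everything afterwards is a direct combination of tools already in place.
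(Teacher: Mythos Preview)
Your argument has a genuine gap at the very step you call ``the only non-formal ingredient'': the deduction that $\tp_k(M')=0$ once $M'$ has no $E_k$-summand. Your justification is that any indecomposable $H$-module whose top is \emph{simple and equal to} $S_k$ must be a quotient of $P_k=E_k$, and that the only nonzero locally free such quotient is $E_k$. This is correct, but it does not give the conclusion you draw. The condition $\tp_k(M')\neq 0$ only says that \emph{some} indecomposable summand $N$ of $M'$ has $S_k$ as a direct summand of its top; it does not force $\tp(N)$ to be simple. Such $N$ need not be a quotient of $E_k$ at all. Concretely, for $H$ of type $B_2$ with minimal symmetrizer and $1$ a sink, the module $N$ with $N_1\cong H_1$, $N_2\cong H_2$, $\vep_1$ acting as the obvious nilpotent and $\alpha_{12}$ hitting $\vep_1 N_1$, is indecomposable and locally free with $\tp(N)\cong S_1\oplus S_2$; it is not a quotient of $E_1$ and is not isomorphic to $E_1$. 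Your argument, which nowhere uses rigidity at this step, would have to exclude such $N$ but does not. (This particular $N$ happens not to be rigid, which is consistent with the proposition, but that is precisely the point: rigidity is essential and must be used.)

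In the paper's proof, almost all of the work is spent establishing exactly the implication you assumed, namely that a locally free \emph{rigid} $M$ with no $E_k$-summand satisfies $\Hom_H(M,E_k)=0$ (equivalently $\tp_k(M)=0$). The argument proceeds by contradiction on a minimal counterexample, strips off vertices using short exact sequences to reduce to the rank-two case $n=2$, and there invokes $\tau$-rigidity (Corollary~\ref{cor:3.9}) together with the disjointness of the summands in a minimal projective presentation to force $M$ to be projective, hence to contain $E_k$ as a summand. Only after this is established does one get $M_{k,\inn}$ surjective, and then local freeness and rigidity of $F_k^+(M)$ follow as you outline.

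Your downstream arguments are fine once $\tp_k(M')=0$ is known: the splitting of $M'_{k,\inn}$ over the self-injective ring $H_k$ gives local freeness via Lemma~\ref{lem:8.5}, and your Euler-form computation for rigidity (matching $\dim\Hom$ through the equivalence of Corollary~\ref{cor:refl2} and using $W$-invariance of $q_C$ together with $\rkv(F_k^+(M'))=s_k\,\rkv(M')$) is a clean alternative to the paper's appeal to Corollary~\ref{cor:refl3}. But the crux of the proposition is the vanishing of $\tp_k$, and that step needs a genuine argument using rigidity.
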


\begin{proof}
Without loss of generality assume that $1$ is a sink and $n$ is a source
in $Q^\circ(C,\Omega)$.
Let $M$ be a locally free and rigid $H$-module.
To get a contradiction, assume
that $F_1^+(M)$ is not locally free and that $M$ is of minimal dimension
with this property. 
Recall that $F_1^+(E_1) = 0$. 
Thus by the minimality of its dimension, $M$ does not have any direct summand isomorphic to $E_1$.
We can also assume that $e_iM \not= 0$ for all $1 \le i \le n$.
(Otherwise $M$ can be considered as a module over an algebra
$H(C',D',\Omega')$ with fewer vertices.)
Since $F_1^+(M)$ is not locally free, we get $\Hom_H(M,E_1) \not= 0$,
see Lemma~\ref{lem:8.5}.

We have a short exact sequence
\begin{equation}\label{eq:8.6}
0 \to M' \to M \to e_nM \to 0
\end{equation}
where $M' := (e_1 + \cdots + e_{n-1})M$.
Clearly, $M'$ and $e_nM$ are both locally free. 
In particular, $e_nM \cong E_n^s$ for some $s \ge 1$.
We have $\Hom_H(M',e_nM) = 0$.
Thus applying $\Hom_H(M',-)$ to (\ref{eq:8.6}) we get an embedding
$\Ext_H^1(M',M') \to \Ext_H^1(M',M)$.
Applying $\Hom_H(-,M)$ to (\ref{eq:8.6}) and using that
$\Ext_H^2(e_nM,M) = 0$ we get $\Ext_H^1(M',M) = 0$.
This shows that $M'$ is rigid.
Applying $\Hom_H(-,E_1)$ to the sequence (\ref{eq:8.6}) yields that 
$\Hom_H(M',E_1) \not= 0$.
Since $M'$ is locally free and rigid, the minimality of $M$ implies that
$M' \cong E_1^r \oplus U$ for some $r \ge 1$ and some locally free
and rigid module $U$ with $\Hom_H(U,E_1) = 0$.
This yields short exact sequences
\begin{equation}\label{eq:8.6b}
0 \to U \xrightarrow{f} M \xrightarrow{g} V \to 0
\end{equation}
and
\begin{equation}\label{eq:8.6c}
0 \to E_1^r \to V \to e_nM \to 0
\end{equation}
where $f$ is the obvious embedding, and $g$ is the obvious projection
onto $V := \Coker(f)$.
Note that $V$ is also locally free, and that
$\Hom_H(U,E_n) = 0$.
Applying $\Hom_H(U,-)$ to (\ref{eq:8.6c}) implies
that $\Hom_H(U,V) = 0$.
Now we apply $\Hom_H(M,-)$ and $\Hom_H(-,V)$ to (\ref{eq:8.6b})
and get similarly as before that $V$ is rigid.
Applying $\Hom_H(-,E_1)$ to (\ref{eq:8.6b}) implies
$\Hom_H(M,E_1) \cong \Hom_H(V,E_1) \not= 0$.
By the minimality of $M$ it now follows that $U=0$ and $V=M$.
Thus we have 
$n=2$.

Since $M$ is locally free, Proposition~\ref{vpdim} implies that it has a 
minimal projective
resolution of the form
\[
0 \to P'' \to P' \to M \to 0.
\]
Since $M$ is rigid and locally free, we know that $M$ is also 
$\tau$-rigid, see Corollary~\ref{cor:3.9}.
Thus by
\cite[Proposition~2.5]{AIR} we get that $\add(P') \cap \add(P'') = 0$.
(It can be easily checked that
\cite[Proposition~2.5]{AIR} is true for arbitrary ground fields.)
We obviously have $\Hom_H(M,E_2) \not= 0$, since $2$ is a source
in $Q^\circ(C,\Omega)$.
By assumption we have $\Hom_H(M,E_1) \not= 0$.
This implies that $\Hom_H(M,S_i) \not= 0$ for the simple $H$-modules
$S_i$, where $i = 1,2$.
Thus $P'$ contains both $P_1$ and $P_2$ as direct summands.
Since $\add(P') \cap \add(P'') = 0$, it follows that $P'' = 0$.
In other words, $M$ is projective.
But $P=E_1$ is the only indecomposable projective $H$-module
with $\Hom_H(P,E_1) \not= 0$.
Thus $M$ contains a direct summand
isomorphic to $E_1$, a contradiction.

Altogether we proved that $\Hom_H(M,E_1) = 0$ for any locally
free and rigid $H$-module $M$, which does not have a direct summand
isomorphic to $E_1$.
In this case, we have
$\tp_1(M) = 0$ and $M_{1,\inn}$ is surjective.
Thus $F_1^+(M)$ is rigid by Corollary~\ref{cor:refl3}, and $F_1^+(M)$ is locally free by Lemma~\ref{lem:8.5}.
The corresponding statement for $F_n^-(M)$ is proved dually.
This finishes the proof.
\end{proof}

\subsection{Reflection functors and APR-tilting}\label{sec:apr}
As before, let $H = H(C,D,\Omega)$ and $Q = Q(C,\Omega)$.
Let $i$ be a sink in $Q^\circ$, and define
\[
T := {_H}H/E_i  \oplus \tau_H^-(E_i) 
\text{\;\;\; and \;\;\;} 
B := \End_H(T)^\op.
\]
We also assume that $i$ is not a source in $Q^\circ$.
We have $E_i = He_i$.
For $1 \le j \le n$ set $P_j := He_j$ and $I_j := \Du(e_jH)$.
Let $S_j$ be the corresponding simple $H$-module with
$S_j \cong \tp(P_j) \cong \soc(I_j)$.
Furthermore, let
\[
T_j := 
\begin{cases}
He_j & \text{if $j \not= i$},
\\ 
\tau_H^-(E_i) & \text{if $j=i$}.
\end{cases}
\]
Thus we have $T = T_1 \oplus \cdots \oplus T_n$.
The indecomposable projective $B$-modules are (up to isomorphism) 
$\Hom_H(T,T_j)$ where $1 \le j \le n$.
Let $e_j$ denote the primitive idempotent in $B$ obtained 
by composing the canonical projection $T \to T_j$ with the
canonical inclusion $T_j \to T$.
Then $\Hom_H(T,T_j) \cong Be_j$.
Finally, let $S_j$ denote the simple $B$-module associated with $e_j$.
We are using the same notation for the idempotents in 
$H$, $s_i(H)$ and $B$ and also for the associated simple modules.
However, the context will always save us from confusion.

Recall that a finite-dimensional module $T$ over a finite-dimensional
algebra $A$ is a \emph{tilting module} if the following hold:
\begin{itemize}

\item[(i)]
$\pdim(T) \le 1$;

\item[(ii)]
$\Ext_A^1(T,T) = 0$;

\item[(iii)]
The number of isomorphism classes of indecomposable direct
summands of $T$ is equal to the number of isomorphism classes
of simple $A$-modules.

\end{itemize}

The following result is inspired by Auslander, Platzeck and Reiten's \cite{APR} ground breaking interpretation of BGP-reflection
functors as homomorphism functors of certain tilting
modules.
Their result can be seen as the beginning of tilting theory.

\begin{Thm}\label{thm:APR}
With the notation above, the $H$-module $T$ is a tilting module,
and the functors 
\[
F_i^+(-)\df \rep(H) \to \rep(s_i(H))
\text{\;\;\; and \;\;\;}
\Hom_H(T,-)\df \rep(H) \to \rep(B)
\]
are \emph{equivalent}, i.e. 
there exists an equivalence 
\[
S\df \rep(s_i(H)) \to \rep(B)
\]
such that we have an isomorphism of functors 
$ \circ F_i^+ \cong \Hom_H(T,-)$.
\end{Thm}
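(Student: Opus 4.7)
The plan is to proceed in three stages: verify that $T$ is a tilting $H$-module, identify $B=\End_H(T)^{\op}$ with $s_i(H)$ via an explicit algebra isomorphism $\psi$, and match $\Hom_H(T,-)$ with $F_i^+$ after restriction of scalars along $\psi$. For the first stage, since $i$ is a sink in $Q^\circ$ we have $\Omega(-,i)=\varnothing$, so $E_i=P_i$ is rigid and projective by Proposition~\ref{resolutionE}. The non-source hypothesis makes $E_i$ non-injective, and applying the inverse Nakayama equivalence to the minimal injective copresentation from Proposition~\ref{resolutionE} yields a short exact sequence
\[
0\to P_i\to\bigoplus_{j\in\Omega(i,-)}P_j^{|c_{ji}|}\to T_i\to 0, \qquad T_i:=\tau_H^-(E_i).
\]
Local freeness of $T_i$ follows from Lemma~\ref{vpclosed}, hence $\pdim(T_i)\le 1$ by Proposition~\ref{vpdim} and thus $\pdim(T)\le 1$. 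The vanishing of $\Ext_H^1(T,T)$ is checked pairwise: terms involving only projective summands vanish trivially, $\Ext_H^1(P_j,T_i)=0$ since $P_j$ is projective, and $\Ext_H^1(T_i,P_j)$ and $\Ext_H^1(T_i,T_i)$ follow by applying $\Hom_H(-,P_j)$ and $\Hom_H(-,T_i)$ to the resolution above. The decisive input is $\Hom_H(P_i,P_j)=e_jHe_i=0$ for $j\ne i$ (no path in $Q$ leaves the sink $i$). Indecomposability of $T_i$ together with the $n-1$ indecomposable summands of $H/E_i$ gives $n$ pairwise non-isomorphic indecomposable summands, matching the number of simples.

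For the second stage, $B$ is computed summand by summand. The diagonal blocks yield $\End_H(T_j)\cong H_j$ for all $j$ (immediate for $j\ne i$; for $j=i$ one applies $\Hom_H(T_i,-)$ to the resolution and uses the $\Ext^1$-vanishing), accounting for the loops $\vep_j$ of $Q(C,s_i(\Omega))$. Arrows not touching $i$ transfer from $H$ to $B$ unchanged. The reversed arrows $\alpha_{ji}^{(g)}\df i\to j$ of $Q(C,s_i(\Omega))$ for $j\in\Omega(i,-)$ correspond to a basis of $\Hom_H(T_i,P_j)$; applying $\Hom_H(-,P_j)$ to the resolution and using $\Hom_H(P_i,P_j)=0$ identifies this space with a natural $H_i$-$H_j$-bimodule isomorphic to ${_j}H_i$ in the sense of Section~\ref{sec5.1}, via the adjunction and trace pairings of Section~\ref{newsec8}. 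The relations (H1) for $s_i(H)$ are recovered from the loop computations, and the commutativity relations (H2) follow from the naturality of the $\vep$-actions on the resolution. A dimension count using the Coxeter description of $\rkv(T_i)$ from Section~\ref{subsec-Coxeter} shows $\dim_K B=\dim_K s_i(H)$, so the constructed map $\psi\df s_i(H)\to B$ is an isomorphism.

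For the third stage, let $S\df \rep(s_i(H))\to\rep(B)$ be restriction of scalars along $\psi^{-1}$. Both $S\circ F_i^+$ and $\Hom_H(T,-)$ are left exact and vanish on copies of $E_i$: for $F_i^+$ this is immediate from Section~\ref{sec8.1}, and $\Hom_H(T,E_i)=0$ follows from $\Hom_H(P_j,E_i)=e_jE_i=0$ for $j\ne i$ together with a direct computation of $\Hom_H(T_i,E_i)$ via the resolution. The Auslander-Reiten formula identifies $\cT(T)=\{M:\Ext_H^1(T,M)=0\}$ with the class of $H$-modules $M$ on which $M_{i,\inn}$ is surjective, exactly the subcategory on which $F_i^-\circ F_i^+\cong\id$ (Proposition~\ref{prop:refl1} and Corollary~\ref{cor:refl2}). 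On any $M\in\cT(T)$ both functors assign the same underlying $B$-module: $M_j$ at vertex $j\ne i$ with unchanged structure, and $\Ker(M_{i,\inn})$ at vertex $i$ with canonically induced structure maps. The main obstacle is the bookkeeping in the algebra identification stage, particularly verifying that the commutativity relations (H2) of $s_i(H)$ are satisfied by the explicit generators of $B$; this is where the adjunction isomorphisms and trace pairings of Sections~\ref{sec5.1} and~\ref{newsec8} play the decisive role.
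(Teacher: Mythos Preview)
Your Stage~1 matches the paper's Lemma~\ref{lem:APR1}, though the paper checks $\Ext_H^1(T,T)=0$ via the Auslander--Reiten formula $\Ext_H^1(\tau_H^-(E_i),T)\cong\Du\Hom_H(T,E_i)$ rather than directly from the resolution.

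Stages~2 and~3 take a genuinely different route. The paper never computes $B$ directly. Instead it uses Brenner--Butler tilting theory, the Connecting Lemma, and the Auslander--Reiten formulas to identify the tilting torsion class $\cT(T)=\cT_i^H$ and its image $\cY=\cS_i^B\subset\rep(B)$ (Lemmas~\ref{lem:APR3} and~\ref{lem:APR4}); together with the already-established equivalence $F_i^+\colon\cT_i^H\to\cS_i^{s_i(H)}$ from Corollary~\ref{cor:refl2} this yields a commutative square of subcategory equivalences (Corollaries~\ref{cor:APRd}, \ref{cor:APRe}). Then \cite[Lemma~2.2]{APR}, a purely categorical statement, upgrades this to an equivalence $\rep(s_i(H))\to\rep(B)$ intertwining $F_i^+$ with $\Hom_H(T,-)$. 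The isomorphism $s_i(H)\cong B$ is a \emph{consequence}, not an input.

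Your direct approach is viable in principle but has real gaps as written. The claimed identification $\Hom_H(T_i,P_j)\cong{_j}H_i$ is wrong: applying $\Hom_H(-,P_j)$ to the resolution yields $\bigoplus_{k\in\Omega(i,-)}(e_jHe_k)^{|c_{ki}|}$, which is in general strictly larger (it contains paths from \emph{every} neighbour $k$ of $i$ to $j$, not just the direct arrows); the correct target is $e_j\,s_i(H)\,e_i$. You would still need to isolate the sub-bimodule carrying the images of the generators $\alpha_{ji}^{(g)}$ and prove they generate $B$ together with the old arrows. The dimension equality $\dim_K B=\dim_K s_i(H)$ is also not automatic, since $\dim_K H(C,D,\Omega)$ genuinely depends on $\Omega$. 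And Stage~3 merely asserts that the $B$-action on $\Hom_H(T,M)$ matches the $s_i(H)$-action on $F_i^+(M)$ via $\psi$; this is precisely the bookkeeping you flag as the main obstacle, and the paper's categorical argument was designed to avoid it.
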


As a preparation for the proof of Theorem~\ref{thm:APR}
we construct a minimal injective resolution 
of the $H$-module $E_i$, see (\ref{eq9.6}) below.
This yields via the inverse Nakayama functor a minimal projective resolution (\ref{eq9.7}) of $\tau_H^-(E_i)$.

Let $E_i'$ be the right $H$-module such that 
$\Du(E_i') \cong E_i$.
Similarly to Proposition~\ref{resolutionE}
there is a minimal projective resolution 
\begin{equation}\label{eq:11.1}
0 \to \bigoplus_{j \in \Omega(i,-)} {_i}H_j \otimes_j e_jH \xrightarrow{\mu}
e_iH \to E_i' \to 0
\end{equation}
of the right $H$-module $E_i'$,
where the map $e_iH \to E_i'$ is the canonical projection, and for $j \in \Omega(i,-)$
the $j$th component of the map $\mu$ is just the multiplication
map
\[
H_{ij}\df {_i}H_j \otimes_j e_jH \to e_iH.
\]
(The maps $H_{ij}$ are the structure maps of the (left) $H$-module structure
on $H$.)
Note that ${_i}H_j \otimes_j e_jH \cong e_jH^{|c_{ji}|}$.

There is a chain of $H_j$-$H_i$-bimodule isomorphisms
\[
{_j}H_i \cong \Hom_{H_i}({_i}H_j,H_i) \cong \Du\Hom_{H_j}({_j}H_i,H_j)
\cong \Du({_i}H_j),
\]
where the first and third isomorphisms are constructed as in 
Section~\ref{sec5.1}, and the second isomorphism is defined as in 
Lemma~\ref{lem:adj-com}. Thus 
for $r \in {_i}R_j$ we can consider $r^*$ now as an element in $\Du({_i}H_j)$.

Under the isomorphism
\[
\Du(e_jH) \otimes_j \Du({_i}H_j) \to \Du({_i}H_j \otimes_j e_jH)
\]
defined by 
\[
\iota \otimes \eta \mapsto 
\left(h \otimes e_jh' \mapsto \eta(h)\iota(e_jh')\right)
\]
the dual of the multiplication map $H_{ij}$ is identified with the map
\[
H_{ij}^*\df \Du(e_iH) \to \Du(e_jH) \otimes_j \Du({_i}H_j)
\]
defined by 
\[
\varphi \mapsto \sum_{r \in {_i}R_j} \varphi(r \cdot -) \otimes r^*.
\]
We have $\Du(e_iH) = I_i$ and 
$\Du(e_jH) \otimes_j \Du({_i}H_j) \cong I_j \otimes_j {_j}H_i \cong I_j^{|c_{ji}|}$.
Applying the duality $\Du$ to (\ref{eq:11.1}) we get a minimal injective
resolution
\begin{equation}\label{eq9.6}
0 \to E_i \to I_i \xrightarrow{(H_{ij}^*)_{j \in \Omega(i,-)}}
\bigoplus_{j \in \Omega(i,-)} I_j \otimes_j {_j}H_i \to 0.
\end{equation}
For $j \in \Omega(i,-)$ set
\[
\theta_{ij} := \nu_H^{-1}(H_{ij}^*) \df E_i \to P_j \otimes_j {_j}H_i
\]
which is given by
$he_i \mapsto \sum_{r \in {_i}R_j} hr \otimes r^*$.
Here $\nu_H^{-1}\df \inj(H) \to \proj(H)$ is the inverse Nakayama functor obtained via restriction from $\Hom_H(\Du(H_H),-)$.
Let $\theta := (\theta_{ij})_j$ where $j \in \Omega(i,-)$. 
Recall that $E_i = P_i$, since $i$ is a sink in $Q^\circ$.
We get that the exact sequence
\begin{equation}\label{eq9.7}
0 \to E_i \xrightarrow{\theta} \bigoplus_{j \in \Omega(i,-)}
P_j \otimes_j {_j}H_i \to \tau_H^-(E_i) \to 0
\end{equation}
is a minimal projective resolution of $\tau_H^-(E_i)$.
Note that (\ref{eq9.7}) is not an Auslander-Reiten sequence if $E_i$
is not simple.

\begin{Lem}\label{lem:9.8}
The following hold:
\begin{itemize}

\item[(i)]
$\tau_H^-(E_i)$ is locally free;

\item[(ii)]
$\Hom_H(\tau_H^-(E_i),H) = 0$;

\item[(iii)]
$\End_H(\tau_H^-(E_i)) \cong \End_H(E_i) \cong H_i$.

\end{itemize}
\end{Lem}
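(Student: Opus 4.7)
My plan is to exploit the minimal projective resolution \eqref{eq9.7}
\[
0 \to E_i \xrightarrow{\theta} \bigoplus_{j \in \Omega(i,-)} P_j \otimes_j {_j}H_i \to \tau_H^-(E_i) \to 0
\]
together with two elementary consequences of the hypotheses that $i$ is a sink in $Q^\circ$ but not a source. First, every path in $H$ starting at $i$ is a power of $\vep_i$, so $e_jHe_i = 0$ whenever $j \ne i$. Second, the indecomposable projective $E_i = P_i$ is not injective (since $\dim I_i > c_i = \dim P_i$ as soon as $\Omega(i,-) \ne \emptyset$), so $\tau\tau_H^-(E_i) \cong E_i$. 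Part (i) is then immediate: both $E_i$ (by Proposition~\ref{resolutionE}) and each $P_j \otimes_j {_j}H_i \cong P_j^{|c_{ji}|}$ (using that ${_j}H_i$ is left $H_j$-free of rank $|c_{ji}|$) are locally free, so their cokernel $\tau_H^-(E_i)$ lies in $\rep_\vp(H)$ by Lemma~\ref{vpclosed}.

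For (ii), I apply $\Hom_H(-,H)$ to \eqref{eq9.7}. Since $\pdim(\tau_H^-(E_i)) \le 1$ by (i) and Proposition~\ref{vpdim}, the Auslander-Reiten formula yields
\[
\Ext^1_H(\tau_H^-(E_i), H) \cong \Du \Hom_H(H, E_i) \cong \Du E_i
\]
of $K$-dimension $c_i$. The two middle terms of the four-term exact sequence have $K$-dimensions $\sum_{j \in \Omega(i,-)} |c_{ji}| \dim I_j$ and $\dim I_i$, and the minimal injective resolution \eqref{eq9.6} of $E_i$ provides the matching identity $\dim I_i = c_i + \sum_{j \in \Omega(i,-)} |c_{ji}| \dim I_j$. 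The alternating sum of dimensions collapses to zero, forcing $\Hom_H(\tau_H^-(E_i), H) = 0$.

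For (iii), $\End_H(E_i) = e_iHe_i = H_i$ is standard from $E_i = He_i$. To compute $\End_H(\tau_H^-(E_i))$ I first apply $\Hom_H(-, E_i)$ to \eqref{eq9.7}: the middle term $\bigoplus_j (e_jHe_i)^{|c_{ji}|}$ vanishes by $e_jHe_i = 0$, so $\Hom_H(\tau_H^-(E_i), E_i) = 0$, and the Auslander-Reiten formula delivers the rigidity $\Ext^1_H(\tau_H^-(E_i), \tau_H^-(E_i)) \cong \Du\Hom_H(\tau_H^-(E_i), E_i) = 0$. Applying $\Hom_H(\tau_H^-(E_i), -)$ to \eqref{eq9.7}, the three non-$\End$ terms all vanish: $\Hom_H(\tau_H^-(E_i), E_i) = 0$ from the rigidity step, $\Hom_H(\tau_H^-(E_i), \bigoplus_j P_j \otimes_j {_j}H_i) = 0$ by (ii) (since this embeds into $\Hom_H(\tau_H^-(E_i), H^{\oplus N})$), and $\Ext^1_H(\tau_H^-(E_i), \bigoplus_j P_j \otimes_j {_j}H_i) \cong \Du \bigoplus_j (e_jHe_i)^{|c_{ji}|} = 0$ by the same Auslander-Reiten computation. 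Hence
\[
\End_H(\tau_H^-(E_i)) \cong \Ext^1_H(\tau_H^-(E_i), E_i) \cong \Du H_i,
\]
which has $K$-dimension $c_i$. To upgrade this to an algebra isomorphism with $H_i$, I observe that $\theta$ is right $H_i$-linear by its defining formula $he_i \mapsto \sum_{j,r} hr \otimes r^*$, so right multiplication by $H_i$ on the second tensor factor of each $P_j \otimes_j {_j}H_i$ descends to $\tau_H^-(E_i)$ and produces a unital $K$-algebra homomorphism $\Phi\df H_i \to \End_H(\tau_H^-(E_i))$. Since $H_i$ is local Artinian, $\ker\Phi$ is either zero or contains the socle $\vep_i^{c_i-1}H_i$; a direct inspection of the action of $\vep_i^{c_i-1}$ on a concrete generator such as $e_j \otimes \alpha_{ji}^{(g)}$ rules out the latter, so $\Phi$ is an isomorphism by the dimension count.

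The main technical point I anticipate is precisely this injectivity of $\Phi$: the dimension equality $\dim_K \End_H(\tau_H^-(E_i)) = c_i$ is essentially formal once the Auslander-Reiten formulas and the vanishing $e_jHe_i = 0$ for $j \in \Omega(i,-)$ are assembled, but promoting it to an honest algebra isomorphism with $H_i$ requires the explicit form of $\theta$ and a direct check that no nonzero element of the socle of $H_i$ acts as zero on $\tau_H^-(E_i)$.
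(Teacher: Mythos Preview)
Your proof is correct, but the paper's argument is considerably more streamlined for parts (ii) and (iii).

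For (ii), rather than your dimension count via the four-term sequence, the paper applies the Auslander--Reiten formula directly: since $\idim(E_i)\le 1$ one has $\Du\Hom_H(\tau^-(E_i),H)\cong\Ext^1_H(H,E_i)=0$, and the claim follows in one line. For (iii), the paper again uses both forms of the Auslander--Reiten formula in tandem: $\pdim(\tau^-(E_i))\le 1$ gives $\Ext^1_H(\tau^-(E_i),E_i)\cong\Du\Hom_H(E_i,\tau\tau^-(E_i))\cong\Du\End_H(E_i)$, while $\idim(E_i)\le 1$ gives $\Ext^1_H(\tau^-(E_i),E_i)\cong\Du\End_H(\tau^-(E_i))$, so $\End_H(\tau^-(E_i))\cong\End_H(E_i)$ immediately. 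Your route through the long exact sequences, the vanishing $e_jHe_i=0$ for $j\in\Omega(i,-)$, and the rigidity of $\tau^-(E_i)$ reaches the same destination but with more machinery.

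One point worth noting: you go further than the paper in attempting to upgrade (iii) to a genuine \emph{algebra} isomorphism via the explicit map $\Phi$. The paper's AR-formula argument only yields a $K$-vector space isomorphism, and this is all that is actually used downstream. Your injectivity argument for $\Phi$ is essentially right but could be tightened: rather than inspecting a specific generator in the cokernel, observe that if $\vep_i^{c_i-1}$ acted as zero on $\tau^-(E_i)$ then in particular it would annihilate $e_i\tau^-(E_i)$; but $e_i\tau^-(E_i)$ is a nonzero free $H_i$-module by part~(i) (nonzero because $\rkv(\tau^-(E_i))_i = \sum_{j\in\Omega(i,-)}|c_{ji}|\,\rkv(P_j)_i - 1 \ge |c_{ji}||c_{ij}|-1 \ge 1$), and one checks that the right $H_i$-action you define agrees with the left $H_i$-action on this component.
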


\begin{proof}
The existence of the exact sequence
(\ref{eq9.7}) implies that $\tau_H^-(E_i)$ is locally free.
Since $\idim(E_i) \le 1$, we have $\Hom_H(\tau_H^-(E_i),H) = 0$.
The $H$-module $E_i$ is indecomposable non-injective with
$\pdim(\tau_H^-(E_i)) = 1$ and $\idim(E_i) = 1$.
Thus by the Auslander-Reiten formulas we get 
$\End_H(\tau_H^-(E_i)) \cong \End_H(E_i) \cong H_i$.
This finishes the proof.
\end{proof}

\begin{Lem}\label{lem:APR1}
The $H$-module $T$ is a tilting module.
\end{Lem}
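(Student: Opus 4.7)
The plan is to verify the three defining properties of a tilting module: projective dimension at most $1$, vanishing of self-extensions, and the correct number of indecomposable summands.

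First I would compute projective dimensions. Since $i$ is a sink in $Q^\circ$, Proposition~\ref{resolutionE} gives $E_i = P_i = He_i$, so $H/E_i \cong \bigoplus_{j\neq i} He_j$ is projective. The exact sequence (\ref{eq9.7}) is a projective resolution of length at most $1$ for $\tau_H^-(E_i)$, so $\pdim(T)\le 1$. For the count of indecomposable summands, the $He_j$ with $j\neq i$ contribute $n-1$ pairwise non-isomorphic indecomposables. Since $i$ is not a source in $Q^\circ$, the module $E_i$ is not injective (by the dual part of Proposition~\ref{resolutionE}), hence $\tau_H^-(E_i)$ is nonzero, indecomposable and non-projective; in particular it is not isomorphic to any $He_j$. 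This gives exactly $n$ distinct indecomposable summands, matching the number of simple $H$-modules.

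Second, and where the real work lies, I would verify $\Ext_H^1(T,T)=0$ by breaking into four pieces. The two pieces with $H/E_i$ in the first slot vanish because $H/E_i$ is projective. For the other two, I would use $\pdim(\tau_H^-(E_i))\le 1$ together with the Auslander-Reiten formula quoted in the excerpt, namely
\[
\Ext_H^1\bigl(\tau_H^-(E_i),M\bigr)\cong \Du\Hom_H\bigl(M,\tau_H\tau_H^-(E_i)\bigr)\cong \Du\Hom_H(M,E_i),
\]
the last isomorphism using again that $E_i$ is non-injective. Taking $M=H/E_i=\bigoplus_{j\neq i}He_j$ gives $\Hom_H(H/E_i,E_i)=\bigoplus_{j\neq i}e_jE_i=0$, because the locally free module $E_i$ has rank vector $\alpha_i$. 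Taking $M=\tau_H^-(E_i)$, I would apply $\Hom_H(-,E_i)$ to the resolution (\ref{eq9.7}): the middle term is $\bigoplus_{j\in\Omega(i,-)}\Hom_H(P_j\otimes_j{_j}H_i,E_i)\cong \bigoplus_j \Hom_{H_j}({_j}H_i,e_jE_i)$, which is zero since $j\in\Omega(i,-)$ forces $j\neq i$ and thus $e_jE_i=0$. Hence $\Hom_H(\tau_H^-(E_i),E_i)$ embeds into $0$, and the Ext vanishes in both cases.

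The only step requiring care is identifying $\tau_H\tau_H^-(E_i)$ with $E_i$, which needs $E_i$ to be non-injective, and this is guaranteed by the standing assumption that $i$ is not a source in $Q^\circ$. Once the four Ext groups are shown to vanish, the three tilting axioms are all verified and the proof is complete.
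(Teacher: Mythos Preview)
Your proof is correct and follows essentially the same route as the paper: reduce $\Ext_H^1(T,T)$ to $\Ext_H^1(\tau_H^-(E_i),T)$ using projectivity of $H/E_i$, then apply the Auslander-Reiten formula (via $\pdim(\tau_H^-(E_i))\le 1$ and $\tau\tau^-(E_i)\cong E_i$) to convert this into $\Du\Hom_H(T,E_i)$. The only minor difference is in the final vanishing $\Hom_H(\tau_H^-(E_i),E_i)=0$: the paper quotes Lemma~\ref{lem:9.8}(ii), which says $\Hom_H(\tau_H^-(E_i),H)=0$ (a one-line consequence of $\idim(E_i)\le 1$ and the AR formula), whereas you compute it directly by applying $\Hom_H(-,E_i)$ to the resolution~(\ref{eq9.7}); both arguments are equally short and valid.
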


\begin{proof}
By the above considerations we know that
\[
0 \to E_i \xrightarrow{\theta} \bigoplus_{j \in \Omega(i,-)}
P_j \otimes_j {_j}H_i \to \tau_H^-(E_i) \to 0
\]
is a minimal projective resolution of $\tau_H^-(E_i)$.
It follows that $\pdim(T) = 1$.
Furthermore, we have
\begin{align*}
\Ext_H^1(T,T) &= \Ext_H^1(\tau_H^-(E_i),T)\\
&\cong \Du\Hom_H(T,E_i)\\
&= \Du\Hom_H(\tau_H^-(E_i),E_i)\\
&= 0.
\end{align*}
The equality in the first line of the above equations holds since $\tau_H^-(E_i)$ is the only
non-projective indecomposable direct summand of $T$.
The isomorphism in the second line follows by the Auslander-Reiten formulas
and the fact that $\pdim(\tau_H^-(E_i)) \le 1$.
The equality in the third line holds since $i$ is a sink. 
Finally, the equality in the fourth line follows from 
Lemma~\ref{lem:9.8}(ii).
The module $H$ has exactly $n$ pairwise non-isomorphic indecomposable direct summands.
This finishes the proof.
\end{proof}

\begin{Lem}\label{lem:APR3}
We have ${_B}\Du(T) \cong \Du(B)/\Du(e_iB) \oplus \tau_B(\Du(e_iB))$.
\end{Lem}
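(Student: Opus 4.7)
The plan is to compute ${_B}\Du(T)$ by decomposing it according to the indecomposable injective summands of $\Du(H)$. The starting point is the standard adjunction isomorphism $\Du(T) \cong \Hom_H(T, \Du(H))$ of $(B,H)$-bimodules, arising from $\Hom_K(T,K) \cong \Hom_H(T, \Hom_K(H,K))$. Combined with the decomposition $\Du(H) = \bigoplus_{j=1}^n I_j$ into indecomposable injective left $H$-modules, this gives $\Du(T) \cong \bigoplus_j \Hom_H(T, I_j)$ as left $B$-modules, and reduces the problem to identifying each summand.

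For $j \neq i$ the adjunction yields $\Hom_H(T, I_j) = \Hom_H(T, \Du(e_jH)) \cong \Du(e_jH \otimes_H T) = \Du(e_jT)$. Since $T_j = He_j$, there is a natural right-$B$-module iso $e_jT \cong \Hom_H(He_j, T) = \Hom_H(T_j, T) = e_jB$, whence $\Hom_H(T, I_j) \cong \Du(e_jB)$, the indecomposable injective $B$-module at vertex $j$. Summing over $j \neq i$ accounts for the summand $\Du(B)/\Du(e_iB)$ of the lemma.

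The heart of the proof is identifying $\Hom_H(T, I_i) \cong \Du(e_iT)$ with $\tau_B(\Du(e_iB))$. To this end I will apply $\Hom_H(-, T)$ to the minimal projective resolution~\eqref{eq9.7} of $T_i = \tau_H^-(E_i)$: since $T_i$ is a direct summand of $T$ (so $\Ext_H^1(T_i, T) = 0$) and the middle term of~\eqref{eq9.7} is projective, this produces a short exact sequence of right $B$-modules
$$0 \to e_iB \to \bigoplus_{k \in \Omega(i,-)} {_i}H_k \otimes_k e_kB \to e_iT \to 0,$$
using Hom-tensor adjunction together with the bimodule iso $\Hom_{H_k}({_k}H_i, H_k) \cong {_i}H_k$ of Section~\ref{sec5.1}. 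In parallel, applying $\Hom_H(T, -)$ to the same resolution of $T_i$ and invoking the Auslander-Reiten formula $\Ext_H^1(T, E_i) \cong \Du\,\overline{\End}_H(E_i)$ together with Lemma~\ref{lem:9.8}(iii) yields a projective presentation $\bigoplus_k Be_k \otimes_k {_k}H_i \to Be_i \to \Du(e_iB) \to 0$ of the left $B$-module $\Du(e_iB) = I_i^B$.

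Applying $\Hom_B(-, B)$ to this projective presentation and invoking the trace-pairing identification $\Hom_B(Be_k \otimes_k {_k}H_i, B) \cong {_i}H_k \otimes_k e_kB$ of Section~\ref{newsec8} reproduces the differential of the right-$B$-module exact sequence displayed above. Taking cokernels yields $\mathrm{Tr}_B(I_i^B) \cong e_iT$ as right $B$-modules, and hence $\tau_B(\Du(e_iB)) = \Du\,\mathrm{Tr}_B(I_i^B) \cong \Du(e_iT)$ by definition of the Auslander-Reiten translate. The main obstacle is the identification $\Ext_H^1(T, E_i) \cong \Du(e_iB)$ as \emph{left $B$-modules}: the equality of $K$-dimensions is immediate from the Auslander-Reiten formula together with the fact that $\overline{\End}_H(E_i) = \End_H(E_i) = H_i$ (which follows, using the description of the tops of injective $H$-modules in Section~\ref{sec3.1}, from the observation that $S_i$ does not appear in the top of any injective $H$-module since $i$ is a sink in $Q^\circ$); however, upgrading this to an isomorphism of left $B$-modules requires a careful diagram chase tracing the naturality of the Auslander-Reiten formula and the trace-pairing isomorphisms of Section~\ref{newsec8} through the above constructions.
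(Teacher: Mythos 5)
Your treatment of the summands with $j\neq i$ is exactly the paper's. For $j=i$, however, you take a genuinely different and longer route: the paper simply invokes the Connecting Lemma of Happel--Ringel (\cite[Section~2.3]{HR}), which gives $\Hom_H(T,\Du(e_iH))\cong\tau_B\bigl(\Ext_H^1(T,E_i)\bigr)$ outright once $\Ext_H^1(T,E_i)\neq 0$, and then identifies $\Ext_H^1(T,E_i)\cong\Du\Hom_H(\tau_H^-(E_i),T)=\Du(e_iB)$ via the AR formula $\Ext_H^1(X,Y)\cong\Du\Hom_H(\tau^-Y,X)$ (valid since $\idim E_i\le 1$), which is automatically a left $B$-module isomorphism because it is natural in the first variable $X=T$. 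You instead re-derive the Connecting Lemma in this specific case by applying $\Hom_H(-,T)$ and $\Hom_H(T,-)$ to the resolution~\eqref{eq9.7} and matching the resulting presentations under $\Hom_B(-,B)$. This is more self-contained, but requires two facts you leave tacit or misattribute: (i) the presentation $\bigoplus_k Be_k\otimes_k{_k}H_i\to Be_i\to\Du(e_iB)\to 0$ must be \emph{minimal} for the transpose to compute $\tau_B$ on the nose, and this follows from $\Hom_H(T,E_i)=0$, which you should note (it holds because $E_i=e_iE_i$ and $\Hom_H(\tau_H^-(E_i),E_i)=0$ by Lemma~\ref{lem:9.8}(ii)); and (ii) the compatibility $\Hom_B(\Hom_H(T,\psi),B)\cong\Hom_H(\psi,T)$ for the map $\psi$ of~\eqref{eq9.7} is not really a trace-pairing statement but the standard fact that $\Hom_H(T,-)\colon\add(T)\to\proj(B)$ is an equivalence intertwining $\Hom_H(-,T)$ with $\Hom_B(-,B)$. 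Finally, the worry you raise about promoting $\Ext_H^1(T,E_i)\cong\Du(e_iB)$ to a $B$-module isomorphism disappears if you use the AR formula in the form $\Du\Hom_H(\tau^-(E_i),T)$ (as the paper does) rather than $\Du\overline{\End}_H(E_i)$, since only the former carries an obvious $B$-action. In short: your argument is correct in outline and more elementary than the paper's, at the cost of filling in the minimality and duality-compatibility steps that the Connecting Lemma packages for free.
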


\begin{proof}
We have 
\[
{_B}\Du(T) \cong \Hom_H(T,\Du(H)) = \bigoplus_{j=1}^n \Hom_H(T,\Du(e_jH)).
\]
For $j \not= i$ there are $B$-module isomorphisms
\[
\Hom_H(T,\Du(e_jH)) \cong \Du\Hom_H(He_j,T) = \Du\Hom_H(T_j,T)
\cong \Du(e_jB).
\]
It follows that
\[
{_B}\Du(T) \cong \Du(B)/\Du(e_iB) \oplus \Hom_H(T,\Du(e_iH)).
\]
Since $\Ext_H^1(T,E_i) \not= 0$, the Connecting Lemma \cite[Section~2.3]{HR} 
implies that
\[
\Hom_H(T,\Du(e_iH)) \cong \tau_B(\Ext_H^1(T,E_i)).
\]
We have 
\[
\Ext_H^1(T,E_i) \cong \Du\Hom_H(\tau_H^-(E_i),T) \cong \Du(e_iB).
\]
The first isomorphism is obtained from the Auslander-Reiten formulas
and the fact that $\idim(E_i) \le 1$.
The second isomorphism is obvious.
Note that both isomorphism are $B$-module isomorphisms.
This finishes the proof.
\end{proof}

For any of the algebras $A \in \{ H, s_i(H), B \}$ and any simple 
$A$-module $S_j$ let
\begin{align*}
\cT_j^A := \{ M \in \rep(A) \mid \Hom_A(M,S_j) = 0 \},
\\
\cS_j^A := \{ M \in \rep(A) \mid \Hom_A(S_j,M) = 0 \}.
\end{align*}
Using the notation from Section~\ref{sec8.1}, for $A \in \{ H,s_i(H) \}$ 
we have
$\cT_j^A = \cT_j \cap \rep(A)$ and $\cS_j^{A} = \cS_j \cap \rep(A)$.

\begin{Lem}\label{lem:APR4}
The functors
$F := \Hom_H(T,-)$ and $G := T \otimes_{B} -$
induce mutually quasi-inverse equivalences
$F\df \cT \to \cY$ and $G\df \cY \to \cT$, where
\begin{align*}
\cT &:= \{ M \in \rep(H) \mid \Ext_H^1(T,M) = 0\} 
\\
&= \{ M \in \rep(H) \mid \Du\Hom_H(M,E_i) = 0 \} 
\\
&= \cT_i^H
\end{align*}
and
\begin{align*}
\cY &:= \{ N \in \rep(B) \mid {\rm Tor}_B^1(T,N) = 0 \}
\\
&= \{ N \in \rep(B) \mid \Du\Ext_B^1(N,\Du(T)) = 0 \}
\\
&= \{ N \in \rep(B) \mid \Hom_B(\Du(e_iB),N) = 0 \}
\\
&= \cS_i^B.
\end{align*}
Furthermore, we have $F(\rep(H)) \subseteq \cY$ and
$G(\rep(B)) \subseteq \cT$.
\end{Lem}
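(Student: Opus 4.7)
The approach is to read Lemma~\ref{lem:APR4} as an instance of the classical Brenner--Butler tilting theorem applied to the tilting $H$-module $T$ from Lemma~\ref{lem:APR1}, and then to identify the abstract torsion/torsion-free classes produced by that theorem with the concrete categories $\cT_i^H$ and $\cS_i^B$.

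\textbf{Step 1 (Brenner--Butler).} Since $T$ is a classical tilting module ($\pdim T \le 1$ and $\Ext^1_H(T,T) = 0$) with $B = \End_H(T)^{\op}$, the tilting theorem produces torsion pairs $(\cF, \cT)$ in $\rep(H)$ and $(\cX, \cY)$ in $\rep(B)$, defined by the Ext/Tor-vanishing conditions of the statement, together with mutually quasi-inverse equivalences $F\df \cT \to \cY$ and $G\df \cY \to \cT$. The inclusions $F(\rep(H)) \subseteq \cY$ and $G(\rep(B)) \subseteq \cT$ are automatic from $\Ext^2_H(T, -) = 0$ and $\mathrm{Tor}_2^B(T, -) = 0$ via the standard long-exact-sequence argument.

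\textbf{Step 2 ($\cT = \cT_i^H$).} Write $T = H/He_i \oplus \tau_H^-(E_i)$; the projective summand kills $\Ext^1$, so $\Ext^1_H(T, M) \cong \Ext^1_H(\tau_H^-(E_i), M)$. By Lemma~\ref{lem:9.8}(i) and Proposition~\ref{vpdim}, $\pdim \tau_H^-(E_i) \le 1$, and $E_i$ is non-injective (since $i$ is not a source), so the Auslander--Reiten formula gives
\[
\Ext^1_H(\tau_H^-(E_i), M) \;\cong\; \Du\Hom_H(M, \tau_H\tau_H^-(E_i)) \;=\; \Du\Hom_H(M, E_i).
\]
Finally, $\Hom_H(M, E_i) = 0 \iff \Hom_H(M, S_i) = 0$: the forward direction uses the inclusion $S_i = \soc(E_i) \hookrightarrow E_i$; for the backward direction, since $E_i = e_iE_i$ all composition factors of $E_i$ are $S_i$, so any map $M \to E_i$ whose composite with the top projection $E_i \twoheadrightarrow S_i$ vanishes lands in $\rad(E_i)$, and iterating along the finite radical filtration forces it to be zero.

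\textbf{Step 3 ($\cY = \cS_i^B$).} I would apply the duality $\Du\,\mathrm{Tor}_1^B(T, N) \cong \Ext^1_B(N, \Du T)$ and use Lemma~\ref{lem:APR3} to write $\Du T \cong \Du(B)/\Du(e_iB) \oplus \tau_B(\Du(e_iB))$. The first summand is injective, so $\Ext^1_B(N, \Du T) \cong \Ext^1_B(N, \tau_B(\Du(e_iB)))$. Assuming $\Du(e_iB)$ is not projective, the AR formula on $B$ yields $\Ext^1_B(N, \tau_B(\Du(e_iB))) \cong \Du\,\underline\Hom_B(\Du(e_iB), N)$, so $\cY$ is identified with $\{N : \Hom_B(\Du(e_iB), N) = 0\}$ modulo the stable-Hom correction. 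The final equality $\Hom_B(\Du(e_iB), N) = 0 \iff \Hom_B(S_i, N) = 0$ then proceeds in parallel with Step 2, provided one knows that all composition factors of $\Du(e_iB)$ are $S_i$: then $\soc(\Du(e_iB)) = S_i$ gives one direction (any nonzero image lies in $\soc N$ as a sum of $S_i$'s), and the top projection $\Du(e_iB) \twoheadrightarrow S_i$ gives the other.

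\textbf{Main obstacle.} The crux is the claim that every composition factor of $\Du(e_iB)$ is the simple $B$-module $S_i$. My plan is to exploit the other half of the Brenner--Butler equivalence, $\Ext^1_H(T,-)\df \cF \to \cX$: using $\Hom_H(H/He_i, E_i) = 0$ and Lemma~\ref{lem:9.8}(ii), we have $E_i \in \cF$, so $\Ext^1_H(T, E_i)$ is a $B$-module in $\cX$, computed by Auslander--Reiten to be $\Du\Hom_H(E_i, E_i) = \Du H_i$ of $K$-dimension~$c_i$. Inducting on the radical filtration of $E_i$ (whose layers are all $S_i$) and invoking the long exact sequence
\[
\Ext^1_H(T, \rad^{k+1}E_i) \to \Ext^1_H(T, \rad^k E_i) \to \Ext^1_H(T, S_i) \to 0
\]
(together with $\Ext^1_H(T, S_i) = \Du\Hom_H(S_i, E_i) \cong S_i$ as a one-dimensional simple $B$-module) shows that all composition factors of $\Ext^1_H(T, E_i)$ are $S_i$; a dimension count and universal-property comparison identifies this module with $\Du(e_iB)$. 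The remaining secondary point is verifying that $\Du(e_iB)$ is not projective as a $B$-module (so that the stable-Hom correction in the AR formula on $B$ is harmless); this is automatic in the generic APR situation but should be checked to exclude degenerate cases.
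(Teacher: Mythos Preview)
Your overall strategy matches the paper's: invoke Brenner--Butler for the equivalences and the inclusions, then use the Auslander--Reiten formulas together with Lemma~\ref{lem:APR3} to rewrite the defining conditions of $\cT$ and $\cY$. The only substantive divergence is in how you attack the ``main obstacle'', namely that every composition factor of $\Du(e_iB)$ is $S_i$.

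The paper's argument for this point is much shorter and stays entirely on the $B$-side: once the description $\cY=\{N:\Hom_B(\Du(e_iB),N)=0\}$ is in hand, simply test it against the indecomposable injectives $\Du(e_jB)$ for $j\neq i$. These are injective, so $\Ext^1_B(\Du(e_jB),\Du(T))=0$, hence $\Du(e_jB)\in\cY$, hence $\Hom_B(\Du(e_iB),\Du(e_jB))=0$; since $\Du(e_jB)$ is the injective envelope of $S_j$, this forces every composition factor of $\Du(e_iB)$ to be $S_i$. Your route via the radical filtration of $E_i$ on the $H$-side also works, but it is longer, and the identification $\Du(e_iB)\cong\Ext^1_H(T,E_i)$ you labor toward was already obtained verbatim in the proof of Lemma~\ref{lem:APR3}.

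Your two technical worries---the stable-$\Hom$ correction in the AR formula on $B$, and whether $\Du(e_iB)$ could be projective---are legitimate, but both are dispatched by a single observation you did not exploit: Brenner--Butler places $\Du(e_iB)\cong\Ext^1_H(T,E_i)$ in the torsion class $\cX\subset\rep(B)$, while every indecomposable projective $Be_k=\Hom_H(T,T_k)$ lies in the torsion-free class $\cY$. Since $\Hom_B(\cX,\cY)=0$ and $\cX\cap\cY=0$, the nonzero module $\Du(e_iB)$ is not projective and admits no nonzero map to any projective, so $\underline{\Hom}_B(\Du(e_iB),N)=\Hom_B(\Du(e_iB),N)$ for every $N$.
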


\begin{proof}
This follows mainly from classical tilting theory (Brenner-Butler Therorem, 
see for example~\cite[Section~2]{HR}) and the Auslander-Reiten formulas.
The third equality in the description of $\cY$ uses the Auslander-Reiten 
formulas in combination with Lemma~\ref{lem:APR3}. 
For $j \not= i$ we have 
\[
\Hom_B(\Du(e_iB),\Du(e_jB)) \cong \Du\Ext_B^1(\Du(e_jB),\Du(T)) = 0.
\]
Thus every composition factor of $\Du(e_iB)$ is isomorphic to
$S_i$.
This implies the fourth equality in the description of $\cY$.
\end{proof}

\begin{Cor}\label{cor:APRd}
Via restriction of
the functors $F_i^+$, $F \circ F_i^-$ and $F$ we get a commutative diagram
\[
\xymatrix{
\cT_i^H \ar[r]^{F_i^+} \ar@{=}[d] & \cS_i^{s_i(H)} \ar[d]^{F \circ F_i^-}
\\
\cT_i^H \ar[r]^F & \cS_i^B
}
\]
of equivalences of subcategories.
\end{Cor}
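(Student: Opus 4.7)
The plan is to deduce the corollary directly from Lemma~\ref{lem:APR4} and Corollary~\ref{cor:refl2}, with essentially no extra work beyond identifying the subcategories correctly. First I would observe that when $i$ is a sink in $Q^\circ(C,\Omega)$, the functor $F_i^+$ is (by construction just above Proposition~\ref{prop:reflvp}) the restriction of $\Sigma_i^+$ to the full subcategory $\rep(H) \subset \rep(\Pi)$ obtained by extending modules by zero on the arrows $\alpha_{ji}^{(g)} \df i \to j$ of $\ov{Q}$. After reflection $i$ becomes a source in $Q^\circ(C,s_i(\Omega))$, and the image of $F_i^+$ lies in $\rep(s_i(H))$ for the same reason. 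Under this inclusion the subcategories appearing in Corollary~\ref{cor:refl2} become $\cT_i \cap \rep(H) = \cT_i^H$ and $\cS_i \cap \rep(s_i(H)) = \cS_i^{s_i(H)}$. Hence Corollary~\ref{cor:refl2} yields mutually quasi-inverse equivalences
\[
F_i^+ \df \cT_i^H \to \cS_i^{s_i(H)}, \qquad F_i^- \df \cS_i^{s_i(H)} \to \cT_i^H.
\]

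Second, Lemma~\ref{lem:APR4} provides the equivalence $F \df \cT_i^H \to \cS_i^B$. Composing with the quasi-inverse $F_i^-$ of the first step gives $F \circ F_i^- \df \cS_i^{s_i(H)} \to \cS_i^B$, which is an equivalence as the composition of equivalences. This establishes the three edges of the diagram as equivalences.

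Finally, for commutativity of the square it suffices to produce a natural isomorphism $(F \circ F_i^-) \circ F_i^+ \cong F$ on $\cT_i^H$. But this is immediate from the isomorphism of functors $F_i^- \circ F_i^+ \cong \id_{\cT_i^H}$ provided by Corollary~\ref{cor:refl2}. The only mildly delicate point is to check that everything restricts properly, i.e.\ that $F_i^+$ and $F_i^-$ preserve the subcategories defined by the $\tp_i$ and $\soc_i$ vanishing conditions when passing from $\Pi$ to $H$ or $s_i(H)$; but this is trivial because the $\rep(H)$ and $\rep(s_i(H))$ correspond exactly to $\Pi$-modules on which half of the structure maps at $i$ vanish, and the reflection functors only modify the data at vertex $i$. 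No serious obstacle is expected.
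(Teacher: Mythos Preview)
Your proposal is correct and follows exactly the approach indicated by the paper's one-line proof, which simply says to combine Lemma~\ref{lem:APR4} and Corollary~\ref{cor:refl2}. You have merely spelled out the details of that combination: Corollary~\ref{cor:refl2} gives the equivalence $F_i^+\df \cT_i^H \to \cS_i^{s_i(H)}$ with quasi-inverse $F_i^-$, Lemma~\ref{lem:APR4} gives the equivalence $F\df \cT_i^H \to \cS_i^B$, and commutativity follows from $F_i^-\circ F_i^+ \cong \id_{\cT_i^H}$.
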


\begin{proof}
Combine Lemma~\ref{lem:APR4} and
Corollary~\ref{cor:refl2}.
\end{proof}

\begin{Cor}\label{cor:APRe}
Via restriction of the functor $F \circ F_i^-$ we get a commutative diagram
\[
\xymatrix{
\rep(H) \ar[r]^{F_i^+} \ar@{=}[d] & \cS_i^{s_i(H)} \ar[d]^{F \circ F_i^-}
\\
\rep(H) \ar[r]^F & \cS_i^B
}
\]
with $F \circ F_i^-$ an equivalence of subcategories.
\end{Cor}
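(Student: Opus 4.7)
The plan is to derive Corollary~\ref{cor:APRe} from Corollary~\ref{cor:APRd} by extending the top row of the latter from $\cT_i^H$ to all of $\rep(H)$. The new content relative to Corollary~\ref{cor:APRd} is twofold: I need to check that $F_i^+$ and $F$ still land in the claimed targets when the domain is enlarged, and that the diagram continues to commute. The right-hand vertical arrow $F \circ F_i^-\df \cS_i^{s_i(H)} \to \cS_i^B$ is already an equivalence by Corollary~\ref{cor:APRd}, so the ``equivalence of subcategories'' assertion requires nothing new.

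For the target subcategories, $F_i^+(M) \in \cS_i^{s_i(H)}$ for every $M$ is immediate from the construction in Section~\ref{sec8.1}: the outgoing map of $F_i^+(M)$ at vertex $i$ is the canonical inclusion $\Ker(M_{i,\inn}) \hookrightarrow \widetilde{M_i}$, which is injective, so $\soc_i(F_i^+(M)) = 0$. For $F$, I would invoke the Brenner-Butler torsion pair from Lemma~\ref{lem:APR4}: writing $0 \to tM \to M \to M/tM \to 0$ with $tM \in \cT = \cT_i^H$ and $\Hom_H(T, M/tM) = 0$, one gets $F(M) \cong F(tM) \in \cS_i^B$ by Lemma~\ref{lem:APR4}.

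The central step is the natural isomorphism $F \circ F_i^- \circ F_i^+ \cong F$. Proposition~\ref{prop:refl1}(ii), restricted to $\rep(H)$, supplies a functorial short exact sequence
\[
0 \to F_i^-F_i^+(M) \to M \to \fac_i(M) \to 0.
\]
Since $\fac_i(M)$ is supported at vertex $i$, all of its composition factors are isomorphic to $S_i$, so a short dévissage reduces the desired vanishing $\Hom_H(T, \fac_i(M)) = 0$ to the single vanishing $\Hom_H(T, S_i) = 0$. The latter is direct: $\Hom_H(He_j, S_i) = e_j S_i = 0$ for $j \neq i$ disposes of the summand $H/E_i$ of $T$, and applying $\Hom_H(-, S_i)$ to the projective resolution~\eqref{eq9.7} of $\tau_H^-(E_i)$ disposes of the other summand, since its middle term is a direct sum of modules $P_j$ with $j \in \Omega(i,-)$ and hence $j \neq i$ (here the assumption that $i$ is a sink in $Q^\circ$ is crucial). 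Applying the left-exact functor $F$ to the above sequence then yields the desired natural isomorphism.

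The main obstacle is identifying $\Hom_H(T, S_i) = 0$ as the precise vanishing that makes the enlarged diagram commute. Once this is in hand, the long exact sequence argument yields commutativity for every $M$, and combining with the equivalence $F \circ F_i^-\df \cS_i^{s_i(H)} \to \cS_i^B$ from Corollary~\ref{cor:APRd} completes the proof.
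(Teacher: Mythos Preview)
Your proof is correct and follows the same route as the paper, which merely cites Corollary~\ref{cor:APRd}, the definition of $F_i^+$, and the last statement of Lemma~\ref{lem:APR4}; you have spelled out the commutativity step that the paper leaves implicit. One small simplification: for $F(M)\in\cS_i^B$ you can quote the last sentence of Lemma~\ref{lem:APR4} directly rather than passing through the torsion decomposition, and for $\Hom_H(T,S_i)=0$ one can alternatively note that $S_i\notin\cT_i^H=\cT$ forces $S_i\in\cF$ since $(\cT,\cF)$ is a torsion pair.
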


\begin{proof}
This follows from Corollary~\ref{cor:APRd}, the definition of $F_i^+$ and the last
statement in Lemma~\ref{lem:APR4}.
\end{proof}

\begin{Lem}\label{lem:APRb}
The following hold:
\begin{itemize}

\item[(i)]
$\proj(s_i(H)) \subseteq \cS_i^{s_i(H)}$;

\item[(ii)]
$\proj(B) \subseteq \cS_i^B$.

\end{itemize}
\end{Lem}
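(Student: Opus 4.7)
The plan is to handle (i) and (ii) by different routes. For (i) we work intrinsically inside $s_i(H)$ and exploit the fact that $i$ is a source in $Q^\circ(C,s_i(\Omega))$; for (ii) we transport the result through the tilting equivalence $F=\Hom_H(T,-)$ established in Lemma~\ref{lem:APR4}.

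For (i), first note that $s_i(H)$ is the path algebra of $Q(C,s_i(\Omega))$ modulo the admissible ideal generated by (H1) and (H2). Since $i$ is a source in $Q^\circ(C,s_i(\Omega))$, the only arrow of $Q(C,s_i(\Omega))$ ending at $i$ is the loop $\vep_i$. Consequently every nonzero path ending at $i$ is of the form $\vep_i^m$ with $0\le m<c_i$, and in particular
\[
e_i\cdot s_i(H)\cdot e_j \;=\; 0\qquad\text{for all }j\neq i.
\]
For $j\neq i$, any homomorphism $S_i\to P'_j:=s_i(H)e_j$ factors through $e_iP'_j=e_i\cdot s_i(H)\cdot e_j=0$, so $\Hom_{s_i(H)}(S_i,P'_j)=0$. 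For $j=i$ we have $e_iP'_i=H_i=K[\vep_i]/(\vep_i^{c_i})$, and the unique (up to scalar) nonzero element annihilated by $\vep_i$ is $\vep_i^{c_i-1}$. Since $i$ is not a source in $Q^\circ(C,\Omega)$, there exists $k\in\Omega(i,-)=s_i(\Omega)(-,i)$, and I would then show, using the commutativity relation (H2) in $s_i(H)$ iterated with $c_i-1=(k_{ki}-1)f_{ki}+(f_{ki}-1)$ and the explicit multiplicative basis of $P'_i$ from Section~\ref{sec3.1}, that
\[
\alpha_{ki}^{(g)}\cdot \vep_i^{c_i-1}\cdot e_i \;=\; \vep_k^{(k_{ki}-1)f_{ik}}\,\alpha_{ki}^{(g)}\,\vep_i^{f_{ki}-1}\cdot e_i \;=\; b_{k,f_{ik}}^{1,g}\;\neq\; 0.
\]
Hence no nonzero element of $e_iP'_i$ lies in $\soc(P'_i)$, so $\Hom_{s_i(H)}(S_i,P'_i)=0$ and (i) is proved.

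For (ii), the indecomposable projective $B$-modules are the $Be_j\cong \Hom_H(T,T_j)=F(T_j)$ for $j=1,\ldots,n$. By the equivalence $F\colon \cT_i^H\to \cS_i^B$ of Lemma~\ref{lem:APR4}, it suffices to verify that $T_j\in \cT_i^H$, i.e.\ $\Hom_H(T_j,S_i)=0$, for each $j$. For $j\neq i$ this is immediate since $T_j=He_j$ and $\Hom_H(He_j,S_i)=e_jS_i=0$. For $j=i$ we have $T_i=\tau_H^-(E_i)$, and the minimal projective resolution~(\ref{eq9.7}) displays $\tau_H^-(E_i)$ as a quotient of $\bigoplus_{k\in\Omega(i,-)}P_k\otimes_k{_k}H_i$, whose top is a direct sum of copies of $S_k$ with $k\in\Omega(i,-)$. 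Because $i$ is a sink in $Q^\circ(C,\Omega)$, one has $i\notin \Omega(i,-)$, so $\Hom_H(\tau_H^-(E_i),S_i)=0$ and hence $T_i\in\cT_i^H$. Therefore $Be_i\in\cS_i^B$ as well.

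The main obstacle is the bookkeeping in the $j=i$ case of~(i): one must propagate $\vep_i^{c_i-1}$ through the commutativity relation (H2) the correct number of times and then identify the resulting element with a nonzero basis vector of $P'_i$. Once that calculation is in hand, the remaining parts of both statements follow immediately from the quiver–and–relations description of $s_i(H)$ and from the tilting equivalence already at our disposal.
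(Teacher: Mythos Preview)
Your proof is correct and follows the same underlying ideas as the paper, though with considerably more detail. The paper's proof of (i) is a single sentence: since $i$ is a source in $Q^\circ(C,s_i(\Omega))$, the claim follows. Your explicit verification that $\alpha_{ki}^{(g)}\vep_i^{c_i-1}e_i\neq 0$ in $P'_i$ is exactly what justifies that one-liner, so your argument simply unpacks what the paper leaves implicit.

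For (ii) you take a slightly longer route than necessary. The paper observes that $Be_j\cong\Hom_H(T,T_j)=F(T_j)$ and then uses the last statement of Lemma~\ref{lem:APR4}, namely $F(\rep(H))\subseteq\cS_i^B$, to conclude immediately. You instead first check $T_j\in\cT_i^H$ and then apply the equivalence $F\df\cT_i^H\to\cS_i^B$. Both are valid, but the containment $F(\rep(H))\subseteq\cS_i^B$ already does the work without needing to locate each $T_j$ inside $\cT_i^H$.
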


\begin{proof}
Since $i$ is a source in $Q(C,s_i(\Omega))^\circ$, we get that
$\proj(s_i(H)) \subseteq \cS_i^{s_i(H)}$.
Part (ii) is obvious, since the modules $\Hom_T(T,T_j)$ are (up 
to isomorphism) the indecomposable projective $B$-modules.
\end{proof}

Theorem~\ref{thm:APR} follows now from \cite[Lemma~2.2]{APR}. (Corollary~\ref{cor:APRe} and Lemma~\ref{lem:APRb} and the fact
that $\cS_i^A$ is closed under submodules for $A \in \{ s_i(H),B\}$
ensure that the assumptions of \cite[Lemma~2.2]{APR} are
satisfied.)

As a consequence, 
via restriction of $F \circ F_i^-$, we get an equivalence of subcategories 
\[
\proj(s_i(H)) \to \proj(B).
\]
In particular, we have $F_i^+(T_j) \cong s_i(H)e_j$ for all $1 \le j \le n$.
We also get an algebra isomorphism $s_i(H) \cong B$.

We leave it as an exercise to formulate a dual version of Theorem~\ref{thm:APR}.

\subsection{Coxeter functors}\label{sec:6.3}
Let $Q = Q(C,\Omega)$.

Given a $+$-admissible sequence $(i_1,\ldots,i_n)$ for $(C,\Omega)$
let
\[
C^+ := F_{i_n}^+ \circ \cdots \circ F_{i_1}^+\df \rep(H) \to \rep(H).
\]
Dually, one defines $-$-\emph{admissible sequences}
$(j_1,\ldots,j_n)$ and $C^- := F_{j_n}^- \circ \cdots \circ F_{j_1}^-$.
We call $C^+$ and $C^-$ \emph{Coxeter functors}.
Similarly as in the classical case one proves the following result,
compare \cite{BGP}.

\begin{Lem}
The functors $C^+$ and $C^-$ do not depend on the chosen 
admissible sequences for $(C,\Omega)$.
\end{Lem}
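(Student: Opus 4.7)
The plan is to combine a combinatorial fact about admissible sequences with the commutativity of reflection functors at independent sinks.

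First I would prove the combinatorial statement: any two $+$-admissible sequences $\bi = (i_1,\ldots,i_n)$ and $\bi' = (i'_1,\ldots,i'_n)$ for $(C,\Omega)$ can be connected by a finite chain of $+$-admissible sequences, each obtained from the previous one by swapping two adjacent entries $(i_k,i_{k+1})$ with $c_{i_k,i_{k+1}} = 0$. The key elementary observation is that two distinct sinks of the same acyclic quiver are never adjacent: otherwise the arrow between them would give one of them an outgoing arrow. The argument proceeds by induction on $n$. Pick $\ell$ with $i_\ell = i'_1$ and verify, by a secondary induction on $k \le \ell$, that $i_\ell$ remains a sink of $Q^\circ(C, s_{i_{k-1}} \cdots s_{i_1}(\Omega))$ (each $s_{i_s}$ alters only the arrows at $i_s$, and an arrow previously outgoing at $i_\ell$ would contradict $i_\ell$ being a sink of $Q^\circ(C,\Omega)$). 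At every such stage both $i_k$ and $i_\ell$ are sinks of the same acyclic quiver, hence not adjacent, so $c_{i_k,i_\ell} = 0$ and we may slide $i_\ell$ forward by legal adjacent transpositions until it reaches the first position. After matching the initial entries, the tails of both sequences are $+$-admissible sequences for the full subquiver on $Q_0 \setminus \{i'_1\}$ with the induced orientation, and the outer induction closes.

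Second, whenever $i \ne j$ are both sinks of some $Q^\circ(C,\Omega')$ (so $c_{ij} = 0$), I would establish a natural isomorphism
\[
F_j^+ \circ F_i^+ \;\cong\; F_i^+ \circ F_j^+
\]
of functors $\rep(H(C,D,\Omega')) \to \rep(H(C,D,s_js_i(\Omega')))$, noting that $s_is_j(\Omega') = s_js_i(\Omega')$ since $i$ and $j$ share no edge. From the explicit description of $F_i^+$ in Section~\ref{sec8}, this functor modifies only the vector space at vertex $i$ and only the structure maps along arrows incident to $i$; since $i$ and $j$ share no arrow in $\overline{Q}$, the prescriptions of $F_i^+$ and $F_j^+$ act on completely disjoint portions of the representation data and therefore commute canonically.

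Combining the two steps produces a natural isomorphism between any two $+$-admissible compositions $F_{i_n}^+\circ\cdots\circ F_{i_1}^+$ and $F_{i'_n}^+\circ\cdots\circ F_{i'_1}^+$, proving that $C^+$ is independent of the chosen sequence up to natural isomorphism. The statement for $C^-$ follows by the entirely dual argument with $-$-admissible sequences and sources replacing sinks. The main delicate point is the bookkeeping in Step~1, i.e.\ verifying that $i_\ell$ remains a sink throughout the permutation process; Step~2 is tautological given the disjoint-support nature of the reflection functors, and Step~3 is merely the combination.
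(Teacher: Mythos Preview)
Your proposal is correct and follows exactly the classical argument the paper invokes: the paper gives no details and simply writes ``Similarly as in the classical case one proves the following result, compare \cite{BGP}.'' Your two-step strategy (connecting admissible sequences by adjacent transpositions at non-adjacent vertices, then using that reflection functors at independent sinks commute) is precisely the standard BGP proof, carried over verbatim to the present setting since the reflection functors $F_k^\pm$ have the same local nature as in the hereditary case.
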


The next lemma is a consequence of Proposition~\ref{prop:refl1},
Corollary~\ref{cor:refl2}
 and
Lemma~\ref{lem:8.5}.

\begin{Lem}\label{lem:8.7}
Let $M$ be an indecomposable locally free $H$-module.
Let $(i_1,\ldots,i_n)$ be a $+$-admissible sequence for $(C,\Omega)$.
Assume that  $F_{i_s}^+ \cdots F_{i_1}^+(M)$ is locally free and non-zero
for some $1 \le s \le n$.
Then we have
\[
F_{i_k}^+ \cdots F_{i_1}^+(M) \cong
(F_{i_{k+1}}^- \cdots F_{i_s}^-)(F_{i_s}^+ \cdots F_{i_{k+1}}^+)
F_{i_k}^+ \cdots F_{i_1}^+(M)
\]
for $1 \le k \le s-1$, and
$F_{i_k}^+ \cdots F_{i_1}^+(M)$ is indecomposable and locally free
for $1 \le k \le s$.
\end{Lem}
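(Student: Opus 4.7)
The plan is to establish the result in three steps: (1) every intermediate $M_k := F_{i_k}^+\cdots F_{i_1}^+(M)$ is locally free; (2) every $M_k$ is indecomposable; (3) the displayed isomorphism.

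For (1), I would exploit that $(i_1,\ldots,i_n)$ is $+$-admissible, so the vertices $i_1,\ldots,i_s$ are pairwise distinct and each is reflected at most once in the first $s$ steps. Hence the underlying $H_v$-module $(M_k)_v$ at any vertex $v$ coincides either with $(M_0)_v$ (if $v$ has not yet been reflected by step $k$) or with $(M_s)_v$ (if $v = i_l$ with $l \le k$, as no subsequent reflection touches $v$). Since both $M_0 = M$ and $M_s$ are locally free by hypothesis, every $(M_k)_v$ is a free $H_v$-module, so $M_k$ is locally free.

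For (2), I argue by forward induction on $k$, the base case $k=0$ being given. Assume $M_{k-1}$ is locally free and indecomposable. Then $M_{k-1} \not\cong E_{i_k}$, for otherwise $M_k = 0$ would propagate to $M_s = 0$. By Lemma~\ref{lem:8.5} applied to the locally free $M_k$, the image of the structure map $M_{k-1,i_k,\inn}$ is a free $H_{i_k}$-module; since $H_{i_k}$ is self-injective, the cokernel $\fac_{i_k}(M_{k-1})$ is itself a free $H_{i_k}$-module, hence isomorphic to $E_{i_k}^r$ for some $r \ge 0$. If $r \ge 1$, projectivity of $E_{i_k}$ (which holds because $i_k$ is a sink in the current orientation) would split the resulting surjection $M_{k-1} \twoheadrightarrow E_{i_k}^r$, contradicting the indecomposability of $M_{k-1}$ together with $M_{k-1} \not\cong E_{i_k}$. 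Hence $M_{k-1} \in \cT_{i_k}$, and Corollary~\ref{cor:refl2} gives that $M_k = F_{i_k}^+(M_{k-1})$ is indecomposable.

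For (3), the memberships $M_k \in \cT_{i_{k+1}}$ obtained as a byproduct of (2) allow Proposition~\ref{prop:refl1}(ii) to yield $M_k \cong F_{i_{k+1}}^- F_{i_{k+1}}^+(M_k) = F_{i_{k+1}}^-(M_{k+1})$, and iterating this over $k, k+1, \ldots, s-1$ produces the displayed isomorphism $M_k \cong F_{i_{k+1}}^- \cdots F_{i_s}^-(M_s)$. The most subtle point is step (1): a single reflection $F_i^+$ need not preserve local freeness in general, and the argument crucially exploits that a $+$-admissible sequence reflects each vertex at most once, so that local freeness of the endpoint $M_s$ can be transported back to every intermediate $M_k$ component by component.
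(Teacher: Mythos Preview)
Your proof is correct and follows the approach the paper indicates (which merely cites Proposition~\ref{prop:refl1}, Corollary~\ref{cor:refl2}, and Lemma~\ref{lem:8.5} without spelling out the details). Your Step~(1) argument---observing that in a $+$-admissible sequence each vertex is touched at most once, so $(M_k)_v$ agrees with either $(M_0)_v$ or $(M_s)_v$---is a clean way to transport local freeness from the endpoints to the intermediate modules; the paper does not make this explicit, but it is the natural mechanism behind the cited Lemma~\ref{lem:8.5}. Steps~(2) and~(3) then unfold exactly as the paper intends.
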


There is also an obvious dual of Lemma~\ref{lem:8.7}.


\section{Coxeter functors and Auslander-Reiten translations}
\label{sec10}


\subsection{Overview}

As before we fix $H=H(C,D,\Omega)$. 
Our aim is to compare
the Coxeter functors $C^+$ and $C^-$ introduced in Section~\ref{sec:6.3} with the
Auslander-Reiten translations $\tau$ and $\tau^-$. 

Without loss of
generality we assume that for each
$(i,j) \in \Omega$ we have $i<j$. 
Thus, 
\[
C^+ = F_n^+ \circ \cdots \circ F_1^+.
\]
Recall that we defined the 
twist automorphism $T$ of $H$ by 
\[
T(\vep_i) = \vep_i,\qquad T(\alpha_{ij}^{(g)}) = -\alpha_{ij}^{(g)}, \qquad 
(i\in Q_0,\ (i,j)\in\Omega,\ 1\le g \le g_{ij}).
\]
The twist by $T$ defines
an automorphism of $\rep(H)$ which we denote also by $T$. 
More explicitly,
for $M = (M_i,M_{ij}) \in \rep(H)$ we have $(TM)_i = M_i$ and  
$(TM)_{ij} = -M_{ij}$. 

Following \cite[Section 5]{Ga3}, we start in Section~\ref{ssec:GaRie} by constructing a new algebra
$\wti{H}$ containing two subalgebras $H_{(0)}$ and $H_{(1)}$ canonically isomorphic to $H$.
Denoting by 
\[
\Res_a \df \rep(\wti{H}) \to \rep(H_{(a)}), \quad (a\in\{0,1\})
\]
the corresponding restriction functors, we will show that 
$C^+ \cong\Res_1\circ \Res_0^*$, where 
\[
\Res_0^*\df \rep(H_{(0)})\to \rep(\wti{H})
\] 
is right adjoint to $\Res_0$.
This will follow from a factorization
\[
\Res_0^* = \Res^*_{(n-1,n)}\circ\cdots\circ\Res^*_{(1,2)}\circ\Res^*_{(0,1)} 
\]
similar to the definition of $C^+$, and from a comparison of the functors 
$\Res^*_{(i-1,i)}$ and $F^+_i$ obtained in Lemma~\ref{lem9.2}.

After that, we will give a different description of the adjoint functor $\Res_0^*$,
which will allow  to show that, for $M\in\rep_\vp (H)$, 
the $H$-module $\Res_1\circ \Res_0^*(M)$ is the kernel of a certain map $d^*_M$.
On the other hand, it follows from Corollary~\ref{cor:resolHM}
that $\tau(TM)$ is the kernel of the map $\Du\Hom_H(d\otimes TM,H)$. 
We will then show that, under the trace pairings, the maps $d^*_M$ 
and $\Du\Hom_H(d\otimes TM,H)$ can be identified, hence 
\[
 C^+(M) \cong \Res_1\circ \Res_0^*(M) \cong \tau(TM).
\]
A more detailed statement of our results will be given in Theorem~\ref{thm:functoriso},
whose proof is carried out in 
Sections~\ref{pf:fiso2ab} to \ref{pf:fiso2cd}.

The remaining subsections present direct applications of Theorem~\ref{thm:functoriso}.
In \ref{subsec:9.6} we give another description of the preprojective algebra $\Pi=\Pi(C,D)$ as a
tensor algebra. In \ref{subsec:9.7} we adapt to our setting a description of the category
$\rep(\Pi)$ due to Ringel in the classical case, in terms of $H$-module homomorphisms $M\to TC^+(M)$.
Finally, in \ref{subsec9.8} we show that the subcategory of 
Gorenstein-projective $H$-modules coincides 
with the kernel of the Coxeter functor $C^+$.

\subsection{An analogue of the Gabriel-Riedtmann construction} \label{ssec:GaRie}
The following is an adaptation of 
\cite[Section 5]{Ga3} to our situation.

\subsubsection{The algebra $\wti{H}$}
To our fixed datum $(C,D,\Omega)$ we attach a new algebra $\wti{H}$ defined 
by a quiver with relations. The quiver $\wti{Q}$ has set of vertices 
\[
 \wti{Q}_0 := \{(i,a)\mid i\in Q_0,\ a\in\{0,1\}\},
\]
and set of arrows
\begin{align*}
 \wti{Q}_1 := &\ \{\alpha_{(i,a)(j,a)}^{(g)}\df (j,a) \to (i,a) \mid (i,j)\in\Omega,\ 1\le g \le g_{ij},\ a\in\{0,1\}\}\\[2mm]
&\ \cup\ 
\{\alpha_{(j,0)(i,1)}^{(g)}\df (i,1) \to (j,0) \mid (i,j)\in\Omega,\ 1\le g \le g_{ij}\}\\[2mm]
&\ \cup\  \{\vep_{(i,a)}\df (i,a) \to (i,a) \mid (i,a) \in \wti{Q}_0\}.
\end{align*}
Accordingly we put
\[
\wti{\Omega}:=
\{((i,a),(j,a)), ((j,0),(i,1))\mid (i,j)\in\Omega, a\in\{0,1\}\}. 
\]
Let 
\[
\wti{H} := K\wti{Q}/\wti{I}
\]
where $\wti{I}$ is the ideal of $K\wti{Q}$ defined by the following relations:
\begin{itemize}

\item[($\wti{\mathrm{H}}1$)]
For each $(i,a)\in \wti{Q}_0$ we have 
\[
\vep_{(i,a)}^{c_i} = 0.
\]

\item[($\wti{\mathrm{H}}2$)]
For each $((i,a),(j,b)) \in \wti{\Omega}$ and each $1 \le g \le g_{ij}$ we have
\[
\vep_{(i,a)}^{f_{ji}}\alpha_{(i,a)(j,b)}^{(g)} = \alpha_{(i,a)(j,b)}^{(g)}\vep_{(j,b)}^{f_{ij}}.
\]

\item[($\wti{\mathrm{H}}3$)]
For each $i\in Q_0$ we have
\begin{align*}
 \sum_{j\in\Omega(i,-)}\sum_{g=1}^{g_{ij}}\sum_{f=0}^{f_{ji}-1}
 \vep_{(i,0)}^f\alpha_{(i,0)(j,0)}^{(g)}\alpha_{(j,0)(i,1)}^{(g)} \vep_{(i,1)}^{f_{ji}-1-f}\qquad& \\
 \qquad+ \sum_{j\in\Omega(-,i)}\sum_{g=1}^{g_{ij}}\sum_{f=0}^{f_{ji}-1}
 \vep_{(i,0)}^f\alpha_{(i,0)(j,1)}^{(g)}\alpha_{(j,1)(i,1)}^{(g)} \vep_{(i,1)}^{f_{ji}-1-f} &= 0.
\end{align*}

\end{itemize}

When $C$ is symmetric and $D$ is minimal the algebra $\wti{H}$ coincides with the bounded quiver 
denoted by $\wti{QQ}$ in \cite[Section 5.3]{Ga3}.

\subsubsection{Example} \label{sssec:GabRie5}
Let $H = H(C,D,\Omega)$ be defined by the quiver 
\[
\xymatrix{
2 \ar@(ul,ur)^{\vep_2} \ar[r]_{\alpha_{12}} & 1 \ar@(ul,ur)^{\vep_1} & 3 \ar@(ul,ur)^{\vep_3} \ar[l]^{\alpha_{13}}
}
\]
with relations $\vep_2 = 0$, $\vep_1^2 = \vep_3^2 = 0$ and 
$\vep_1\alpha_{13} = 
\alpha_{13} \vep_3$.
Here $C$ is a Cartan matrix of Dynkin type $B_3$, and $D$ is the minimal symmetrizer.
Then $\wti{H}$ is defined by the quiver
\[
\xymatrix{
& (2,0) \ar@(ul,ur)^{\vep_{(2,0)}}\ar[dl]_<<<<{\alpha_{(1,0)(2,0)}} && (2,1) \ar@(ul,ur)^{\vep_{(2,1)}}\ar[dl]_<<<<{\alpha_{(1,1)(2,1)}}
\\
(1,0) \ar@(ul,ur)[]^{\vep_{(1,0)}} &&
(1,1) \ar@(ul,ur)[]^{\vep_{(1,1)}} 
\ar[ul]^{\alpha_{(2,0)(1,1)}}\ar[dl]_<<<<{\alpha_{(3,0)(1,1)}}
\\
& (3,0) \ar@(ul,ur)[]^{\vep_{(3,0)}} 
\ar[ul]^{\alpha_{(1,0)(3,0)}} &&
(3,1) \ar[ul]^{\alpha_{(1,1)(3,1)}}\ar@(ul,ur)[]^{\vep_{(3,1)}}
}
\]
bound by the relations 
\begin{align*}
&\vep_{(2,a)} = 0,\\
&\vep_{(1,a)}^2 = \vep_{(3,a)}^2 = 0,\\
&\vep_{(1,a)} \alpha_{(1,a)(3,a)} = 
\alpha_{(1,a)(3,a)}\vep_{(3,a)},\\
&\vep_{(3,0)} \alpha_{(3,0)(1,1)} = 
\alpha_{(3,0)(1,1)}\vep_{(1,1)},
\end{align*}
with $a \in\{0,1\}$, and
\begin{align*}
&\alpha_{(2,0)(1,1)}\alpha_{(1,1)(2,1)} = 0,\\[1mm]
&\vep_{(1,0)}\alpha_{(1,0)(2,0)}\alpha_{(2,0)(1,1)} + \alpha_{(1,0)(2,0)}\alpha_{(2,0)(1,1)}\vep_{(1,1)}\\
&+\ \vep_{(1,0)}\alpha_{(1,0)(3,0)}\alpha_{(3,0)(1,1)} + \alpha_{(1,0)(3,0)}\alpha_{(3,0)(1,1)}\vep_{(1,1)} =0,\\[1mm]
&\vep_{(3,0)}\alpha_{(3,0)(1,1)}\alpha_{(1,1)(3,1)} + \alpha_{(3,0)(1,1)}\alpha_{(1,1)(3,1)}\vep_{(3,1)} =0.
\end{align*}
%

\subsubsection{$\wti{H}$ as a quotient of a tensor algebra} \label{sssec:GabRie1}
It will be useful to have a more intrinsic description of $\wti{H}$ 
in the spirit of Section~\ref{sec7}.
Define
$\wti{C} = (\wti{c}_{(i,a),(j,b)}) \in \Z^{\wti{Q}_0\times\wti{Q}_0} = M_{2n}(\Z)$
by
\[
\wti{c}_{(i,a),(j,b)} :=
\begin{cases}
c_{ij} &\text{if } (a=b),\\
       &\text{or }(a,b)=(0,1)\text{ and } (i,j)\in\Omega^*,\\
       &\text{or }(a,b)=(1,0)\text{ and } (i,j)\in\Omega,\\
0      &\text{otherwise}.
\end{cases}
\]
Clearly, $\wti{C}$ is a Cartan matrix with
symmetrizer $\wti{D} = \diag(c_1,\ldots,c_n,c_1,\ldots,c_n)$,
where $D = \diag(c_1,\ldots,c_n)$ is our symmetrizer for $C$,
and $\wti{\Omega}$ is an orientation of $\wti{C}$. 
Moreover, if $\wti{c}_{(i,a),(j,b)} < 0$ then 
\[
\wti{g}_{(i,a),(j,b)} = g_{ij}, \qquad
\wti{f}_{(i,a),(j,b)} = f_{ij}, \qquad
\wti{k}_{(i,a),(j,b)} = k_{ij}.
\]
As before, one defines the corresponding algebra
$H(\wti{C},\wti{D},\wti{\Omega})$.
Let 
\[
H_{(i,a)} = K[\vep_{(i,a)}]/(\vep_{(i,a)}^{c_i}).
\]
We have isomorphisms 
\[
\eta_{(i,a)}\df H_i\rightarrow H_{(i,a)}
\]
defined by 
$\vep_i\mapsto\vep_{(i,a)}$,
and as before for each $((i,a),(j,b)) \in \wti{\Omega}$ we get an
$H_{(i,a)}$-$H_{(j,b)}$-bimodule
${_{(i,a)}}H_{(j,b)}$
and an 
$H_{(j,b)}$-$H_{(i,a)}$-bimodule
${_{(j,b)}}H_{(i,a)}$.
There are
bimodule isomorphisms
\[
{_{(i,a)}}H_{(j,b)} \cong
\begin{cases}
{_i}H_j & \text{if $a=b$ and $(i,j) \in \Omega$},\\
{_i}H_j & \text{if $(a,b)=(0,1)$ and $(i,j) \in \Omega^*$}.
\end{cases}
\]
via $\eta_{(i,a)}$ and $\eta_{(j,b)}$.

Set 
\[
\wti{S} := \prod_{(i,a) \in \wti{Q}_0} H_{(i,a)}.
\]
Then 
\[
\wti{B} := \bigoplus_{(\bi,\bj) \in \wti{\Omega}} 
{_\bi}H_\bj
\]
is an $\wti{S}$-$\wti{S}$-bimodule, and we have an isomorphism 
\[
T_{\wti{S}}(\wti{B}) \cong H(\wti{C},\wti{D},\wti{\Omega}).
\] 

In case $(i,j)\in\Omega$ we abbreviate ${_iR_j^0}$ for the standard right basis 
of ${_{(i,0)}}H_{(j,0)}$ and  
${_i}L^1_j$ for the standard left basis of ${_{(i,1)}H_{(j,1)}}$. 
Moreover, in this
case we can identify in a obvious way ${_{(j,0)}H_{(i,1)}}$ with 
$\Hom_{H_{(j,0)}}(_{(i,0)}H_{(j,0)},H_{(j,0)})$ and obtain an $H_{(j,0)}$-basis
$(r^*_+)_{r\in {_iR_j^0}}$ of ${_{(j,0)}H_{(i,1)}}$ which is under this identification
dual to ${_iR_j^0}$. 
Similarly, we obtain a dual $H_{(i,1)}$-basis
$(\ell^*_-)_{\ell\in {_iL_j^1}}$ of ${_{(j,0)}H_{(i,1)}}$.

For $j\in Q_0$, define
\begin{equation} \label{eq:Def-wtiH}
\wti{\rho}_j := \sum_{\substack{i \in \Omega(-,j) \\ \ell \in {_i}L^1_j}} \ell_-^* \otimes \ell +  
\sum_{\substack{k \in \Omega(j,-) \\ r \in {_j}R^0_k}} r \otimes r_+^*.
\end{equation}
We have
$\wti{\rho}_j\in e_{(j,0)}\wti{B}\otimes_{\wti{S}}\wti{B}e_{(j,1)}$.
Now, arguing as in Section~\ref{sec7}, we obtain:
\[
\wti{H} \cong T_{\wti{S}}(\wti{B})/(\wti{\rho}_j \mid j \in Q_0).
\]
Similarly to the case of preprojective algebras, 
for $M\in\rep(H(\wti{C},\wti{D},\wti{\Omega}))$ and $j\in Q_0$ 
we can define maps
\begin{align*}
\wti{M}_{j,\inn} 
&= 
(M_{(j,0),(k,c)})_{(k,c)} \df   
\bigoplus_{(k,c)\in\wti{\Omega}((j,0),-)} {_{(j,0)}H_{(k,c)}}\otimes_{H_{(k,c)}}  M_{(k,c)}\ra M_{(j,0)},
\\ 
\wti{M}_{j,\out} 
&=
(\ad_{(i,a),(j,1)}(M_{(i,a),(j,1)}))_{(i,a)}
\df    M_{(j,1)}\ra
\bigoplus_{(i,a)\in\wti{\Omega}(-,(j,1))}  {_{(j,1)}H_{(i,a)}} \otimes_{H_{(i,a)}}   M_{(i,a)}.
\end{align*}
Note that $\wti{\Omega}((j,0),-) = \wti{\Omega}(-,(j,1))$ and thus,
if we identify by a slight abuse $H_{(j,0)}$ with $H_{(j,1)}$,
we can write
\[
\bigoplus_{(k,c)\in\wti{\Omega}((j,0),-)} {_{(j,0)}H_{(k,c)}}\otimes_{H_{(k,c)}}  M_{(k,c)} \;\;\;=
\bigoplus_{(i,a)\in\wti{\Omega}(-,(j,1))} {_{(j,1)}H_{(i,a)}}\otimes_{H_{(i,a)}}  M_{(i,a)}.
\]
With this setup $M\in\rep(H(\wti{C},\wti{D},\wti{\Omega}))$ belongs to
$\rep(\wti{H})$ if and only if 
\begin{equation}\label{eq:tilde H}
 \wti{M}_{j,\inn}\circ\wti{M}_{j,\out}=0
\end{equation}
for all $j\in Q_0$.

\subsubsection{The subalgebras $H_{(0)}$ and $H_{(1)}$} \label{sssec:GabRie4}
For $a=0,1$ set 
\[
\bbo_a:= \sum_{i\in I}{e_{(i,a)}},\qquad
H_{(a)} := \bbo_a \wti{H} \bbo_a. 
\]
Clearly, $H_{(a)}$ is a (non-unitary) subalgebra of $\wti{H}$, and we have
natural isomorphisms $\eta_a\colon H\rightarrow H_{(a)}$ with 
$\eta_a(\vep_i)=\vep_{(i,a)}$ and 
$\eta_a(\alpha_{ij}^{(g)})=\alpha_{(i,a)(j,a)}^{(g)}$.

We obtain for $a\in\{0,1\}$  exact restriction functors 
\[
\Res_a\df \rep(\wti{H}) \to \rep(H_{(a)}), \quad
M\mapsto \bbo_a\wti{H}\otimes_{\wti{H}} M= \bbo_a M.
\]
We will use several times the elementary fact that the functor
\[
\Res_0^*\df \rep(H_{(0)})\ra \rep(\wti{H}), \quad N\mapsto 
\Hom_{H_{(0)}}(\bbo_0\wti{H},N)
\]
is uniquely characterized up to isomorphism as the right adjoint of $\Res_0$.
It is not hard to see that 
\[
\Res_0\circ\Res_0^* \cong \id_{\rep(H_{(0)})}. 
\]

\subsubsection{The $H$-$H$-bimodule $X$}
Define 
\[
X = X(C,D,\Omega) := \bbo_0\wti{H}\bbo_1.
\] 
We regard $X$ as an $H$-$H$-bimodule
via the maps $\eta_0$ and $\eta_1$, 
that is,
\[
hxh' := \eta_0(h)x\eta_1(h'),\qquad (h, h' \in H,\ x\in X). 
\]
Similarly, using $\eta_0$ and $\eta_1$ we can regard $\Res_1\circ\Res_0^*$ as a functor
from $\rep(H)$ to $\rep(H)$.
Then it is easy to see that we have an isomorphism of functors: 
\[
\Res_1\circ\Res_0^* \cong \Hom_{H}(X,-).
\] 
\begin{Thm}\label{thm:functoriso}
The following hold:
\begin{itemize}

\item[(a)] 
For each $M \in \rep(H)$
we have a functorial isomorphism 
\[
\Hom_H(X,M)\cong C^+(M).
\] 

\item[(b)]
For each $M \in \rep_\vp(H)$ we have
functorial isomorphisms
\[
\Hom_H(X,TM) \cong TC^+(M)\cong \tau(M).
\]

\item[(c)]
For each $M \in \rep(H)$ 
we have a functorial isomorphism 
\[
X\otimes_H M \cong C^-(M).
\] 

\item[(d)]
For each $M \in \rep_\vp(H)$ we have functorial isomorphisms
\[
X\otimes_H TM \cong TC^-(M) \cong \tau^-(M).
\]

\end{itemize}
\end{Thm}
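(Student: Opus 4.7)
The plan is to prove (a) and (c) first by adjunction/categorical arguments mirroring the definitions of $C^\pm$, and then deduce (b) and (d) by comparing with the projective resolution of Corollary~\ref{cor:resolHM} via the trace pairing. For part (a), I would factor $\Res_0^*$ to mirror $C^+ = F_n^+ \circ \cdots \circ F_1^+$. Concretely, construct a chain $\wti H^{(0)} = H_{(0)} \subset \wti H^{(1)} \subset \cdots \subset \wti H^{(n)} = \wti H$, where $\wti H^{(k)}$ is obtained from $\wti H^{(k-1)}$ by adjoining the vertex $(k,1)$, the arrows incident to it, and the mesh relation $\wti\rho_k$ at $k$. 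The associated restriction functors compose to $\Res_0$, so
$$
\Res_0^* \cong (\Res^{(n)})^* \circ \cdots \circ (\Res^{(1)})^*.
$$
The crucial lemma identifies each $(\Res^{(k)})^*$, via the isomorphisms $\eta_a$, with the reflection functor $F_k^+$: coinduction along $\wti H^{(k-1)} \hookrightarrow \wti H^{(k)}$ attaches to a module exactly the kernel of the incoming map $M_{k,\inn}$ at the new vertex $(k,1)$, which is the defining exact sequence of $F_k^+$, and the mesh relation $\wti\rho_k$ is engineered precisely to enforce this kernel condition. Composing and applying $\Res_1$ to extract the level-$1$ part gives $\Res_1 \circ \Res_0^* \cong C^+$, and combined with the natural identification $\Hom_H(X,-) \cong \Res_1 \circ \Res_0^*$ proves (a). For (c), the left adjoint of $\Res_1$ is realized by $\wti H \bbo_1 \otimes_{H_{(1)}} -$, which after applying $\Res_0$ becomes $X \otimes_H -$; a parallel factorization matching a $-$-admissible ordering yields $X \otimes_H - \cong C^-$.

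For parts (b) and (d), the twist statements $\Hom_H(X,TM) \cong TC^+(M)$ and $X \otimes_H TM \cong TC^-(M)$ follow from (a) and (c) by tracking the signs that $T$ introduces on the arrow data. The substantive content is the identification with $\tau$ and $\tau^-$ when $M \in \rep_\vp(H)$. I would apply $\Du\Hom_H(-, H)$ (equivalently, the inverse Nakayama functor on projectives) to the minimal projective resolution of Corollary~\ref{cor:resolHM},
$$
0 \to \bigoplus_{(j,i) \in \Omega} He_j \otimes_j {_j}H_i \otimes_i M_i \xrightarrow{d \otimes M} \bigoplus_{k=1}^n He_k \otimes_k M_k \to M \to 0,
$$
presenting $\tau(M)$ as the kernel of $\Du\Hom_H(d \otimes M, H)$. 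On the other side, the co-induction formula $\Res_0^*(N) = \Hom_{H_{(0)}}(\bbo_0 \wti H, N)$ together with the shape of $X = \bbo_0 \wti H \bbo_1$ dictated by $\wti\rho_j$ (using the dual bases $r^*_+$ and $\ell^*_-$) present $\Hom_H(X, TM)$ as the kernel of a map $d^*_{TM}$ built from these dualities. Via Proposition~\ref{prop:adj1} on the compatibility of adjunction with the trace pairing, one identifies $d^*_{TM}$ with $\Du\Hom_H(d \otimes TM, H)$, giving $\Hom_H(X, TM) \cong \tau(M)$; part (d) is proved dually.

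The main obstacle is the sign and trace-pairing bookkeeping in the final identification: matching the map coming from the dual-basis elements in $\wti\rho_j$ with the map $\Du\Hom_H(d \otimes TM, H)$ from the projective resolution. Proposition~\ref{prop:adj1} is precisely the bridge, but one must check that the signs absent from $\wti\rho_j$ (compared with the preprojective relation (P3), where signs $\sgn(i,j)$ appear) correspond exactly to the sign flips introduced by the twist $T$, which explains why the Auslander-Reiten translate equals $TC^+(M)$ rather than $C^+(M)$.
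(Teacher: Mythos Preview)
Your plan for (a) and (b) is essentially the paper's proof: the factorization of $\Res_0^*$ through the intermediate algebras $\wti{H}^{(l)}$ and the identification of each step with $F_l^+$ is Lemma~\ref{lem9.2}, and the comparison of $d^*_M$ with $\Du\Hom_H(d\otimes TM,H)$ via the trace pairing is Proposition~\ref{prop:gabriel1}. Your diagnosis of the sign issue (the missing $\sgn(i,j)$ in $\wti\rho_j$ versus (P3) accounts for the twist $T$) is exactly right.

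For (c) and (d), however, the paper takes a shorter route than you propose. Part (c) is a one-line consequence of (a) by uniqueness of left adjoints: $X\otimes_H -$ is left adjoint to $\Hom_H(X,-)$ by tensor--hom, and $C^-$ is left adjoint to $C^+$ by Proposition~\ref{prop:refl1}, so the isomorphism of right adjoints in (a) forces $X\otimes_H - \cong C^-$. No parallel factorization is needed. For (d), the paper does \emph{not} argue dually. Instead it combines (b), (c), and the Auslander--Reiten formulas: for $M,N\in\rep_\vp(H)$ one has
\[
\Hom_H(\tau^-(M),N)\cong\Hom_H(M,\tau(N))\cong\Hom_H(M,C^+(TN))\cong\Hom_H(C^-(TM),N),
\]
and taking $N=\Du(H)$ (which is locally free) yields $\tau^-(M)\cong C^-(TM)\cong X^T\otimes_H M$ via the isomorphism $\Du(Y)\cong\Hom_H(Y,\Du(H))$. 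Your ``dual'' approach to (d) would in principle work but would require rebuilding the entire $R_0^*$ construction on the injective side; the paper's adjunction trick avoids this entirely.
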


\subsection{Proof of Theorem~\ref{thm:functoriso}(a)} 
\label{pf:fiso2ab}
We follow the hints from \cite[Section~5.5]{Ga3}.
For $l\in Q_0\cup\{0\}$ we define  idempotents in $\wti{H}$
\[
\bbo^{(l)}  := \sum_{i>l} e_{(i,0)} +  \sum_{i\leq l} e_{(i,1)},\qquad
\bbo^{(l)}_0:= \bbo_0+ \sum_{i\leq l} e_{(i,1)},
\]
and the corresponding  (non-unitary) subalgebras
\[
H^{(l)}:= \bbo^{(l)} \wti{H}\bbo^{(l)}, \qquad
\wti{H}^{(l)}:= \bbo^{(l)}_0 \wti{H}\bbo^{(l)}_0.
\]
Clearly, $H^{(0)}=\wti{H}^{(0)}=H_{(0)}$, $H^{(n)}=H_{(1)}$, $\wti{H}^{(n)}=\wti{H}$, and an easy calculation
shows that, using the notation of equation (\ref{eq:s_i(H)}),
\[
H^{(l)}= s_l \cdots s_2s_1(H^{(0)}), \qquad (l\in Q_0).
\]
Moreover $\bbo^{(l)}\in\wti{H}^{(l)}$ and thus 
$H^{(l)}\subset\wti{H}^{(l)}\supset  \wti{H}^{(l-1)}$ for $l\in Q_0$.
We study the corresponding restriction functors: 
\begin{align*}
\Res^{(l)} &\df \rep(\wti{H})\ra\rep(H^{(l)}), \ \qquad
M\mapsto \bbo^{(l)}\wti{H}\otimes_{\wti{H}} M, \\[1mm] 
\Res_{(l,m)} &\df \rep(\wti{H}^{(m)})\ra\rep(\wti{H}^{(l)}), \quad
M\mapsto \bbo_0^{(l)}\wti{H}^{(m)}\otimes_{\wti{H}^{(m)}} M, \quad (l<m).
\end{align*}
Obviously, $\Res_{(l,m)}$ admits a right adjoint 
\[
\Res_{(l,m)}^*(-) = \Hom_{\wti{H}^{(l)}}(\bbo_0^{(l)}\wti{H}^{(m)},-)
\] 
and 
\[
\Res_0=\Res_{(0,1)}\circ\Res_{(1,2)}\circ\cdots\circ\Res_{(n-1,n)}. 
\]
Thus we have
\[
\Res_0^* = \Res^*_{(n-1,n)}\circ\cdots\circ\Res^*_{(1,2)}\circ\Res^*_{(0,1)}.
\]

\begin{Lem} \label{lem9.2}
With the above notations we have functorial isomorphisms
\[
\Res^{(i)}\circ\Res^*_{(i-1,i)}(M)\cong F_i^+\circ\Res^{(i-1)}(M)
\]
for all $M\in\rep(\wti{H}^{(i-1)})$ and $i\in Q_0 = \{1,\ldots,n\}$.
\end{Lem}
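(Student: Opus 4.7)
The plan is to make $\Res^*_{(i-1,i)}$ completely explicit by constructing a candidate $N\in\rep(\wti{H}^{(i)})$ and then directly verifying its universal property, bypassing any detailed analysis of the bimodule $\bbo_0^{(i-1)}\wti{H}^{(i)}e_{(i,1)}$.

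First I record the relevant quiver combinatorics. Under the convention $i<j$ for $(i,j)\in\Omega$, a direct inspection of $\wti{Q}$ shows that in $\wti{H}^{(i)}$ the vertex $(i,1)$ has only the loop as a non-trivial incoming arrow, while its outgoing arrows land in
\[
 \wti{\Omega}(-,(i,1))=\{(p,1)\mid p<i,\,(p,i)\in\Omega\}\cup\{(q,0)\mid q>i,\,(i,q)\in\Omega\},
\]
which coincides with $\wti{\Omega}((i,0),-)$ -- the set of neighbors of the sink $(i,0)$ in the quiver of $H^{(i-1)}$ -- and lies entirely in $\wti{H}^{(i-1)}$. Moreover the bimodules match: ${_{(i,1)}H_{(k,c)}}\cong {_{(i,0)}H_{(k,c)}}$ under the canonical identification $H_{(i,0)}\cong H_{(i,1)}\cong H_i$, because both sides are built from the same $(C,D)$-data.

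I then define $N$ to be the $\wti{H}^{(i)}$-module whose components and structure maps at all vertices of $\wti{H}^{(i-1)}$ agree with those of $M$, whose component at $(i,1)$ is
\[
 N_{(i,1)}\;:=\;\Ker\bigl(\wti{M}_{i,\inn}\df\textstyle\bigoplus_{(k,c)\in\wti{\Omega}((i,0),-)}{_{(i,0)}H_{(k,c)}}\otimes_{H_{(k,c)}}M_{(k,c)}\to M_{(i,0)}\bigr),
\]
and whose outgoing map at $(i,1)$ is the canonical inclusion of this kernel into the direct sum (after the identification above). This is a legitimate $\wti{H}^{(i)}$-representation: the commutativity relations ($\wti{\mathrm{H}}2$) involving $(i,1)$ are automatic from the bimodule structure, the nilpotency $\vep_{(i,1)}^{c_i}=0$ is forced by the $H_{(i,0)}$-module structure on the ambient direct sum, and the only genuinely new relation $\wti{\rho}_i=0$ corresponds to the vanishing of $\wti{M}_{i,\inn}\circ\iota$, which is tautological.

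To see $N\cong\Res^*_{(i-1,i)}(M)$, I verify the universal property $\Hom_{\wti{H}^{(i)}}(L,N)\cong\Hom_{\wti{H}^{(i-1)}}(\Res_{(i-1,i)}(L),M)$ naturally in $L\in\rep(\wti{H}^{(i)})$ and $M$. The forward direction is restriction to the vertices of $\wti{H}^{(i-1)}$, well defined because $N$ and $M$ agree there. For the inverse, given $\psi\colon\Res_{(i-1,i)}(L)\to M$, the composite
\[
 \bigl(\textstyle\bigoplus 1\otimes\psi_{(k,c)}\bigr)\circ\wti{L}_{i,\out}\colon L_{(i,1)}\to\bigoplus_{(k,c)}{_{(i,1)}H_{(k,c)}}\otimes M_{(k,c)}
\]
satisfies $\wti{M}_{i,\inn}\circ(\bigoplus 1\otimes\psi_{(k,c)})\circ\wti{L}_{i,\out}=\psi_{(i,0)}\circ\wti{L}_{i,\inn}\circ\wti{L}_{i,\out}=0$, using the relation $\wti{\rho}_i=0$ in $L$ together with the $\wti{H}^{(i-1)}$-linearity of $\psi$ at $(i,0)$. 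Hence it factors uniquely through $N_{(i,1)}$ and supplies the required extension at $(i,1)$. These constructions are mutually inverse by a routine diagram chase and natural by construction. Applying $\Res^{(i)}$ (which drops the $(i,0)$-component) leaves an $H^{(i)}$-module whose $(i,1)$-component is $\Ker(\wti{M}_{i,\inn})=\Ker(\Res^{(i-1)}(M)_{(i,0),\inn})$, with all other components and structure maps matching $F_i^+\circ\Res^{(i-1)}(M)$ by construction. The main subtlety is the identification of the bimodules ${_{(i,1)}H_{(k,c)}}$ with ${_{(i,0)}H_{(k,c)}}$ in a way fully compatible with the commutativity relations ($\wti{\mathrm{H}}2$) and with the precise form of $\wti{\rho}_i$, which is what makes the two ``a priori different'' kernels literally coincide as $H_i$-modules.
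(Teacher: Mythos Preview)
Your proof is correct and follows essentially the same approach as the paper. The paper also constructs the candidate $N$ (called $R^*_{(i-1,i)}(M)$ there) by fixing its two restrictions to $\wti{H}^{(i-1)}$ and to $H^{(i)}$, checks it is an $\wti{H}^{(i)}$-module via the relation $\wti{\rho}_i$, and then verifies the right-adjoint property by exactly your diagram chase: for $\psi\colon\Res_{(i-1,i)}(L)\to M$ one uses $\wti{L}_{i,\inn}\circ\wti{L}_{i,\out}=0$ and the $\wti{H}^{(i-1)}$-linearity of $\psi$ at $(i,0)$ to factor through $\Ker(\wti{M}_{i,\inn})$.
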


\begin{proof}
Note that naturally 
\[
\Res_{(i-1,i)}\circ\Res^*_{(i-1,i)}(M) \cong M
\] 
for all $M\in\rep(\wti{H}^{(i-1)})$. 
Now there is a unique functor 
\[
R^*_{(i-1,i)}\df\rep(\wti{H}^{(i-1)}) \ra \rep(\wti{H}^{(i)})
\]
satisfying the two following conditions for all $M\in\rep(\wti{H}^{(i-1)})$: 
\[
\Res_{(i-1,i)}\circ R^*_{(i-1,i)}(M) = M,
\qquad
\Res^{(i)}\circ R^*_{(i-1,i)}(M) = F_i^+\circ\Res^{(i-1)}(M).
\]
Indeed, the first condition fixes the restriction of $R^*_{(i-1,i)}(M)$ to
$\wti{H}^{(i-1)}$ and the second one fixes the restriction of
$R^*_{(i-1,i)}(M)$ to $H^{(i)}$. Because of the definitions of $H^{(i)}$ and
$\wti{H}^{(i-1)}$, this determines completely the structure of
$R^*_{(i-1,i)}(M)$, and gives uniqueness. 
Note that the quivers of $\wti{H}^{(i-1)}$ and of
$H^{(i)}$ contain some common arrows, but the representations $M$ and
$F_i^+ \circ \Res^{(i-1)}(M)$ are the same for those arrows, by definition of
$F_i^+$. So $R^*_{(i-1,i)}(M)$ is indeed a representation of
$H(\wti{C},\wti{D},\wti{\Omega})$, supported on the vertices and arrows of $\wti{H}^{(i)}$.
Finally, this representation satisfies the relation (\ref{eq:tilde H}) for $j=i$, 
because again of the definition of $F_i^+$, so $R^*_{(i-1,i)}(M)\in \rep(\wti{H}^{(i)})$.

To prove the lemma, we have to show that the above functor $R^*_{(i-1,i)}$ is isomorphic to  
$\Res^*_{(i-1,i)}$, or equivalently, that $R^*_{(i-1,i)}$ is right adjoint to $\Res_{(i-1,i)}$.
To do so, let $N\in \rep(\wti{H}^{(i)})$ and $M\in \rep(\wti{H}^{(i-1)})$,
and consider the natural map
\[
\Hom_{\wti{H}^{(i)}}(N, R^*_{(i-1,i)}(M))
\to \Hom_{\wti{H}^{(i-1)}}(\Res_{(i-1,i)}(N), M) 
\]
obtained by restricting $f\df N \to R^*_{(i-1,i)}(M)$ to $\Res_{(i-1,i)}(N)$.
We have to show that this restriction is in fact bijective. 
That is, for
$g \in \Hom_{\wti{H}^{(i-1)}}(\Res_{(i-1,i)}(N), M)$
we have to show that there exists a unique
$g_{(i,1)} \in \Hom_{H_i} (N_{(i,1)}, \Ker(\wti{M}_{i,\inn}))$ which lifts
$g$ to an element of $\Hom_{\wti{H}^{(i)}}(N, R^*_{(i-1,i)}(M))$.

Now, let
\[
N_{i,+}:=
\bigoplus_{(k,c)\in\wti{\Omega}((i,0),-)} {}_{(i,0)}H_{(k,c)}\otimes_{H_{(k,c)}} N_{(k,c)}
\]
denote the domain of $\wti{N}_{i,\inn}$, and similarly let $M_{i,+}$ 
denote the domain of $\wti{M}_{i,\inn}$.
By the definition of $\wti{H}^{(i-1)}$-homomorphisms, we have a commutative 
diagram
\[
\xymatrix@-1ex{
& N_{(i,1)} \ar[r]^{\wti{N}_{i,\out}} &
N_{i,+}\ar[rr]^{\wti{N}_{i,\inn}}\ar[d]^{g_{i,+}}
&&
N_{(i,0)}\ar[d]^{g_{(i,0)}}
\\
0 \ar[r] & \Ker(\wti{M}_{i,\inn}) \ar[r] &
M_{i,+} \ar[rr]^{\wti{M}_{i,\inn}}
&& M_{(i,0)}
}
\]
where the bottom row is exact by construction, and in the top row the
composition is zero since $N$ is a $\wti{H}^{(i)}$-module.
Thus $\wti{M}_{i,\inn} \circ g_{i,+} \circ \wti{N}_{i,\out} = 0$.
By the universal property of $\Ker(\wti{M}_{i,\inn})$ there exists a unique 
morphism of $H_i$-modules $g_{(i,1)}\df N_{(i,1)} \to \Ker(\wti{M}_{i,\inn})$
which makes the left-hand square commutative.
\end{proof}

We can now finish the proof of Theorem~\ref{thm:functoriso}(a).
Using $n$ times Lemma~\ref{lem9.2}, for $M\in\rep(H)$ (regarded as a 
representation of $H_0$) we have
\begin{align*}
 \Hom_H(X,M) &= \Res_1\circ\Res_0^*(M)\\
 &= \Res^{(n)}\circ\Res_{(n-1,n)}^*\circ \cdots \circ \Res_{(0,1)}^*(M)\\
 &= F_n^+\circ\Res^{(n-1)}\circ \Res_{(n-2,n-1)}^*\circ \cdots \circ \Res_{(0,1)}^*(M)\\
 &= \quad\cdots \\
 &= F_n^+\circ F_{n-1}^+ \circ \cdots \circ F_1^+ \circ \Res^{(0)}(M) \\
 &= C^+(M).
 \end{align*}

\subsection{Proof of Theorem~\ref{thm:functoriso}(b)}\label{pf:fiso2b}
We follow the idea of \cite[Section~5.4]{Ga3}, and start by giving an 
alternative description of $\Res_0^*$. This is done by constructing in two steps
a functor $R_0^*\df \rep(H_{(0)})\to \rep(\wti{H})$, and then showing that $R_0^*$ 
is right adjoint to $\Res_0$.

Let $M\in\rep(H_{(0)})$. We first define 
$\wti{M}\in\rep(H(\wti{C},\wti{D},\wti{\Omega}))$
by requiring that
\begin{align*}
\Res_0(\wti{M}) &= M,
\\
\Res_1(\wti{M}) &= \bigoplus_{(k,l)\in\Omega} 
\Hom_{H_{(l,0)}}({_{(l,0)}H_{(k,1)}}\otimes_{H_{(k,1)}}e_{(k,1)}H_1, M_{(l,0)}).
\end{align*}
(Note that $H_{(0)}$ and $H_{(1)}$ can also be regarded as subalgebras of $H(\wti{C},\wti{D},\wti{\Omega})$,
so we allow ourselves, by some abuse of notation, to continue to denote the restriction functors
$\rep(H(\wti{C},\wti{D},\wti{\Omega})) \ra \rep(H_{(a)})$ by $\Res_a$.) 
It remains to define, for $(i,j)\in\Omega$, the structure map
\[
\wti{M}_{(j,0),(i,1)}\df {_{(j,0)}H_{(i,1)}}\otimes_{H_{(i,1)}} \wti{M}_{(i,1)}\longrightarrow
\wti{M}_{(j,0)}=M_{(j,0)}. 
\]
This is given by the following composition:
\begin{align*}
&{_{(j,0)}H_{(i,1)}}\otimes_{H_{(i,1)}} \left(\bigoplus_{(k,l)\in\Omega} 
\Hom_{H_{(l,0)}}({_{(l,0)}H_{(k,1)}}\otimes e_{(k,1)}H_1 e_{(i,1)},M_{(l,0)})\right)\\
&\overset{\text{proj.}}{\longrightarrow} {_{(j,0)}H_{(i,1)}}\otimes_{H_{(i,1)}}
\Hom_{H_{(j,0)}}({_{(j,0)}H_{(i,1)}}\otimes e_{(i,1)}H_1 e_{(i,1)},M_{(j,0)})\\
&=\quad {_{(j,0)}H_{(i,1)}}\otimes_{H_{(i,1)}}
\Hom_{H_{(j,0)}}({_{(j,0)}H_{(i,1)}},M_{(j,0)})
\overset{\text{eval.}}{\longrightarrow} M_{(j,0)},
\end{align*}
where the first map is the projection on the direct summand indexed by $(k,l) = (i,j)$
and the second map is the evaluation $h\otimes \varphi \mapsto \varphi(h)$.

Secondly, we define a subrepresentation $R_0^*(M)$ of $\wti{M}$ as follows. 
We set
\[
(R_0^*(M))_{(i,0)} =\wti{M}_{(i,0)}=M_{(i,0)}, \qquad (i\in Q_0),
\]
and we define $(R_0^*(M))_{(h,1)}$ as the subspace of $\wti{M}_{(h,1)}$
consisting of all 
\[
(\mu_{k,l}^h)_{(k,l)\in\Omega} \in \bigoplus_{(k,l)\in\Omega} 
\Hom_{H_{(l,0)}}({_{(l,0)}H_{(k,1)}}\otimes e_{(k,1)}H_{(1)} e_{(h,1)},M_{(l,0)})
\]
such that, for all $l\in Q_0$ and
$n^{(1)}\in e_{(l,1)} H_{(1)} e_{(h,1)}$ the following relation holds:
\begin{equation} \label{eq:R0*}
\sum_{\substack{k\in\Omega(-,l)\\\ell\in {_{k}L^1_{l}}}} 
\mu_{k,l}^h(\ell^*_-\otimes \ell\cdot n^{(1)}) 
+ \sum_{\substack{m \in\Omega(l,-)\\ r\in {_{l}R^0_{m}}}}
M_{(l,0),(m,0)} (r\otimes \mu_{l,m}^h(r^*_+\otimes n^{(1)})) = 0.
\end{equation}
Here, we use the notation from 
Section~\ref{sssec:GabRie1}. It is straightforward to
check that $R_0^*(M)$ is an $H(\wti{C},\wti{D},\wti{\Omega})$-subrepresentation
of $\wti{M}$. 
Moreover, $R^*_0(M)$ is in fact a representation
of $\wti{H}$. To see this, we check the defining relations~\eqref{eq:Def-wtiH}
with the help of the special case $n^{(1)}=e_{(l,1)}$ of  
equation~\eqref{eq:R0*}. In fact, if we apply $\wti{\rho}_j$ to
$\mu^{(j)}=(\mu_{k,l}^j)_{(k,l)\in\Omega}\in R_0^*(M)_{(j,1)}$ we deduce from
the definitions that
\begin{multline*}
\sum_{\substack{i\in\Omega(-,j)\\ \ell\in{_iL^1_j}}}\!\!
\wti{M}_{(j,0),(i,1)}(\ell_-^*\otimes\wti{M}_{(i,1),(j,1)}(\ell\otimes \mu^{(j)}))
\,+\!\sum_{\substack{k\in\Omega(j,-)\\ r\in{_jR^0_k}}}\!\!
M_{(j,0),(k,0)}(r\otimes\wti{M}_{(k,0),(j,1)}(r^*_+ \otimes\mu^{(j)}))
\\
=\sum_{\substack{i\in\Omega(-,j)\\ \ell\in{_iL^1_j}}}\!\mu_{i,j}^j(\ell_-^*\otimes \ell\cdot e_{(j,1)})
\,+\!\sum_{\substack{k\in\Omega(j,-)\\ r\in{_jR^0_k}}}\!
M_{(j,0),(k,0)}(r\otimes\mu_{j,k}^j(r_+^*\otimes e_{(j,1)})) =0,
\end{multline*}
as required.

Thus, we have obtained a functor 
$R_0^*\df \rep(H_{(0)}) \to \rep(\wti{H}), \ M\mapsto R_0^*(M)$.
It will follow from the next lemma that $R_0^*$ is isomorphic to $\Res_0^*$.

\begin{Lem} \label{lem:radj}
$R_0^*$ is right adjoint to $\Res_0$.
\end{Lem}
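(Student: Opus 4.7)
The plan is to exhibit a natural bijection
\[
\Phi_{N,M}\df \Hom_{\wti H}(N,R_0^*(M)) \to \Hom_{H_{(0)}}(\Res_0(N),M),\qquad \varphi\mapsto \Res_0(\varphi),
\]
for $N\in\rep(\wti H)$, $M\in\rep(H_{(0)})$, from which the lemma follows by uniqueness of right adjoints. Naturality in both variables will be immediate from the construction, so the entire content is to produce a two-sided inverse $\Psi_{N,M}$. Concretely, given $g\in\Hom_{H_{(0)}}(\Res_0(N),M)$, I will construct the unique extension $\wti g\df N \to R_0^*(M)$ with $\Res_0(\wti g)=g$ and check it lands in $R_0^*(M)$.

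For uniqueness and for the definition of $\wti g$, the $(i,0)$-components must equal $g_{(i,0)}$. For an element $n\in N_{(i,1)}$, compatibility of $\wti g$ with the structure map on the arrow $(k,1)\to(l,0)$ (for $(k,l)\in\Omega$) together with compatibility with the $H_{(1)}$-action — which transports $n\in N_{(i,1)}$ to $n^{(1)}\cdot n\in N_{(k,1)}$ via any $n^{(1)}\in e_{(k,1)}H_{(1)}e_{(i,1)}$, and dually on $R_0^*(M)$ by $(n^{(1)}\cdot\mu)^h_{k',l'}(y\otimes m^{(1)})=\mu^h_{k',l'}(y\otimes m^{(1)}\cdot n^{(1)})$ — forces the components $\mu^i_{k,l}(n)$ of $\wti g_{(i,1)}(n)$ to be
\[
\mu^i_{k,l}(n)(y\otimes n^{(1)}) \;=\; g_{(l,0)}\!\left(N_{(l,0),(k,1)}(y\otimes n^{(1)}\cdot n)\right),
\qquad y\in{_{(l,0)}H_{(k,1)}},\ n^{(1)}\in e_{(k,1)}H_{(1)}e_{(i,1)}.
\]
I will take this formula as the definition of $\Psi_{N,M}(g)=\wti g$. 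Once it is shown that $\wti g_{(i,1)}(n)$ actually lies in $R_0^*(M)_{(i,1)}$, the compatibility of $\wti g$ with all structure maps of $\wti H$ is a short check: for the arrows $(i,1)\to(j,0)$ with $(i,j)\in\Omega$, the structure map on $R_0^*(M)$ at $\mu=\wti g_{(i,1)}(n)$ is the evaluation $\mu^i_{i,j}(y\otimes e_{(i,1)})$, which by the defining formula equals $g_{(j,0)}(N_{(j,0),(i,1)}(y\otimes n))$; for the arrows inside the $H_{(1)}$-part and the loops $\vep_{(i,1)}$, the compatibility is built into the formula via the $H_{(1)}$-equivariance described above.

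The main obstacle is verifying that $\wti g_{(i,1)}(n)\in R_0^*(M)_{(i,1)}$, i.e.\ that the relation (\ref{eq:R0*}) holds for $(\mu^i_{k,l}(n))_{(k,l)\in\Omega}$. Substituting the defining formula and using that $g$ is $H_{(0)}$-linear (so $M_{(l,0),(m,0)}(r\otimes g_{(m,0)}(x))=g_{(l,0)}(N_{(l,0),(m,0)}(r\otimes x))$), the left-hand side of (\ref{eq:R0*}) becomes
\[
g_{(l,0)}\!\left(\sum_{\substack{k\in\Omega(-,l)\\ \ell\in{_kL^1_l}}}\!\! N_{(l,0),(k,1)}(\ell^*_-\otimes\ell\cdot n^{(1)}\cdot n)
+\!\!\sum_{\substack{m\in\Omega(l,-)\\ r\in{_lR^0_m}}}\!\! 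N_{(l,0),(m,0)}(r\otimes N_{(m,0),(l,1)}(r^*_+\otimes n^{(1)}\cdot n))\right).
\]
Comparing with the expression for $\wti\rho_l$ in (\ref{eq:Def-wtiH}), the argument of $g_{(l,0)}$ is exactly $\wti\rho_l$ applied to $n^{(1)}\cdot n\in N_{(l,1)}$, which vanishes because $N\in\rep(\wti H)$. The delicate point is keeping track of the three different dualities linking ${_{(l,0)}H_{(k,1)}}$, ${_iL^1_j}$ and ${_iR^0_j}$, but once the defining relation of $\wti H$ is written in the intrinsic form (\ref{eq:Def-wtiH}) these identifications are the natural ones. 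This completes the verification that $\Psi_{N,M}$ lands in $\Hom_{\wti H}(N,R_0^*(M))$, and $\Phi_{N,M}\circ\Psi_{N,M}=\id$ by construction while $\Psi_{N,M}\circ\Phi_{N,M}=\id$ by the uniqueness established above.
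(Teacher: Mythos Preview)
Your proposal is correct and follows essentially the same strategy as the paper: both define the adjunction map as restriction $\Res_0$, observe that compatibility with the arrows $(k,1)\to(l,0)$ and with the $H_{(1)}$-action forces the $(i,1)$-components (the paper's equations~\eqref{eq:ad2} and~\eqref{eq:ad3}), and then check that the resulting extension lands in $R_0^*(M)$. The paper leaves this last verification as an exercise, whereas you carry it out explicitly by recognising the left-hand side of~\eqref{eq:R0*} as $g_{(l,0)}$ applied to $\wti\rho_l\cdot(n^{(1)}\cdot n)$, which vanishes since $N\in\rep(\wti H)$.
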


\begin{proof}
Let $N\in\rep(\wti{H})$ and $M\in\rep(H_{(0)})$. 
Consider $\chi\in \Hom_{\wti{H}}(N, R_0^*(M))$. Thus, $\chi$ is given by a family
of maps
\[
\chi^{(i,a)}\in \Hom_{H_{(i,a)}}(N_{(i,a)}, (R_0^*(M))_{(i,a)}),
\qquad ((i,a)\in\wti{Q}_0),
\]
subject to the usual commutativity relations. By the construction of $R_0^*(M)$
we have more explicitly for all $i\in Q_0$ and $n_{(i,1)}\in N_{(i,1)}$:
\begin{align*}
&\chi^{(i,0)} \in \Hom_{H_{(i,0)}}(N_{(i,0)},M_{(i,0)}),
\\
&\chi^{(i,1)}(n_{(i,1)}) \in \bigoplus_{(k,l)\in\Omega} 
\Hom_{H_{(l,0)}}({_{(l,0)}H_{(k,1)}}\otimes e_{(k,1)} H_1 e_{(i,1)}, M_{(l,0)}).
\end{align*}
Let us denote by $\chi^{(i,1)}_{k,l}(-,n_{(i,1)})$ the $(k,l)$-component of 
$\chi^{(i,1)}(n_{(i,1)})$.
These maps are subject to the following relations for $(i,j)\in\Omega$,
$\ell^{(a)}\in {_iL_j^a}$, $r\in {_iR_j^0}$:
\begin{align} 
\chi^{(i,0)}(N_{(i,0),(j,0)}(\ell^{(0)}\otimes n_{(j,0)})) &=
M_{(i,0),(j,0)}(\ell^{(0)}\otimes\chi^{(j,0)}(n_{(j,0)})), \label{eq:ad1}
\\
\chi^{(j,0)}(N_{(j,0),(i,1)}(r_+^*\otimes n_{(i,1)})) &=
\chi_{i,j}^{(i,1)}(r_+^*\otimes e_{(i,1)},n_{(i,1)}),  \label{eq:ad2}
\\
\chi_{k,l}^{(i,1)}(-, N_{(i,1),(j,1)}(\ell^{(1)}\otimes n_{(j,1)})) &=
\chi_{k,l}^{(j,1)}(-\cdot \ell^{(1)}, n_{(j,1)}).  \label{eq:ad3}
\end{align}
Equation~\eqref{eq:ad1} means that we have indeed a well-defined restriction
\[
r_{N,M}\df \Hom_{\wti{H}}(N, R_0^*(M))\ra \Hom_{H_{(0)}}(\Res_0(N), M).
\]
Combining~\eqref{eq:ad2} and~\eqref{eq:ad3} we see that the maps
$\chi_{k,l}^{(j,1)}$ for $(k,l)\in\Omega$ and $j\in Q_0$ are determined
by the maps $\chi^{(i,0)}$ with $i\in Q_0$, in other words $r_{N,M}$ is injective.

By the same token we see that for each $\chi^{(0)}\in\Hom_{H_{(0)}}(\Res_0(N), M)$ there
exists $\wti{\chi}\in\Hom_{H(\wti{C},\wti{D},\wti{\Omega})}(N,\wti{M})$
which restricts to $\chi^{(0)}$. 
We leave it as an exercise to show that if $N\in\rep(\wti{H})$ then
$\Ima(\wti{\chi})\subset R_0^*(M)$. 
Thus, $r_{N,M}$ is bijective.
\end{proof}

\begin{Prop} \label{prop:gabriel1}
For $M \in \rep_\vp(H)$ we have
\[
\tau(TM) \cong {\Res_1}\circ R_0^*(M),
\]
where in the right-hand side $H_{(0)}$ and $H_{(1)}$ are identified with $H$
by means of the isomorphisms $\eta_0$ and $\eta_1$.
\end{Prop}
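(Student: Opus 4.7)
The plan is to realise both sides as kernels of explicit linear maps and to identify these maps via the trace pairings of Section~\ref{newsec8}.

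First I would apply Corollary~\ref{cor:resolHM} to $TM$ to write the projective resolution
\[
0 \to \bigoplus_{(j,i) \in \Omega} He_j \otimes_j {_j}H_i \otimes_i M_i \xrightarrow{d \otimes TM} \bigoplus_{k=1}^n He_k \otimes_k M_k \to TM \to 0,
\]
with $(d \otimes TM)(p \otimes h \otimes m) = ph \otimes m + p \otimes M_{ji}(h \otimes m)$, the $+$ sign arising because the twist $T$ flips every structure map $M_{ji}$ to $-M_{ji}$. Applying $\Du\Hom_H(-,H)$ realises $\tau(TM)$ as the kernel of $\Du\Hom_H(d \otimes TM, H)$. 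After evaluating at $e_h$ and using tensor-hom adjunction summand-by-summand, its source and target become direct sums of spaces $\Du\Hom_{H_i}(M_i, \,\cdot\,)$ where the second argument involves the locally free $H_i$-modules $e_iHe_h$ (which are locally free because $P_h$ is, by Proposition~\ref{resolutionE}).

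Second, from the construction preceding Lemma~\ref{lem:radj}, the module $\Res_1 \circ R_0^*(M)$ at vertex $h$ (under the identification via $\eta_1$) is by definition the subspace of $\bigoplus_{(k,l)\in\Omega} \Hom_{H_l}({_l}H_k \otimes_k e_kHe_h,\, M_l)$ cut out by the relations~\eqref{eq:R0*} indexed by $l\in Q_0$. This exhibits $e_h\Res_1 R_0^*(M)$ as the kernel of a single map $d^*_M$ into $\bigoplus_l \Hom_{H_l}(e_lHe_h, M_l)$.

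The crucial step is to identify the two kernel descriptions. For each summand I would apply the trace-pairing isomorphism $\Du\Hom_{H_i}(U,V) \cong \Hom_{H_i}(V,U)$ from Section~\ref{sec:trace1}, together with the bimodule identification $\Hom_{H_j}({_j}H_i, H_j) \cong {_i}H_j$ from Section~\ref{sec5.1}, extended with coefficients in the locally free $H_j$-module $e_jHe_h$. These identifications make the source and target of $\Du\Hom_H(d \otimes TM, He_h)$ match those of $d^*_M$. The equality of the two maps then reduces to Proposition~\ref{prop:adj1}: its commutative diagram says precisely that, under the trace pairings, the transpose of the structure map ${_j}H_i \otimes_i M_i \to M_j$ is its adjoint $M_{ji}^\vee\colon M_i \to {_i}H_j \otimes_j M_j$, which is exactly the building block appearing in both summands of~\eqref{eq:R0*}. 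The $+$ sign produced by the twist $T$ lines up with the absence of $\sgn$-factors in~\eqref{eq:R0*} compared with the mesh relation for $\Pi$.

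The main obstacle I expect is bookkeeping: one must match the two summations over $k \in \Omega(-,l)$ and $m \in \Omega(l,-)$ in~\eqref{eq:R0*} with the two summands $He_i\otimes_i M_i$ and $He_j\otimes_j M_j$ that $(d\otimes TM)$ hits for each $(j,i)\in\Omega$, and carefully track the dual-basis conventions ${_i}R_j$ versus ${_i}L_j$ when toggling between left and right $H_j$-actions via the trace pairing. Once Proposition~\ref{prop:adj1} is applied summand by summand, the comparison should be routine if tedious.
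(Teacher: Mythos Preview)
Your proposal is correct and follows essentially the same route as the paper's proof: realise $\tau(TM)$ as $\Ker(\Du\Hom_H(d\otimes TM,H))$ via Corollary~\ref{cor:resolHM}, realise $\Res_1\circ R_0^*(M)$ as the kernel of an explicit map $d_M^*$ coming from the relations~\eqref{eq:R0*}, and then identify the two maps via the trace pairing and adjunction. The paper carries out precisely this comparison (equations~\eqref{eq1} and~\eqref{eq2}); your explicit invocation of Proposition~\ref{prop:adj1} for the summand-by-summand identification is exactly the mechanism underlying the paper's final paragraph.
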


\begin{proof}
Since $M$ is locally free, $TM$ is also locally free and 
Corollary~\ref{cor:resolHM} provides a projective resolution:
\[
0 \to
\bigoplus_{(j,i) \in \Omega} H e_j \otimes {_j}H_i \otimes (TM)_i
\xrightarrow{d \otimes TM}
\bigoplus_{k=1}^n He_k \otimes (TM)_k \xrightarrow{{\rm mult}} TM \to 0 
\]
Therefore, by definition of the Auslander-Reiten translation $\tau$, 
we know that $\tau(TM)$ is isomorphic to $\Ker(\Du\Hom_H(d \otimes_H TM,H))$.

On the other hand, the construction of $R_0^*(M)$ shows that
${\Res_1}\circ R_0^*(M)$ can be identified with the kernel of the map
\[
d_M^*\df \bigoplus_{(j,i) \in \Omega} \Hom_{H_i}({_j}H_i^* \otimes e_jH,M_i)
\quad\longrightarrow\quad \bigoplus_{k=1}^n \Hom_{H_k}(e_kH,M_k)
\]
whose $(j,i)$-component is defined by
\begin{equation}\label{eq1}
\varphi_{(j,i)} \mapsto 
\sum_{\ell \in {_j}L_i} \varphi_{(j,i)}(\ell^* \otimes_j \ell \cdot -) 
+ \sum_{r \in {_j}R_i} M_{ji}(r \otimes \varphi_{(j,i)}(r^* \otimes_j -)).
\end{equation}
Indeed, ${\Res_1}\circ R_0^*(M)$ is the subspace of ${\Res_1}(\wti{M})$
defined by equations (\ref{eq:R0*}).
Our goal is to identify under the trace pairing the map 
$\Du\Hom_H(d \otimes TM,H)$ with $d_M^*$.
For $(j,i)\in\Omega$, the restriction 
$d\otimes TM \df He_j\otimes {}_jH_i\otimes M_i \to He_i\otimes M_i \oplus He_j\otimes M_j$ is 
given by
\begin{equation}\label{eq2}
 (d\otimes TM)(p\otimes h\otimes m) = ph\otimes m + p\otimes M_{ji}(h\otimes m).
\end{equation}
(Note the plus sign, coming from the twist map $T$).
Using adjunction we have 
\[
\Hom_H(He_i\otimes M_i,H) \cong \Hom_{H_i}(M_i,\Hom_H(He_i,H)) \cong \Hom_{H_i}(M_i,e_iH),
\]
so under the trace pairing we get
\[
\Du\Hom_H(He_i\otimes M_i \oplus He_j\otimes M_j,H) \cong \Hom_{H_i}(e_iH,M_i) \oplus \Hom_{H_j}(e_jH,M_j).
\]
Similarly, 
\[
\Du\Hom_H(He_j\otimes {}_jH_i\otimes M_i) \cong \Hom_{H_j}(e_jH,{}_jH_i\otimes M_i)
 \cong \Hom_{H_i}({}_iH_j \otimes e_jH, M_i),
\]
where the second isomorphism is given by $\ad_{ij}^{-1}$. Hence $\Du\Hom_H(d \otimes TM,H)$ can be identified with 
a map from $\oplus_{(j,i)\in\Omega} \Hom_{H_i}({}_iH_j \otimes e_jH, M_i)$ to $\oplus_k \Hom_{H_k}(e_kH,M_k)$,
and comparison between (\ref{eq1}) and (\ref{eq2}) shows that this map is 
indeed $d_M^*$.
\end{proof}

Now we are ready to prove part~(b) of Theorem~\ref{thm:functoriso}.
By Lemma~\ref{lem:radj} and the uniqueness of adjoint functors we have
a functorial isomorphism $R_0^*(-) \cong \Res_0^* (-)$.
Hence, by Proposition~\ref{prop:gabriel1}, if $M\in\rep_\vp(H)$ we have 
\[
\Hom_{H}(X,TM) = \Res_1\circ\Res_0^*(TM) \cong {\Res_1}\circ R_0^*(TM)\cong\tau(T^2M)=\tau(M). 
\]
This proves Theorem~\ref{thm:functoriso}~(b).

\subsection{Proof of Theorem~\ref{thm:functoriso}(c),(d)} 
\label{pf:fiso2cd}
Clearly, (c) follows from (a) since $C^-$ is left adjoint
to $C^+$. 
In order to show (d), let $M,N \in \rep_\vp(H)$.
Recall, that this implies that both, $M$ and $N$, have projective and injective
dimension at most $1$. 
We obtain functorial isomorphisms
\begin{align*}
\Hom_{H}(\tau^-(M), N) &\cong \Hom_H(M,\tau(N))
\\
&\cong \Hom_H(M,C^+(TN))
\\
&\cong \Hom_H(C^-(TM),N).
\end{align*}
The first isomorphism is obtained from the Auslander-Reiten formulas,
the second follows from (b), and the third isomorphism is just
the adjunction map.

With the usual $H$-$H$-bimodule structure on $\Du(H)$ we obtain 
a functorial 
isomorphism of
right $H$-modules 
\[
\Du(X) \cong \Hom_H(X,\Du(H))
\] 
for all (left) $H$-modules $X$.
Now, in our situation $\Du(H)$ is locally free, thus taking $N=\Du(H)$ in
the above chain of functorial isomorphisms we get
\[
\tau^-(M) \cong C^{-}(TM) \cong X^T\otimes_H M
\]
where the last isomorphism comes from (c). 
This proves Theorem~\ref{thm:functoriso}~(d).

\subsection{Another description of $\Pi(C,D)$ as a tensor algebra}\label{subsec:9.6}
Let $\Pi = \Pi(C,D)$.
Recall from Section~\ref{sec7} that $\Pi_1$ is 
the subspace of $\Pi$ of elements of degree $1$.
Let $X^T$ be the twisted version of the $H$-$H$-bimodule $X$, where 
the bimodule structure is defined by
\[
hxh' := h xT(h'),\qquad (h, h' \in H,\ x\in X).
\]

\begin{Thm}\label{thm:10.6}
We have isomorphisms of $H$-$H$-bimodules
\[
\Pi_1 \cong X^T \cong \Ext^1_H(\Du(H),H).
\]
\end{Thm}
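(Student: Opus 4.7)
The statement consists of two isomorphisms, which I will establish by rather different arguments: first $\Pi_1 \cong X^T$ by direct comparison of presentations, then $X^T \cong \Ext^1_H(\Du(H), H)$ by a projective-resolution computation.

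For $\Pi_1 \cong X^T$, the key observation is that the bridge arrows in $\wti Q$ go only from the $(-,1)$-side to the $(-,0)$-side, so each path contributing to $e_{(a,0)}\wti H e_{(b,1)} = e_a X e_b$ crosses exactly one bridge $\alpha_{(j,0)(i,1)}^{(g)}$, splitting as $(\text{path on 1-side})\cdot(\text{bridge})\cdot(\text{path on 0-side})$. On the other hand, $\Pi_1$ is generated as an $H$-$H$-bimodule by the new arrows $\alpha_{ji}^{(g)}$ for $(i,j)\in\Omega$ and $1\le g\le g_{ij}$, modulo the commutativity relations (part of (P2)) and the mesh relations $\rho_i=0$. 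I propose the map
\[
\psi\df \Pi_1 \longrightarrow X^T, \qquad h\cdot\alpha_{ji}^{(g)}\cdot h' \longmapsto \eta_0(h)\cdot\alpha_{(j,0)(i,1)}^{(g)}\cdot\eta_1(T(h')),
\]
for $h\in He_j$, $h'\in e_iH$ and $(i,j)\in\Omega$. The twist $T$ on the right argument is exactly what is needed to convert the natural right $H$-action into the twisted right action defining $X^T$ (left compatibility is immediate). Well-definedness reduces to checking that $\psi(\rho_i)$ equals the $\wti H$-mesh relation $(\wti{\mathrm{H}}3)_i$: splitting $\rho_i$ according to whether $j\in\Omega(-,i)$ or $j\in\Omega^*(-,i)$, the signs $\sgn(i,j)=\pm 1$ from $\rho_i$ combine with the signs introduced by $T$ (which acts by $-1$ on each original arrow) to give a uniform $+$ sign on every term, matching $(\wti{\mathrm{H}}3)_i$. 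Bijectivity follows by writing down the explicit inverse which sends a path with its unique bridge back to the corresponding element of $\Pi_1$.

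For $X^T\cong\Ext^1_H(\Du(H),H)$, since $\Du(H)\in\rep_\vp(H)$ by Proposition~\ref{resolutionE}, Corollary~\ref{cor:resolHM} supplies the projective resolution
\[
0 \to \bigoplus_{(j,i)\in\Omega} He_j\otimes_j {}_jH_i\otimes_i (\Du H)_i \xrightarrow{\,d\otimes \Du H\,} \bigoplus_k He_k\otimes_k (\Du H)_k \to \Du(H) \to 0.
\]
Applying $\Hom_H(-,H)$ and using the standard adjunction together with the bimodule isomorphism $\Hom_{H_j}({}_jH_i,H_j)\cong {}_iH_j$ from Section~\ref{sec5.1}, one identifies
\[
\Hom_H(He_j\otimes_j {}_jH_i\otimes_i V,H) \cong \Hom_{H_i}(V,\, {}_iH_j\otimes_j e_jH).
\]
Setting $V=(\Du H)_i$, which is free as an $H_i$-module, and invoking the trace pairings of Section~\ref{newsec8} to compute $\Hom_{H_k}((\Du H)_k,e_kH)\cong He_k\otimes_{H_k}e_kH$ and $\Hom_{H_i}((\Du H)_i,\,{}_iH_j\otimes_j e_jH)\cong He_i\otimes_{H_i}{}_iH_j\otimes_j e_jH$, one obtains
\[
\Ext^1_H(\Du H,H) \;\cong\; \Coker\!\left(\bigoplus_k He_k\otimes_k e_kH \longrightarrow \bigoplus_{(j,i)\in\Omega} He_i\otimes_i {}_iH_j\otimes_j e_jH\right).
\]
After the relabeling $(a,b):=(i,j)$ (so $(j,i)\in\Omega$ becomes $(a,b)\in\Omega^*$), this cokernel has exactly the same shape as the presentation of $\Pi_1$ obtained in the first step, and the induced differential matches, term by term, the mesh relations $\rho_i$; composing with the iso $\psi$ then yields $\Ext^1_H(\Du H,H)\cong\Pi_1\cong X^T$.

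The hard part will be the bookkeeping in the second step. Each of the adjunction, the duality $\Hom_{H_j}({}_jH_i,H_j)\cong {}_iH_j$, and the trace-pairing identifications introduces signs, and these must combine with the $\sgn(i,j)$ from the mesh relation so as to reproduce $\rho_i$ exactly. In effect, this is the same sign identity already used to verify $\psi(\rho_i)=(\wti{\mathrm{H}}3)_i$ in the first step, now reformulated inside the $\Ext$ framework. Once the various identifications are pinned down, the resulting isomorphisms are canonical and functorial.
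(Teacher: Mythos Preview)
Your first isomorphism $\Pi_1\cong X^T$ is handled exactly as in the paper (which merely says ``follows directly from the definitions''); you have simply made the map and the sign check explicit, and your verification that $\psi(\rho_k)$ becomes the relation $(\wti{\mathrm H}3)_k$ is correct.

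Your second isomorphism, however, takes a genuinely different route. The paper does \emph{not} compute $\Ext^1_H(\Du(H),H)$ from a resolution. Instead it invokes Theorem~\ref{thm:functoriso}(d), which already gives a functorial isomorphism $X^T\otimes_H M\cong\tau^-(M)\cong\Ext^1_H(\Du(H),M)$ for every locally free $M$; since both $X^T\otimes_H-$ and $\Ext^1_H(\Du(H),-)$ are right exact (the latter because $\pdim\Du(H)\le 1$), a projective presentation and the five lemma extend this to all $M$, whence the bimodules representing the two functors are isomorphic. This is short precisely because the hard work has been absorbed into Proposition~\ref{prop:gabriel1} and Theorem~\ref{thm:functoriso}. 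Your approach, by contrast, is self-contained: it does not need the Gabriel--Riedtmann algebra $\wti H$ or the Coxeter functor comparison at all, and it exhibits $\Ext^1_H(\Du(H),H)$ directly as the cokernel you write down. The cost is that the whole burden now falls on the claim that, after the adjunction $\Hom_{H_j}({}_jH_i\otimes_i-,-)\cong\Hom_{H_i}(-,{}_iH_j\otimes_j-)$ and the Frobenius identification $\Hom_{H_k}(\Du(He_k),W)\cong He_k\otimes_{H_k}W$, the transported differential $\Hom_H(d\otimes\Du(H),H)$ sends $e_k\otimes e_k$ to $\rho_k$. You assert this and note it is ``the same sign identity'' as in step~1; that is morally right, but it is not literally the same computation (here one has to unwind the structure maps $M_{ji}$ for $M=\Du(H)$ together with the trace pairing of Section~\ref{newsec8}, rather than the twist $T$), and getting the signs to line up with the $\sgn(k,j)$ in $\rho_k$ is the entire content. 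If you carry this out, you recover a computation very close in spirit to the one inside the proof of Proposition~\ref{prop:gabriel1}, so you are in effect reproving part of Theorem~\ref{thm:functoriso} by other means rather than bypassing it.
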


\begin{proof}
Note that the bimodule isomorphism $\Pi_1 \cong X^T$ follows directly 
from the definitions.
On the other hand, we have by Theorem~\ref{thm:functoriso}(d) for locally free
modules $M$ a functorial isomorphism 
\begin{equation}
X^T\otimes_H M \cong \tau^-(M) \cong \Ext^1_H(\Du(H),M). 
\label{eq:XT-Ext}
\end{equation}
Note that the functor 
\[
\Ext_H^1(\Du(H),-)\colon\rep(H)\ra\rep(H)
\] 
is right exact
since $\pdim(\Du(H)) \le 1$. 
For $M \in \rep(H)$ let
\begin{equation}\label{eq:9.5}
P_1 \to P_0 \to M \to 0
\end{equation}
be a projective presentation of $M$.
Applying the right exact functors $X^T \otimes_H -$ and
$\Ext_H^1(\Du(H),-)$ to (\ref{eq:9.5}) yields a functorial commutative
diagram 
\[
\xymatrix{
X^T \otimes_H P_1 \ar[d]^{\eta_{P_1}} \ar[r] &
X^T \otimes_H P_0 \ar[d]^{\eta_{P_0}} \ar[r] &
X^T \otimes_H M \ar[d]^{\eta_M} \ar[r] & 0
\\
\Ext_H^1(\Du(H),P_1) \ar[r] &
\Ext_H^1(\Du(H),P_0) \ar[r] &
\Ext_H^1(\Du(H),M) \ar[r] & 0
}
\]
with exact rows.
Since the restrictions of $X^T \otimes_H -$ and $\Ext_H^1(\Du(H),-)$
to $\rep_\vp(H)$ are isomorphic, we get that $\eta_{P_0}$ and
$\eta_{P_1}$ are isomorphisms. 
This implies that $\eta_M$ is an isomorphism as well.
It follows that the functors
$X^T \otimes_H -$ and $\Ext_H^1(\Du(H),-)$ are isomorphic.
From the canonical isomorphism of $H$-$H$-bimodules
$\Ext^1_H(\Du(H),H)\otimes_H H\cong \Ext_H^1(\Du(H),H)$ we conclude that the 
right exact functors $\Ext_H^1(\Du(H),-)$ and $\Ext^1_H(\Du(H),H)\otimes_H-$ 
are isomorphic. This implies that  $\Ext_H^1(\Du(H),H)$ and $X^T$ are 
isomorphic as $H$-$H$-bimodules.
\end{proof}

\begin{Cor}\label{cor:10.7}
We have $K$-algebra isomorphisms
\[
\Pi \cong T_H(X^T) \cong T_H(\Ext_H^1(\Du(H),H)).
\]
\end{Cor}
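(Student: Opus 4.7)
The plan is to deduce the corollary almost immediately by chaining Proposition~\ref{prop:7.5} with the two bimodule isomorphisms established in Theorem~\ref{thm:10.6}. The main observation is that the tensor algebra construction $T_H(-)$ is functorial in the $H$-$H$-bimodule argument: an isomorphism of $H$-$H$-bimodules $M \cong M'$ induces, by the universal property recorded in Lemmas~\ref{bimodule1} and \ref{bimodule2}, an isomorphism of $K$-algebras $T_H(M) \cong T_H(M')$.

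Concretely, I would proceed in two short steps. First, Proposition~\ref{prop:7.5} gives a $K$-algebra isomorphism $\Pi \cong T_H(\Pi_1)$, where $\Pi_1$ denotes the $H$-$H$-subbimodule of $\Pi$ consisting of the degree-one elements (with respect to the grading putting $\vep_i$ and the arrows $\alpha_{ij}^{(g)}$ with $(i,j)\in\Omega$ in degree $0$, and the arrows $\alpha_{ji}^{(g)}$ with $(i,j)\in\Omega$ in degree $1$). Second, Theorem~\ref{thm:10.6} provides isomorphisms of $H$-$H$-bimodules
\[
\Pi_1 \;\cong\; X^T \;\cong\; \Ext_H^1(\Du(H),H).
\]
Applying $T_H(-)$ to this chain of bimodule isomorphisms and combining with Proposition~\ref{prop:7.5} yields
\[
\Pi \;\cong\; T_H(\Pi_1) \;\cong\; T_H(X^T) \;\cong\; T_H(\Ext_H^1(\Du(H),H)),
\]
which is the desired statement.

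There is no real obstacle here, since all the substantive work has already been done: the identification of $\Pi_1$ as the relevant bimodule sits inside the proof of Proposition~\ref{prop:7.5}, while the nontrivial identification $X^T \cong \Ext_H^1(\Du(H),H)$ is the content of Theorem~\ref{thm:10.6}, whose proof uses the functorial isomorphism $X^T \otimes_H M \cong \tau^-(M)$ from Theorem~\ref{thm:functoriso}(d) together with right-exactness of both functors to extend the comparison from $\rep_\vp(H)$ to all of $\rep(H)$. The only thing to verify carefully is that the isomorphism $\Pi_1 \cong X^T$ respects the $H$-$H$-bimodule structures as defined (in particular, the sign twist built into $X^T$ matches the sign $\sgn(i,j)$ appearing in relation~(P3)); this is already noted in the proof of Theorem~\ref{thm:10.6}, so no further calculation is needed.
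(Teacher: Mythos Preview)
Your proposal is correct and follows exactly the same route as the paper's proof, which simply says to combine Theorem~\ref{thm:10.6} and Proposition~\ref{prop:7.5}. Your added remarks about functoriality of $T_H(-)$ and the compatibility of the bimodule structures are reasonable elaborations, but no new idea is needed beyond the two cited results.
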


\begin{proof}
Combine Theorem~\ref{thm:10.6} and Proposition~\ref{prop:7.5}.
\end{proof}

\begin{Cor}\label{cor:10.8}
For $M \in \rep(H)$ there are functorial isomorphisms 
\[
\Hom_H(X^T,M) \cong \Du\Ext_H^1(M,H)
\text{\;\;\; and \;\;\;}
X^T \otimes_H M  \cong \Ext_H^1(\Du(H),M).
\]
\end{Cor}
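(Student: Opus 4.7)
The plan is to derive both isomorphisms from Theorem~\ref{thm:10.6}. The second isomorphism is in fact already established in the course of its proof: there it is shown that $X^T\otimes_H(-)$ and $\Ext^1_H(\Du(H),-)$ are right exact functors from $\rep(H)$ to $\rep(H)$ which, by Theorem~\ref{thm:functoriso}(d) and the Auslander-Reiten formula, agree on locally free modules, and hence agree on every $M\in\rep(H)$ by right exactness applied to projective presentations.

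For the first isomorphism I would begin with the standard tensor-Hom duality: for any $H$-$H$-bimodule $N$ and any $M\in\rep(H)$, using $M\cong \Du\Du M$ together with the tensor-Hom adjunction $\Hom_H(N,\Hom_K(V,K))\cong \Hom_K(V\otimes_H N,K)$, one obtains a functorial isomorphism of left $H$-modules
\[
\Hom_H(N,M)\cong \Du(\Du M\otimes_H N),
\]
where the left $H$-action arises from the right $H$-action on $N$. Specializing to $N=X^T$, it suffices to exhibit a natural isomorphism $\Du M\otimes_H X^T\cong \Ext^1_H(M,H)$ of right $H$-modules.

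For this I will use the right-sided analogue of Theorem~\ref{thm:10.6}. Since $H^\op\cong H(C,D,\Omega^*)$, applying the construction of Section~\ref{ssec:GaRie} and Theorem~\ref{thm:10.6} to the opposite orientation yields, for every right $H$-module $V$, a natural isomorphism of right $H$-modules $V\otimes_H X^T\cong \Ext^1_{H^\op}(\Du H,V)$. Setting $V=\Du M$ and combining with the standard duality $\Ext^1_{H^\op}(\Du H,\Du M)\cong \Ext^1_H(M,H)$ (obtained by dualizing a projective resolution of $M$, using that $\Du$ interchanges projective left $H$-modules with injective right $H$-modules and vice versa), we deduce
\[
\Du M\otimes_H X^T\cong \Ext^1_{H^\op}(\Du H,\Du M)\cong \Ext^1_H(M,H),
\]
and hence $\Hom_H(X^T,M)\cong \Du\Ext^1_H(M,H)$. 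The main obstacle is verifying that the $H$-$H$-bimodule $X^T$ coincides, up to the natural left-right swap, with the analogous bimodule produced by the $\wti{H}$-construction for $\Omega^*$, so that Theorem~\ref{thm:10.6} for $H^\op$ indeed provides the right-sided tensor identification above; once this compatibility is settled, the rest is a verbatim adaptation of the arguments establishing the second isomorphism.
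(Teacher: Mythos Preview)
Your treatment of the second isomorphism is correct and matches the paper: it is exactly what is established in the proof of Theorem~\ref{thm:10.6}.

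For the first isomorphism, however, you take an unnecessary detour. The paper derives it directly from the second ``by adjunction'', and the intended argument is simply the dual of the right-exactness argument used in the proof of Theorem~\ref{thm:10.6}. Since $\Hom_H(X^T,-)$ is a right adjoint it is left exact, and since $\idim({_H}H)\le 1$ we have $\Ext^2_H(-,H)=0$, so $\Du\Ext^1_H(-,H)$ is also a left exact covariant endofunctor of $\rep(H)$. For locally free $M$ one has
\[
\Hom_H(X^T,M)\cong TC^+(M)\cong\tau(M)=\Du\operatorname{Tr}(M)=\Du\Ext^1_H(M,H)
\]
by Theorem~\ref{thm:functoriso}(a),(b) together with $\pdim(M)\le 1$. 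Since injectives are locally free, feeding an injective copresentation $0\to M\to I^0\to I^1$ through both left exact functors extends the isomorphism to all $M\in\rep(H)$, exactly as projective presentations were used for the second isomorphism.

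Your proposed route---re-running the Gabriel--Riedtmann construction of Section~\ref{ssec:GaRie} for $H^{\op}\cong H(C,D,\Omega^*)$ and then identifying the resulting bimodule with $X^T$---is not wrong, but the ``main obstacle'' you flag is a genuine verification: one has to exhibit an isomorphism $\wti H(C,D,\Omega^*)\cong\wti H(C,D,\Omega)^{\op}$ (via the vertex swap $(i,a)\mapsto(i,1{-}a)$), track both twist automorphisms through it, and check that the bimodule structures line up. None of this is needed once Theorem~\ref{thm:10.6} is in hand; the bimodule isomorphism $X^T\cong\Ext^1_H(\Du H,H)$ together with left exactness already does the job.
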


\begin{proof}
We get
the second isomorphism from the proof of Theorem~\ref{thm:10.6}.
The first isomorphism follows then by adjunction.
\end{proof}

\subsection{The morphism categories $\cC(1,TC^+)$ and $\cC(TC^-,1)$}\label{subsec:9.7}
Let $\Pi = \Pi(C,D)$.
Following a definition due to Ringel \cite{Ri4} ,
we define a category $\cC(1,TC^+)$ as follows.
Its objects are the $H$-module homomorphims $M \to TC^+(M)$, where
$M \in \rep(H)$ and
the morphisms in $\cC(1,TC^+)$ are given by commutative diagrams
\[
\xymatrix{
M \ar[r]^>>>>f\ar[d]_h & TC^+(M) \ar[d]^{TC^+(h)}\\
N \ar[r]^>>>>g & TC^+(N).
}
\]
Similarly, let $\cC(TC^-,1)$ be the category with objects the
$H$-module homomorphisms $TC^-(M) \to M$.

\begin{Thm}\label{Ringelgen1}
The categories $\rep(\Pi)$, $\cC(1,TC^+)$ and $\cC(TC^-,1)$
are isomorphic.
\end{Thm}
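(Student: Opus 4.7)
The plan is to combine the tensor-algebra presentation $\Pi \cong T_H(X^T)$ from Corollary~\ref{cor:10.7} with the functorial identifications of $X^T \otimes_H M$ and $\Hom_H(X^T, M)$ provided by Corollary~\ref{cor:10.8} and Theorem~\ref{thm:functoriso}. The key structural fact I will invoke is the standard one: for any $H$-$H$-bimodule $B$, the category $\rep(T_H(B))$ is \emph{isomorphic} (not merely equivalent) to the category whose objects are pairs $(M,\phi)$ with $M \in \rep(H)$ and $\phi \in \Hom_H(B \otimes_H M, M)$, morphisms being $H$-linear maps $h \df M \to N$ that fit into the obvious commutative square. Applying this with $B = X^T$, a $\Pi$-module amounts to a pair $(M,\phi\df X^T \otimes_H M \to M)$.

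First I would establish the isomorphism $\rep(\Pi) \cong \cC(TC^-, 1)$. By Corollary~\ref{cor:10.8} there is a functorial isomorphism $X^T \otimes_H M \cong \Ext_H^1(\Du(H), M)$, and combining this with Theorem~\ref{thm:functoriso}(d) (equivalently, the second half of Theorem~\ref{thm:main2}) yields a natural isomorphism $X^T \otimes_H M \cong TC^-(M)$. Conjugating each $\phi$ through this isomorphism produces a map $\tilde\phi\df TC^-(M) \to M$; naturality in $M$ guarantees that morphisms in $\rep(\Pi)$ (commutative squares for $\phi$) correspond bijectively to morphisms in $\cC(TC^-, 1)$ (commutative squares for $\tilde\phi$), giving the claimed isomorphism of categories.

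Next I would obtain $\rep(\Pi) \cong \cC(1, TC^+)$ by tensor-hom adjunction. The natural bijection
\[
\Hom_H(X^T \otimes_H M, M) \;\xrightarrow{\sim}\; \Hom_H(M, \Hom_H(X^T, M))
\]
sends $\phi$ to its adjoint $\phi^\vee$. Corollary~\ref{cor:10.8} together with Theorem~\ref{thm:functoriso}(b) (i.e., the first half of Theorem~\ref{thm:main2}) gives a functorial isomorphism $\Hom_H(X^T, M) \cong \Du\Ext_H^1(M, H) \cong TC^+(M)$, so $\phi^\vee$ translates into a map $M \to TC^+(M)$. Again, naturality of all identifications ensures morphisms are transported bijectively, yielding the isomorphism with $\cC(1, TC^+)$.

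The only delicate point is checking that every transport sends commutative squares to commutative squares, which reduces to verifying naturality of each cited isomorphism in $M \in \rep(H)$. Since tensor-hom adjunction, Corollary~\ref{cor:10.8}, and the isomorphisms of Theorem~\ref{thm:functoriso} are all natural transformations of functors on $\rep(H)$, this poses no substantive obstacle. The theorem is thus essentially a repackaging of the previously established bimodule and functorial isomorphisms, with the main conceptual work already carried out in the proofs of Theorems~\ref{thm:10.6} and~\ref{thm:functoriso}.
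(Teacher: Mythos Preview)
Your approach is correct and is essentially the same as the paper's: both rest on the tensor-algebra presentation $\Pi\cong T_H(X^T)$ from Corollary~\ref{cor:10.7}, the standard identification of $\rep(T_H(B))$ with the morphism category of maps $B\otimes_H M\to M$ (this is \cite[Lemma~12]{Ri4} in the paper's citation), and the adjunction between $X^T\otimes_H-$ and $\Hom_H(X^T,-)$ together with their identification with $TC^-$ and $TC^+$ (the paper packages the adjunction step as \cite[Lemma~1]{Ri4} applied to the adjoint pair $(TC^-,TC^+)$, which amounts to the same thing).

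One small citation issue: parts (b) and (d) of Theorem~\ref{thm:functoriso} are stated only for $M\in\rep_\vp(H)$, so they do not by themselves give the needed isomorphisms for arbitrary $M\in\rep(H)$. What you actually want are parts (a) and (c), which hold for all $M$ and yield $\Hom_H(X^T,-)\cong TC^+(-)$ and $X^T\otimes_H-\cong TC^-(-)$ directly (this is how the paper phrases it). Your parenthetical appeal to Theorem~\ref{thm:main2} is fine, since the first displayed pair of isomorphisms there does hold for all $M$, but you should not describe (b)/(d) as ``equivalent'' to those statements.
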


\begin{proof}
It follows from Proposition~\ref{prop:refl1} that $(TC^-,TC^+)$ is a pair
of adjoint functors $\rep(H) \to \rep(H)$.
Now \cite[Lemma~1]{Ri4} implies that the categories $\cC(1,TC^+)$ and $\cC(TC^-,1)$
are isomorphic.
It follows from
Theorem~\ref{thm:functoriso}(c) that there is a functorial isomorphism
$X^T \otimes_H - \cong TC^-(-)$, and by Corollary~\ref{cor:10.7} we have
$\Pi \cong T_H(X^T)$.
Now  \cite[Lemma~12]{Ri4} gives an isomorphism of categories
$\cC(TC^-,1) \cong \rep(\Pi(C,D))$.
\end{proof}

One can also adapt Ringel's proof of \cite[Theorem~B]{Ri4} to obtain
a more direct proof of 
Theorem~\ref{Ringelgen1}.

\subsection{The kernel of the Coxeter functor}\label{subsec9.8}
As before, let $H = H(C,D,\Omega)$.
Recall that $H$ is a $1$-Iwanaga-Gorenstein algebra with
the subcategory
\[
\cG\cP(H) = \{ M \in \rep(H) \mid \Ext_H^1(M,H) = 0 \}
\]
of Gorenstein-projective modules.

As an immediate consequence of 
Theorem~\ref{thm:functoriso}(a), 
Corollary~\ref{cor:10.8},
and the definition of $C^+(-)$ 
we get the following result.
Here, the map $M_{i,\inn}$ is defined as in
Section~\ref{Minn}, since we can regard the $H$-module $M$ 
also as a module over $\Pi(C,D)$.

\begin{Thm}\label{thm:GP}
For an $H$-module $M$ the following are equivalent:
\begin{itemize}

\item[(i)]
$M \in \cG\cP(H)$;

\item[(ii)]
$C^+(M) = 0$;

\item[(iii)]
$M_{i,\inn}$ is injective for all $1 \le i \le n$.

\end{itemize}
\end{Thm}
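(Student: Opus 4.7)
The plan is to establish the equivalences (i)$\Leftrightarrow$(ii) and (ii)$\Leftrightarrow$(iii) separately.

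For (i)$\Leftrightarrow$(ii), I would combine Theorem~\ref{thm:functoriso}(a), which gives $C^+(M)\cong\Hom_H(X,M)$, with Corollary~\ref{cor:10.8}, which gives $\Du\Ext_H^1(M,H)\cong\Hom_H(X^T,M)$. The crucial observation is that the bimodules $X$ and $X^T$ coincide as left $H$-modules, since they differ only in the right $H$-action (twisted by the algebra automorphism $T$, which affects only the right action). Hence $\Hom_H(X,M) = \Hom_H(X^T,M)$ as $K$-vector spaces, so $C^+(M)\cong\Du\Ext_H^1(M,H)$, and (i)$\Leftrightarrow$(ii) is immediate.

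For (ii)$\Leftrightarrow$(iii), I would fix a $+$-admissible sequence $(1,2,\ldots,n)$. By construction such a sequence is a topological ordering of $Q^\circ(C,\Omega)$ from sinks towards sources: for each vertex $l$, every predecessor of $l$ in $\Omega$ has index greater than $l$, while every successor has index smaller than $l$. Setting $M^{(0)}:=M$ and $M^{(k)}:=F_k^+(M^{(k-1)})$, one has $C^+(M)=M^{(n)}$. The reflection functor $F_k^+$ satisfies $(M^{(k)})_k=\Ker\bigl((M^{(k-1)})_{k,\inn}\bigr)$ and leaves $(M^{(k)})_l=(M^{(k-1)})_l$ untouched for $l\neq k$. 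Since each component is modified exactly once, $(C^+(M))_l=\Ker\bigl((M^{(l-1)})_{l,\inn}\bigr)$ for every $l$, and hence $C^+(M)=0$ if and only if $(M^{(l-1)})_{l,\inn}$ is injective for every $l$.

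It then remains to check, by induction on $l$, that under the assumption $(M^{(k)})_k=0$ for all $k<l$ (equivalently, that each earlier reflected map is injective), the reflected map $(M^{(l-1)})_{l,\inn}$ coincides with the original $M_{l,\inn}$. In the reflected quiver $s_{l-1}\cdots s_1(\Omega)$, the predecessors of $l$ form the full neighbor set $\Omega(l,-)\cup\Omega(-,l)$: original predecessors $j\in\Omega(l,-)$ (with $j>l$) are untouched, while original successors $j\in\Omega(-,l)$ (with $j<l$) have had their arrows to $l$ reversed by the reflection at $j$. For $j\in\Omega(-,l)$ one has $(M^{(l-1)})_j=(M^{(j)})_j=0$ by hypothesis, so these summands vanish; for $j\in\Omega(l,-)$ one has $(M^{(l-1)})_j=M_j$ and the structure map is unchanged from $M_{l,j}$. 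Hence $(M^{(l-1)})_{l,\inn}$ and $M_{l,\inn}$ agree as maps, and the induction closes.

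The main obstacle is this inductive bookkeeping: tracking exactly how the successive reflections at earlier vertices modify the $\Pi$-module structure at vertex $l$, and using the topological ordering of the admissible sequence to isolate precisely those contributions that collapse to zero under the induction hypothesis. Once the identification $(M^{(l-1)})_{l,\inn}=M_{l,\inn}$ is in place, all three statements of the theorem are seen to be equivalent.
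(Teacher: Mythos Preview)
Your approach is the same as the paper's; the paper merely cites Theorem~\ref{thm:functoriso}(a), Corollary~\ref{cor:10.8}, and ``the definition of $C^+$'' without spelling out the inductive bookkeeping you supply for (ii)$\Leftrightarrow$(iii).

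One point needs care. When you write that $(M^{(l-1)})_{l,\inn}$ ``coincides with the original $M_{l,\inn}$'', this is correct only if $M_{l,\inn}$ is read as the map with domain $\bigoplus_{j\in\Omega(l,-)}{_l}H_j\otimes_j M_j$. Taken literally as the $\Pi$-module map from Section~\ref{Minn}, the domain also contains summands ${_l}H_j\otimes_j M_j$ for $j\in\Omega(-,l)$, on which the map vanishes (these $\Pi$-structure maps are zero for an $H$-module viewed as a $\Pi$-module). Injectivity of the full map would then force $M_j=0$ for every out-neighbour $j<l$ of $l$, whereas your induction hypothesis only yields $(M^{(j)})_j=0$, which is a different condition. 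Under that strict reading the theorem is actually false: for type $B_2$ with minimal symmetrizer the simple module $S_1$ lies in $\cG\cP(H)$ (it carries the dashed frame in Figure~\ref{Fig:B2}), yet ${_2}H_1\otimes_1 (S_1)_1\cong K$ sits in the kernel of the full $M_{2,\inn}$. So the correct reading of (iii)---and the one your argument in fact establishes---is the restriction to $j\in\Omega(l,-)$; you should say so explicitly rather than leaving it implicit in the phrase ``these summands vanish''.
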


If $C$ is symmetric, then the equivalence of (i) and (iii) in Theorem~\ref{thm:GP} is a special case
of \cite[Theorem~5.1]{LuZ}.
For $C$ symmetric and $D = \diag(2,\ldots,2)$
the category $\cG\cP(H)$
has been studied in detail in \cite{RiZ}.


\section{$\tau$-locally free $H$-modules}\label{sec12}


\subsection{}\label{sec11.1}
Let $M$ be an indecomposable $H$-module.
Recall that $M$ is
$\tau$-\emph{locally free} if $\tau^k(M)$ is locally free for all
$k \in \Z$.
Furthermore, $M$ is called \emph{preprojective} (resp. \emph{preinjective})
if there exists some
$k \ge 0$ such that $M \cong \tau^{-k}(P)$ (resp.
$M \cong \tau^k(I)$) for some indecomposable projective
$H$-module $P$ (resp. indecomposable injective
$H$-module $I$). 
A $\tau$-locally free $H$-module $M$ is 
\emph{$\tau$-locally free regular} if
$M$ is neither preprojective nor preinjective.
(An indecomposable module $M$ over a finite-dimensional algebra
is called \emph{regular} if $\tau^k(M) \not= 0$ for all $k \in \Z$.)

Let $\cC$ be a $K$-linear category.
The \emph{stable category} $\underline{\cC}$ (resp. $\ov{\cC}$)
is the quotient category of $\cC$ modulo the ideal of
all morphisms factoring through projective (resp. injective) objects.

\begin{Prop}\label{prop:12.1}
The restriction of $\tau(-)$ yields an equivalence
of stable categories
\[
\underline{\rep}_\vp(H) \to \ov{\cF}(H)
\]
where 
$\cF(H) := \{ M \in \rep(H) \mid \Hom_H(\Du(H),M) = 0 \}$, and
$\tau^-(-)$ yields an equivalence
of stable categories
\[
\ov{\rep}_\vp(H) \to \underline{\cG}(H)
\]
where 
$\cG(H) := \{ M \in \rep(H) \mid \Hom_H(M,H) = 0 \}$.
\end{Prop}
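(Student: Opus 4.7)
The plan is to derive the proposition from the classical fact that the Auslander-Reiten translation induces mutually quasi-inverse equivalences $\tau \colon \underline{\md}(H) \to \ov{\md}(H)$ and $\tau^- \colon \ov{\md}(H) \to \underline{\md}(H)$, by restricting these to the subcategories at hand. As a preliminary cosmetic step I would note that the bar and underline decorations on $\cF(H)$ and $\cG(H)$ are redundant: every non-zero injective $H$-module is a direct summand of $\Du(H)$, hence $\Hom_H(\Du(H),I)\neq 0$ and no non-zero injective belongs to $\cF(H)$. Consequently every morphism between objects of $\cF(H)$ factoring through an injective of $\rep(H)$ must vanish, and dually for $\cG(H)$. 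It thus suffices to show that $\tau$ carries $\rep_\vp(H)$ into $\cF(H)$ and that $\tau^-$ carries $\cF(H)$ back into $\rep_\vp(H)$.

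First I would check $\tau(\rep_\vp(H))\subseteq\cF(H)$. For $M\in\rep_\vp(H)$ without projective summands, Proposition~\ref{vpdim} furnishes a minimal projective resolution $0\to P_1\xrightarrow{d}P_0\to M\to 0$. Writing $(-)^\vee:=\Hom_H(-,H)$, the module $\tau M$ is the $K$-linear dual of $\Coker(d^\vee\colon P_0^\vee\to P_1^\vee)$. Combining the duality isomorphism $\Hom_H(\Du(H),\Du(Y))\cong\Hom_{H^\op}(Y,H_H)$ for finite-dimensional right $H$-modules $Y$ with the canonical identifications $(P^\vee)^\vee\cong P$ for finitely generated projective $P$, the group $\Hom_H(\Du(H),\tau M)$ becomes identified with $\Ker(d)$. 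The latter vanishes because $\pdim(M)\leq 1$ forces $d$ to be injective, so $\tau M\in\cF(H)$.

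Dually, for $N\in\cF(H)$ I would take a minimal injective copresentation $0\to N\to I_0\to I_1$ and apply the inverse Nakayama functor $\nu^-:=\Hom_H(\Du(H),-)$, which restricts to an equivalence $\inj(H)\to\proj(H)$. The assumption $N\in\cF(H)$ reads $\nu^-N=0$, so the induced sequence $0\to\nu^-I_0\to\nu^-I_1\to\tau^-N\to 0$ is exact, exhibiting a projective resolution of $\tau^-N$ of length at most one. Hence $\pdim(\tau^-N)\leq 1$ and $\tau^-N\in\rep_\vp(H)$ by Proposition~\ref{vpdim}. Combined with the classical equivalence, this establishes the first equivalence of the proposition; the second assertion $\tau^-\colon\ov{\rep}_\vp(H)\to\underline{\cG}(H)$ is then proved by an entirely dual argument, swapping the roles of projectives and injectives and using $\idim(M)\leq 1$.

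The hard part will be mainly the careful deployment of the Hom-space dualities in order to match up the condition $\Hom_H(\Du(H),\tau M)=0$ with the injectivity of $d$; once this identification is secured, the combination of the $1$-Iwanaga-Gorenstein property (which forces minimal projective resolutions to have length at most one) with the classical Auslander-Reiten equivalence on full module categories takes care of the rest.
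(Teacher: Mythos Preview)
Your proof is correct. The paper's own proof is a one-line citation: it invokes \cite[Lemma~4.1]{AR} together with Proposition~\ref{vpdim}, whereas you supply the content of that lemma directly by unwinding the definitions of $\tau$ and $\tau^-$ via the transpose and the inverse Nakayama functor. The underlying mathematics is the same---the identification of $\Hom_H(\Du(H),\tau M)$ with $\Ker(d)$, and dually the use of $\nu^-N=0$ to force injectivity of $\nu^-g$---so your argument is essentially a self-contained unpacking of the cited reference rather than a genuinely different route. One cosmetic remark: in your preliminary step you say every non-zero injective is a direct summand of $\Du(H)$; strictly speaking it is a summand of a finite direct sum of copies of $\Du(H)$, but the conclusion $\Hom_H(I,N)=0$ for $N\in\cF(H)$ is unaffected.
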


\begin{proof}
Combine
Proposition~\ref{vpdim} with 
\cite[Lemma~4.1]{AR} and its dual.
\end{proof}

\begin{Cor}\label{cor:12.2} 
For an indecomposable $M \in \rep(H)$ the following are
equivalent:
\begin{itemize}

\item[(i)]
$M \in \rep_\vp(H)$;

\item[(ii)]
$\Hom_H(\tau^-(M),H) = 0$; 

\item[(iii)]
$\Hom_H(\Du(H),\tau(M)) = 0$.

\end{itemize}
\end{Cor}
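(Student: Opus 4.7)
The plan is to derive each direction from the Auslander-Reiten formulas recalled in the discussion preceding Corollary~\ref{cor:3.9}, combined with the equivalences of stable categories supplied by Proposition~\ref{prop:12.1}.

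I would first dispatch the two forward implications directly. By Theorem~\ref{thm:main0}, any $M \in \rep_\vp(H)$ satisfies $\pdim(M)\le 1$ and $\idim(M)\le 1$. Using $\idim(M) \le 1$, the AR formula $\Ext_H^1(X,M) \cong \Du\Hom_H(\tau^-(M), X)$ specialized at $X = H$ gives $\Du\Hom_H(\tau^-(M), H) = \Ext_H^1(H, M) = 0$, whence (ii). Dually, using $\pdim(M) \le 1$, the AR formula $\Ext_H^1(M,Y) \cong \Du\Hom_H(Y, \tau(M))$ specialized at $Y = \Du(H)$ yields (iii).

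For (ii) $\Rightarrow$ (i), assume $\Hom_H(\tau^-(M), H) = 0$ with $M$ indecomposable. If $M$ is injective, then it is locally free by Proposition~\ref{resolutionE}. Otherwise $\tau^-(M)$ is a nonzero indecomposable module without projective summands, lying in $\cG(H)$. The equivalence $\tau^- \df \ov{\rep}_\vp(H) \to \underline{\cG}(H)$ of Proposition~\ref{prop:12.1} is essentially surjective, so there exists a locally free module $M'$ with $\tau^-(M') \cong \tau^-(M)$ in $\underline{\cG}(H)$. After discarding the injective summands of $M'$ (which are annihilated by $\tau^-$, and $\rep_\vp(H)$ is closed under summands since being locally free is summand-closed over the truncated polynomial rings $H_i$), we may assume $M'$ has no injective summand, so $\tau^-(M')$ in turn has no projective summands. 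A standard Krull-Schmidt argument then lifts the stable-category isomorphism $\tau^-(M) \cong \tau^-(M')$ to an honest isomorphism in $\rep(H)$; the indecomposability of $\tau^-(M)$ forces $M'$ to be indecomposable and non-injective, and applying $\tau$ yields $M \cong M' \in \rep_\vp(H)$. The implication (iii) $\Rightarrow$ (i) is proved dually via the equivalence $\tau\df \underline{\rep}_\vp(H) \to \ov{\cF}(H)$, handling the case where $M$ is projective separately.

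The only technical subtlety is the lifting of an isomorphism in a stable category to an honest module isomorphism; this becomes automatic once one works with modules having no projective (respectively injective) summands, a condition which the functors $\tau^-$ (respectively $\tau$) supply by construction, so the main work is the careful bookkeeping around summands before invoking Krull-Schmidt.
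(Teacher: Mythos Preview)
Your proof is correct and follows essentially the paper's intended route, since the corollary is meant to be read off directly from Proposition~\ref{prop:12.1}. One minor remark: your forward implications (i)$\Rightarrow$(ii),(iii) via the Auslander--Reiten formulas are fine but slightly redundant, since Proposition~\ref{prop:12.1} already asserts that $\tau^{-}$ maps $\rep_\vp(H)$ into $\cG(H)$ and $\tau$ maps $\rep_\vp(H)$ into $\cF(H)$; the backward implications, which you handle by essential surjectivity plus Krull--Schmidt, are exactly the content one must extract from the stable equivalences, and your bookkeeping around injective and projective summands is the right way to do it.
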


\begin{Cor}\label{cor:12.3}
For an indecomposable $M \in \rep(H)$ the following are
equivalent:
\begin{itemize}

\item[(i)]
$M$ is $\tau$-locally free;

\item[(ii)]
$\Hom_H(\tau^-(\tau^k(M)),H) = 0$ for all $k \in \Z$; 

\item[(iii)]
$\Hom_H(\Du(H),\tau(\tau^k(M))) = 0$ for all $k \in \Z$.

\end{itemize}
\end{Cor}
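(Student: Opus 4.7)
The plan is to reduce Corollary~\ref{cor:12.3} to an iterated application of Corollary~\ref{cor:12.2} along the $\tau$-orbit of $M$. First I would unfold the definition: an indecomposable $M$ is $\tau$-locally free precisely when each translate $\tau^k(M)$, for $k\in\Z$, belongs to $\rep_\vp(H)$. Since $\tau$ and $\tau^-$ take indecomposable non-projective (respectively non-injective) modules to indecomposable modules, each nonzero $\tau^k(M)$ is again indecomposable, so Corollary~\ref{cor:12.2} is directly applicable to it.

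Applying Corollary~\ref{cor:12.2} to $N=\tau^k(M)$ for each fixed $k$ gives two equivalences: local freeness of $\tau^k(M)$ is equivalent to the vanishing of $\Hom_H(\tau^-(\tau^k(M)),H)$, and also to the vanishing of $\Hom_H(\Du(H),\tau(\tau^k(M)))$. Taking the conjunction over all $k\in\Z$ of the first equivalence yields (i)$\Leftrightarrow$(ii), and likewise the second equivalence yields (i)$\Leftrightarrow$(iii).

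The boundary case $\tau^k(M)=0$ needs only a one-line remark: then the Hom-spaces in (ii) and (iii) at that index $k$ are vacuously zero, while the zero module may also be regarded as locally free, so the equivalence is preserved without shift or reindexing. Thus the argument is genuinely pointwise in $k$, and no additional input is required.

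I do not anticipate a real obstacle here; the statement is essentially Corollary~\ref{cor:12.2} rephrased for the entire $\tau$-orbit. The only item deserving care is the transfer of the indecomposability hypothesis of Corollary~\ref{cor:12.2} to each $\tau^k(M)$, which is a standard consequence of Auslander--Reiten theory (and implicit in Proposition~\ref{prop:12.1} via the equivalences of stable categories it provides).
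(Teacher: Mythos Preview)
Your proposal is correct and follows essentially the same approach as the paper: both reduce the statement to a pointwise application of Corollary~\ref{cor:12.2} along the $\tau$-orbit of $M$, after noting that each nonzero $\tau^k(M)$ is again indecomposable and that the zero case is harmless. The paper's proof phrases the boundary/indecomposability issue via the standard facts $\tau\tau^-(M)\cong M$ iff $M$ is non-injective and $\tau^-\tau(M)\cong M$ iff $M$ is non-projective, but this is just another way of recording what you observe.
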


\begin{proof}
Recall that for an indecomposable module $M$ we have 
$\tau(\tau^-(M)) \cong M$ if and only if $M$ is not injective, and
$\tau^-(\tau(M)) \cong M$ if and only if $M$ is not projective.
Now the statement is a direct consequence of Corollary~\ref{cor:12.2}.
\end{proof}

\begin{Prop}\label{prop:12.5}
Let $M \in \rep_\vp(H)$ be indecomposable and rigid.
Then $M$ is $\tau$-locally free and $\tau^k(M)$ is rigid for all
$k \in \Z$. 
\end{Prop}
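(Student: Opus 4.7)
The strategy is to induct on $|k|$, bootstrapping from the base case $k=0$ (which is the hypothesis) via the isomorphisms $\tau \cong TC^+$ and $\tau^- \cong TC^-$ provided by Theorem~\ref{thm:functoriso}(b),(d) on $\rep_\vp(H)$, combined with the fact that reflection functors preserve local freeness and rigidity (Proposition~\ref{prop:reflvp}).

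For the positive direction, suppose by induction that $\tau^k(M)$ is locally free and rigid for some $k \ge 0$. If $\tau^k(M) = 0$ the claim is vacuous, so assume not. Since $\tau^k(M) \in \rep_\vp(H)$, Theorem~\ref{thm:functoriso}(b) yields $\tau^{k+1}(M) \cong T C^+(\tau^k(M))$. Now pick a $+$-admissible sequence $(i_1,\ldots,i_n)$ for $(C,\Omega)$, so that $C^+ = F_{i_n}^+ \circ \cdots \circ F_{i_1}^+$; at each stage $F_{i_j}^+$ is a reflection at a sink of $Q^\circ(C, s_{i_{j-1}} \cdots s_{i_1}(\Omega))$. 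Proposition~\ref{prop:reflvp} (which holds for arbitrary $H(C,D,\Omega')$) applies at each step, so by iteration $C^+(\tau^k(M))$ is locally free and rigid. Finally, $T$ is an autoequivalence of $\rep(H)$ induced by an algebra automorphism that fixes the idempotents $e_i$ and loops $\vep_i$, hence it preserves local freeness and rigidity. Therefore $\tau^{k+1}(M)$ is locally free and rigid, completing the induction.

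The negative direction is entirely analogous, using Theorem~\ref{thm:functoriso}(d) and the dual statement of Proposition~\ref{prop:reflvp} for the functors $F_i^-$, applied along a $-$-admissible sequence. Combining both inductions shows that $\tau^k(M)$ is locally free and rigid for every $k \in \Z$, which is precisely the assertion that $M$ is $\tau$-locally free together with rigidity of every $\tau^k(M)$.

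There is essentially no deep obstacle: the real content has already been packaged into Proposition~\ref{prop:reflvp} (whose proof is the subtle part, where one must rule out a nonzero $\Hom_H(M,E_1)$) and Theorem~\ref{thm:functoriso}. The only point requiring a moment's attention is the correct bookkeeping of which algebra each intermediate reflection acts on, and the trivial observation that the twist $T$ commutes with every relevant property.
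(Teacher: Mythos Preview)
Your proof is correct and follows essentially the same approach as the paper: combine Theorem~\ref{thm:functoriso} (to identify $\tau^{\pm}$ with $TC^{\pm}$ on locally free modules) with Proposition~\ref{prop:reflvp} (reflection functors preserve local freeness and rigidity). Your version is simply more explicit than the paper's two-line argument, spelling out the induction on $|k|$, the bookkeeping of orientations as the reflections are applied, and why the twist $T$ preserves the relevant properties.
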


\begin{proof}
By Theorem~\ref{thm:functoriso} we know that 
$\tau^k(M) \cong T^kC^k(M)$.
Now the result follows from Proposition~\ref{prop:reflvp}.
\end{proof}

Recall from Section~\ref{subsec-Coxeter} the Coxeter matrix $\Phi_H$.

\begin{Prop}\label{prop:12.8}
For a $\tau$-locally free module $M \in \rep(H)$
the following hold:
\begin{itemize}

\item[(i)]
If $\tau^k(M) \not= 0$ for some $k \in \Z$, then
$\rkv(\tau^k(M)) = (D^{-1}\Phi_HD)^k(\rkv(M))$.

\item[(ii)]
If $\tau^k(M) \not= 0$ for some $k \in \Z$ and $\rkv(M)$ is contained in $\Delta_\real^+(C)$ or $\Delta_\imag^+(C)$, 
then $\rkv(\tau^k(M))$ is in $\Delta_\real^+(C)$
or $\Delta_\imag^+(C)$, respectively.

\end{itemize}
\end{Prop}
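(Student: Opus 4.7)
The plan is to deduce (i) from the classical projective-resolution / Nakayama-functor computation of $\tau$, and then obtain (ii) by passing from dimension vectors to rank vectors and invoking the Weyl-group invariance of $\Delta_\real(C)$ and $\Delta_\imag(C)$. The key new input, beyond the hereditary-case argument, is Corollary~\ref{cor:12.2}, which is precisely what is needed to kill the error term that arises in the standard formula for $\dimv(\tau M)$.

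For the base case $k=1$ of (i): since $M$ is locally free, Proposition~\ref{vpdim} gives $\pdim(M) \le 1$, so $M$ admits a minimal projective resolution $0 \to P_1 \to P_0 \to M \to 0$. Applying the Nakayama functor $\nu = \Du\Hom_H(-,H)$ yields the standard four-term exact sequence
\[
0 \to \tau(M) \to \nu(P_1) \to \nu(P_0) \to \nu(M) \to 0.
\]
Reading off dimension vectors and using the defining property $\Phi_H(\dimv(P_k)) = -\dimv(I_k) = -\dimv(\nu(P_k))$ of the Coxeter matrix from Section~\ref{subsec-Coxeter}, one obtains $\dimv(\tau M) = \Phi_H\dimv(M) + \dimv(\nu M)$. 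Since $\tau(M)\neq 0$ forces $M$ to be non-projective (so $\tau^-\tau(M) \cong M$), and since $\tau(M)$ is locally free by hypothesis, Corollary~\ref{cor:12.2} applied to $\tau(M)$ gives $\Hom_H(M,H) = 0$, hence $\nu(M) = 0$ and the formula reduces to $\dimv(\tau M) = \Phi_H \dimv(M)$. Iterating this on each $\tau^{j}(M)$ for $0 \le j < k$ (all non-projective, indecomposable, and locally free thanks to the $\tau$-locally free assumption together with the hypothesis $\tau^k(M)\ne 0$) proves (i) for $k \ge 1$. The case $k<0$ is dual, using a minimal injective resolution of $M$, the inverse Nakayama functor $\nu^- = \Hom_H(\Du(H),-)$, and the second equivalence in Corollary~\ref{cor:12.2}.

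For (ii): combining $\dimv(X) = D\rkv(X)$ on locally free modules with the identification of $\Phi_H$ with the Coxeter transformation $c^+$ in the basis $(\dimv(E_k))_k = (D\alpha_k)_k$ from Section~\ref{subsec-Coxeter} gives $\Phi_H = D c^+ D^{-1}$, so part (i) translates into $\rkv(\tau^k M) = c^k\,\rkv(M)$. Since $c \in W(C)$ and both $\Delta_\real(C)$ and $\Delta_\imag(C)$ are $W(C)$-stable (Section~\ref{sec2}), $c^k\rkv(M)$ lies in the same set as $\rkv(M)$; and since $\tau^k(M)$ is a non-zero locally free module, its rank vector lies in $\N^n\setminus\{0\}$, placing it in $\Delta_\real^+(C)$ or $\Delta_\imag^+(C)$ as appropriate. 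The main obstacle in the whole argument is precisely the vanishing $\nu(M) = 0$: this is where the full force of $\tau$-local freeness enters (via Corollary~\ref{cor:12.2}, which itself relies on the Iwanaga-Gorenstein characterization of $\rep_\vp(H)$). Without this vanishing, the dimension-vector formula is only correct up to the additive defect $\dimv\nu(M)$, and the clean identification with the Coxeter matrix breaks down.
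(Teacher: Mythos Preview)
Your proof is correct, but it takes a genuinely different route from the paper's. The paper argues via the reflection-functor description of $\tau$: using Theorem~\ref{thm:functoriso}(b),(d) to write $\tau(M)\cong TC^+(M)$, it observes that for a sink $i$ and an indecomposable $\tau$-locally free $M\not\cong E_i$ the map $M_{i,\inn}$ is surjective, so Proposition~\ref{prop:refl4} gives $\rkv(F_i^+M)=s_i(\rkv(M))$; iterating over a $+$-admissible sequence yields $\rkv(\tau M)=c^+\rkv(M)$ directly on rank vectors. Your argument bypasses the Coxeter functor entirely and works with the Nakayama four-term sequence, the crucial step being the vanishing $\nu(M)=0$ extracted from Corollary~\ref{cor:12.2}. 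Interestingly, the same corollary is what ultimately justifies the paper's surjectivity claim (since $\Hom_H(M,H)=0$ forces $\Hom_H(M,E_i)=0$, hence $\Coker(M_{i,\inn})=0$). So the two proofs share the same underlying input but package it differently: the paper's version stays closer to the reflection-functor machinery developed in Sections~\ref{sec8}--\ref{sec10} and gives (ii) immediately on rank vectors, while yours is a clean self-contained computation in the spirit of the hereditary case, with (ii) obtained afterwards via the change of basis $\Phi_H = Dc^+D^{-1}$.
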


\begin{proof}
Part (i) follows from \cite[Section~2.4, p.75]{Ri3} combined with 
Corollary~\ref{cor:12.3} and the fact
that $\tau$-locally free $H$-modules and their $\tau^k$-translates have
projective and injective dimension at most $1$.
To prove (ii),
let $i$ be a sink (resp. source) in $Q^\circ(C,\Omega)$.
Then for a $\tau$-locally free module $M$ with $M \not\cong E_i$ the map
$M_{i,\inn}$ is surjective (resp. $M_{i,\out}$ is injective).
Now the result follows from
Proposition~\ref{prop:refl4}
and Theorem~\ref{thm:functoriso}(b),(d).
\end{proof}

\begin{Prop}\label{prop:12.9}
Let $M$ be a preprojective or preinjective $H$-module.
Then the following hold:
\begin{itemize}

\item[(i)]
$M$ is $\tau$-locally free and rigid;

\item[(ii)]
$\rkv(M) \in \Delta^+_\real(C)$;

\item[(iii)]
If $M$ and $N$ are preprojective or preinjective $H$-modules
with $\dimv(M) = \dimv(N)$, then $M \cong N$.

\end{itemize}
\end{Prop}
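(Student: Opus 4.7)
The plan is to prove the three parts in sequence, with (i) and (ii) following quickly from previously established results and (iii) requiring the bulk of the work, with the Dynkin case being the main obstacle.

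For (i), I start from the fact that each indecomposable projective $P_i$ is locally free by Proposition~\ref{resolutionE} and rigid as a projective module. Proposition~\ref{prop:12.5} then implies that $P_i$ is $\tau$-locally free and that $\tau^k(P_i)$ is rigid for every $k \in \Z$; in particular every preprojective module $\tau^{-k}(P_i)$ is locally free and rigid. The preinjective case is dual, using that each $I_j$ is locally free. Part (ii) is then immediate: Lemma~\ref{dimP_k} gives $\rkv(P_i) = \beta_i \in \Delta_\real^+(C)$, and induction via Proposition~\ref{prop:12.8}(ii) shows $\rkv(\tau^{-k}(P_i)) \in \Delta_\real^+(C)$ whenever $\tau^{-k}(P_i) \neq 0$; the preinjective statement follows dually using Lemma~\ref{dimI_k}.

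For (iii), the essential observation is that Proposition~\ref{prop:12.8}(i) together with the identification of $\Phi_H$ with the Coxeter transformation $c^+$ on rank vectors from Section~\ref{subsec-Coxeter} yields
\[
\rkv(\tau^{-k}(P_i)) = c^{-k}(\beta_i), \qquad \rkv(\tau^{l}(I_j)) = c^{l}(\gamma_j),
\]
whenever the modules on the left are non-zero. Since a locally free module is determined up to isomorphism of rank vector by its dimension vector (and conversely), it suffices to show that the rank vector determines the isomorphism class of a preprojective or preinjective module. If $C$ is not of Dynkin type, Lemma~\ref{lem:ct1} asserts that all elements $c^{-r}(\beta_i)$ and $c^s(\gamma_j)$ with $r, s \geq 0$ and $1 \leq i, j \leq n$ are pairwise distinct in $\Delta_\real^+(C)$; this yields uniqueness immediately and simultaneously rules out the possibility of a preprojective and a preinjective module sharing a rank vector.

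The main obstacle is the Dynkin case, where I must handle a preprojective $M$ and a preinjective $N$ with identical rank vectors. The key step is to show that in this case every preprojective module is also preinjective. By (i) and (ii), each non-zero $\tau^{-k}(P_i)$ is locally free with rank vector $c^{-k}(\beta_i) \in \N^n$, so the defining property of $p_i$ in Section~\ref{sec:ct} forces $\tau^{-p_i}(P_i) = 0$. Letting $k_0 \leq p_i$ be the smallest positive integer with $\tau^{-k_0}(P_i) = 0$, the general fact that $\tau^-(N) = 0$ if and only if $N$ is injective implies $\tau^{-(k_0 - 1)}(P_i)$ is an indecomposable injective, say $I_{\sigma(i)}$; hence the preprojective orbit of $P_i$ coincides with the preinjective orbit of $I_{\sigma(i)}$, and every preprojective is preinjective. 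This reduces the mixed case to the pure preprojective one, which is settled by Lemma~\ref{lem:ct2}: since the rank vectors $c^{-r}(\beta_i)$ are pairwise distinct for $1 \leq i \leq n$ and $0 \leq r \leq p_i - 1$, and the orbit of $P_i$ contributes such vectors only for $0 \leq r \leq k_0 - 1$, any two preprojective (equivalently preinjective) modules with the same rank vector are isomorphic.
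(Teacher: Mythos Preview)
Your proof is correct and follows essentially the same approach as the paper. Parts (i) and (ii) match the paper's argument exactly (Proposition~\ref{prop:12.5} applied to the locally free rigid modules $P_i$, $I_i$, then Proposition~\ref{prop:12.8}(ii)). For part (iii) the paper simply cites Lemmas~\ref{lem:ct1} and \ref{lem:ct2}; your write-up makes explicit the one step the paper leaves to the reader in the Dynkin case, namely that the $\tau^-$-orbit of each $P_i$ must terminate (since $c^{-p_i}(\beta_i)\notin\N^n$ forces $\tau^{-p_i}(P_i)=0$) and hence ends at an indecomposable injective, so preprojective and preinjective modules coincide. This is exactly the point the paper spells out a few lines later in the proof of Theorem~\ref{Dynkinroots}. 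One tiny remark: to reduce the mixed case to a ``pure'' case you also need the dual statement (every preinjective is preprojective), which follows by the symmetric argument; you might say this explicitly.
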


\begin{proof}
By definition we have $M \cong \tau^{-k}(P_i)$ or
$M \cong \tau^k(I_i)$ for some $k \ge 0$ and some
$1 \le i \le n$.
The modules $P_i$ and $I_i$ are indecomposable, locally free and rigid.
Thus by Proposition~\ref{prop:12.5} the module 
$M$ is $\tau$-locally free and rigid.
We know from Section~\ref{sec3.3} that $\rkv(P_i),\rkv(I_i) \in
\Delta_\real^+(C)$.
Now part (ii) follows from
Proposition~\ref{prop:12.8}(ii), and
(iii) is a consequence of Lemmas~\ref{lem:ct1} and \ref{lem:ct2}.
\end{proof}

\begin{Lem}
Assume $C$ is connected and not of Dynkin type.
Let $X$ be a preprojective, $Y$ a $\tau$-locally free regular and $Z$ a preinjective $H$-module.
Then we have $\Hom_H(Z,Y) = 0$, $\Hom_H(Y,X) = 0$ and
$\Hom_H(Z,X) = 0$.
\end{Lem}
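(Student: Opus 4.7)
The plan is to deduce all three vanishings from two earlier results: Corollary~\ref{cor:12.3}, which says that any indecomposable $\tau$-locally free module $M$ satisfies $\Hom_H(\tau^\ell M, H) = 0$ and $\Hom_H(\Du(H), \tau^\ell M) = 0$ for all $\ell \in \Z$; and Proposition~\ref{prop:12.1}, which provides the stable equivalences
\[
\tau\df \underline{\rep}_\vp(H) \xrightarrow{\sim} \overline{\cF}(H)
\qquad \text{and} \qquad
\tau^-\df \overline{\rep}_\vp(H) \xrightarrow{\sim} \underline{\cG}(H).
\]
Each of $X$, $Y$, $Z$ is $\tau$-locally free ($Y$ by the definition of ``regular'' in Section~\ref{sec11.1}, and $X$, $Z$ by Proposition~\ref{prop:12.9}(i)), so Corollary~\ref{cor:12.3} applies to all three. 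In particular, since every indecomposable projective is a summand of $H$ and every indecomposable injective is a summand of $\Du(H)$, we have $\Hom_H(Y,P) = \Hom_H(Z,P) = 0$ for every projective $P$, and $\Hom_H(I,Y) = 0$ for every injective $I$.

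The prototype is $\Hom_H(Y,X) = 0$. Write $X = \tau^{-k}(P_i)$ with $P_i$ indecomposable projective and $k\ge 0$, and induct on $k$. The case $k=0$ is immediate, since $\Hom_H(Y,P_i) \subseteq \Hom_H(Y,H) = 0$. For $k\ge 1$, Proposition~\ref{prop:12.1} yields
\[
\underline{\Hom}_H(Y,X) \cong \overline{\Hom}_H(\tau Y,\tau X) = \overline{\Hom}_H(\tau Y, \tau^{-k+1}(P_i)).
\]
The module $\tau Y$ is again indecomposable and regular: it is $\tau$-locally free because $Y$ is, and if $\tau Y$ were preprojective or preinjective then $Y = \tau^-(\tau Y)$ would be as well, contradicting regularity of $Y$. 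So the induction hypothesis gives $\Hom_H(\tau Y, \tau^{-k+1}(P_i)) = 0$, which forces $\underline{\Hom}_H(Y,X) = 0$. Finally, any morphism $Y \to X$ factoring through a projective $P$ must factor as $Y \to P \to X$ with $\Hom_H(Y,P) = 0$, so it is zero. Hence the canonical map $\Hom_H(Y,X) \to \underline{\Hom}_H(Y,X)$ is an isomorphism and $\Hom_H(Y,X) = 0$.

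The vanishing $\Hom_H(Z,Y) = 0$ is proved dually, inducting on $m$ with $Z = \tau^m(I_j)$ and using the second equivalence of Proposition~\ref{prop:12.1}; the base case $m=0$ follows from $\Hom_H(I_j,Y) \subseteq \Hom_H(\Du(H),Y) = 0$. The third vanishing $\Hom_H(Z,X) = 0$ is obtained by repeating the prototype argument verbatim with $Y$ replaced by $Z$: the only properties of $Y$ that enter are that $Y$ (and hence each $\tau^\ell Y$) is indecomposable and $\tau$-locally free with $\Hom_H(Y,H) = 0$, and all of these also hold for $Z$. That $\tau Z$ is preinjective rather than regular is immaterial for the inductive step.

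The only genuine subtlety is the passage from stable Hom back to honest Hom after each $\tau$-shift. This step is painless here precisely because Corollary~\ref{cor:12.3} forces $\Hom_H(Y,P) = \Hom_H(Z,P) = 0$ for every projective $P$, so no nonzero morphism out of $Y$ or $Z$ can factor through a projective, and the surjection $\Hom_H \twoheadrightarrow \underline{\Hom}_H$ is automatically bijective at every stage of the induction.
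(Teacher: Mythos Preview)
Your argument is correct and follows the same route as the paper's proof (shift by $\tau$ until you reach a projective, then invoke Corollary~\ref{cor:12.3}); you simply spell out the induction and the passage from stable to honest $\Hom$ that the paper leaves implicit in the single line $\Hom_H(Y,X)\cong\Hom_H(\tau^k(Y),P_i)$. One small slip: your paraphrase of Corollary~\ref{cor:12.3} is false as stated for preprojective $M$ (e.g.\ $\Hom_H(P_i,H)\neq 0$), but you only ever apply it to regular and preinjective modules, where the conclusion $\Hom_H(\tau^\ell M,H)=0$ does hold.
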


\begin{proof}
We have $X \cong \tau^{-k}(P_i)$ for some $1 \le i \le n$ and 
some $k \ge 0$.
We get
$\Hom_H(Y,X) \cong \Hom_H(\tau^k(Y),P_i)$ and
$\Hom_H(Z,X) \cong  \Hom_H(\tau^k(Z),P_i)$.
Now Corollary~\ref{cor:12.3}
yields that these homomorphism spaces are zero.
Similarly, one shows that $\Hom_H(Z,Y) = 0$.
\end{proof}

A sequence $((i_1,p_1),\ldots,(i_t,p_t))$ with
$1 \le i_k \le n$ and $p_i \in \{+,-\}$ is \emph{admissible} for 
$(C,\Omega)$
if the following hold:
\begin{itemize}

\item[(i)]
Either
$i_1$ is a sink in $Q^\circ(C,\Omega)$ and $p_1 = +$, or $i_1$ is 
a source in $Q^\circ(C,\Omega)$ and $p_1 = -$;

\item[(ii)]
For each $2 \le k \le n$, either
$i_k$ is a sink in $Q^\circ(C,s_{i_{k-1}} \cdots s_{i_1}(\Omega))$ and $p_k = +$, or $i_k$ is 
a source in $Q^\circ(C,s_{i_{k-1}} \cdots s_{i_1}(\Omega))$ and $p_k = -$.

\end{itemize}

\begin{Prop}
For an indecomposable locally free $M \in \rep(H)$ the following are
equivalent:
\begin{itemize}

\item[(i)]
$M$ is $\tau$-locally free;

\item[(ii)]
For each admissible sequence 
$((i_1,p_1),\ldots,(i_t,p_t))$ for $(C,\Omega)$
the module
\[
F_{i_t}^{p_t} \cdots F_{i_1}^{p_1}(M)
\]
is locally free.

\end{itemize}
\end{Prop}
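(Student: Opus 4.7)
The plan is to exploit Theorem~\ref{thm:functoriso}, which on $\rep_\vp(H)$ identifies the Auslander--Reiten translation with the twisted Coxeter functor ($\tau\cong TC^+$ and $\tau^-\cong TC^-$), together with Lemma~\ref{lem:8.7}, which propagates local freeness from a completed Coxeter image back through every intermediate reflection. A routine check, which I will assume throughout, shows that the twist $T^2\cong\id$ commutes with each $F_j^\pm$ up to isomorphism (the sign conventions in the reflection functors are exactly those absorbed by $T$), so $C^\pm\circ T\cong T\circ C^\pm$ and in particular $\rep_\vp(H)$ is stable under $T$.

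For the direction (ii)$\Rightarrow$(i), choose a $+$-admissible sequence $(i_1,\ldots,i_n)$ for $(C,\Omega)$. Because each vertex of $Q^\circ$ appears exactly once as a sink in this list, the element $s_{i_n}\cdots s_{i_1}$ preserves $\Omega$, and hence the $k$-fold concatenation of $((i_1,+),\ldots,(i_n,+))$ is again admissible for $(C,\Omega)$ with associated functor $(C^+)^k$. Hypothesis~(ii) therefore gives $(C^+)^k(M)$ locally free for every $k\ge 0$, and iterating Theorem~\ref{thm:functoriso}(b) together with the above commutation yields $\tau^k(M)\cong T^k(C^+)^k(M)$, which is locally free. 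The dual argument using $-$-admissible sequences and Theorem~\ref{thm:functoriso}(d) shows $\tau^{-k}(M)$ is locally free, and so $M$ is $\tau$-locally free.

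For the direction (i)$\Rightarrow$(ii), I would argue by induction on $t$, strengthening the statement to: $N_k:=F_{i_k}^{p_k}\cdots F_{i_1}^{p_1}(M)$ is either zero or $\tau$-locally free over $s_{i_k}\cdots s_{i_1}(H)$. The base $k=0$ is the hypothesis, and the inductive step reduces to the following

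\textbf{Key Lemma.} \emph{If $N$ is indecomposable and $\tau$-locally free over $H'=H(C,D,\Omega')$ and $i$ is a sink (resp.\ source) of $Q^\circ(C,\Omega')$, then $F_i^+(N)$ (resp.\ $F_i^-(N)$) is either zero or $\tau$-locally free over $s_i(H')$.}

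\noindent Local freeness of $F_i^+(N)$ itself follows from Lemma~\ref{lem:8.7} applied to any $+$-admissible sequence for $(C,\Omega')$ starting with $i$: the endpoint $C^+_{H'}(N)\cong T\tau_{H'}(N)$ is locally free, and this propagates back to local freeness of $F_i^+(N)$.

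Promoting local freeness to $\tau$-local freeness is the main obstacle. My plan is to invoke Theorem~\ref{thm:APR}, which realises $F_i^+$ as the tilting functor $\Hom_{H'}(T,-)$ for the APR-tilting module $T=H'/E_i\oplus\tau_{H'}^-(E_i)$, and then to use classical Brenner--Butler theory to obtain the intertwining
\[
\tau_{s_i(H')}\circ F_i^+\;\cong\;F_i^+\circ\tau_{H'}
\]
on the subcategory of $X\in\rep(H')$ satisfying $X,\tau_{H'}(X)\in\cT_i:=\{X:\Hom_{H'}(X,E_i)=0\}$. Iterating this would give $\tau_{s_i(H')}^k(F_i^+(N))\cong F_i^+(\tau_{H'}^k(N))$, the right-hand side being locally free by the preceding paragraph applied to the $\tau$-locally free module $\tau_{H'}^k(N)$. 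The delicate step is verifying $\tau_{H'}^k(N)\in\cT_i$ for every $k\in\Z$: since $i$ is a sink, $E_i=P_i$ is a direct summand of $H'$, and Corollary~\ref{cor:12.3} combined with $\tau_{H'}^-(\tau_{H'}^{k+1}N)\cong\tau_{H'}^k(N)$ yields $\Hom_{H'}(\tau_{H'}^kN,E_i)\hookrightarrow\Hom_{H'}(\tau_{H'}^-\tau_{H'}^{k+1}N,H')=0$, except in the boundary cases in which the $\tau$-orbit of $N$ meets an indecomposable projective or injective summand; those must be handled separately using $F_i^+(E_i)=0$ and its dual.
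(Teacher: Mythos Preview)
Your direction (ii)$\Rightarrow$(i) is fine and matches the paper's argument.

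For (i)$\Rightarrow$(ii), you and the paper agree on the reduction to the Key Lemma, and both arguments ultimately hinge on establishing the intertwining $\tau_{s_i(H')}(F_i^+(N))\cong F_i^+(\tau_{H'}(N))$ (and its $\tau^-$-analogue) for $\tau$-locally free $N$. The difference is in how this intertwining is obtained. The paper does it by an elementary and self-contained combinatorial trick: choose a $+$-admissible sequence $(j_1,\ldots,j_n)$ for $(C,s_i(\Omega))$ with $j_n=i$; then $(i,j_1,\ldots,j_{n-1})$ is $+$-admissible for $(C,\Omega)$, so the Coxeter functor for $s_i(\Omega)$ satisfies $C^+_{s_i(\Omega)}\circ F_i^+ = F_i^+\circ F_{j_{n-1}}^+\cdots F_{j_1}^+\circ F_i^+ = F_i^+\circ C^+_\Omega$, and Theorem~\ref{thm:functoriso} converts this into the $\tau$-intertwining. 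A dual choice (a $-$-admissible sequence for $s_i(\Omega)$ beginning with $i$) handles the negative direction, and Lemma~\ref{lem:8.7} supplies local freeness along the way. This bypasses tilting theory entirely.

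Your route via Theorem~\ref{thm:APR} and ``classical Brenner--Butler'' has a genuine gap at exactly this point: the Brenner--Butler theorem gives only the equivalence $\cT\simeq\cY$ of torsion classes, not that $\Hom_{H'}(T,-)$ intertwines the Auslander--Reiten translations of $H'$ and $B$. To upgrade to $\tau$-equivariance one needs either Happel's derived-equivalence framework (tilting induces a triangle equivalence, hence commutes with the Serre functor) together with a check that the relevant AR-triangles stay in degree zero, or else precisely the elementary reflection-functor identity the paper uses. Neither is ``classical Brenner--Butler''. Your verification that $\tau_{H'}^k(N)\in\cT_i$ via Corollary~\ref{cor:12.2} is correct and pleasant, but it only ensures the tilting functor is exact on the relevant modules; it does not by itself force the image of an AR-sequence to be an AR-sequence. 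So the heart of your Key Lemma is not yet proved. The quickest fix is to drop the tilting detour and adopt the paper's admissible-sequence argument, which gives the intertwining in one line.
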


\begin{proof}
Assume $M$ is $\tau$-locally free.
Let $i$ be a sink in $Q^\circ(C,\Omega)$.
We want to show that
$F_i^+(M)$ is $\tau$-locally free.
If $M \cong P_i = E_i$, then $F_i^+(M) = 0$, which is trivially $\tau$-locally free.
Thus we can assume that $M \not\cong E_i$.
If $M \cong P_j$ for some $j \not= i$, then $\tp_i(M) = 0$.
In particular, $M_{i,\inn}$ is surjective.
Thus $F_i^+(M)$ is locally free.
Now Lemma~\ref{lem:8.7} yields
that $F_i^+(M)$ is indecomposable.
Next, assume that $M$ is not projective.
In other words, we have
$\tau(M) \not= 0$.
There clearly exists a $+$-admissible sequence $(i_1,\ldots,i_n)$
for $(C,\Omega)$ with
$i_1=i$.
Using that $M$ is $\tau$-locally free and applying 
Theorem~\ref{thm:functoriso} we get
\[
\tau(M) \cong TC^+(M) \cong TF_{i_n}^+ \cdots F_{i_1}^+(M).
\]
By Lemma~\ref{lem:8.7}, the module
$F_i^+(M)$ is indecomposable and locally free.
We can now assume that $\tau^k(F_i^+(M)) \not= 0$ for all
$k \in \Z$. (Otherwise, the indecomposable  module $F_i^+(M)$ is 
preprojective or preinjective and therefore $\tau$-locally free.)

Let $k > 0$.
There exists a $+$-admissible sequence $(j_1,\ldots,j_n)$ for $(C,s_i(\Omega))$
with $j_n=i$.
It follows that $(i,j_1,\ldots,j_{n-1})$ is a $+$-admissible sequence
for $(C,\Omega)$.
We have
\[
\tau(F_i^+(M)) \cong 
T(F_{j_n}^+ \cdots F_{j_1}^+(F_i^+(M)) \cong
F_i^+(\tau(M)),
\]
and this module is indecomposable and locally free since $\tau(M)$ is 
$\tau$-locally free.
Now it follows by induction that
$\tau^k(F_i^+(M)) \cong F_i^+(\tau^k(M))$ is
indecomposable and locally free for each
$k > 0$.

Next, let $k < 0$.
Then there exists a $-$-admissible sequence $(j_1,\ldots,j_n)$ for
$(C,s_i(\Omega))$ with $j_1=i$.
Then 
$(j_2,\ldots,j_n,i)$ is a $-$-admissible sequence for $(C,\Omega)$.
We get
\[
\tau^-(F_i^+(M))
\cong F_{j_n}^- \cdots F_{j_2}^-F_i^-F_i^+(M)
\cong F_{j_n}^- \cdots F_{j_2}^-(M)
\]
and
\[
F_i^+(\tau^-(M)) 
\cong 
F_i^+F_i^-F_{j_n}^- \cdots F_{j_2}^-(M) 
\cong F_{j_n}^- \cdots F_{j_2}^-(M).
\]
(For the last isomorphism we used the dual of Lemma~\ref{lem:8.7}.)
Again by induction we get that $\tau^k(F_i^+(M)) \cong F_i^+(\tau^k(M))$
is indecomposable and locally free for each $k<0$.

Altogether we showed that $F_i^+(M)$ is $\tau$-locally free.
Dually, one shows that $F_j^-(M)$ is $\tau$-locally free for each source $j$
in $Q^\circ(C,\Omega)$.
This implies (ii).

To show the other direction, assume that (ii) holds.
It follows that $T^kC^k(M)$ is locally free for all $k \in \Z$.
Now we can apply Theorem~\ref{thm:functoriso} and get
$\tau^k(M) \cong T^kC^k(M)$.
Thus $M$ is $\tau$-locally free.
\end{proof}

\begin{Prop}\label{prop:10.9}
For an $H$-module $M$ the following are equivalent:
\begin{itemize}

\item[(i)]
$M$ is locally free;

\item[(ii)]
$\tau(M) \cong TC^+(M)$;

\item[(iii)]
$\tau^-(M) \cong TC^-(M)$.

\end{itemize}
\end{Prop}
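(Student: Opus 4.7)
The forward implications (i) $\Rightarrow$ (ii) and (i) $\Rightarrow$ (iii) are already furnished by Theorem~\ref{thm:functoriso}(b),(d). For the converses, the plan is to construct, for every $M\in\rep(H)$, a canonical short exact sequence
\[
0 \to \Du L_M \to \tau(M) \to TC^+(M) \to 0
\]
whose leftmost term vanishes precisely when $M$ is locally free. Granted this, if $\tau(M)\cong TC^+(M)$ as $H$-modules, then matching dimensions on the two sides forces $\dim_K\!\Du L_M=0$, hence $L_M=0$, hence $M\in\rep_\vp(H)$.

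To construct this sequence I would fix a minimal projective presentation $P_1\xrightarrow{f} P_0\to M\to 0$, set $K:=\Ker f$ and $I:=\Ima f$, and apply $(-)^*:=\Hom_H(-,H)$ to the two short exact sequences $0\to K\to P_1\to I\to 0$ and $0\to I\to P_0\to M\to 0$. Since projectives are $\Ext^1$-acyclic this yields
\[
0\to I^*\to P_1^*\to K^*\to \Ext^1_H(I,H)\to 0,\qquad 0\to M^*\to P_0^*\to I^*\to \Ext^1_H(M,H)\to 0,
\]
and $f^*\colon P_0^*\to P_1^*$ factors as $P_0^*\twoheadrightarrow\Ima(P_0^*\to I^*)\hookrightarrow I^*\hookrightarrow P_1^*$, so the Auslander transpose $\operatorname{Tr}M=\Coker(f^*)=P_1^*/\Ima(P_0^*\to I^*)$ fits in a natural short exact sequence
\[
0\to \Ext^1_H(M,H)\to \operatorname{Tr}M\to L_M\to 0,\qquad L_M:=P_1^*/I^*=\Ima(P_1^*\to K^*)\subseteq K^*.
\]
Dualising with $\Du$ and using $\tau(M)=\Du\operatorname{Tr}M$ together with the isomorphism $TC^+(M)\cong\Du\Ext^1_H(M,H)$ for every $M\in\rep(H)$ (which follows from Corollary~\ref{cor:10.8} via the identification $\Hom_H(X^T,M)\cong TC^+(M)$) delivers the desired exact sequence. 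The critical verification is $L_M=0\Leftrightarrow K=0$: the reverse implication is immediate from $L_M\subseteq K^*$; conversely, writing $P_1=\bigoplus_k He_{j_k}$, if $K\ne 0$ then some projection $\pi_k\colon P_1\to He_{j_k}$ is nonzero on $K$, and composing with the canonical inclusion $He_{j_k}\hookrightarrow H$ yields an element of $P_1^*$ whose restriction to $K$ is nonzero. By Proposition~\ref{vpdim}, $K=0$ is equivalent to $M\in\rep_\vp(H)$.

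The implication (iii)$\Rightarrow$(i) is handled by the dual argument: start from a minimal injective copresentation $0\to M\to I^0\xrightarrow{g} I^1$, identify $\tau^-(M)=\Coker(\nu_H^{-1}I^0\to\nu_H^{-1}I^1)$ where $\nu_H^{-1}=\Hom_H(\Du H,-)$, combine with $TC^-(M)\cong\Ext^1_H(\Du H,M)$ from Corollary~\ref{cor:10.8}, and extend the copresentation by the next term $I^2$ of the minimal injective resolution of $M$ to obtain a natural short exact sequence
\[
0\to TC^-(M)\to \tau^-(M)\to L_M'\to 0,
\]
where $L_M'=\Ima(\nu_H^{-1}I^1\to\nu_H^{-1}I^2)\subseteq\nu_H^{-1}I^2$. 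Minimality forces $L_M'=0\Leftrightarrow I^2=0\Leftrightarrow\idim(M)\le 1$, again equivalent to $M\in\rep_\vp(H)$ by Proposition~\ref{vpdim}; a dimension count then concludes. The main obstacle I anticipate is simply the careful bookkeeping of the various identifications ($\tau=\Du\operatorname{Tr}$ and its dual version for $\tau^-$, together with the interaction of the Nakayama functor $\nu_H$ with minimal (co)resolutions), all of which are routine individually but must be tracked with care so that the two short exact sequences above appear naturally from the projective/injective (co)presentation data.
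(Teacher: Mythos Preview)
Your proof is correct and takes a genuinely different route from the paper's. The paper argues (ii)$\Rightarrow$(i) by embedding $M$ into a locally free module $N$ (e.g.\ its injective envelope), using left-exactness of $TC^+$ to get a monomorphism $TC^+(M)\hookrightarrow TC^+(N)$, and then invoking Proposition~\ref{prop:12.1} (the equivalence $\underline{\rep}_\vp(H)\to\ov{\cF}(H)$) to conclude that $\Hom_H(\Du(H),\tau(M))=0$ forces $M$ to be locally free. Your argument is instead a direct homological computation: you unpack $\tau(M)=\Du\operatorname{Tr}M$ from a minimal projective presentation, compare it termwise with $TC^+(M)\cong\Du\Ext^1_H(M,H)$ (valid for all $M$ by Corollary~\ref{cor:10.8}), and exhibit a canonical short exact sequence whose left term $\Du L_M$ measures exactly the failure of $\pdim(M)\le 1$.

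Your approach is more elementary---it avoids Proposition~\ref{prop:12.1} and the embedding trick---and in fact yields more: a natural short exact sequence $0\to\Du L_M\to\tau(M)\to TC^+(M)\to 0$ for \emph{every} $M$, identifying the obstruction module explicitly. The paper's approach, by contrast, is softer and more categorical, leaning on the functorial description of $\rep_\vp(H)$ already in place. Both are clean; yours has the virtue of being self-contained once Corollary~\ref{cor:10.8} is known.
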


\begin{proof}
By Theorem~\ref{thm:functoriso}(b) we know that (i) implies (ii).
Now suppose that (ii) holds.
Let $f\df M \to N$ be a monomorphism from $M$ to a locally free
$H$-module $N$.
(For example we could just take the injective envelope of $M$.)
Since $TC^+$ is a left exact functor we get an exact sequence
\[
0 \to TC^+(M) \to TC^+(N).
\]
By Theorem~\ref{thm:functoriso}(b) and assumption (ii) we get an
exact sequence 
\[
0 \to \tau(M) \to \tau(N).
\]
By Proposition~\ref{prop:12.1} we have $\Hom_H(\Du(H),\tau(N)) = 0$,
since $N$ is locally free.
Applying $\Hom_H(\Du(H),-)$ to
the exact sequence above gives $\Hom_H(\Du(H),\tau(M)) = 0$.
Again by Proposition~\ref{prop:12.1} this implies that $M$ is locally free.
The equivalence of (i) and (iii) is proved dually.
\end{proof}

\subsection{Finite type classification for $\tau$-locally free modules}
\label{sec:Dynkinroots}

\begin{Thm}\label{Dynkinroots}
For a Cartan matrix $C$ of Dynkin type 
the following hold:
\begin{itemize}

\item[(i)]
The map 
$M \mapsto \rkv(M)$ 
yields a bijection between the set of isomorphism classes of 
$\tau$-locally free $H$-modules
and the set $\Delta^+(C)$ of positive roots of the Lie
algebra associated with $C$.

\item[(ii)]
For an indecomposable $H$-module $M$ the following are equivalent:
\begin{itemize}

\item[(a)]
$M$ is preprojective;

\item[(b)]
$M$ is preinjective;

\item[(c)]
$M$ is $\tau$-locally free;

\item[(d)]
$M$ is locally free and rigid.

\end{itemize}
\end{itemize}
\end{Thm}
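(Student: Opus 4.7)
The plan is to establish the four-way equivalence (ii) first, then deduce the bijection (i) from it using Lemma~\ref{lem:ct2} together with Lemmas~\ref{dimP_k} and~\ref{dimI_k}.

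\textbf{For (ii):} Proposition~\ref{prop:12.9}(i) already yields (a) or (b) $\Rightarrow$ (c),(d), and Proposition~\ref{prop:12.5} yields (d) $\Rightarrow$ (c). The main step is (c) $\Rightarrow$ (a) (the implication (c) $\Rightarrow$ (b) being completely dual via $\tau^-$ and $c^-$). Assume $M$ is indecomposable and $\tau$-locally free, and suppose for contradiction that $\tau^k(M) \neq 0$ for all $k \geq 0$. Iterating Proposition~\ref{prop:12.8}(i) and invoking the identification $\Phi_H \leftrightarrow c^+$ from Section~\ref{subsec-Coxeter}, we obtain $\rkv(\tau^k(M)) = (c^+)^k \rkv(M) \in \N^n \setminus \{0\}$ for every $k \geq 0$. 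But in Dynkin type $c^+$ has finite order $h$ in the finite Weyl group $W(C)$, and by the classical description of Coxeter elements (eigenvalues $e^{2\pi i m_j/h}$ with exponents $1 \leq m_j < h$) it fixes no nonzero vector in the root lattice. Hence $\sum_{k=0}^{h-1} (c^+)^k \rkv(M)$ is $c^+$-invariant, therefore zero, which is impossible as a sum of nonzero vectors in $\N^n$. So $\tau^N(M) = 0$ for some minimal $N \geq 0$; then $\tau^{N-1}(M)$ is an indecomposable module with zero $\tau$-image, hence projective by standard Auslander--Reiten theory, and $M$ is preprojective.

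\textbf{For (i):} Using (ii), the indecomposable $\tau$-locally free modules are precisely the preprojectives $\tau^{-r}(P_i)$ with $i \in Q_0$ and $r \geq 0$. Well-definedness of $\rkv$ into $\Delta^+(C) = \Delta_\real^+(C)$ and injectivity are given by Proposition~\ref{prop:12.9}(ii) and (iii). For surjectivity, let $d \in \Delta^+(C)$. By Lemma~\ref{lem:ct2}, write $d = (c^+)^{-r}\beta_i$ with $0 \leq r \leq p_i - 1$ and set $M := \tau^{-r}(P_i)$. To check $M \neq 0$, assume $\tau^{-s}(P_i) = 0$ for some minimal $s \leq r$; then $\tau^{-(s-1)}(P_i)$ is an indecomposable injective, hence of the form $I_j$, and Proposition~\ref{prop:12.8}(i) together with Lemma~\ref{dimP_k} yields $\gamma_j = \rkv(I_j) = (c^+)^{-(s-1)}\beta_i$. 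Using the identity $c^+(\beta_j) = -\gamma_j$ from Section~\ref{sec:ct} we get $(c^+)^{-s}\beta_i = -\beta_j \notin \N^n$, contradicting the minimality of $p_i$ since $s \leq r < p_i$. Hence $M$ is nonzero, is preprojective by definition, and $\rkv(M) = (c^+)^{-r}\beta_i = d$ by iterated Proposition~\ref{prop:12.8}(i) and Lemma~\ref{dimP_k}.

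\textbf{The main obstacle} is the Coxeter fixed-point argument used in the step (c) $\Rightarrow$ (a): it hinges on the classical fact that the Coxeter element of a finite Weyl group has no nonzero fixed vector on the root lattice. Once this is granted (and combined with Proposition~\ref{prop:12.8}, which controls how $\rkv$ behaves under $\tau$), the rest of the proof is essentially bookkeeping with the explicit action of $c^+$ on $\beta_i$ and $\gamma_j$.
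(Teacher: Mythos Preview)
Your proof is correct. The main substantive difference from the paper is the argument for (c)~$\Rightarrow$~(a). You use the classical combinatorial fact that a Coxeter element of a finite Weyl group has no nonzero fixed vector, so that $\sum_{k=0}^{h-1}(c^+)^k\rkv(M)=0$ forces some $\tau^k(M)$ to vanish. The paper instead argues homologically: it picks an injective $I_i$ with $\Hom_H(M,I_i)\neq 0$, uses that in Dynkin type every $I_i$ is of the form $\tau^{-k}(P_j)$, and then derives $\Hom_H(\tau^{-}(\tau^{k+1}(M)),P_j)\neq 0$, contradicting Corollary~\ref{cor:12.3}. Your route is closer in spirit to the original BGP/Dlab--Ringel arguments and avoids invoking the characterization of $\rep_\vp(H)$ via vanishing conditions; the paper's route stays entirely inside the homological framework developed in Section~\ref{sec12} and does not appeal to properties of Coxeter elements beyond Lemma~\ref{lem:ct2}. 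For part~(i) the paper simply observes that Lemma~\ref{lem:ct2} together with $\rkv(P_{i_k})=\beta_{\mathbf{i},k}$ shows the preprojectives exhaust $\Delta^+(C)$ (and simultaneously that preprojectives and preinjectives coincide), then invokes Proposition~\ref{prop:12.9}; your surjectivity argument is essentially the same content, spelled out more explicitly.
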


\begin{proof}
Let $\bi = (i_1,\ldots,i_n)$ be a $+$-admissible sequence for $(C,\Omega)$.
We have
$\rkv(P_{i_k}) = \beta_{\bi,k}$ for $1 \le k \le n$, compare Section~\ref{sec3.3}.
Since $C$ is of Dynkin type, we get all elements of $\Delta^+(C)$ by
applying the Coxeter transformation $c^{-s}$ to the $\beta_{\bi,k}$ with $s \ge 0$ and $1 \le k \le n$, compare Lemma~\ref{lem:ct2}.
In particular,
the preprojective $H$-modules and the preinjective $H$-modules coincide.
Now
Proposition~\ref{prop:12.9} 
implies part (i).
We also get that (a) and (b) in part (ii) are equivalent, and that
(a) and (b) implies (c) and (d).
By Proposition~\ref{prop:12.5} we know that (d) implies (c).
Now let
$M \in \rep(H)$ be 
$\tau$-locally free.
Then there exists an injective $H$-module $I_i$ with
$\Hom_H(M,I_i) \not= 0$.
Since $C$ is of Dynkin type, we know that $P_j = \tau^k(I_i)$ for
some $k \ge 0$ and some $1 \le j \le n$.
If $\tau^s(M) = 0$ for some $s \ge 0$, then $M$ is preprojective and we are done.
Thus assume that
$\tau^s(M) \not= 0$ for all $s \ge 0$. 
Then we have 
$
\Hom_H(M,I_i) 
\cong \Hom_H(\tau^k(M),\tau^k(I_i)) = \Hom_H(\tau^k(M),P_j) \not= 0.
$
(Note that all modules appearing here have projective and injective
dimension at most one. 
Thus the stable homomorphism spaces are equal to the ordinary
homomorphism spaces.)
It follows that $\Hom_H(\tau^-(\tau^{k+1}(M)),P_j) \not= 0$,
a contradiction to Corollary~\ref{cor:12.3}.
This finishes the proof.
\end{proof}

Combining the results in Section~\ref{sec11.1}, 
Theorem~\ref{Dynkinroots} and Lemmas~\ref{lem:ct1} and \ref{lem:ct2}
we get the following result.

\begin{Thm}
There are finitely many isomorphism classes of $\tau$-locally free
$H$-modules if and only if $C$ is of Dynkin type.
\end{Thm}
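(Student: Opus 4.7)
The plan is to note that this theorem is essentially a packaging of Theorem~\ref{Dynkinroots} together with the orbit analysis of the Coxeter transformation in Section~\ref{sec:ct}.

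For the ``if'' direction, assume $C$ is of Dynkin type. Then Theorem~\ref{Dynkinroots}(i) gives a bijection between isomorphism classes of $\tau$-locally free $H$-modules and $\Delta^+(C)$. Since $C$ is of Dynkin type, $|\Delta^+(C)|<\infty$, so we are done.

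For the ``only if'' direction, I argue the contrapositive: assuming $C$ is not of Dynkin type, I exhibit an infinite family of pairwise non-isomorphic $\tau$-locally free $H$-modules. Fix a $+$-admissible sequence $\bi=(i_1,\ldots,i_n)$ for $(C,\Omega)$, and without loss of generality assume $i_k=k$. Pick any vertex $i\in Q_0$. By Proposition~\ref{resolutionE}, the indecomposable projective $P_i$ is locally free; being projective it is rigid, so by Proposition~\ref{prop:12.5} it is $\tau$-locally free. The proposed family is
\[
\{\tau^{-r}(P_i) \mid r\ge 0\},
\]
which consists of preprojective $H$-modules (by definition), hence $\tau$-locally free by Proposition~\ref{prop:12.9}(i).

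Two things must be checked: (a) each $\tau^{-r}(P_i)$ is non-zero, and (b) the modules are pairwise non-isomorphic. For (b), combine Lemma~\ref{dimP_k} (giving $\rkv(P_i)=\beta_{\bi,i}$) with Proposition~\ref{prop:12.8}(i) and the identification of $\Phi_H$ with $c^+$ on the basis $(\dimv(E_k))_k$ from Section~\ref{subsec-Coxeter} to get
\[
\rkv(\tau^{-r}(P_i))=c^{-r}(\beta_{\bi,i})
\]
whenever $\tau^{-r}(P_i)\neq 0$. By Lemma~\ref{lem:ct1}, applied in the non-Dynkin setting, the elements $c^{-r}(\beta_{\bi,i})$ for $r\ge 0$ are pairwise distinct, so the modules have pairwise distinct rank vectors (in particular, pairwise distinct dimension vectors), and are therefore pairwise non-isomorphic.

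The main obstacle, and where the non-Dynkin hypothesis is really used, is (a). I argue by induction on $r$. Suppose $\tau^{-r}(P_i)=0$ for some minimal $r\ge 1$; then $M:=\tau^{-(r-1)}(P_i)$ is a non-zero indecomposable module with $\tau^{-}(M)=0$, so $M$ is injective, say $M\cong I_j$ for some $j$. By Lemma~\ref{dimI_k} one has $\rkv(I_j)=\gamma_{\bi,j}$, while by the computation above $\rkv(M)=c^{-(r-1)}(\beta_{\bi,i})$. Thus
\[
c^{-(r-1)}(\beta_{\bi,i})=\gamma_{\bi,j}=c^{0}(\gamma_{\bi,j}),
\]
which directly contradicts the pairwise distinctness asserted by Lemma~\ref{lem:ct1} for $C$ not of Dynkin type. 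Hence all $\tau^{-r}(P_i)$ are non-zero, completing the proof of the existence of infinitely many pairwise non-isomorphic $\tau$-locally free $H$-modules.
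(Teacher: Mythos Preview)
Your proof is correct and follows essentially the same approach as the paper, which simply writes ``Combining the results in Section~\ref{sec11.1}, Theorem~\ref{Dynkinroots} and Lemmas~\ref{lem:ct1} and \ref{lem:ct2}.'' You have spelled out precisely how these ingredients combine: Theorem~\ref{Dynkinroots} handles the Dynkin direction, and for the converse the preprojective modules $\tau^{-r}(P_i)$ give infinitely many $\tau$-locally free modules with pairwise distinct rank vectors $c^{-r}(\beta_{\bi,i})$ by Lemma~\ref{lem:ct1}. One small remark: when $C$ is not connected, the phrase ``pick any vertex $i\in Q_0$'' should be tightened to choosing $i$ in a non-Dynkin connected component, since otherwise the orbit of $\beta_{\bi,i}$ under the Coxeter transformation could be finite; this is consistent with how Lemma~\ref{lem:ct1} is meant to be applied.
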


\subsection{The algebra $\Pi$ as a module over $H$}\label{sec:10.3}
Let $\Pi = \Pi(C,D)$.

\begin{Thm}\label{thm:preproj}
${_H}\Pi \cong \bigoplus_{m \ge 0} \tau^{-m}({_H}H)$.
\end{Thm}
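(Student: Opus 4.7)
The plan is to leverage the description of $\Pi$ as a tensor algebra from Corollary~\ref{cor:10.7} together with the functorial isomorphism $X^T \otimes_H - \cong \tau^-(-)$ on $\rep_\vp(H)$ from Theorem~\ref{thm:functoriso}(d). By Corollary~\ref{cor:10.7}, $\Pi \cong T_H(X^T)$ as $K$-algebras, so as a left $H$-module
\[
{_H}\Pi \;\cong\; \bigoplus_{m \ge 0} (X^T)^{\otimes_H m},
\]
with the convention $(X^T)^{\otimes_H 0} := {_H}H_H$. It therefore suffices to exhibit, for each $m \ge 0$, a left-$H$-module isomorphism $(X^T)^{\otimes_H m} \cong \tau^{-m}({_H}H)$.

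Using the canonical identification $(X^T)^{\otimes_H m} \cong (X^T)^{\otimes_H m} \otimes_H H$ of left $H$-modules, I would proceed by induction on $m$, the case $m=0$ being tautological. For the inductive step, associativity of tensor products gives
\[
(X^T)^{\otimes_H (m+1)} \otimes_H H \;\cong\; X^T \otimes_H \bigl((X^T)^{\otimes_H m} \otimes_H H\bigr) \;\cong\; X^T \otimes_H \tau^{-m}({_H}H),
\]
where the second isomorphism is the inductive hypothesis. To conclude that the right-hand side is isomorphic to $\tau^{-(m+1)}({_H}H)$ via Theorem~\ref{thm:functoriso}(d), one must check that $\tau^{-m}({_H}H) \in \rep_\vp(H)$.

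This is the one technical point to verify. Writing ${_H}H \cong \bigoplus_{i=1}^n P_i$, we have $\tau^{-m}({_H}H) \cong \bigoplus_{i=1}^n \tau^{-m}(P_i)$, and each nonzero indecomposable summand is preprojective by the very definition of preprojective module. By Proposition~\ref{prop:12.9}(i), every preprojective $H$-module is $\tau$-locally free; in particular it is locally free. Hence $\tau^{-m}({_H}H) \in \rep_\vp(H)$, Theorem~\ref{thm:functoriso}(d) applies, and the induction closes.

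The main obstacle is really only the bookkeeping around the iterated identifications: the $H$-$H$-bimodule description of $\Pi$, the repeated application of $X^T \otimes_H -$, and—crucially—the fact that $\rep_\vp(H)$ is preserved along the orbit $\{\tau^{-m}({_H}H)\}_{m \ge 0}$, which is precisely what allows Theorem~\ref{thm:functoriso}(d) to be used at every step rather than only once. As a byproduct, the finite-dimensionality dichotomy stated after the theorem is immediate: ${_H}\Pi$ is finite-dimensional if and only if only finitely many $\tau^{-m}(P_i)$ are nonzero, which by Theorem~\ref{Dynkinroots} happens exactly when $C$ is of Dynkin type.
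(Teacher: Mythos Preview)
Your proposal is correct and essentially the same as the paper's proof: both use the tensor-algebra description of $\Pi$ from Corollary~\ref{cor:10.7}, invoke Proposition~\ref{prop:12.9} to ensure $\tau^{-m}({_H}H)$ stays locally free, and then iterate the identification of $\tau^-$ with tensoring by the relevant bimodule. The only cosmetic difference is that the paper phrases the bimodule as $\Ext_H^1(\Du(H),H)$ rather than $X^T$, but these are identified in Theorem~\ref{thm:10.6}.
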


\begin{proof}
By Proposition~\ref{prop:12.9} we know that
$\tau^{-m}(H)$ is locally free
for all $m \ge 0$. 
Thus we have $\tau^-(H) \cong \Ext_H^1(\Du(H),H)$ and
\begin{align*}
\tau^-(\tau^{-(m-1)}(H)) &\cong 
\Ext_H^1(\Du(H),\tau^{-(m-1)}(H)) \\
&\cong
\Ext_H^1(\Du(H),H) \otimes_H \tau^{-(m-1)}(H)\\
&\cong \Ext_H^1(\Du(H),H) \otimes_H \Ext_H^1(\Du(H),H)^{\otimes(m-1)}
\end{align*}
where the last isomorphism follows by induction.
By Corollary~\ref{cor:10.7} we know that $\Pi \cong T_H(\Ext_H^1(\Du(H),H))$.
The result follows.
\end{proof}

\begin{Cor} \label{Cor:Pi-fd}  
$\Pi$ is finite-dimensional if and only if
$C$ is of Dynkin type.
\end{Cor}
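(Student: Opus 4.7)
The plan is to leverage Theorem~\ref{thm:preproj}, which gives ${_H}\Pi \cong \bigoplus_{m \ge 0} \tau^{-m}({_H}H)$ as left $H$-modules. Since $H$ is finite-dimensional, $\dim_K \Pi = \sum_{m \ge 0} \dim_K \tau^{-m}(H)$ is finite if and only if $\tau^{-m}(H) = 0$ for all sufficiently large $m$, which in turn is equivalent to asking that there exist only finitely many isomorphism classes of preprojective indecomposable $H$-modules (each such being a summand of $\tau^{-m}(H)$ for some $m\ge 0$).

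First I would handle the Dynkin case. In the proof of Theorem~\ref{thm:preproj} it was already noted that each $\tau^{-m}(H)$ is locally free, and hence its indecomposable direct summands, being preprojective by construction, are $\tau$-locally free. By Theorem~\ref{Dynkinroots}, when $C$ is Dynkin there are only finitely many isomorphism classes of $\tau$-locally free $H$-modules, so for each indecomposable projective $P_i$ there is a maximal integer $k_i \ge 0$ with $\tau^{-k_i}(P_i)\neq 0$; taking $N := \max_i k_i$ we get $\tau^{-m}(H) = 0$ for all $m > N$, and the sum is finite.

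Conversely, assume $C$ is not of Dynkin type; I would show that for every vertex $i$ and every $r\ge 0$ one has $\tau^{-r}(P_i)\neq 0$. For this, fix a $+$-admissible sequence $\bi=(i_1,\ldots,i_n)$; by Lemma~\ref{dimP_k} we have $\rkv(P_{i_k}) = \beta_{\bi,k}$. Since preprojective modules are $\tau$-locally free (Proposition~\ref{prop:12.9}), Proposition~\ref{prop:12.8}(i) together with the identification of $\Phi_H$ with the Coxeter transformation $c^+$ (Section~\ref{subsec-Coxeter}) gives, inductively on $r$, that $\rkv(\tau^{-r}(P_{i_k})) = c^{-r}(\beta_{\bi,k})$ whenever this module is nonzero. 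By Lemma~\ref{lem:ct1}, the vectors $c^{-r}(\beta_{\bi,k})$ with $r\ge 0$ and $1\le k\le n$ are pairwise distinct elements of $\Delta^+_\real(C) \subset \N^n\setminus\{0\}$, so none of the intermediate modules can vanish. This produces infinitely many pairwise non-isomorphic nonzero summands in $\bigoplus_{m \ge 0} \tau^{-m}(H)$, forcing $\Pi$ to be infinite-dimensional.

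The only mildly delicate point is making the induction $\rkv(\tau^{-r}(P_{i_k})) = c^{-r}(\beta_{\bi,k})$ airtight: one must know at each step that the previous term is locally free (so that Proposition~\ref{prop:12.8} applies) and nonzero (so that the rank vector continues to be a positive root, hence not accidentally zero). Both facts are already in hand, since Proposition~\ref{prop:12.9} guarantees preprojective modules are $\tau$-locally free, and Lemma~\ref{lem:ct1} ensures the predicted rank vectors lie in $\N^n\setminus\{0\}$, so the induction carries through without difficulty.
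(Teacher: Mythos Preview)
Your overall strategy matches the paper's: reduce via Theorem~\ref{thm:preproj} to the question of whether there are only finitely many preprojective indecomposables, and settle this using the root combinatorics. The Dynkin direction is fine.

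In the non-Dynkin direction, however, your induction has a genuine gap. You argue that since the predicted rank vector $c^{-r}(\beta_{\bi,k})$ lies in $\N^n\setminus\{0\}$, the module $\tau^{-r}(P_{i_k})$ cannot vanish. But Proposition~\ref{prop:12.8}(i) only asserts that $\rkv(\tau^{-r}(P_{i_k})) = c^{-r}(\beta_{\bi,k})$ \emph{under the hypothesis} that $\tau^{-r}(P_{i_k}) \neq 0$; the positivity of the predicted vector does not by itself force the module to be nonzero, so the step ``predicted rank vector in $\N^n\setminus\{0\}$, hence module nonzero'' is circular.

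The repair uses more of Lemma~\ref{lem:ct1} than you extract: not only are the $c^{-r}(\beta_{\bi,k})$ pairwise distinct positive roots, they are also distinct from every $\gamma_{\bi,l}$. Suppose $r_0 \ge 1$ is minimal with $\tau^{-r_0}(P_{i_k}) = 0$. Then $M := \tau^{-(r_0-1)}(P_{i_k})$ is indecomposable and nonzero with $\tau^-(M) = 0$, hence $M$ is injective, say $M \cong I_{i_l}$. By the valid part of your induction, $\rkv(M) = c^{-(r_0-1)}(\beta_{\bi,k})$, while Lemma~\ref{dimI_k} gives $\rkv(I_{i_l}) = \gamma_{\bi,l}$. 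Thus $c^{-(r_0-1)}(\beta_{\bi,k}) = \gamma_{\bi,l}$, contradicting Lemma~\ref{lem:ct1}. With this correction the argument goes through.
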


\begin{proof}
This follows directly from Theorems~\ref{Dynkinroots} and 
\ref{thm:preproj} and
the fact that $\Delta^+(C)$ is finite if $C$ is of Dynkin type.
\end{proof}

\subsection{Regular components of the Auslander-Reiten quiver}
A connected component $\cC$ of the Auslander-Reiten quiver
of $H$ is \emph{$\tau$-locally free regular} if it consists only of $\tau$-locally free regular modules.
(A connected component of the Auslander-Reiten quiver
of a finite-dimensional algebra is called \emph{regular} if it 
consists only of regular modules.)

\begin{Prop}\label{prop:12.6}
For a connected component $\cC$ of the Auslander-Reiten quiver of $H$
the following are equivalent:
\begin{itemize}

\item[(i)]
$\cC$ contains a $\tau$-locally free regular module;

\item[(ii)]
$\cC$ is $\tau$-locally free regular.

\end{itemize}
\end{Prop}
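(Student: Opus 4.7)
\textit{Plan.} The implication (ii)$\Rightarrow$(i) is trivial. For (i)$\Rightarrow$(ii), assume $\cC$ contains a regular module. If $C$ is of Dynkin type, Theorem~\ref{Dynkinroots} implies that there are no regular $H$-modules at all and the statement is vacuous; so I may assume $C$ is connected and not of Dynkin type, which makes the Hom-vanishing lemma stated just before this proposition available.

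Since $\cC$ is connected, it suffices by induction on the distance in $\cC$ to establish the following propagation statement: if $L \in \cC$ is regular and $L' \in \cC$ is indecomposable with an irreducible morphism $L \to L'$ (or dually $L' \to L$), then $L'$ is regular. I treat the case $L \to L'$; the case $L' \to L$ is entirely analogous.

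The heart of the argument is to rule out that $L'$ is preprojective or preinjective. The non-zero morphism $L \to L'$ together with the preceding Hom-vanishing lemma (applied to $L$ regular, $L'$ hypothetically preprojective) shows that $L'$ cannot be preprojective; in particular $L'$ is non-projective. Hence the AR-sequence $0 \to \tau(L') \to E' \to L' \to 0$ exists, and $L$ appears as a direct summand of $E'$, producing a non-zero irreducible morphism $\tau(L') \to L$. A second application of the Hom-vanishing lemma (to this new morphism, with $L$ regular) gives that $\tau(L')$ is not preinjective; since the preinjective class is $\tau$-stable, $L'$ itself is not preinjective. For local freeness, the AR-sequence $0 \to L \to E \to \tau^{-}(L) \to 0$ (which exists because the regular module $L$ is non-injective) has $L'$ as a summand of $E$; as both $L$ and $\tau^{-}(L)$ are locally free, Lemma~\ref{vpclosed} yields that $E$, and hence $L'$, is locally free.

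It remains to upgrade local freeness of $L'$ to $\tau$-local freeness. Since $L'$ is indecomposable non-zero and neither preprojective nor preinjective, every translate $\tau^k(L')$ with $k \in \Z$ is defined, non-zero, indecomposable, and still neither preprojective nor preinjective (these two classes are stable under $\tau^{\pm 1}$ whenever the translate is defined and non-zero). In particular $\tau^{k+1}(L')$ is non-zero and not isomorphic to any indecomposable injective $I_i$, so Corollary~\ref{cor:12.2} shows that $\tau^k(L')$ is locally free for every $k \in \Z$. Hence $L'$ is regular. The delicate point of the proof is the double use of the Hom-vanishing lemma required to rule out both preprojectivity and preinjectivity from a single arrow; this is made possible by invoking the dual arrow $\tau(L') \to L$ coming from the AR-sequence.
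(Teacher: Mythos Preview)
Your argument has a genuine gap in the final step (upgrading local freeness of $L'$ to $\tau$-local freeness). You claim that since $\tau^{k+1}(L')$ is non-zero and not isomorphic to any $I_i$, Corollary~\ref{cor:12.2} gives that $\tau^k(L')$ is locally free. But Corollary~\ref{cor:12.2}(iii) requires $\Hom_H(\Du(H),\tau^{k+1}(L'))=0$, i.e.\ that no non-zero map exists from any injective into $\tau^{k+1}(L')$; this is strictly stronger than $\tau^{k+1}(L')$ not being injective. In fact, since $\tau^k(L')$ is non-projective, $\tau^{k+1}(L')$ is automatically non-injective, so your observation carries no information. At this stage you only know that $\tau^{k+1}(L')$ is indecomposable, non-zero, and neither preprojective nor preinjective; but such a module need not even be locally free, and the Hom-vanishing lemma you invoked earlier applies only to modules already known to be regular, so it cannot be used here.

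The paper's proof avoids this by staying adjacent to the regular $\tau$-orbit at every step rather than just the first. Since $\tau^k(M)$ and $\tau^{k+1}(M)$ are locally free for all $k$, the middle term of each Auslander--Reiten sequence $0\to\tau^{k+1}(M)\to\tau^k(E)\to\tau^k(M)\to 0$ is locally free by Lemma~\ref{vpclosed}, so every indecomposable summand $N$ of $E$ has $\tau^k(N)$ locally free for all $k$. Your use of the Hom-vanishing lemma to rule out preprojectivity and preinjectivity of $L'$ is correct, but it cannot by itself deliver $\tau$-local freeness; for that one must keep $L'$ tethered to the regular module $L$ under all $\tau$-translates, which is precisely the mechanism of the paper's proof.
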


\begin{proof}
Trivially, (ii) implies (i).
For the other direction assume that $M$ is a $\tau$-locally free regular
module in $\cC$.
Let $0 \to \tau(M) \to E \to M \to 0$ be the Auslander-Reiten sequence
ending in $M$.
Applying $\tau^k(-)$ yields again an Auslander-Reiten sequence
\[
0 \to \tau^{k+1}(M) \to \tau^k(E) \to \tau^k(M) \to 0
\]
for each $k \in \Z$.
Here we used that $\tau^{k+1}(M)$ and $\tau^k(M)$ and therefore also $\tau^k(E)$ 
have projective and
injective dimension equal to $1$.
It follows that $\tau^k(N)$ is locally free for each indecomposable
direct summand $N$ of $E$.
Now (ii) follows by induction.
\end{proof}

Let $\cC$ be a connected component of the Auslander-Reiten
quiver of $H$.
Suppose $\cC$ contains an indecomposable projective module $P_i$
with $c_i \ge 2$.
Then $\rad(P_i)$ is obviously not locally free.
Thus $\rad(P_i)$ contains an indecomposable direct summand
$R$, which is not locally free.
Since the inclusion $\rad(P_i) \to P_i$ is a sink map, there
is an arrow $[R] \to [P_i]$ in the Auslander-Reiten quiver of $H$.
Thus $\cC$ contains a module, which is not locally free.

Ringel \cite{Ri2} proved that the regular components of the Auslander-Reiten
quiver of a wild hereditary algebra are always of type $\Z A_\infty$.
An alternative proof is due to Crawley-Boevey \cite[Section~2]{CB1} and 
can easily be adapted to obtain the following theorem.

\begin{Thm}
Assume that $C$ is connected and neither of Dynkin nor of
Euclidean type. 
Let $\cC$ be a $\tau$-locally free regular component of the Auslander-Reiten quiver of $H$.
Then $\cC$ is a component of type $\Z A_\infty$.
\end{Thm}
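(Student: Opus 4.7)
\emph{Plan.} The strategy is to adapt Crawley-Boevey's argument for the wild hereditary case \cite[Section~2]{CB1} to our $1$-Iwanaga-Gorenstein setting, exploiting the fact that on a regular component everything behaves as though we were over a hereditary algebra. I would proceed in three steps.

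\emph{Step 1 (shape theorem and positive additive function).} First, by Proposition~\ref{prop:12.6}, every module in $\cC$ is $\tau$-locally free, so $\cC$ contains no projective and no injective $H$-module (preprojectives and preinjectives lie outside any regular component by definition). Consequently the Riedtmann/Liu--Zhang structure theorem applies and $\cC$ is isomorphic, as a stable translation quiver, to $\Z\Delta/G$ for some connected valued tree $\Delta$ (the tree class) and an admissible group $G$. Second, because the middle term of any AR-sequence in $\cC$ is again locally free by Lemma~\ref{vpclosed}, and rank vectors add along short exact sequences of locally free modules, the assignment $M\mapsto\rkv(M)$ descends to a strictly positive additive function on $\Z\Delta/G$. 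By the classical theorem of Happel--Preiser--Ringel, $\Delta$ must be a finite Dynkin, a Euclidean, or an infinite Dynkin diagram ($A_\infty$, $A_\infty^\infty$, $B_\infty$, $C_\infty$ or $D_\infty$), and $G$ is either trivial or a cyclic group generated by some power of $\tau$.

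\emph{Step 2 (ruling out Dynkin, Euclidean, and non-trivial $G$).} By Proposition~\ref{prop:12.8}(i) together with the description of the Coxeter matrix in Section~\ref{subsec-Coxeter}, we have $\rkv(\tau^k M)=(c^+)^k(\rkv(M))$ for all $k\in\Z$, where $c^+\in W(C)$ is the Coxeter element. Since $C$ is connected and neither Dynkin nor Euclidean, the form $q_C=q_H$ (Corollary~\ref{quadratic3}) is indefinite, and a well-known spectral-radius estimate (A'Campo/Kac) shows that $c^+$ has a real eigenvalue $\lambda>1$ with positive Perron eigenvector in the interior of the Tits cone. In particular, no non-zero positive vector has a bounded $c^+$-orbit, nor can one be a fixed point of any non-trivial power of $c^+$. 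This immediately excludes the Dynkin case ($c^+$ of finite order), the Euclidean case (positive additive functions on $\Z\tilde\Delta/G$ are bounded along $\tau$-orbits, being scalar multiples of the null root), and any non-trivial $G=\langle\tau^p\rangle$ (which would force some $\rkv(M)$ to be fixed by $(c^+)^p$). Hence $G=1$.

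\emph{Step 3 (ruling out the remaining infinite trees).} The main obstacle is to exclude the branched infinite trees $D_\infty$, $A_\infty^\infty$, $B_\infty$, and $C_\infty$ in favour of $A_\infty$. Following the trick of \cite[Section~2]{CB1}, I would localise at a branch vertex of $\Delta$: the mesh identities there yield a linear relation among the rank vectors in a small neighbourhood, and pairing this relation with the symmetric form $(-,-)_H=(-,-)_C$ (which is $c^+$-invariant by Corollary~\ref{quadratic3} together with the classical identity $(c^+ x, c^+ y)_C=(x,y)_C$) yields a bounded quantity expressible in terms of iterates $(c^+)^k\rkv(M)$. Combining this with the exponential growth from Step~2 produces a contradiction unless $\Delta$ has no branch vertex, i.e.\ unless $\Delta=A_\infty$. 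Together with $G=1$ from Step~2, this gives $\cC\cong\Z A_\infty$, completing the proof.
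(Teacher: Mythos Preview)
Your plan is exactly what the paper asks for --- adapt \cite[Section~2]{CB1} --- and since the paper itself gives no further details, the only benchmark is Crawley-Boevey's original argument. Steps~1 and~2 are broadly on the right track, though two remarks are in order. First, $B_\infty$ and $C_\infty$ drop out immediately: since $H/\rad H\cong K^n$, every indecomposable $H$-module has endomorphism residue field $K$, so the Auslander--Reiten quiver carries only symmetric valuations and asymmetric tree classes cannot occur. Second, in Step~2 the inference ``$\rho(c^+)>1$, hence no positive vector has bounded $c^+$-orbit'' needs the additional input that the \emph{left} Perron eigenvector of $c^+$ is also strictly positive (so that every $v>0$ has non-zero leading coefficient); this holds for connected indefinite $C$, but it is a separate fact and not a formal consequence of $\rho>1$.

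The real gap is Step~3. You lump $A_\infty^\infty$ together with the ``branched'' trees and propose to ``localise at a branch vertex'', but $A_\infty^\infty$ has no branch vertex, so your method cannot exclude it as stated. More seriously, the argument you sketch --- pair a local mesh relation with the $W$-invariant form $(-,-)_H$ to produce an unnamed ``bounded quantity'' and set it against exponential growth --- is too vague to be a proof: you never identify the quantity, explain why the pairing bounds it, or say how the contradiction is reached. This is also not the argument in \cite[Section~2]{CB1}. You should return to Crawley-Boevey's notes and carry over his actual exclusion of $D_\infty$ and $A_\infty^\infty$; the adaptation to $H$ is genuinely easy because on a regular component every module is locally free (Proposition~\ref{prop:12.6}), so Proposition~\ref{prop:12.8} and the Euler-form identities of Section~\ref{sec4} supply everything his argument uses.
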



\section{Projective resolutions and Ext-group symmetries of $\Pi$-modules}\label{sec9}


\subsection{Projective resolutions of $\Pi(C,D)$-modules}
Let $H = H(C,D,\Omega)$ and $\Pi = \Pi(C,D)$.  
Let $\Rep_\vp(\Pi)$ denote the category of 
\emph{all} locally free $\Pi$-modules, possibly of infinite rank.
For typographic reasons we use in this section the convention 
$I=\{1,2,\ldots, n\}$. 
Recall that we have defined the $S$-$S$-bimodule
\[
\ov{B} := \bigoplus_{(i,j)\in\ov{\Omega}} {_iH_j}, 
\]
and that we can identify
$\Pi$ and  $T_S(\ov{B})/J$, 
where 
$J$ is the ideal of $T_S(\ov{B})$ which is generated by the elements
$\rho_i\in \ov{B}\otimes_S\ov{B}$ for $i\in I$. 
For the next result we follow closely the ideas from~\cite[Lemma~3.1]{CBSh}.

\begin{Prop}\label{prop:resol1}
There is an exact sequence of $\Pi$-$\Pi$-bimodules
\begin{equation}\label{eq:resol1}
\underbrace{\bigoplus_{i\in I} \Pi e_i \otimes e_i\Pi \xrightarrow{f}
\bigoplus_{(j,i) \in \ov{\Omega}} \Pi e_j \otimes_j {_j}H_i \otimes_i e_i\Pi \xrightarrow{g}
\bigoplus_{i\in I} \Pi e_i \otimes_i e_i\Pi}_{P_\bullet} 
\xrightarrow{h} \Pi \to 0
\end{equation}
where 
\begin{align*}
f(e_i \otimes e_i) &:= \rho_i \otimes e_i + e_i \otimes \rho_i,\\  
g(e_j \otimes h \otimes e_i) &:= he_i \otimes e_i - e_j \otimes e_jh,\\
h(m \otimes m') &:= mm'. 
\end{align*}
\end{Prop}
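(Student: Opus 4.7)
The approach I would follow is a direct adaptation of Crawley-Boevey--Shaw's argument \cite{CBSh} for classical preprojective algebras; the main subtlety is that tensor products must be tracked over the non-semisimple local rings $H_i = K[\vep_i]/(\vep_i^{c_i})$ rather than over the ground field.

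First I verify that $P_\bullet$ is a complex. Surjectivity of $h$ is trivial, and $h\circ g = 0$ is immediate from the formula for $g$. For $g\circ f = 0$, I would substitute $\rho_i = \sum_{j\in\ov\Omega(-,i)}\sum_{b\in{_j}L_i}\sgn(i,j)\,b^*\otimes_j b$ and compute $g(\rho_i\otimes e_i)$ and $g(e_i\otimes\rho_i)$ termwise, using the bimodule linearity $g(p\otimes h\otimes q) = ph\otimes q - p\otimes hq$. The ``middle'' contributions $\pm\sgn(i,j)\,b^*\otimes_j b$ cancel between the two summands, leaving
\[
\Bigl(\sum_{j,b}\sgn(i,j)\,b^*b\Bigr)\otimes e_i \;-\; e_i\otimes\Bigl(\sum_{j,b}\sgn(i,j)\,b^*b\Bigr),
\]
which vanishes because the inner sum is precisely the image of $\rho_i$ in $\Pi$ and hence is zero by relation (P3).

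Next, exactness at $\Pi$ is trivial, and exactness at $P_0$ follows from Schofield's bimodule resolution of the free tensor algebra $T_S(\ov B)$ (compare Proposition~\ref{prop:resolH}) combined with the presentation $\Pi\cong T_S(\ov B)/(\rho_1,\ldots,\rho_n)$: any element of $\ker(h)$ can be lifted to an element of the kernel of multiplication $T_S(\ov B)\otimes_S T_S(\ov B)\to T_S(\ov B)$, and the latter is generated as a bimodule by the commutators $he_i\otimes e_i - e_j\otimes e_jh$ for $h\in e_j\ov B e_i$, which are precisely the generators of $\Ima(g)$.

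The main content is exactness at $P_1$. The strategy is to equip $\Pi$ with the natural $\Z$-grading in which $\deg(\vep_i) = 0$ and every arrow of $\ov Q$ has degree one; the complex $P_\bullet$ then becomes a complex of graded $\Pi$-bimodules, and exactness at $P_1$ can be checked on the associated graded algebra. On the associated graded the resulting complex is a Koszul-type mesh complex in which the relations $\rho_1,\ldots,\rho_n$ appear homogeneously in degree two, and the equality $\Ima(f) = \ker(g)$ reduces to the observation that the defining form of $f$ is, under the trace pairings of Section~\ref{newsec8} and the dual-basis identifications ${_j}L_i\leftrightarrow{_i}R_j$ from Section~\ref{sec5.1}, dual to the defining form of $g$. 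The principal obstacle is the combinatorial bookkeeping of tensor products over the non-semisimple rings $H_i$: the meaning of ``dual basis'' for an element of ${_j}H_i$ depends on whether one regards it with respect to its left or its right $H_\bullet$-module structure, and coordinating the signs and summations so that the full identification $\Ima(f) = \ker(g)$ emerges from the shape of $\rho_i$ is the core of the argument.
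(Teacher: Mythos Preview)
Your verification that $P_\bullet$ is a complex and your treatment of exactness at $\Pi$ and at $P_0$ are essentially aligned with the paper. However, your argument for exactness at $P_1$ has a genuine gap, and the paper's route is both different and shorter.

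The paper does not attempt any Koszul-type or duality argument at $P_1$. Instead it observes that $f$ factors as $f=\mathrm{can}\circ r$, where $r\colon \bigoplus_i \Pi e_i\otimes_i e_i\Pi \twoheadrightarrow J/J^2$ is the bimodule surjection $e_i\otimes e_i\mapsto \overline{\rho_i}$ (with $J\subset T_S(\ov B)$ the ideal generated by the $\rho_i$) and $\mathrm{can}\colon J/J^2\to \Pi\otimes_S\ov B\otimes_S\Pi$ is built from the inclusion of $J$ into the positive-degree tensors. Since $r$ is onto, $\Ima(f)=\Ima(\mathrm{can})$, and the whole proposition reduces to exactness of
\[
J/J^2 \xrightarrow{\ \mathrm{can}\ } \Pi\otimes_S\ov B\otimes_S\Pi \xrightarrow{\ g\ } \Pi\otimes_S\Pi \xrightarrow{\ h\ } \Pi \to 0,
\]
which is a general fact about quotients of tensor algebras due to Schofield \cite[Theorems~10.1, 10.3, 10.5]{Sch}. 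No combinatorics of dual bases and no grading tricks are needed; the argument works uniformly regardless of whether $S$ is semisimple.

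Your proposed route at $P_1$ does not close. First, ``passing to the associated graded'' is vacuous here: $\Pi$ is already graded and the $\rho_i$ are already homogeneous, so nothing simplifies. Second, and more seriously, the observation that $f$ is in some sense dual to $g$ under the trace pairings does \emph{not} imply $\Ima(f)=\ker(g)$. Self-duality of a three-term complex only forces the middle homology to be self-dual, not to vanish; concretely, $\ker(g)/\Ima(f)$ could be any self-dual space. (This duality is precisely what the paper exploits in Lemma~\ref{lem:PoM-N} and Theorem~\ref{thm:9.11} to obtain the $\Ext^1$-symmetry, but that argument \emph{presupposes} the exactness of $P_\bullet$ established here; it cannot be used to prove it.) To make a direct argument at $P_1$ work you would need an independent dimension count or an explicit contracting homotopy, neither of which is available without already knowing the graded structure of $\Pi$.
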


\begin{proof}
Observe first, that the above complex can be written more compactly as
\[
\Pi\otimes_S \Pi\xrightarrow{f} \Pi\otimes_S\otimes\ov{B}\otimes_S\Pi
\xrightarrow{g}\Pi\otimes_S\Pi\xrightarrow{h}\Pi\ra 0
\]
Note that we have a surjective $\Pi$-$\Pi$-bimodule homomorphism  
\[
\bigoplus_{i=1}^n \Pi e_i \otimes_i e_i\Pi \xrightarrow{r} J/J^2
\]
defined by $e_i \otimes e_i \mapsto \rho_i$. Moreover, we have a canonical map
\[
J/J^2 \xrightarrow{{\rm can}} \Pi \otimes_S \overline{B} \otimes_S \Pi
\]
given by $\overline{x} \mapsto \tilde{x}_l \otimes 1 + 1 \otimes \tilde{x}_r$
coming from the compositions of
\[
J \xrightarrow{i_{J,l}} \bigoplus_{k \ge 1}(\ov{B}^{\otimes k} \otimes_S \ov{B})
\xrightarrow{{\rm proj} \otimes \id} \Pi \otimes_S \ov{B}
\]
and
\[
J \xrightarrow{i_{J,r}} \bigoplus_{k \ge 1} (\ov{B} \otimes_S \ov{B}^{\otimes k})
\xrightarrow{\id \otimes {\rm proj}} \ov{B} \otimes_S \Pi,
\]
respectively, where the maps $i_{J,l}$ and $i_{J,r}$ are the obvious inclusions. 
Note that both compositions vanish on $J^2$.
It is easy to see that $f = {\rm can} \circ r$.
Thus we only have to show that the sequence
\[
J/J^2 \xrightarrow{\rm can} \Pi \otimes_S \overline{B} \otimes_S \overline{B}
\xrightarrow{u} \Pi \otimes_S \Pi \xrightarrow{{\rm mult}} \Pi \to 0
\]
where $u(1 \otimes b \otimes 1) := b \otimes 1 - 1 \otimes b$ 
is exact.
This is a special case of a combination of results by Schofield
\cite[Theorems~10.1,10.3,10.5]{Sch}.
\end{proof}

\begin{Cor}
For each $M \in \Rep_\vp(\Pi)$ 
the complex $P_\bullet \otimes_\Pi M$ is the beginning of a projective resolution of $M$. 
\end{Cor}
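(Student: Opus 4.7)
The plan is to apply $-\otimes_\Pi M$ to the exact bimodule sequence~\eqref{eq:resol1} from Proposition~\ref{prop:resol1} and verify that the resulting complex
\[
\bigoplus_{i\in I} \Pi e_i\otimes_i e_iM
\xrightarrow{f\otimes M}
\bigoplus_{(j,i)\in\ov{\Omega}} \Pi e_j\otimes_j {_j}H_i\otimes_i e_iM
\xrightarrow{g\otimes M}
\bigoplus_{i\in I} \Pi e_i\otimes_i e_iM
\xrightarrow{h\otimes M} M\to 0,
\]
obtained after the identification $e_i\Pi\otimes_\Pi M = e_iM$, is exact with projective left $\Pi$-modules in the first three positions.

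First I would establish projectivity of the three left-hand terms. Since $M$ is locally free, $e_iM\cong H_i^{r_i}$ is a free $H_i$-module, and therefore $\Pi e_i\otimes_i e_iM\cong (\Pi e_i)^{r_i}$ is a direct sum of the indecomposable projectives $\Pi e_i$. For the middle term, ${_j}H_i$ is free as a left $H_j$-module by Section~\ref{sec5.1}, so ${_j}H_i\otimes_i e_iM$ is a free $H_j$-module, making $\Pi e_j\otimes_j({_j}H_i\otimes_i e_iM)$ a direct sum of copies of $\Pi e_j$.

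For exactness, my strategy is to show that~\eqref{eq:resol1} is in fact a flat resolution of $\Pi$ as a \emph{right} $\Pi$-module. Once this is granted, tensoring over $\Pi$ with $M$ preserves exactness, because the homology of the tensored complex computes ${\rm Tor}^\Pi_*(\Pi,M)$, which vanishes in positive degrees. By the symmetric reasoning used above, right-flatness of each component reduces to showing that $\Pi e_i$ is free as a right $H_i$-module (right-freeness of ${_j}H_i$ over $H_i$ is already in Section~\ref{sec5.1}). For this, Theorem~\ref{thm:preproj} gives ${_H}\Pi\cong\bigoplus_{m\ge 0}\tau^{-m}({_H}H)$ as a direct sum of locally free $H$-modules, so that $e_i\Pi$ is a free \emph{left} $H_i$-module. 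I would then transport this freeness to the right side via the anti-automorphism $\sigma\df\Pi\to\Pi$ defined on generators by $\sigma(\vep_i)=\vep_i$ and $\sigma(\alpha_{ij}^{(g)})=\alpha_{ji}^{(g)}$, which exchanges $e_i\Pi$ with $\Pi e_i$ and restricts to the identity on each $H_i$.

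The main obstacle is verifying that $\sigma$ is well defined, i.e., that it preserves the defining relations of $\Pi$. The nilpotency~(P1) and commutativity~(P2) relations behave transparently; the mesh relation~(P3) requires the substitution $f\mapsto f_{ji}-1-f$ inside the triple sum in order to recover the original relation after applying $\sigma$, and one must check that $\sgn(i,j)$ remains consistent under this re-indexing (it does, since $i$ and $j$ play asymmetric roles in the sum and are not swapped). Once this bookkeeping is performed, the rest of the argument is essentially formal.
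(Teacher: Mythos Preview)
Your approach is correct and leads to the same key intermediate point as the paper: each term of $P_\bullet$ (as well as $\Pi$ itself) is projective as a right $\Pi$-module. The paper simply asserts that $\Pi e_j$ is free as a right $H_j$-module and deduces right-projectivity from that; you instead justify this freeness by transporting the left-freeness of $e_i\Pi$ (coming from Theorem~\ref{thm:preproj}, since each $\tau^{-m}({_H}H)$ is locally free by Proposition~\ref{prop:12.9}) across the anti-automorphism $\sigma$. Your check that $\sigma$ preserves the mesh relation (P3) via the substitution $f\mapsto f_{ji}-1-f$ is correct.

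One small imprecision: calling \eqref{eq:resol1} a ``flat resolution'' is not quite right, because $f$ need not be injective (it fails precisely when $C$ has a Dynkin component, see Proposition~\ref{prop:non-Dynk}), so $P_\bullet$ is not literally a resolution of $\Pi$. The paper sidesteps this by observing that once all four terms $P_2,P_1,P_0,\Pi$ are projective as right $\Pi$-modules, the exact sequence $P_2\to P_1\to P_0\to\Pi\to 0$ splits as a complex of right $\Pi$-modules, and a split-exact sequence remains exact under any tensor product. Your Tor argument can also be repaired by extending on the left by $\Ker(f)$ (a direct summand of $P_2$, hence projective), but the splitting argument is more direct.
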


\begin{proof}
The components of $P_\bullet$
are projective as left and as right modules. 
In fact, for example 
$H_i^m\otimes_i e_i\Pi\cong (e_i\Pi)^m$ is a projective right $\Pi$-module.
Now, ${_jH_i}$ is a free right $H_i$-module and  $\Pi e_j$ is a free right 
$H_j$-module and thus $\Pi e_j\otimes_j {_jH_i}$ is also a free right 
$H_i$-module. 
Altogether, $\Pi e_j\otimes_j {_jH_i}\otimes_i e_i \Pi$ 
is a projective right $\Pi$-module. A similar argument shows that
$\Pi e_i\otimes_{H_i} e_i\Pi$ is projective as a right $\Pi$-module.
Thus as a sequence of right modules the sequence $P_\bullet$
splits.
This implies that the sequence $P_\bullet \otimes_\Pi M$
is exact.
Now, if $M \in \Rep_\vp(\Pi)$, then the relevant components of $P_\bullet \otimes_\Pi M$ are
evidently projective. 
\end{proof}

Let us write the complex $P_\bullet \otimes_\Pi M$ explicitly: 
\begin{equation}\label{resolution2}
\bigoplus_{i\in I} \Pi e_i \otimes_i e_i M \xrightarrow{\; f_M \;}
\bigoplus_{(j,i)\in\ov{\Omega}} \Pi e_j \otimes_j {_j}H_i \otimes_i e_iM
\xrightarrow{\; g_M \;}
\bigoplus_{i\in I} \Pi e_i \otimes_i e_i M \xrightarrow{\; h_M \;} M \to 0
\end{equation}
and the maps $f_M, g_M, h_M$ act on generators as follows:
\begin{align*}
f_M(e_j\otimes m_j) &= \sum_{\substack{i\in\Omega(-,j)\\l\in{_iL_j}}} 
\left(
l^*\otimes l\otimes m_j+e_j\otimes l^*\otimes M_{ij}(l\otimes m_j)\right)
\\
&\;\;\;\;\;\;-\sum_{ \substack{k\in\Omega(j,-)\\r\in{_jR_k}}} 
\left(
r\otimes r^*\otimes m_j+e_j\otimes r\otimes M_{kj}(r^*\otimes m_j)\right),\\
g_M(e_i\otimes h\otimes m_j) &= h\otimes m_j - e_i\otimes M_{ij}(h\otimes m_j),\\
h_M(e_i\otimes m_i) &= m_i.
\end{align*}

\begin{Prop} \label{prop:non-Dynk}
Suppose that the Cartan matrix $C$ has no components of Dynkin type. Then in
the complex $P_\bullet$ for $\Pi$ we have $\Ker(f)=0$. 
In particular,
for all $M \in \Rep_\vp(\Pi)$ we have $\pdim(M) \le 2$.
\end{Prop}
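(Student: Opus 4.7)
The plan is to exploit a bigrading on $\Pi$ together with a minimality argument. Assign bidegrees to the generators as follows: each ``original'' arrow $\alpha_{ij}^{(g)}$ with $(i,j) \in \Omega$ has bidegree $(1,0)$, each ``new'' arrow $\alpha_{ji}^{(g)}$ with $(i,j) \in \Omega$ has bidegree $(0,1)$, and each loop $\vep_i$ has bidegree $(0,0)$. All defining relations of $\Pi$ are bihomogeneous: (P1) and (P2) have bidegree $(0,0)$, $(1,0)$ or $(0,1)$ depending on the arrow, while every mesh relation $\rho_i$ has bidegree $(1,1)$. In the total grading $\Pi=\bigoplus_{k\ge 0}\Pi_k$ obtained by summing the two degrees, Theorem~\ref{thm:preproj} identifies $\Pi_k\cong\tau^{-k}({_H}H)$ as a left $H$-module, and the non-Dynkin hypothesis (via Corollary~\ref{Cor:Pi-fd}) ensures $\Pi_k\ne 0$ for every $k\ge 0$. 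The complex $P_\bullet$ inherits the structure of a complex of bigraded $\Pi$-bimodules, and after shifting $P_2$ by $(1,1)$ the map $f$ is bihomogeneous of bidegree $(0,0)$.

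Suppose for contradiction that $0\ne\xi\in\Ker(f)$, and pick a bihomogeneous $\xi=\sum_i x_i\otimes y_i$ with each $x_i,y_i$ bihomogeneous. The equation $f(\xi)=0$ decomposes, component by component in $(k,l)\in\overline{\Omega}$, as
\[
\sum_{b\in{_k}L_l}(x_l b^{*})\otimes b\otimes y_l \;=\; \sum_{b\in{_l}L_k}x_k\otimes b^{*}\otimes(by_k)\quad\text{in}\quad \Pi e_k\otimes_k{_k}H_l\otimes_l e_l\Pi,
\]
up to a global sign. When $(k,l)\in\Omega$ the element $b$ on the right has bidegree $(0,1)$, so $by_k$ has $\beta$-degree strictly one larger than $y_k$; when $(k,l)\in\Omega^{*}$, analogously, $by_k$ has $\alpha$-degree one larger than $y_k$. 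Selecting the representation of $\xi$ that minimises $\beta_{\min}:=\min\{\beta(y_i):y_i\ne 0\}$ and projecting the $(k,l)\in\Omega$ components of the above identity onto the piece with $\beta(y_l)=\beta_{\min}$ kills the right-hand side, leaving $\sum_{b\in{_k}L_l}(x_l b^{*})\otimes b\otimes y_l^{(\beta_{\min})}=0$. A dual $\alpha$-minimisation handles the $\Omega^{*}$-components. Combining the two reductions, using freeness of $e_l\Pi$ as a left $H_l$-module (Theorem~\ref{thm:preproj}), and invoking the exactness of the $H$-bimodule resolution of Proposition~\ref{prop:resolH}—which is precisely the subcomplex governing the residual equation—forces $x_l=0$ whenever $y_l^{(\beta_{\min})}\ne 0$, contradicting non-triviality of $\xi$.

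The main obstacle is to combine the two separate projections (minimum-$\beta$ for $\Omega$-components and minimum-$\alpha$ for $\Omega^{*}$-components) into a single $H$-bimodule identity on which the injectivity from Proposition~\ref{prop:resolH} can be applied: individual $y_l$'s need not have their $\alpha$- and $\beta$-minima aligned, so a simultaneous refinement of the bigraded decomposition is required, together with careful bookkeeping of how the commutativity relations (P2) allow $\vep$-factors to migrate across arrows. Once $\Ker(f)=0$ is established, the bound $\pdim_\Pi(M)\le 2$ for every $M\in\Rep_\vp(\Pi)$ is immediate: as observed in the proof of the corollary following Proposition~\ref{prop:resol1}, the complex $P_\bullet$ splits as a sequence of right $\Pi$-modules, so tensoring with $M$ on the right yields an exact sequence $0\to P_2\otimes_\Pi M\to P_1\otimes_\Pi M\to P_0\otimes_\Pi M\to M\to 0$ in which every $P_d\otimes_\Pi M$ is a projective left $\Pi$-module by local freeness of $M$.
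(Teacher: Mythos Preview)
Your proposal has a genuine gap. The most telling symptom is that the non-Dynkin hypothesis, while mentioned, is never actually used in the argument: you invoke it only to say $\Pi_k\ne 0$ for all $k\ge 0$, but your subsequent minimality/projection reasoning makes no use of this fact. Since the conclusion is \emph{false} in the Dynkin case (there $\Ker(f)\cong\Hom_\Pi(\Du(\Pi),\Pi)\ne 0$ because $\Pi$ is selfinjective), any correct proof must use the hypothesis in an essential way, and yours does not.

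Concretely, the argument breaks down at the step where you ``invoke the exactness of the $H$-bimodule resolution of Proposition~\ref{prop:resolH}''. After your $\beta$-minimal (resp.\ $\alpha$-minimal) projection, the residual identity $\sum_{b\in{_k}L_l}(x_l b^{*})\otimes b\otimes y_l^{(\beta_{\min})}=0$ still lives in $\Pi e_k\otimes_k{_k}H_l\otimes_l e_l\Pi$, not in $He_k\otimes_k{_k}H_l\otimes_l e_lH$; there is no algebra map $\Pi\to H$, so Proposition~\ref{prop:resolH} does not apply. You also acknowledge but do not resolve the ``main obstacle'' of aligning the $\alpha$- and $\beta$-minimal projections across $\Omega$- and $\Omega^*$-components into a single identity; without this, the two separate reductions cannot be combined to conclude $x_l=0$. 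In short, the bigrading isolates pieces of the equation $f(\xi)=0$, but nothing in your argument forces those pieces to be governed by the (exact) $H$-resolution rather than by $\Pi$ itself.

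For comparison, the paper's proof proceeds quite differently: it first shows $E'_i\otimes_\Pi f$ is injective for each generalized simple right module $E'_i$, by identifying the tensored complex with the sequence obtained from applying $\Hom_H(-,\Pi)$ to the resolution $0\to P_i\to\bigoplus P_j\otimes{_j}H_i\to\tau^-(P_i)\to 0$. The non-Dynkin hypothesis enters precisely here, via $\tau({_H}\Pi)\cong{_H}\Pi$, which gives $\Hom_H(\tau^-(P_i),\Pi)\cong e_i\Pi$. One then observes that $U:=\Ker(f)$ is projective as a left $\Pi$-module and satisfies $E'_i\otimes_\Pi U=0$ for all $i$, whence $U=0$ by a completion argument along powers of the arrow ideal.
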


\begin{proof}
In a first step we show that $\Ker(E'_i\otimes_\Pi f)=0$ for the generalized 
simple right $\Pi$-modules $E'_i$ concentrated at the vertex $i \in I$.
To this end we
adapt the relevant part of  the proof of~\cite[Proposition~4.2]{BBK} to our setting:
Choose an orientation $\Omega$ such that in $H=H(C,D,\Omega)$ the projective $H$-module
$P_i = He_i$ is the generalized simple $H$-module $E_i$. 
We have a short exact sequence of $H$-modules
\begin{equation}\label{eq:pseudo-AR}
0\ra P_i \ra\oplus_{j\in \Omega(i,-)}P_j\otimes_j{_jH_i} \ra \tau^-(P_i) \ra 0, 
\end{equation}
see the proof of Theorem~\ref{thm:APR}. Note, that this sequence is possibly
not an Auslander-Reiten sequence.
Applying $\Hom_H(-,\Pi)$ to~\eqref{eq:pseudo-AR} we obtain the sequence of
right $\Pi$-modules
\begin{equation} \label{eq:pseudo-AR-Pi}
0\ra \Hom_H(\tau^-(P_i),\Pi)\ra \oplus_{j\in\Omega(i,-)} {_iH_j}\otimes_j\Pi\ra
e_i\Pi \ra \Ext^1_H(\tau^-(P_i),\Pi)\ra 0
\end{equation}
Now, by Theorem~\ref{thm:preproj} 
\[
{_H\Pi}=\oplus_{j\in I}\oplus_{k\in\N} \tau^{-k}(P_j),
\]
with all summands indecomposable, locally free preprojective modules.
In particular, in our situation all summands have injective dimension $1$. Thus,
$\Ext^1_H(\tau^-(P_i),\tau^{-k}(P_j))=\Du\Hom_H(\tau^{-k}(P_j),P_i)=0$ unless
$(j,k)=(i,0)$. We conclude that  we have an isomorphism of
right $\Pi$-modules $\Ext^1_H(\tau^-(P_i),\Pi)\cong E_i'$. Next
\[
\Hom_H(\tau^-(P_i),\Pi)\cong\Hom_H(P_i,\tau({_H}\Pi))=e_i \Pi
\]
where the last equality holds since by our hypothesis on $C$ we have 
$\tau({_H}\Pi) = {_H}\Pi$. Alltogether, we can identify 
now the exact sequence~\eqref{eq:pseudo-AR-Pi} with
\[
0 \ra e_i\Pi\xrightarrow{E'_i\otimes f}\oplus_{j\in \Omega(i,-)} {_iH_j}\otimes_j \Pi
\xrightarrow{E'_i\otimes g} e_i \Pi \ra E'_i \ra 0
\]
Now, let $U:=\Ker(f)$. Then $U$ is projective as a left $\Pi$-module since 
$P_\bullet\xrightarrow{h}\Pi\ra 0$ is a (split) exact sequence of 
projective left modules. Next we observe that 
$E'_i\otimes_\Pi U=0$ since $\Coker(f)$ is projective as a left module.
Now, let $\m\subset\Pi$ be the ideal which is generated by
$\overline{B}$.  Thus $\Pi/\m^j\in\rep_{\vp}(\Pi)$ is filtered by the 
generalized simples $E'_i$ 
for all $j \ge 1$ and $\Pi\subset\varprojlim_j(\Pi/\m^j)$. 
Thus, since $U$ is projective as a left module we get
\[
U=\Pi\otimes_\Pi U\subset 
(\varprojlim_j(\Pi/\m^j))\otimes_\Pi U\subset
\varprojlim_j(\Pi/\m^j\otimes_\Pi U)=0.
\]
This finishes the proof.
\end{proof}

\subsection{Symmetry of extension groups}\label{sec:extsymm}
Let $\Pi = \Pi(C,D)$, and
let $M = (M_i,M_{ij},M_{ji})$ and $N = (N_i,N_{ij},N_{ji})$ be in 
$\Rep_\vp(\Pi)$. 
Let $Q_\bullet(M,N)$ be \emph{the complex}
\begin{equation} \label{eq:rcpxMN}
\bigoplus_{k\in I} \Hom_{H_k}(M_k,N_k) \xleftarrow{\tilde{f}_{M,N}} 
\bigoplus_{(i,j)\in\ov{\Omega}} 
\Hom_{H_i}({_iH_j} \otimes_j M_j,N_i) \xleftarrow{\tilde{g}_{M,N}} 
\bigoplus_{k\in I} \Hom_{H_k}(M_k,N_k)
\end{equation}
where 
$\tilde{f}_{M,N}$ is defined by
\[
\left(\tilde{f}_{M,N}((\psi_{ij})_{(i,j)\in\ov{\Omega}})\right)_k := 
\sum_{j\in\ov{\Omega}(-,k)}
\sgn(j,k) (N_{kj} \circ \ad_{j,k}(\psi_{j,k}) - \psi_{k,j} \circ \ad_{j,k}(M_{jk}))
\]
and
$\tilde{g}_{M,N}$ is defined by
\[
(\tilde{g}_{M,N}((\phi_k)_{k\in I}))_{(i,j)} :=  
N_{ij} \circ (\id_{{_iH_j}} \otimes \phi_j) - \phi_i \circ M_{ij}.
\]
If $N$ is of finite rank, via the trace pairing from Section~\ref{sec:trace1},
we can identify the 
$K$-dual of the shifted complex $Q_\bullet(N,M)[2]$ 
with the following complex $Q_\bullet(N,M)^*$:
\begin{equation} \label{eq:rcpxMN2}
\bigoplus_{k\in I} \Hom_{H_k}(M_k,N_k) \xleftarrow{\tilde{g}^*_{N,M}} 
\bigoplus_{(i,j)\in\ov{\Omega}} 
\Hom_{H_i}(M_i, {_iH_j} \otimes_j N_j) \xleftarrow{\tilde{f}^*_{N,M}} 
\bigoplus_{k\in I} \Hom_{H_k}(M_k,N_k)
\end{equation}
(Recall that we have for example  natural isomorphisms
\[
\Hom_{H_k}(M_k,N_k)\cong \Du\Hom_{H_k}(N_k,M_k)
\]
since $N_k$ is free of finite rank by hypothesis.)
We define moreover
\[
\ad_{M,N}\colon
\bigoplus_{(i,j)\in\ov{\Omega}}\Hom_{H_j}({_jH_i}\otimes_i M_i,N_j) 
\xrightarrow{\oplus_{(i,j)\in\ov{\Omega}}\;\sgn(i,j)\ad_{ji}}
\bigoplus_{(i,j)\in\ov{\Omega}}\Hom_{H_i}(M_i,{_iH_j}\otimes_j N_j).
\]
We know that $\ad_{M,N}$ is an isomorphism.

\begin{Lem} \label{lem:PoM-N}
For $M,N \in \Rep_\vp(\Pi)$ the following hold:
\begin{itemize}

\item[(a)]
The complex $\Hom_\Pi(P_\bullet \otimes_\Pi M,N)$ is isomorphic to 
$Q_\bullet(M,N)$.
\item[(b)] If $N$ is of finite rank
\[
(\id_{\oplus \Hom_{H_k}{(M_k,N_k)}},\ad_{M,N}, \id_{\oplus\Hom_{H_k}(M_k,N_k)})
\]
is an isomorphism between the complexes
$Q_\bullet(M,N)$ and $Q_\bullet(N,M)^*$.
\end{itemize}
\end{Lem}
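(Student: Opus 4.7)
The plan is to prove both parts by direct computation, using the natural identifications already available.

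For part (a), I would first apply the standard tensor-hom adjunction to each term of $\Hom_\Pi(P_\bullet \otimes_\Pi M, N)$. This gives $\Hom_\Pi(\Pi e_k \otimes_k M_k, N) \cong \Hom_{H_k}(M_k, N_k)$ on the outer positions and $\Hom_\Pi(\Pi e_j \otimes_j {_j}H_i \otimes_i M_i, N) \cong \Hom_{H_j}({_j}H_i \otimes_i M_i, N_j)$ in the middle; after relabeling $(j,i) \leftrightarrow (i,j)$, these match the three terms of $Q_\bullet(M,N)$ term by term. The induced middle differential is then computed using that a $\Pi$-linear extension $\Phi$ of $(\phi_k)$ satisfies $\Phi(p \otimes m) = p \cdot \phi_k(m)$; plugging into $g_M(p \otimes h \otimes m) = ph \otimes m - p \otimes M_{ij}(h \otimes m)$ yields precisely $N_{ij} \circ (\id \otimes \phi_j) - \phi_i \circ M_{ij}$, which is the formula defining $\tilde g_{M,N}(\phi)_{(i,j)}$. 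The identification of $\Hom_\Pi(f_M,N)$ with $\tilde f_{M,N}$ is analogous but longer: the four summand types in the explicit formula for $f_M$ from (\ref{resolution2}) evaluate, after applying $\Psi$, to the two terms $N_{kj}\circ\ad_{jk}(\psi_{jk})$ and $\psi_{kj}\circ\ad_{jk}(M_{jk})$ that make up $\tilde f_{M,N}(\psi)_k$. The key recognition is that the expression $\sum_{b \in {_j}L_i} b^* \otimes f(b \otimes -)$ arising from the $l^* \otimes l$ and $r \otimes r^*$ factors in $f_M$ is exactly the formula defining $\ad_{ji}(f)$ from Section~\ref{sec5.1}.

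For part (b), the trace pairing of Section~\ref{sec:trace1} together with the finite-rank hypothesis on $N$ provides natural isomorphisms $\Hom_{H_k}(M_k,N_k) \cong \Du\Hom_{H_k}(N_k,M_k)$ on the two outer positions, which supply the identity components of the claimed isomorphism. On the middle term, Proposition~\ref{prop:adj1} is precisely the commutative square expressing that $\ad_{ji}$ is intertwined by the trace pairing with the transpose $\Du(\ad_{ij})$; assembling these squares over $(i,j) \in \ov\Omega$ with the signs encoded in $\ad_{M,N}$ then yields the degree-$1$ component. To verify compatibility of the triple $(\id, \ad_{M,N}, \id)$ with the differentials, I would apply Lemma~\ref{lem:dualmap} to the structure maps $M_{ij}, N_{ij}$: that lemma says that post-composition with a map of finitely generated free $H_i$-modules corresponds, under the trace pairing, to the transpose of pre-composition, which is exactly what is needed to transform the differentials $\tilde f_{M,N}, \tilde g_{M,N}$ into the dualized differentials $\tilde g^*_{N,M}, \tilde f^*_{N,M}$ of $Q_\bullet(N,M)^*$.

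The main obstacle is the bookkeeping of the signs $\sgn(i,j)$, which appear both in $\tilde f, \tilde g$ and in the definition of $\ad_{M,N}$. These signs are rigidly dictated by the mesh relation (P3), equivalently by the condition $M_{i,\inn}\circ M_{i,\out} = 0$ of Proposition~\ref{prop3.3}, and they propagate through the two adjunction maps $\ad_{ij}$ and $\ad_{ji}$ for each pair $\{i,j\}$, which play complementary roles on the two sides of the identification. There is nothing conceptually delicate here, but consistent tracking is needed to ensure that all signs cancel appropriately so that $(\id, \ad_{M,N}, \id)$ genuinely intertwines the two complexes.
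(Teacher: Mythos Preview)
Your proposal is correct and follows essentially the same approach as the paper: part~(a) is dismissed there as ``straightforward'' (your tensor--hom adjunction unwinding is exactly what is meant), and for part~(b) the paper likewise verifies $\tilde f^*_{N,M}=\ad_{M,N}\circ\tilde g_{M,N}$ by a direct computation using the trace pairing of Section~\ref{sec:trace1} and Proposition~\ref{prop:adj1}, with the sign bookkeeping handled as you anticipate.
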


\begin{proof}
Part (a) is straightforward. 
For (b) we show that $\tilde{f}^*_{N,M}=\ad_{M,N}\circ \tilde{g}_{M,N}$. To this end
we evaluate
for $(i,j)\in\overline{\Omega}$ the $(i,j)$-component of
$\tilde{f}^*_{N,M}$ on $(\la_k)_{k\in I}\in \oplus_{k\in I} \Hom_{H_k}(M_k,N_k)$,
according to our discussion of the trace pairing in Sections~\ref{sec:trace1} and
\ref{sec:trace2}: 
\begin{align*}
(\tilde{f}_{N,M}^*((\la_k)_k))_{ij} &=
\sgn(i,j)(\ad_{i,j}^*(\la_j\circ M_{ji}) -\ad_{j,i}(N_{ji})\circ\la_i)\\
&=\sgn(i,j)(\ad_{j,i}(\la_j\circ M_{ji})-\ad_{j,i}(N_{ji})\circ\la_i)\\
&=\sgn(i,j)(\ad_{j,i}(\la_j\circ M_{ji}-N_{ji}\circ(\id_{H_i}\otimes\la_i)))
\end{align*}
where the second equality follows from Proposition~\ref{prop:adj1}, and 
the third equality is just the definition of $\ad_{ji}$. 
The proof of
$\tilde{g}^*_{N,M} \circ\ad_{M,N}=\tilde{f}_{M,N}$ is similar.
\end{proof}

\begin{Prop}\label{prop:9.10}
For $M,N \in \Rep_\vp(\Pi)$ we have the following
functorial isomorphisms:
\begin{itemize}

\item[(a)]
$\Ker(\tilde{g}_{M,N}) = \Hom_\Pi(M,N)$ \text{ and }
$\Ker(\tilde{f}_{M,N})/\Ima(\tilde{g}_{M,N}) \cong \Ext_\Pi^1(M,N)$.

\item[(b)]
$\Coker{\tilde{f}_{M,N}}=\Ext^2_\Pi(M,N)$ if $C$ has no component of Dynkin type.

\item[(c)] 
$\Hom_\Pi(N,M)\cong \Du\Coker(\tilde{f}_{M,N})$ if $M$ is of finite rank.
\end{itemize}
\end{Prop}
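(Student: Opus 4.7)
The plan is to run the bimodule resolution of Proposition~\ref{prop:resol1} through the functor $\Hom_\Pi(-,N)$, identify the resulting complex with $Q_\bullet(M,N)$ via Lemma~\ref{lem:PoM-N}(a), and then read off cohomology in each case. The corollary following Proposition~\ref{prop:resol1} guarantees that for any $M\in\Rep_\vp(\Pi)$ the complex $P_\bullet\otimes_\Pi M$ is the beginning of a projective resolution of $M$, which is the single input needed for parts (a) and (b).

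For part (a), I would note that the augmented complex $P_\bullet\otimes_\Pi M \xrightarrow{h_M} M \to 0$ is exact at $M$ and at the rightmost copy of $\bigoplus \Pi e_i\otimes_i e_i M$. Applying $\Hom_\Pi(-,N)$ and using Lemma~\ref{lem:PoM-N}(a) transforms this into the sequence
\[
0 \to \Hom_\Pi(M,N) \to \bigoplus_k \Hom_{H_k}(M_k,N_k) \xrightarrow{\tilde{g}_{M,N}} \bigoplus_{(i,j)\in\overline{\Omega}} \Hom_{H_i}({_iH_j}\otimes_j M_j,N_i) \xrightarrow{\tilde{f}_{M,N}} \bigoplus_k \Hom_{H_k}(M_k,N_k),
\]
which is exact at the first two nontrivial spots and a complex thereafter. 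This yields the two claimed identifications at once: $\Hom_\Pi(M,N)=\Ker(\tilde{g}_{M,N})$ and $\Ext^1_\Pi(M,N)=\Ker(\tilde{f}_{M,N})/\Ima(\tilde{g}_{M,N})$, since $P_\bullet\otimes_\Pi M$ is projective in the relevant degrees.

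For part (b), the hypothesis that $C$ has no component of Dynkin type lets me invoke Proposition~\ref{prop:non-Dynk}, which states $\Ker(f)=0$ in $P_\bullet$ and, moreover, that $\pdim(M)\le 2$ for any $M\in\Rep_\vp(\Pi)$. Using that the components of $P_\bullet$ are projective and the sequence splits as right $\Pi$-modules (shown in the corollary after Proposition~\ref{prop:resol1}), tensoring with $M$ preserves the exactness on the left, so $P_\bullet\otimes_\Pi M$ is in fact a full length-two projective resolution of $M$. Then $\Ext^2_\Pi(M,N)$ is computed as the cohomology of $\Hom_\Pi(P_\bullet\otimes_\Pi M,N)\cong Q_\bullet(M,N)$ at its leftmost position, i.e.\ $\Coker(\tilde{f}_{M,N})$.

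For part (c), the idea is to exchange the roles of $M$ and $N$ in Lemma~\ref{lem:PoM-N}(b). Since $M$ is of finite rank, the trace pairing produces a chain complex isomorphism $Q_\bullet(N,M)\cong Q_\bullet(M,N)^*$, where the right-hand side is the $K$-dual of the shifted complex $Q_\bullet(M,N)[2]$. Taking zeroth cohomology on the left gives $\Hom_\Pi(N,M)$ by part (a), while on the right it gives $\Du H^2(Q_\bullet(M,N))=\Du\Coker(\tilde{f}_{M,N})$. The expected obstacle is bookkeeping: making sure the finiteness hypothesis needed by the trace pairing is correctly placed (the swap of $M$ and $N$ moves the ``$N$ finite rank'' hypothesis of Lemma~\ref{lem:PoM-N}(b) onto $M$), and matching cohomological degrees through the shift by $2$ so that $H^0$ of the left-hand complex corresponds to $\Du H^2$ of the right-hand complex rather than to $\Du H^0$. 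Once this is handled, (c) drops out.
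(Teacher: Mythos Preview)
Your proposal is correct and follows the same line as the paper: identify $\Hom_\Pi(P_\bullet\otimes_\Pi M,N)$ with $Q_\bullet(M,N)$ via Lemma~\ref{lem:PoM-N}(a) to obtain (a), invoke Proposition~\ref{prop:non-Dynk} for the injectivity of $f_M$ to get (b), and for (c) swap the roles of $M$ and $N$ in Lemma~\ref{lem:PoM-N}(b) so that $\Ker(\tilde g_{N,M})=\Hom_\Pi(N,M)$ is identified with $\Ker(\tilde f^{*}_{M,N})=\Du\Coker(\tilde f_{M,N})$. Your extra discussion of the degree shift and the placement of the finiteness hypothesis is exactly the bookkeeping the paper leaves implicit.
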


\begin{proof}
Part (a) follows from the functorial isomorphism of~\eqref{eq:rcpxMN} with
$\Hom_\Pi(P_\bullet \otimes_\Pi M,N)$. 
Part~(b) follows by the same token since in this situation  
by Proposition~\ref{prop:non-Dynk} the map $f_M$ in~\eqref{resolution2}
is injective.

For~(c) we just note that by~(a) and Lemma~\ref{lem:PoM-N}~(b) we have  
$\Ker(\tilde{f}^*_{M,N})=\Ker(\tilde{g}_{N,M})=\Hom_\Pi(N,M)$.
\end{proof}

\begin{Thm}\label{thm:9.11}
Let $M\in\Rep_{\vp}(\Pi)$ and $N\in\rep_{\vp}(\Pi)$.
 
\begin{itemize}
\item[(a)]
There is a functorial isomorphism
\[
\Ext^1_\Pi(M,N)\cong \Du\Ext^1_\Pi(N,M).
\]

\item[(b)]
If the Cartan matrix $C$ has no component of Dynkin type, we have
more generally functorial isomorphisms
\[
\Ext^{2-i}_\Pi(M,N)\cong \Du\Ext^i_\Pi(N, M) \text{ for } i=0,1,2.
\]

\item[(c)]
If $M$ is also of finite rank we have
\[
\dim \Ext_\Pi^1(M,N) = \dim \Hom_\Pi(M,N) + \dim \Hom_\Pi(N,M) 
- (M,N)_H.
\]
\end{itemize}
\end{Thm}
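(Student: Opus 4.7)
The plan is to deduce everything from the three-term complex $Q_\bullet(M,N)$ of Section~\ref{sec:extsymm}, using Proposition~\ref{prop:9.10} to compute its cohomology as the Hom/Ext spaces of interest, Lemma~\ref{lem:PoM-N}(b) to compare $Q_\bullet(M,N)$ with $Q_\bullet(N,M)^*$, and the trace pairing of Section~\ref{sec:trace1} to realize $Q_\bullet(N,M)^*$ as the $K$-linear dual $\Du Q_\bullet(N,M)$.

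For parts (a) and (b), I would first invoke Lemma~\ref{lem:PoM-N}(b), which applies because $N\in\rep_\vp(\Pi)$ has finite rank, to obtain an isomorphism of complexes $Q_\bullet(M,N)\cong Q_\bullet(N,M)^*$ via $(\id,\ad_{M,N},\id)$. The crucial observation is that the maps $\tilde{f}^*_{N,M}$ and $\tilde{g}^*_{N,M}$ defining $Q_\bullet(N,M)^*$ are, by construction, the transposes of $\tilde{f}_{N,M}$ and $\tilde{g}_{N,M}$ under the trace pairings. Since the trace pairing $\Hom_{H_i}(U,V)\cong\Du\Hom_{H_i}(V,U)$ only requires $V$ to be of finite rank, and in every term of $Q_\bullet(N,M)$ the \emph{target} of the relevant Hom-space involves only $N_j$ (which is of finite rank), the required identifications all make sense even though $M$ may be of infinite rank. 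This realizes $Q_\bullet(N,M)^*$ as $\Du Q_\bullet(N,M)$, with the reversal of arrows compensating for the positional swap between $\Du A^0_{N,M}$ and $\Du A^2_{N,M}$. Because $\Du=\Hom_K(-,K)$ is exact and commutes with cohomology, the three cohomologies of $Q_\bullet(N,M)^*$ read, from right to left, as $\Du\Hom_\Pi(N,M)$, $\Du\Ext^1_\Pi(N,M)$, and $\Du\coker(\tilde{f}_{N,M})$. Comparing with the cohomologies of $Q_\bullet(M,N)$ given by Proposition~\ref{prop:9.10}(a) yields (a) directly from the middle position; under the no-Dynkin hypothesis, Proposition~\ref{prop:9.10}(b) upgrades the rightmost/leftmost cohomologies to $\Ext^2_\Pi$, producing all three isomorphisms of (b).

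For (c), now that $M$ is also of finite rank, $Q_\bullet(M,N)$ is a complex of finite-dimensional $K$-vector spaces, so its Euler characteristic is both the alternating sum of the dimensions of its terms and the alternating sum of the dimensions of its cohomology groups. On the cohomology side, Proposition~\ref{prop:9.10}(a),(c) identifies these as $\Hom_\Pi(M,N)$, $\Ext^1_\Pi(M,N)$, and $\Du\Hom_\Pi(N,M)$; using $\dim\Du X=\dim X$, the alternating sum is $\dim\Hom_\Pi(M,N)+\dim\Hom_\Pi(N,M)-\dim\Ext^1_\Pi(M,N)$. On the term side, writing $\rkv(M)=(a_k)$ and $\rkv(N)=(b_k)$, the dimensions work out to $\dim A^0=\dim A^2=\sum_k c_ka_kb_k$ and $\dim A^1=\sum_{(i,j)\in\Omega}c_i|c_{ij}|(a_ib_j+a_jb_i)$, using $c_i|c_{ij}|=c_j|c_{ji}|$; the Euler characteristic is then $2\sum_k c_ka_kb_k-\sum_{(i,j)\in\Omega}c_i|c_{ij}|(a_ib_j+a_jb_i)$, which by Proposition~\ref{quadratic1} equals $\bil{M,N}_H+\bil{N,M}_H=(M,N)_H$. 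Equating the two expressions for $\chi$ and rearranging gives (c). The main subtlety throughout is bookkeeping: verifying that the trace-pairing identifications correctly realize $Q_\bullet(N,M)^*$ as $\Du Q_\bullet(N,M)$ at the level of the entire complex (not just termwise), so that the commutation of $\Du$ with cohomology applies; once this is pinned down, the rest is formal from Proposition~\ref{prop:9.10}.
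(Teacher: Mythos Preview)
Your proposal is correct and follows essentially the same route as the paper: identify $Q_\bullet(N,M)^*$ with $\Du Q_\bullet(N,M)[2]$ via the trace pairing (using that $N$ has finite rank), invoke Lemma~\ref{lem:PoM-N}(b) for the isomorphism $Q_\bullet(M,N)\cong Q_\bullet(N,M)^*$, read off cohomology via Proposition~\ref{prop:9.10}, and for (c) compute the Euler characteristic of the complex two ways. Your explicit unpacking of the term dimensions via rank vectors and Proposition~\ref{quadratic1} is a slight elaboration of what the paper states in one line, but the argument is the same.
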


\begin{proof}
Recall, that by Proposition~\ref{prop:9.10}~(a) and~(b) we have 
naturally $H^i(Q_\bullet(M,N)) \cong \Ext^i_\Pi(M,N)$ for $i=0,1$ and also for 
$i=2$ in case $C$ has no Dynkin component. In our situation ($N$ of finite
rank), the complex $Q_\bullet (N,M)^*$ is, via the trace pairing, 
identified to the $K$-dual shifted complex $\Du Q_\bullet(N,M)[2]$. Thus, by the
same token we have $H^{2-i}(Q_\bullet(N,M)^*)\cong \Du\Ext^i_\Pi(N,M)$ for $i=0,1$, and also
for $i=2$ in the Dynkin-free case. 
Now, by Lemma~\ref{lem:PoM-N}~(b)
the complexes $Q_\bullet(M,N)$ and $Q_\bullet(N,M)^*$ are naturally isomorphic, 
which implies~(a) and~(b).
For~(c), we observe that by Proposition~\ref{prop:9.10} we obtain
from the complex~(\ref{eq:rcpxMN}) the equality
\begin{multline*}
\dim\Hom_\Pi(N,M)-\dim\Ext^1_\Pi(M,N)+\dim \Hom_\Pi(M,N)\\
= 2\cdot\sum_{k\in I}\dim \Hom_{H_k}(M_k,N_k) - 
\sum_{(i,j)\in\ov{\Omega}}\dim \Hom_{H_j} ({_jH_i}\otimes_i M_i,N_j)=(M,N)_H,
\end{multline*}
which is equivalent to our claim.
\end{proof}

The last statement  in Theorem~\ref{thm:9.11} 
generalizes Crawley-Boevey's formula in 
\cite[Lemma~1]{CB3}.

\begin{Cor} Suppose that the Cartan matrix $C$ is connected of Dynkin type.
Then $\Pi$ is a selfinjective algebra. 
Moreover, in this situation we have
$\Ker(f)=\Hom_\Pi(\Du(\Pi),\Pi)$ as a $\Pi$-$\Pi$-bimodule, where $f$ is
the last morphism in the complex $P_\bullet$ of Proposition~\ref{prop:resol1}.
\end{Cor}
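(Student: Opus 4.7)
The plan is to split the corollary into its two assertions and prove them using the Ext-symmetry of Theorem~\ref{thm:9.11} together with the structural description of ${_H\Pi}$ from Theorem~\ref{thm:preproj}.

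By Corollary~\ref{Cor:Pi-fd}, $\Pi$ is finite-dimensional in the Dynkin case. Theorem~\ref{thm:preproj} gives ${_H\Pi} \cong \bigoplus_{m\ge 0} \tau^{-m}({_H}H)$, and each summand is preprojective, hence locally free by Proposition~\ref{prop:12.9}(i), so ${_H\Pi}$ is locally free; in particular every indecomposable projective $\Pi e_i$ lies in $\rep_\vp(\Pi)$. Applying Theorem~\ref{thm:9.11}(a) with $M = \Pi$ and arbitrary $N \in \rep_\vp(\Pi)$ then yields $\Ext^1_\Pi(N,\Pi) \cong \Du\Ext^1_\Pi(\Pi,N) = 0$, so $\Pi$ is injective inside $\rep_\vp(\Pi)$. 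To upgrade this to full injectivity of ${_\Pi}\Pi$ in $\rep(\Pi)$, I would show that $\Du(\Pi)$ is also locally free, using the symmetric role of $\Pi(C,D,\Omega)$ and $\Pi(C,D,\Omega^*)$ together with a right-handed analogue of Theorem~\ref{thm:preproj}. Then, given a projective (hence locally free) resolution $\cdots \to P_1 \to P_0 \to N \to 0$ of an arbitrary $N \in \rep(\Pi)$, applying $\Hom_\Pi(-,\Pi)$ and using the established vanishing on $\rep_\vp(\Pi)$ together with a dimension-shifting argument reduces the vanishing of $\Ext^1_\Pi(N,\Pi)$ to the case of simple modules, which can be checked directly.

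For the bimodule identification, I would apply the bimodule-dual functor $\Hom_{\Pi^e}(-,\Pi \otimes_K \Pi)$ to the partial resolution of Proposition~\ref{prop:resol1}. Each component of $P_\bullet$ has the form $\Pi e_i \otimes_i e_i\Pi$ or $\Pi e_j \otimes_j {_j}H_i \otimes_i e_i\Pi$, so this dual can be computed componentwise using the trace pairings of Section~\ref{sec:trace1} and the adjunction isomorphisms of Section~\ref{sec:trace2}. Selfinjectivity of $\Pi$ translates into $\Du(\Pi) \cong {_1}\Pi_\sigma$ as $\Pi$-$\Pi$-bimodules for the Nakayama automorphism $\sigma$, whence $\Hom_\Pi(\Du(\Pi),\Pi) \cong {_{\sigma^{-1}}}\Pi_{1}$. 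On the other hand, the explicit formula for $f$ combined with the adjunction and trace-pairing identifications shows that $\Ker(f)$ is isomorphic to $\Pi$ twisted on the left by the same automorphism $\sigma^{-1}$ arising from the dualization of the mesh elements $\rho_i$, giving the required bimodule isomorphism.

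The main obstacle will be the extension step in the selfinjectivity argument, since $\rep_\vp(\Pi)$ is not closed under submodules and so propagating Ext-vanishing from locally free to arbitrary modules is not automatic; an alternative, possibly cleaner route would be to construct directly a non-degenerate associative bilinear form on $\Pi$ from the trace maps $t_i^{\max}\df H_i \to K$ introduced in Section~\ref{sec:trace1}, combined with the canonical decomposition ${_H\Pi} \cong \bigoplus_m \tau^{-m}(_H H)$, which would show simultaneously that $\Pi$ is Frobenius and produce the Nakayama automorphism needed for the bimodule identification.
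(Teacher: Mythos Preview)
Your selfinjectivity argument has a real gap at the ``upgrade'' step, and you correctly flag it yourself. The paper sidesteps this entirely: rather than trying to prove $\Ext^1_\Pi(N,\Pi)=0$ for \emph{all} $N$, it takes a short exact sequence $0\to\Pi\xrightarrow{\iota}Q\to R\to 0$ with $Q$ injective. Since $\Pi$ and $Q$ are locally free, so is $R$ (closure under cokernels of monomorphisms). Then Theorem~\ref{thm:9.11}(a) applied to the single module $R$ gives $\Ext^1_\Pi(R,\Pi)\cong\Du\Ext^1_\Pi(\Pi,R)=0$, so $\iota$ splits and $\Pi$ is a summand of $Q$. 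This uses the Ext-symmetry once, for one locally free module, and never leaves $\rep_\vp(\Pi)$. Your dimension-shifting and reduction to simples is unnecessary and, as you note, not obviously justified.

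For the bimodule identification, your route through the Nakayama automorphism is speculative and far more work than needed. The paper observes that $P_\bullet = P_\bullet\otimes_\Pi\Pi$, so $\Du(P_\bullet)\cong\Hom_\Pi(P_\bullet,\Du(\Pi))\cong Q_\bullet(\Pi,\Du(\Pi))$ by Lemma~\ref{lem:PoM-N}(a); hence $f=\Du(\tilde f_{\Pi,\Du(\Pi)})$. Lemma~\ref{lem:PoM-N}(b) then identifies this with $\tilde g_{\Du(\Pi),\Pi}$, and Proposition~\ref{prop:9.10}(a) gives $\Ker(\tilde g_{\Du(\Pi),\Pi})=\Hom_\Pi(\Du(\Pi),\Pi)$ directly, as bimodules. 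No computation of $\sigma$ is required; the identification falls out of the complex isomorphisms already established.
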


\begin{proof} By Corollary~\ref{Cor:Pi-fd}, $\Pi$ is in this situation a 
finite-dimensional basic $K$-algebra. So, for the first claim we have to
show only that $\Pi$, as a left module, is injective. In any case, we can
find a short exact sequence of left $\Pi$-modules
\[
0\ra \Pi \xrightarrow{\iota} Q \xrightarrow{\pi} R \ra 0
\]
with $Q$ injective. Since $\Pi$ and $Q$ are locally free, $R$ is also locally
free. Now, $\Hom_\Pi(\iota,\Pi)$ is surjective since 
$\Ext^1_\Pi(R,\Pi)=\Du\Ext^1_\Pi(\Pi,R)=0$ by~Theorem~\ref{thm:9.11}.
Thus, there exists 
$\rho\in\Hom_\Pi(Q,\Pi)$ with $\rho\iota=\id_\Pi$. In other words, $\Pi$
is a direct summand of the injective module $Q$.

For the second claim we note that we have here natural identifications
$f= \Du(\tilde{f}_{\Pi,\Du(\Pi)}) = \tilde{g}_{\Du(\Pi),\Pi}$. Now, 
$\Ker(\tilde{g}_{\Du(\Pi),\Pi})=\Hom_\Pi(\Du(\Pi),\Pi)$ by 
Proposition~\ref{prop:9.10}~(a).
\end{proof}

\begin{Rem} For (classical) preprojective algebras $\Pi$ associated to a 
Dynkin quiver it seems to be folklore that $\Ker(f)\cong \Du(\Pi)$. This is
not in contradiction with the above statement. In fact, the
Nakayama automorphism, viewed as an element of the group of outer 
automorphism has order $2$ (except for a few cases over fields of characteristic
$2$ when it is the identity), see~\cite[Theorem~4.8]{BBK}.
and thus $\Du(\Pi) \cong \Hom_\Pi(\Du(\Pi),\Pi)$ as a bimodule. We expect that
for our generalized preprojective algebras a similar statement holds.
\end{Rem}


\section{Examples}\label{sec13}


\subsection{}\label{example-def-H-Pi}
The matrix
\[
C = \left(\bbm 2&-4&0\\-6&2&-3\\0&-9&2\ebm\right)
\]
is a Cartan matrix, and
$D = \diag(9,6,2)$
is the minimal symmetrizer of $C$.
Let $\Omega = \{(1,2),(2,3)\}$.
This is an orientation of $C$.
We have
$f_{12} = 2$,
$f_{21} = 3$,  $f_{23} = 1$, $f_{32} = 3$, 
$g_{12} = 2$ and $g_{23} = 3$.
The algebra $H = H(C,D,\Omega)$ is given by the quiver 
\[
\xymatrix{
1 \ar@(ul,ur)[]^{\vep_1}& 2 \ar@(ul,ur)[]^{\vep_2}\ar@<0.4ex>[l]\ar@<-0.4ex>[l]
& 3 \ar@(ul,ur)[]^{\vep_3}\ar[l]\ar@<0.8ex>[l]\ar@<-0.8ex>[l]
}
\]
with relations
\begin{align*}
\vep_1^9 &= 0,  & \vep_2^6 &= 0, & \vep_3^2 &= 0,
\end{align*}
\begin{align*}
\vep_1^3\alpha_{12}^{(g)} &= \alpha_{12}^{(g)}\vep_2^2, &
(g&=1,2),
\\
\vep_2^3\alpha_{23}^{(g)} &= \alpha_{23}^{(g)}\vep_3, &
(g &=1,2,3).
\end{align*}
(Recall that $\alpha_{ij}^{(g)}$ denotes an arrow $j \to i$.)

The preprojective algebra $\Pi = \Pi(C,D)$ is given by the double quiver
$\ov{Q}(C)$ with relations
\begin{align*}
\vep_1^9 &= 0, & \vep_2^6 &= 0, & \vep_3^2 &= 0,
\end{align*}
\begin{align*}
\vep_1^3\alpha_{12}^{(g)} &= \alpha_{12}^{(g)}\vep_2^2, &
\vep_2^2\alpha_{21}^{(g)} &= \alpha_{21}^{(g)}\vep_1^3, &
(g&=1,2),
\\
\vep_2^3\alpha_{23}^{(g)} &= \alpha_{23}^{(g)}\vep_3, &
\vep_3\alpha_{32}^{(g)} &= \alpha_{32}^{(g)}\vep_2^3, & 
(g &=1,2,3),
\end{align*}
\begin{align*}
&\sum_{g=1}^2 \left(\alpha_{12}^{(g)}\alpha_{21}^{(g)}\vep_1^2 +
\vep_1\alpha_{12}^{(g)}\alpha_{21}^{(g)}\vep_1 +
\vep_1^2\alpha_{12}^{(g)}\alpha_{21}^{(g)}\right) = 0,
\\
&\sum_{g=1}^2\left( -\alpha_{21}^{(g)}\alpha_{12}^{(g)}\vep_2 
-\vep_2\alpha_{21}^{(g)}\alpha_{12}^{(g)}   \right)
+
\sum_{g=1}^3\left( \alpha_{23}^{(g)}\alpha_{32}^{(g)}\vep_2^2+
\vep_2\alpha_{23}^{(g)}\alpha_{32}^{(g)}\vep_2+
\vep_2^2\alpha_{23}^{(g)}\alpha_{32}^{(g)}
\right) = 0,
\\
&\sum_{g=1}^3 -\alpha_{32}^{(g)}\alpha_{23}^{(g)} = 0.
\end{align*}

\subsection{Cartan matrices of Dynkin type}\label{secDynkin}
Figure~\ref{fig:Dynkin} shows a list of
valued graphs called \emph{Dynkin graphs}.
By definition
each of the graphs $A_n$, $B_n$, $C_n$ and $D_n$ has $n$ vertices.
The graphs $A_n$, $D_n$, $E_6$, $E_7$ and $E_8$ are the
\emph{simply laced Dynkin graphs}.
A Cartan matrix $C$ is of \emph{Dynkin type} 
if the valued graph $\GG(C)$ is isomorphic (as a valued graph) to a
disjoint union of  Dynkin graphs.
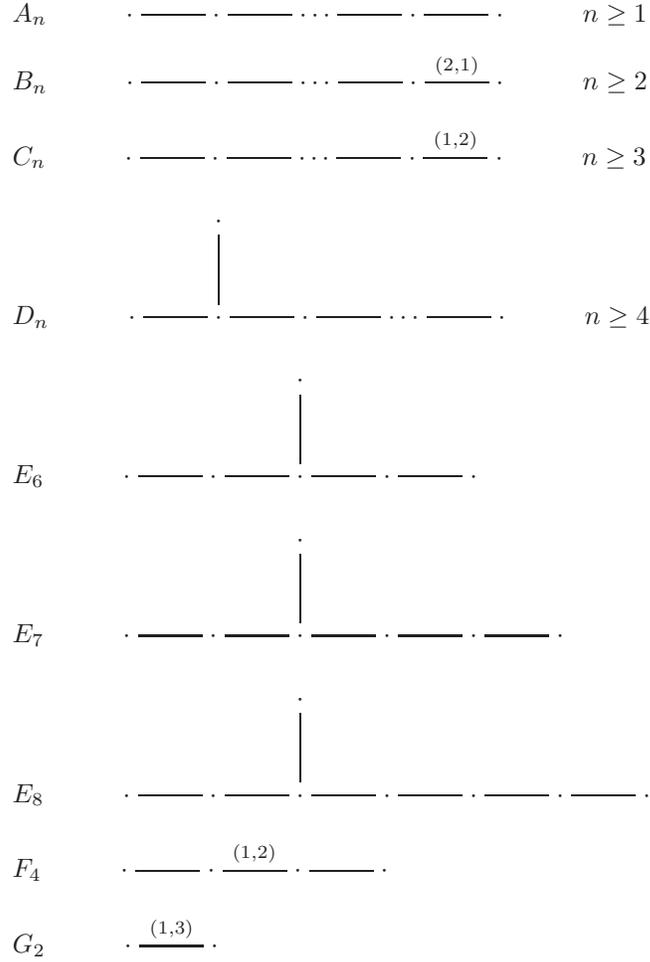
\begin{figure}[!htb]
\centering
{\small
\begin{tabular}{l}
\vspace{0.2cm}
\xymatrix{
A_n &
\cdot \ar@{-}[r] & \cdot \ar@{-}[r] & \cdots \ar@{-}[r] & \cdot 
\ar@{-}[r] & \cdot & n \ge 1 
}\\
\vspace{0.3cm}
\xymatrix{
B_n &
\cdot \ar@{-}[r] & \cdot \ar@{-}[r] & \cdots \ar@{-}[r] & \cdot 
\ar@{-}[r]^{(2,1)} & \cdot & n \ge 2
} \\
\vspace{0.3cm}
\xymatrix{
C_n&
\cdot \ar@{-}[r] & \cdot \ar@{-}[r] & \cdots \ar@{-}[r] & \cdot 
\ar@{-}[r]^{(1,2)} & \cdot & n \ge 3
}\\
\vspace{0.3cm}
\xymatrix{
&& \cdot \ar@{-}[d]\\
D_n& \cdot \ar@{-}[r] & 
\cdot \ar@{-}[r] & \cdot \ar@{-}[r] & \cdots \ar@{-}[r] 
& \cdot & n \ge 4
}\\
\vspace{0.3cm}
\xymatrix{
&&& \cdot \ar@{-}[d]\\
E_6&\cdot \ar@{-}[r] & \cdot \ar@{-}[r] & \cdot \ar@{-}[r] & \cdot 
\ar@{-}[r] & \cdot 
}\\
\vspace{0.3cm}
\xymatrix{
&&& \cdot \ar@{-}[d]\\
E_7 &\cdot \ar@{-}[r] & \cdot \ar@{-}[r] & \cdot \ar@{-}[r] & \cdot 
\ar@{-}[r] & \cdot \ar@{-}[r] & \cdot
}\\
\vspace{0.3cm}
\xymatrix{
&&& \cdot \ar@{-}[d]\\
E_8&\cdot \ar@{-}[r] & \cdot \ar@{-}[r] & \cdot \ar@{-}[r] & \cdot 
\ar@{-}[r] & \cdot \ar@{-}[r] & \cdot \ar@{-}[r] & \cdot
}\\
\vspace{0.3cm}
\xymatrix{
F_4 &
\cdot \ar@{-}[r] & \cdot \ar@{-}[r]^{(1,2)} & \cdot \ar@{-}[r] & \cdot 
}\\
\vspace{0.3cm}
\xymatrix{
G_2 & 
\cdot \ar@{-}[r]^{(1,3)} & \cdot
}
\end{tabular}
}
\caption{Dynkin graphs.}
\label{fig:Dynkin}
\end{figure}

\subsection{Finite representation type}
Let $H = H(C,D,\Omega)$ with
$D = \diag(c_1,\ldots,c_n)$.
Without loss of generality assume that $C$ is connected.
We only sketch the proof of the following proposition.

\begin{Prop}
The algebra $H$ is representation-finite if and only if
we are in one of the following cases:
\begin{itemize}

\item[(i)]
$C$ is of Dynkin type $A_n$, $C_n$, $D_n$, $E_6$, $E_7$, $E_8$, $B_2$, $B_3$ or $G_2$, and $D$ is minimal;

\item[(ii)]
$C$ is of Dynkin type $A_1$;

\item[(iii)]
$C$ is of Dynkin type $A_2$, and we have
$(c_1,c_2) = (2,2)$ or $(c_1,c_2) = (3,3)$;

\item[(iv)]
$C$ is of Dynkin type $A_3$, and we have
$(c_1,c_2,c_3) = (2,2,2)$.

\end{itemize}
\end{Prop}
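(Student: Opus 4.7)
The first reduction is immediate: if $C$ is not of Dynkin type, then by Theorem~\ref{thm:main1} there are infinitely many isomorphism classes of $\tau$-locally free $H$-modules, so $H$ is representation-infinite. We may therefore restrict to the case where $C$ is Dynkin, and treat the simply-laced and non-simply-laced situations separately.

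For the positive direction we proceed case by case. If $C$ is simply-laced with minimal symmetrizer, then $H \cong KQ^\circ$, and Gabriel's theorem gives the finite list of indecomposables. If $C$ is of type $A_1$ with arbitrary symmetrizer, then $H \cong K[\varepsilon]/(\varepsilon^{c_1})$ is a Nakayama algebra with exactly $c_1$ indecomposable modules. If $C$ is symmetric but with non-minimal $D = (c,\ldots,c)$, then $H \cong A_c \otimes_K KQ^\circ$, and when $Q^\circ$ is linearly oriented of type $A_n$ this coincides with $T_n(A_c)$. The representation-finiteness bounds for these tensor-product algebras are classical: one invokes Leszczy\'nski-Skowro\'nski \cite{LS} (and Ringel-Schmidmeier \cite{RiS} for $n=2$) to see that rep-finiteness holds precisely in the three non-trivial cases $(n,c)\in\{(2,2),(2,3),(3,2)\}$, which is exactly what the proposition asserts. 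The remaining rep-finite cases are the non-simply-laced Dynkin types $B_2$, $B_3$, $C_n$, $G_2$ with minimal $D$; here one computes the Auslander-Reiten quiver of $H$ directly, starting from the projectives and injectives described in Section~\ref{sec3.1} and propagating through the reflection functors of Section~\ref{sec8}. The number of $\tau$-orbits of locally free indecomposables equals $|\Delta^+(C)|$ by Theorem~\ref{thm:main1}, and in these specific small types one verifies that the additional non-locally-free indecomposables (arising from the loops $\varepsilon_i$) also form a finite family.

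For the negative direction one must exhibit infinitely many non-isomorphic indecomposable $H$-modules in every Dynkin case not in the list. When $C$ is simply-laced with non-minimal $D$ outside the listed cases, $H$ is of the form $T_n(A_c)$ with $(n,c)$ beyond the classical bounds above, hence representation-infinite. For non-simply-laced Dynkin with minimal $D$ not in the list --- namely $B_n$ with $n\ge 4$ and $F_4$ --- the plan is to construct a $K^*$-family of pairwise non-isomorphic indecomposable locally free $H$-modules of fixed rank vector, generalizing the $B_5$ family mentioned after Theorem~\ref{thm:main1}. For non-simply-laced Dynkin with non-minimal symmetrizer $D = mD_{\min}$, $m\ge 2$, the algebra $H$ contains a convex subcategory equivalent to an algebra of one of the preceding rep-infinite types (obtained by restricting to vertices with large $c_i$ and quotienting out loop powers), from which infinite families of indecomposables lift to $H$.

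The main obstacle is the detailed Auslander-Reiten analysis required in the non-simply-laced Dynkin types with minimal symmetrizer. Theorem~\ref{thm:main1} controls only the $\tau$-locally free indecomposables (parametrized by $\Delta^+(C)$), whereas representation-finiteness of $H$ requires control over \emph{all} indecomposables, including non-locally-free ones. Showing that only finitely many non-locally-free indecomposables exist in types $B_2$, $B_3$, $C_n$, $G_2$, and conversely constructing explicit $K^*$-families in types $B_n$ with $n\ge 4$ and $F_4$, appears to demand an essentially case-by-case argument that is not reducible to the rank-vector formalism of the paper; this is the reason the authors only sketch the proof.
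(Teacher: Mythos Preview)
Your overall strategy is sound, and the positive direction largely matches the paper: Gabriel for simply-laced minimal, the truncated polynomial ring for $A_1$, and a direct Auslander--Reiten computation for the small non-simply-laced types. Two refinements are worth noting. First, the paper observes that for type $C_n$ with minimal $D$ the algebra $H$ is a \emph{string algebra}, so its representation-finiteness is immediate from the classification of string modules rather than requiring a knitting argument. Second, for $A_2$ with $(2,2)$ or $(3,3)$ the paper appeals to the Bongartz--Gabriel list of maximal algebras with two simples rather than to \cite{LS} or \cite{RiS}.

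The substantive divergence is in the negative direction. The paper does \emph{not} construct $K^*$-families or invoke tensor-product bounds; instead it identifies a short explicit list of ``minimal'' algebras $H$ outside (i)--(iv) --- namely the types $A_2$ with $D=\diag(4,4)$, $A_3$ with $\diag(3,3,3)$, $A_4$ with $\diag(2,2,2,2)$, $B_2$ with $\diag(4,2)$, $B_4$ minimal, $D_4$ with $\diag(2,2,2,2)$, and $F_4$ minimal --- and shows each is representation-infinite using covering theory together with the Happel--Vossieck list of tame concealed algebras. Every other $H$ outside (i)--(iv) contains one of these as a factor in the obvious sense. This is more economical than your plan of building explicit one-parameter families in $B_{n\ge 4}$ and $F_4$.

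Your proposal also has a genuine gap: the $T_n(A_c)$ argument only handles linearly oriented $A_n$, so it does not show representation-infiniteness for $D_n$ or $E_n$ with non-minimal symmetrizer (e.g.\ $D_4$ with $\diag(2,2,2,2)$), nor obviously for non-linear orientations of $A_n$. You would need either to extend the convex-subcategory reduction to cover these, or to adopt the paper's approach. Your final paragraph correctly identifies why the argument remains a sketch in any case.
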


\begin{proof}
Assume that $D$ is minimal.
For Dynkin types $A_n$, $D_n$, $E_6$, $E_7$, $E_8$, the algebra
$H$ is representation-finite by Gabriel's Theorem.
For type $C_n$, the algebra $H$ is a representation-finite string algebra.
The Auslander-Reiten quiver of $H$ for types
$B_2$, $B_3$ and $G_2$ can be computed by covering theory and the
knitting algorithm for preprojective components. 
They all turn out to be finite.

If $C$ is of type $A_1$, then the symmetrizers are
$D = (m)$ with $m \ge 1$.
Then
$H \cong K[\vep_1]/(\vep_1^m)$
is just a truncated polynomial ring, which is obviously
representation-finite.

If $C$ is of type $A_2$ and $(c_1,c_2) = (2,2)$ or $(c_1,c_2) = (3,3)$, then
$H$ is a representation-finite algebra, see Bongartz and Gabriel's
list \emph{Maximal algebras with $2$ simples modules} in
\cite[Section~7]{BG}.

If $C$ is of type $A_3$ with $(c_1,c_2,c_3) = (2,2,2)$, then one can again use
covering theory and the knitting algorithm to check that $H$ is
representation-finite.

It is straightforward to check that these are all representation-finite 
cases.
(One first compiles the list of all minimal algebras $H$, which are not 
mentioned in (i), (ii), (iii) and (iv).
These are the algebras $H = H(C,D,\Omega)$ of types
\begin{itemize}

\item 
$A_2$ with $D = \diag(4,4)$;

\item
$A_3$ with $D = \diag(3,3,3)$;

\item
$A_4$ with $D = \diag(2,2,2,2)$;

\item 
$B_2$ with $D = \diag(4,2)$;

\item 
$B_4$ with $D$ minimal;

\item 
$D_4$ with $D = \diag(2,2,2,2)$;

\item 
$F_4$ with $D$ minimal.

\end{itemize}
Then one uses covering theory and the Happel-Vossieck list (see 
\cite{HV})
to check that these minimal algebras are representation infinite.)
\end{proof}

\subsection{Notation}
In the following subsections we discuss several examples.
We also display the Auslander-Reiten quivers of
some representation-finite algebras $H$.
The $\tau$-locally free $H$-modules are marked with a double frame,
the locally free $H$-modules, which are not $\tau$-locally free, are
marked with a single solid frame, and the Gorenstein-projective $H$-modules,
which are not projective, have a dashed frame.

\subsection{Dynkin type $A_2$}\label{example:typeA2}
Let 
\[
C = \left(\bbm 2&-1\\-1&2 \ebm\right)
\]
with symmetrizer $D = \diag(2,2)$ and $\Omega = \{ (1,2)\}$.
Thus $C$ is a Cartan matrix of Dynkin type $A_2$ with
a non-minimal symmetrizer.
We have $f_{12} = f_{21} = 1$.
Thus $H = H(C,D,\Omega)$ is given by the quiver
\[
\xymatrix{
1 \ar@(ul,ur)^{\vep_1}& 2 \ar[l]^{\alpha_{12}}\ar@(ul,ur)^{\vep_2}
}
\]
with relations $\vep_1^2 = \vep_2^2 = 0$ and 
$\vep_1\alpha_{12} = \alpha_{12}\vep_2$.
The Auslander-Reiten quiver of 
$H$ is displayed in Figure~\ref{Fig:A2(2,2)}.
The numbers in the figure correspond to composition factors and
basis vectors.
(The three modules in the left most column have to be identified with the
three modules in the right most column.)
Note that $P_2 \cong I_1$ is projective-injective.
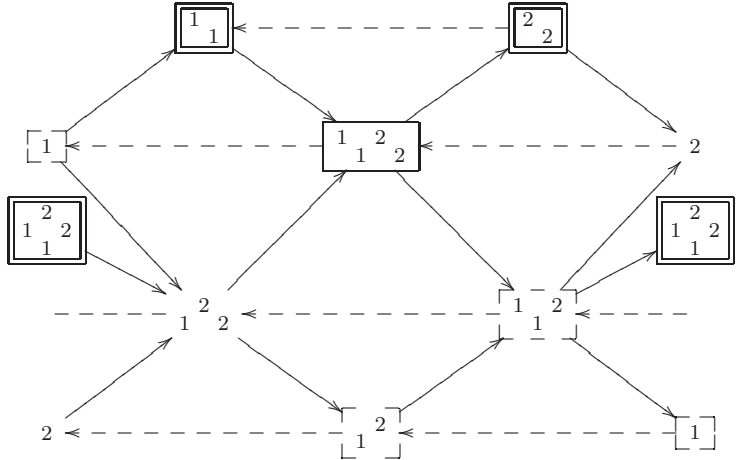
\begin{figure}[!htb]
\[
\xymatrix@-1.0pc@R=10pt{
&&*+[F=]{\bsm 1&\\&1 \esm}\ar[ddrr]&&&&
*+[F=]{\bsm 2&\\&2 \esm}\ar@{-->}[llll]\ar[ddrr]
\\
&&&&&&&&
\\
*+[F--]{\bsm 1 \esm}\ar[uurr]\ar[ddrr]&&&&
*+[F]{\bsm 1&&2\\&1&&2 \esm}\ar[uurr]\ar[ddrr]\ar@{-->}[llll]&&&&
{\bsm 2 \esm}\ar@{-->}[llll]
\\
*+[F=]{\bsm &2\\1&&2\\&1 \esm} \ar[drr]
&&&&&&&&
*+[F=]{\bsm &2\\1&&2\\&1 \esm}
\\
&&{\bsm &2\\1&&2 \esm}\ar[ddrr]\ar[uurr]\ar@{--}[ll]&&&&
*+[F--]{\bsm 1&&2\\&1 \esm}\ar@{-->}[llll]\ar[ddrr]\ar[uurr]\ar[rru]
&&\ar@{-->}[ll]
\\
&&&&&&&&
\\
{\bsm 2 \esm}\ar[uurr]&&&&
*+[F--]{\bsm &2\\1 \esm}\ar[uurr]\ar@{-->}[llll]&&&&
*+[F--]{\bsm 1 \esm}\ar@{-->}[llll]
}
\] 
\caption{
The Auslander-Reiten quiver of $H(C,D,\Omega)$ of type $A_2$ with
$D = \diag(2,2)$.}
\label{Fig:A2(2,2)}
\end{figure}

The preprojective algebra $\Pi = \Pi(C,D)$ is given by the quiver
\[
\xymatrix{
1 \ar@(ul,ur)^{\vep_1} \ar@<0.4ex>[r]^{\alpha_{21}}& 2 \ar@<0.4ex>[l]^{\alpha_{12}} \ar@(ul,ur)^{\vep_2}
}
\]
with relations $\vep_1^2 = \vep_2^2 = 0$, 
$\vep_1\alpha_{12} = \alpha_{12}\vep_2$,
$\vep_2\alpha_{21} = \alpha_{21}\vep_1$,
$\alpha_{12}\alpha_{21} = 0$ and
$-\alpha_{21}\alpha_{12} = 0$.
The indecomposable projective $\Pi$-modules are shown in
Figure~\ref{Fig:projPA2}. 
(The arrows indicate when an arrow of the
algebra $\Pi$ acts with a non-zero scalar on a basis vector.)
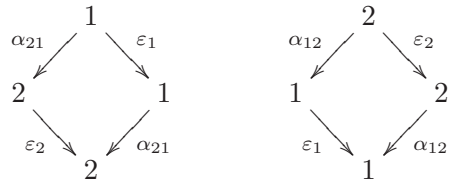
\begin{figure}[!htb]
\[
\xymatrix@-0.7pc{ 
& 1 \ar[dl]_{\alpha_{21}}\ar[dr]^{\vep_1}
&&&& 2 \ar[dl]_{\alpha_{12}}\ar[dr]^{\vep_2}
\\
2 \ar[dr]_{\vep_2} && 1 \ar[dl]^{\alpha_{21}}
&&1 \ar[dr]_{\vep_1} && 2 \ar[dl]^{\alpha_{12}}
\\
& 2 &&&& 1
}
\]
\caption{
The indecomposable projective $\Pi(C,D)$-modules for type $A_2$
with $D = (2,2)$.}
\label{Fig:projPA2}
\end{figure}

\subsection{Dynkin type $B_2$}
Let 
\[
C = \left(\bbm 2&-1\\-2&2 \ebm\right)
\]
with symmetrizer $D = \diag(2,1)$ and
$\Omega = \{ (1,2) \}$.
The graph $\GG(C)$ looks as follows:
\[
\xymatrix{
1 \ar@{-}[r]^{(2,1)} & 2
}
\]
Thus $C$ is a Cartan matrix of Dynkin type $B_2$.
We have $f_{12} = 1$ and $f_{21} = 2$.
Then $H = H(C,D,\Omega)$ is given by the quiver
\[
\xymatrix{
1 \ar@(ul,ur)^{\vep_1} & 2 \ar[l]^{\alpha_{12}}\ar@(ul,ur)^{\vep_2}
}
\]
with relations $\vep_1^2 = 0$ and $\vep_2 = 0$. 
The Auslander-Reiten quiver of 
$H$ is shown in Figure~\ref{Fig:B2}.
The numbers in the figure correspond to composition factors and basis vectors.
(In the last two rows the two modules on the left have to be identified
with the corresponding two modules on the right.)
\begin{figure}[!htb]
\[
\xymatrix@R=15pt{
&&*+[F=]{\bsm 2\\1\\1 \esm}\ar[dr]&&
*+[F=]{\bsm 2  \esm}\ar@{-->}[ll]
\\
&*+[F=]{\bsm 1\\1\esm}\ar[dr]\ar[ur]&&
*+[F=]{\bsm 2\\1&&2\\&1 \esm}\ar@{-->}[ll]\ar[dr]\ar[ur]
\\
*+[F--]{\bsm 1 \esm}\ar[ur]\ar[dr]&&
*+[F]{\bsm 1&&2\\&1 \esm}\ar[dr]\ar[ur]\ar@{-->}[ll]&&
{\bsm 2\\1 \esm}\ar@{-->}[ll]
\\
&{\bsm 2\\1 \esm}\ar[ur]&&
*+[F--]{\bsm 1 \esm}\ar@{-->}[ll]\ar[ur]
}
\] 
\caption{
The Auslander-Reiten quiver of $H(C,D,\Omega)$ of type $B_2$ with
$D$ minimal.}
\label{Fig:B2}
\end{figure}
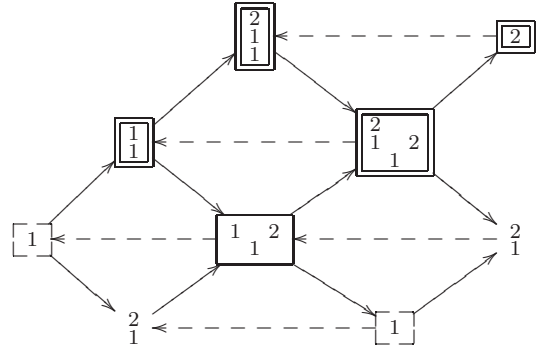

The preprojective algebra $\Pi = \Pi(C,D)$ is given by the quiver
\[
\xymatrix{
1 \ar@(ul,ur)^{\vep_1} \ar@<0.4ex>[r]^{\alpha_{21}}& 2\ar@(ul,ur)^{\vep_2} \ar@<0.4ex>[l]^{\alpha_{12}}
}
\]
with relations $\vep_1^2 = 0$, $\vep_2 = 0$, 
$\alpha_{12}\alpha_{21}\vep_1 + \vep_1\alpha_{12}\alpha_{21} = 0$
and
$-\alpha_{21}\alpha_{12} = 0$.
The indecomposable projective $\Pi$-modules are shown in
Figure~\ref{Fig:projPB2}. 
(The arrows indicate when an arrow of the
algebra $\Pi$ acts with a non-zero scalar on a basis vector.)
\begin{figure}[!htb]
\[
\xymatrix@-0.7pc{ 
&1 \ar[dl]_{\alpha_{21}}\ar[dr]^{\vep_1}
&&&&
2 \ar[d]^{\alpha_{12}}
&&&&
\\
2 \ar[d]_{\alpha_{12}} && 1 \ar[d]^{\alpha_{21}} 
&&&
1 \ar[d]^{\vep_1} 
&&&&
\\
1 \ar[dr]_{\vep_1} && 2 \ar[dl]^{\alpha_{12}} 
&&&
1 \ar[d]^{\alpha_{21}}
&&&&
\\
&1 
&&&& 
2
&&&&
}
\]
\caption{
The indecomposable projective $\Pi(C,D)$-modules for type $B_2$
with $D$ minimal.}
\label{Fig:projPB2}
\end{figure}

\subsection{Dynkin type $B_3$}\label{secB3ex}
Let 
\[
C = \left(\bbm 2&-1&0\\-1&2&-1\\0&-2&2 \ebm\right)
\]
with symmetrizer $D = \diag(2,2,1)$ and $\Omega = \{(1,2),(2,3)\}$.
The graph $\GG(C)$ looks as follows:
\[
\xymatrix{
1 \ar@{-}[r] & 2 \ar@{-}[r]^{(2,1)} & 3
}
\]
Thus $C$ is a Cartan matrix of Dynkin type $B_3$.
We have $f_{12} = f_{21} = 1$, $f_{23} = 1$ and $f_{32} = 2$.
Thus $H = H(C,D,\Omega)$ is given by the quiver
\[
\xymatrix{
1 \ar@(ul,ur)^{\vep_1} & 2 \ar[l]^{\alpha_{12}}\ar@(ul,ur)^{\vep_2} & 3
\ar@(ul,ur)^{\vep_3} \ar[l]^{\alpha_{23}}
}
\]
with relations $\vep_1^2 = \vep_2^2 = 0$, $\vep_3 = 0$ and
$\vep_1\alpha_{12} = \alpha_{12}\vep_2$.
The Auslander-Reiten quiver of 
$H$ is shown in Figure~\ref{Fig:B3}.
As vertices we have the graded dimension vectors (arising from the obvious $\Z$-covering of $H$) of the indecomposable 
$H$-modules. 
(In the last three rows the three modules on the left have to be identified
with the corresponding three modules on the right.)
The indecomposable $H$-module $M$ with graded dimension vector
\[
\bsm 1&1&0\\1&2&1\\0&1&0\esm
\]
is locally free. 
(It is a direct summand of an extension of locally free
modules.)
We have $\rkv(M) = (1,2,1)$.
In the root lattice of $C$ this corresponds to $\alpha_1+2\alpha_2+\alpha_3$.
Thus we have $\rkv(M) \notin \Delta^+(C)$.
\begin{landscape}
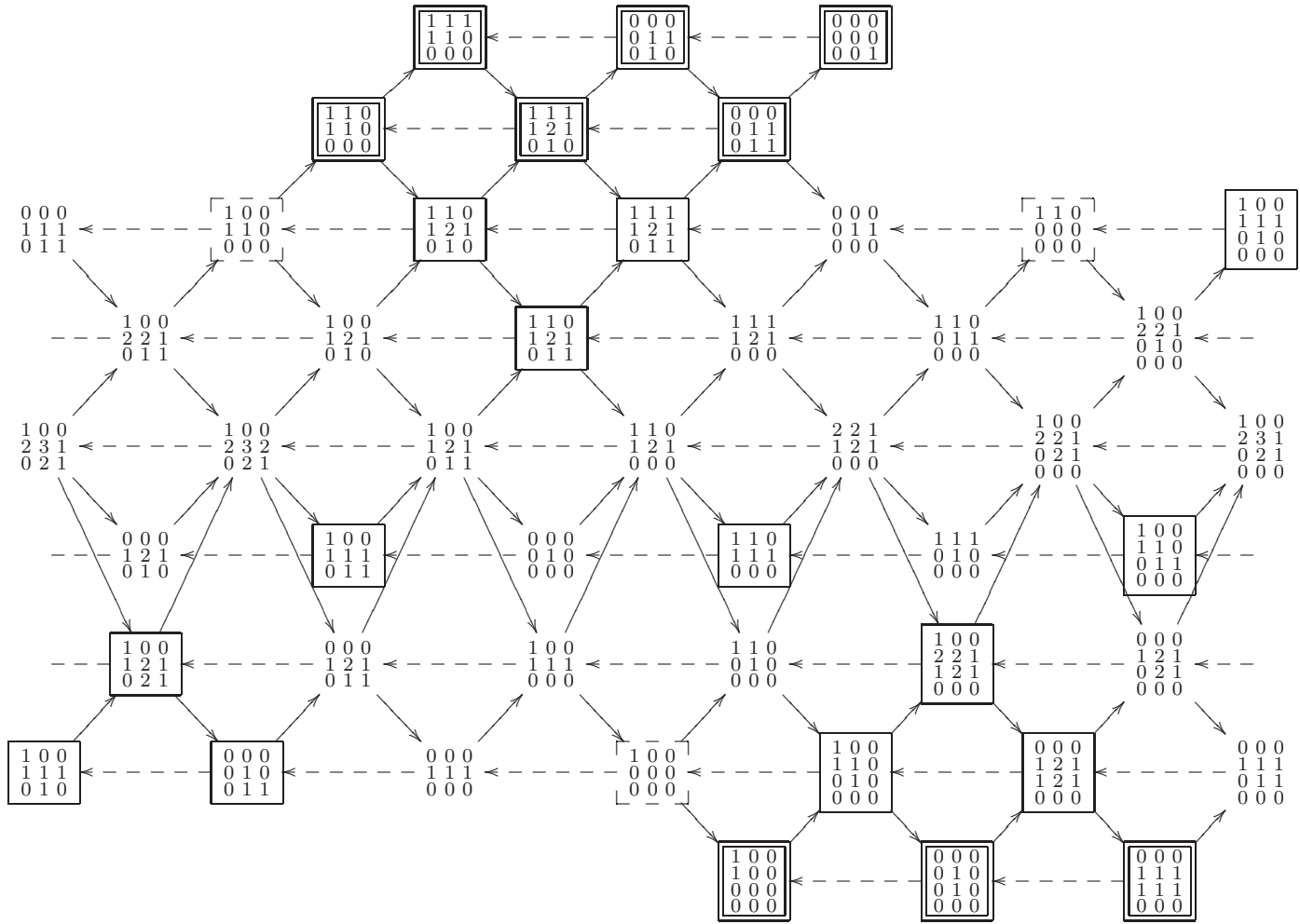
\begin{figure}[!htb]
\[
\xymatrix@-2.5ex{
&&&&*+[F=]{\bsm 1&1&1\\1&1&0\\0&0&0 \esm}\ar[dr]
&&*+[F=]{\bsm 0&0&0\\0&1&1\\0&1&0 \esm}\ar[dr]\ar@{-->}[ll]
&&*+[F=]{\bsm 0&0&0\\0&0&0\\0&0&1 \esm}\ar@{-->}[ll]
&&&&
\\
&&&*+[F=]{\bsm 1&1&0\\1&1&0\\0&0&0 \esm}\ar[ur]\ar[dr]
&&*+[F=]{\bsm 1&1&1\\1&2&1\\0&1&0 \esm}\ar[ur]\ar[dr]\ar@{-->}[ll]
&&*+[F=]{\bsm 0&0&0\\0&1&1\\0&1&1 \esm}\ar@{-->}[ll]\ar[ur]\ar[dr]
\\
{\bsm 0&0&0\\1&1&1\\0&1&1 \esm}\ar[dr]
&&*+[F--]{\bsm 1&0&0\\1&1&0\\0&0&0 \esm}\ar@{-->}[ll]\ar[dr]\ar[ur]
&&*+[F]{\bsm 1&1&0\\1&2&1\\0&1&0 \esm}\ar@{-->}[ll]\ar[dr]\ar[ur]
&&*+[F]{\bsm 1&1&1\\1&2&1\\0&1&1 \esm}\ar@{-->}[ll]\ar[dr]\ar[ur]
&&{\bsm 0&0&0\\0&1&1\\0&0&0 \esm}\ar@{-->}[ll]\ar[dr]
&&*+[F--]{\bsm 1&1&0\\0&0&0\\0&0&0 \esm}\ar@{-->}[ll]\ar[dr]
&&*+[F]{\bsm 1&0&0\\1&1&1\\0&1&0\\0&0&0 \esm}\ar@{-->}[ll]
\\
&{\bsm 1&0&0\\2&2&1\\0&1&1 \esm}\ar[ur]\ar[dr]\ar@{--}[l]
&&{\bsm 1&0&0\\1&2&1\\0&1&0 \esm}\ar[dr]\ar[ur]\ar@{-->}[ll]
&&*+[F]{\bsm 1&1&0\\1&2&1\\0&1&1 \esm}\ar[ur]\ar[dr]\ar@{-->}[ll]
&&{\bsm 1&1&1\\1&2&1\\0&0&0 \esm}\ar[ur]\ar[dr]\ar@{-->}[ll]
&&{\bsm 1&1&0\\0&1&1\\0&0&0 \esm}\ar[ur]\ar[dr]\ar@{-->}[ll]
&&{\bsm 1&0&0\\2&2&1\\0&1&0\\0&0&0 \esm}\ar[ur]\ar[dr]
\ar@{-->}[ll]&\ar@{-->}[l]
\\
{\bsm 1&0&0\\2&3&1\\0&2&1 \esm}\ar[dr]\ar[ur]\ar[ddr]
&&{\bsm 1&0&0\\2&3&2\\0&2&1 \esm}\ar[dr]\ar[ur]\ar[ddr]\ar@{-->}[ll]
&&{\bsm 1&0&0\\1&2&1\\0&1&1 \esm}\ar[dr]\ar[ur]\ar[ddr]\ar@{-->}[ll]
&&{\bsm 1&1&0\\1&2&1\\0&0&0 \esm}\ar[dr]\ar[ur]\ar[ddr]\ar@{-->}[ll]
&&{\bsm 2&2&1\\1&2&1\\0&0&0 \esm}\ar[dr]\ar[ur]\ar[ddr]\ar@{-->}[ll]
&&{\bsm 1&0&0\\2&2&1\\0&2&1\\0&0&0 \esm} \ar[dr]\ar[ur]\ar[ddr]\ar@{-->}[ll]
&&{\bsm 1&0&0\\2&3&1\\0&2&1\\0&0&0 \esm}\ar@{-->}[ll]
\\
&{\bsm 0&0&0\\1&2&1\\0&1&0 \esm}\ar[ur]\ar@{--}[l]
&&*+[F]{\bsm 1&0&0\\1&1&1\\0&1&1 \esm}\ar[ur]\ar@{-->}[ll]
&&{\bsm 0&0&0\\0&1&0\\0&0&0 \esm}\ar[ur]\ar@{-->}[ll]
&&*+[F]{\bsm 1&1&0\\1&1&1\\0&0&0 \esm}\ar[ur]\ar@{-->}[ll]
&&{\bsm 1&1&1\\0&1&0\\0&0&0 \esm}\ar[ur]\ar@{-->}[ll]
&&*+[F]{\bsm 1&0&0\\1&1&0\\0&1&1\\0&0&0 \esm}\ar[ur]\ar@{-->}[ll]
&\ar@{-->}[l]
\\
&*+[F]{\bsm 1&0&0\\1&2&1\\0&2&1 \esm}\ar[uur]\ar[dr]\ar@{--}[l]
&&{\bsm 0&0&0\\1&2&1\\0&1&1 \esm}\ar[uur]\ar[dr]\ar@{-->}[ll]
&&{\bsm 1&0&0\\1&1&1\\0&0&0 \esm}\ar[uur]\ar[dr]\ar@{-->}[ll]
&&{\bsm 1&1&0\\0&1&0\\0&0&0 \esm}\ar[uur]\ar[dr]\ar@{-->}[ll]
&&*+[F]{\bsm 1&0&0\\2&2&1\\1&2&1\\0&0&0 \esm}\ar[uur]\ar[dr]
\ar@{-->}[ll]
&&{\bsm 0&0&0\\1&2&1\\0&2&1\\0&0&0 \esm}\ar[uur]\ar[dr]
\ar@{-->}[ll]&\ar@{-->}[l]
\\
*+[F]{\bsm 1&0&0\\1&1&1\\0&1&0 \esm}\ar[ur]
&&*+[F]{\bsm 0&0&0\\0&1&0\\0&1&1 \esm}\ar[ur]\ar@{-->}[ll]
&&{\bsm 0&0&0\\1&1&1\\0&0&0 \esm}\ar[ur]\ar@{-->}[ll]
&&*+[F--]{\bsm 1&0&0\\0&0&0\\0&0&0 \esm}\ar[ur]\ar[dr]\ar@{-->}[ll]
&&*+[F]{\bsm 1&0&0\\1&1&0\\0&1&0\\0&0&0 \esm}\ar[ur]\ar[dr]
\ar@{-->}[ll]
&&*+[F]{\bsm 0&0&0\\1&2&1\\1&2&1\\0&0&0 \esm}\ar[ur]\ar[dr]
\ar@{-->}[ll]
&&{\bsm 0&0&0\\1&1&1\\0&1&1\\0&0&0 \esm}\ar@{-->}[ll]
\\
&&&&&&&
*+[F=]{\bsm 1&0&0\\1&0&0\\0&0&0\\0&0&0 \esm}\ar[ur]
&&*+[F=]{\bsm 0&0&0\\0&1&0\\0&1&0\\0&0&0 \esm}\ar[ur]\ar@{-->}[ll]
&&*+[F=]{\bsm 0&0&0\\1&1&1\\1&1&1\\0&0&0 \esm}\ar[ur]\ar@{-->}[ll]
&
}
\] 
\caption{
The Auslander-Reiten quiver of $H(C,D,\Omega)$ of type $B_3$ with
$D$ minimal.}
\label{Fig:B3}
\end{figure}
\end{landscape}

\subsection{Dynkin type $C_3$}
Let 
\[
C = \left(\bbm 2&-1&0\\-1&2&-2\\0&-1&2 \ebm\right)
\]
with symmetrizer $D = \diag(1,1,2)$ and
$\Omega = \{ (1,2),(2,3) \}$.
The graph $\GG(C)$ looks as follows:
\[
\xymatrix{
1 \ar@{-}[r] & 2 \ar@{-}[r]^{(1,2)} & 3
}
\]
Thus $C$ is a Cartan matrix of Dynkin type $C_3$.
We have $f_{12} = f_{21} = 1$, $f_{23} = 2$ and $f_{32} = 1$.
Then $H = H(C,D,\Omega)$ is given by the quiver
\[
\xymatrix{
1 \ar@(ul,ur)^{\vep_1}& 2 \ar@(ul,ur)^{\vep_2}\ar[l]^{\alpha_{12}} & 3 \ar[l]^{\alpha_{23}}
\ar@(ul,ur)^{\vep_3}
}
\]
with relations $\vep_1 = \vep_2 = 0$ and $\vep_3^2 = 0$.
The Auslander-Reiten quiver of 
$H$ is shown in Figure~\ref{Fig:C3}.
The numbers in the figure correspond to composition factors and
basis vectors.
(In the last three rows the three modules on the left have to be identified
with the corresponding three modules on the right.)
\begin{figure}[!htb]
\[
\xymatrix@R=15pt{
*+[F=]{\bsm 1 \esm}
\ar[dr]
&&*+[F=]{\bsm 2 \esm}\ar[dr]\ar@{-->}[ll]&&
*+[F=]{\bsm 3\\3\\2\\1 \esm}\ar@{-->}[ll]\ar[dr]
\\
&*+[F=]{\bsm 2\\1 \esm}\ar[ur]\ar[dr]&&
*+[F=]{\bsm &3\\2&&3\\&&2\\&&1 \esm}\ar[ur]\ar[dr]\ar@{-->}[ll]&&
*+[F=]{\bsm 3\\3\\2 \esm}\ar@{-->}[ll]\ar[dr]
\\
&&*+[F=]{\bsm &3\\2&&3\\1&&2\\&&1 \esm}\ar[dr]\ar[ur]&&
*+[F=]{\bsm &3\\2&&3\\&&2 \esm}\ar@{-->}[ll]\ar[dr]\ar[ur]&&
*+[F=]{\bsm 3\\3 \esm}\ar@{-->}[ll]\ar[dr]
\\
&*+[F--]{\bsm 3\\2\\1 \esm}\ar[ur]\ar[dr]&&
*+[F]{\bsm &3\\2&&3\\1&&2 \esm}\ar[dr]\ar[ur]\ar@{-->}[ll]&&
*+[F]{\bsm &3\\2&&3 \esm}\ar[ur]\ar[dr]\ar@{-->}[ll]&&
{\bsm 3 \esm}\ar@{-->}[ll]
\\
&&{\bsm 3\\2 \esm}\ar[dr]\ar[ur]&&
*+[F]{\bsm &3\\2&&3\\1 \esm}\ar@{-->}[ll]\ar[dr]\ar[ur]&&
{\bsm 3\\2 \esm}\ar[ur]\ar@{-->}[ll]
\\
&&&{\bsm 3 \esm}\ar[ur]&&
*+[F--]{\bsm 3\\2\\1 \esm}\ar[ur]\ar@{-->}[ll]
}
\] 
\caption{
The Auslander-Reiten quiver of $H(C,D,\Omega)$ of type $C_3$ with
$D$ minimal.}
\label{Fig:C3}
\end{figure}
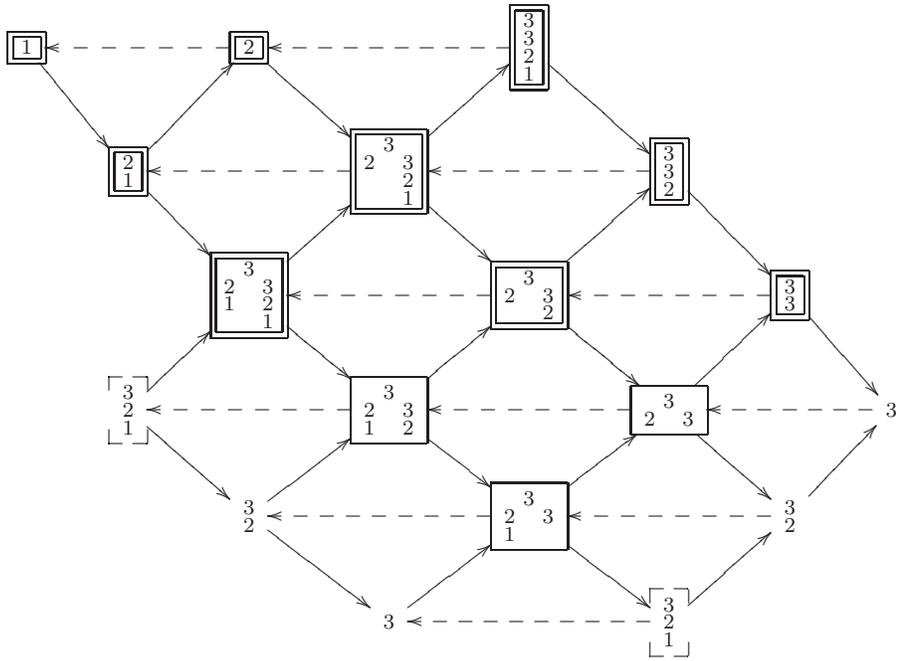

The preprojective algebra $\Pi = \Pi(C,D)$ is given by the quiver
\[
\xymatrix{
1 \ar@(ul,ur)^{\vep_1}\ar@<0.4ex>[r]^{\alpha_{21}}& 2\ar@(ul,ur)^{\vep_2} \ar@<0.4ex>[l]^{\alpha_{12}} \ar@<0.4ex>[r]^{\alpha_{32}}& 3 \ar@<0.4ex>[l]^{\alpha_{23}}
\ar@(ul,ur)^{\vep_3}
}
\]
with relations $\vep_1 = \vep_2 = 0$, $\vep_3^2 = 0$, 
$\alpha_{12}\alpha_{21} = 0$,
$-\alpha_{21}\alpha_{12} + \alpha_{23}\alpha_{32} = 0$ and
$-\alpha_{32}\alpha_{23}\vep_3 - \vep_3\alpha_{32}\alpha_{23} = 0$.
The indecomposable projective $\Pi$-modules are shown in
Figure~\ref{Fig:projPC3}.
\begin{figure}[!htb]
\[
\xymatrix@-0.7pc{
1 \ar[d]^{\alpha_{21}} &&& 
2 \ar[dl]_{\alpha_{12}}\ar[dr]^{\alpha_{32}} &&&&&&& 
3 \ar[dl]_{\alpha_{23}}\ar[dr]^{\vep_3}
\\
2 \ar[d]^{\alpha_{32}} && 
1 \ar[dr]_{\alpha_{21}}&& 3 \ar[dl]_{\alpha_{23}}\ar[dr]^{\vep_3} &&&&& 
2 \ar[dl]_{\alpha_{12}}\ar[dr]_{\alpha_{32}} && 3 \ar[dr]^{\alpha_{23}}
\\
3 \ar[d]^{\vep_3} &&& 
2 \ar[dr]_{\alpha_{32}} && 3 \ar[dr]^{\alpha_{23}} &&& 
1 \ar[dr]_{\alpha_{21}} && 3 \ar[dl]_{\alpha_{23}}\ar[dr]_{\vep_3} && 2 \ar[dl]_{\alpha_{32}}\ar[dr]^{\alpha_{12}}
\\
3 \ar[d]^{\alpha_{23}} &&&&
3 \ar[dr]_{\vep_3} && 2 \ar[dl]_{\alpha_{32}}\ar[dr]^{\alpha_{12}} &&&
2 \ar[dr]_{\alpha_{32}} && 3 \ar[dr]_{\alpha_{23}} && 1 \ar[dl]_{\alpha_{21}}
\\
2 \ar[d]^{\alpha_{12}} &&&&&
3 \ar[dr]_{\alpha_{23}} && 1 \ar[dl]_{\alpha_{21}} &&&
3 \ar[dr]_{\vep_3} && 2 \ar[dl]_{\alpha_{32}}
\\
1 &&&&&&
2 &&&&&
3
}
\]
\caption{
The indecomposable projective $\Pi(C,D)$-modules for type $C_3$ with
$D$ minimal.}
\label{Fig:projPC3}
\end{figure}
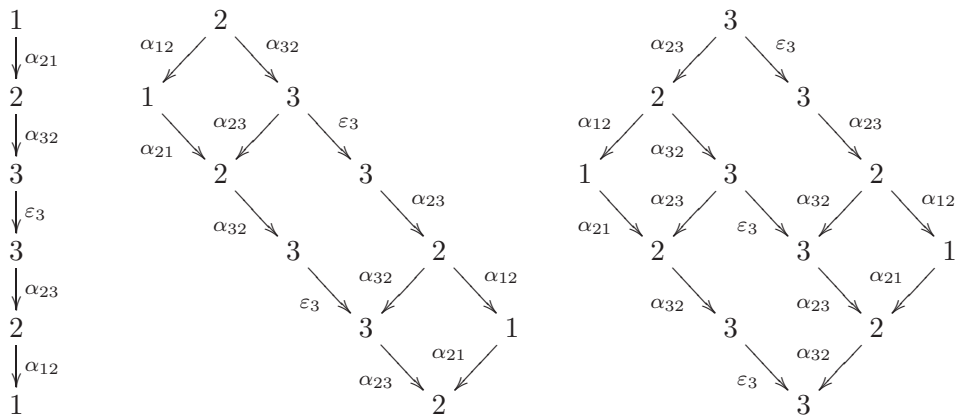

\subsection{Dynkin type $G_2$}
Let 
\[
C = \left(\bbm 2&-3\\-1&2 \ebm\right)
\]
with symmetrizer $D = \diag(1,3)$ and $\Omega = \{ (1,2) \}$.
The graph $\GG(C)$ looks as follows:
\[
\xymatrix{
1 \ar@{-}[r]^{(1,3)} & 2
}
\]
Thus $C$ is a Cartan matrix of Dynkin type $G_2$.
We have $f_{12} = 3$ and $f_{21} = 1$.
Thus $H = H(C,D,\Omega)$ is given by the quiver
\[
\xymatrix{
1 \ar@(ul,ur)^{\vep_1}& 2 \ar[l]^{\alpha_{12}}\ar@(ul,ur)^{\vep_2}
}
\]
with relations $\vep_1 = 0$ and $\vep_2^3 = 0$.
The Auslander-Reiten quiver of 
$H$ is displayed in Figure~\ref{Fig:G2}.
As vertices we have the graded dimension vectors (arising from the obvious 
$\Z$-covering of $H$) of the indecomposable 
$H$-modules. 
(The three modules in the left most column have to be identified with the
three modules in the right most column.)
\begin{figure}[!htb]
\[
\xymatrix@-1.3pc@R=1pt{
&& *+[F=]{\bsm 1&0\\0&0\\0&0\\0&0 \esm} \ar[ddrr] &&&& 
*+[F=]{\bsm 0&1\\1&1\\1&1\\0&0 \esm}\ar[ddrr]\ar@{-->}[llll] &&&&
*+[F=]{\bsm 0&0\\0&1\\0&1\\1&1 \esm} \ar[ddrr]\ar@{-->}[llll]
\\
&&&&
\\
&&&& *+[F=]{\bsm 1&1\\1&1\\1&1\\0&0 \esm} \ar[uurr]\ar[ddrr]  &&&& 
*+[F=]{\bsm 0&1\\1&2\\1&2\\1&1 \esm}\ar[uurr]\ar[ddrr]\ar@{-->}[llll] &&&&
*+[F=]{\bsm 0&0\\0&1\\0&1\\0&1 \esm} \ar[ddrr]\ar@{-->}[llll]
\\
&&&&
\\
&& *+[F--]{\bsm 0&0\\1&1\\1&1\\0&0 \esm} \ar[uurr]\ar[ddrr]\ar@{--}[ll] &&&& 
*+[F]{\bsm 1&1\\1&2\\1&2\\1&1 \esm}\ar[uurr]\ar[ddrr]\ar@{-->}[llll] &&&&
*+[F]{\bsm 0&1\\1&2\\1&2\\0&1 \esm} \ar[uurr]\ar[ddrr]\ar@{-->}[llll] &&&&
{\bsm 0&0\\0&1\\0&1\\0&0 \esm} \ar[ddrr]\ar@{-->}[llll]
&&\ar@{-->}[ll]
\\
&&&&
\\
{\bsm 0&0\\1&1\\1&2\\0&1 \esm}\ar[uurr]\ar[ddrr] &&&& 
{\bsm 0&0\\1&2\\1&2\\1&1 \esm}\ar[uurr]\ar[ddrr]\ar@{-->}[llll] &&&&
*+[F]{\bsm 1&1\\1&2\\1&2\\0&1 \esm}\ar[uurr]\ar[ddrr]\ar@{-->}[llll]  &&&&
{\bsm 0&1\\1&2\\1&2\\0&0 \esm}\ar[uurr]\ar[ddrr] \ar@{-->}[llll] &&&&
{\bsm 1&1\\1&2\\0&1\\0&0 \esm}\ar@{-->}[llll]
\\
{\bsm 0&0\\0&1\\1&2\\1&1 \esm} \ar[drr] &&&&
*+[F]{\bsm 0&0\\1&1\\0&1\\0&1 \esm} \ar[drr]\ar@{-->}[llll] &&&&
{\bsm 0&0\\0&1\\1&1\\0&0 \esm} \ar[drr]\ar@{-->}[llll] &&&&
*+[F]{\bsm 1&1\\1&1\\0&1\\0&0 \esm} \ar[drr]\ar@{-->}[llll] &&&&
{\bsm 0&1\\1&2\\1&1\\0&0 \esm} \ar@{-->}[llll]
\\
&&
{\bsm 0&0\\1&2\\1&3\\1&2 \esm} \ar[ddrr]\ar[uurr]\ar[urr]\ar@{--}[ll] &&&&
{\bsm 0&0\\1&2\\1&2\\0&1 \esm} \ar[ddrr]\ar[uurr]\ar[urr]\ar@{-->}[llll] &&&&
{\bsm 1&1\\1&2\\1&2\\0&0 \esm} \ar[ddrr]\ar[uurr]\ar[urr]\ar@{-->}[llll] &&&&
{\bsm 1&2\\2&3\\1&2\\0&0 \esm} \ar[ddrr]\ar[uurr]\ar[urr]\ar@{-->}[llll]
&&\ar@{-->}[ll]
\\
&&&&
\\
*+[F]{\bsm 0&0\\1&2\\1&2\\1&2 \esm} \ar[ddrr]\ar[uurr] &&&&
{\bsm 0&0\\0&1\\1&2\\0&1 \esm} \ar[ddrr]\ar[uurr]\ar@{-->}[llll]  &&&&
{\bsm 0&0\\1&1\\0&1\\0&0 \esm} \ar[ddrr]\ar[uurr]\ar@{-->}[llll]  &&&&
{\bsm 1&1\\1&2\\1&1\\0&0 \esm} \ar[ddrr]\ar[uurr]\ar@{-->}[llll]  &&&&
*+[F]{\bsm 1&2\\1&2\\1&2\\0&0 \esm}\ar@{-->}[llll]
\\
&&&&
\\
&& *+[F]{\bsm 0&0\\0&1\\1&1\\0&1 \esm} \ar[uurr]\ar@{--}[ll] &&&& 
{\bsm 0&0\\0&0\\0&1\\0&0 \esm}\ar[uurr]\ar@{-->}[llll] &&&&
*+[F--]{\bsm 0&0\\1&1\\0&0\\0&0 \esm} \ar[uurr]\ar@{-->}[llll]&&&&
*+[F]{\bsm 1&1\\0&1\\1&1\\0&0 \esm} \ar[uurr]\ar@{-->}[llll]&&\ar@{-->}[ll]
}
\]
\caption{
The Auslander-Reiten quiver of $H(C,D,\Omega)$ of type $G_2$ with
$D$ minimal.}
\label{Fig:G2}
\end{figure}
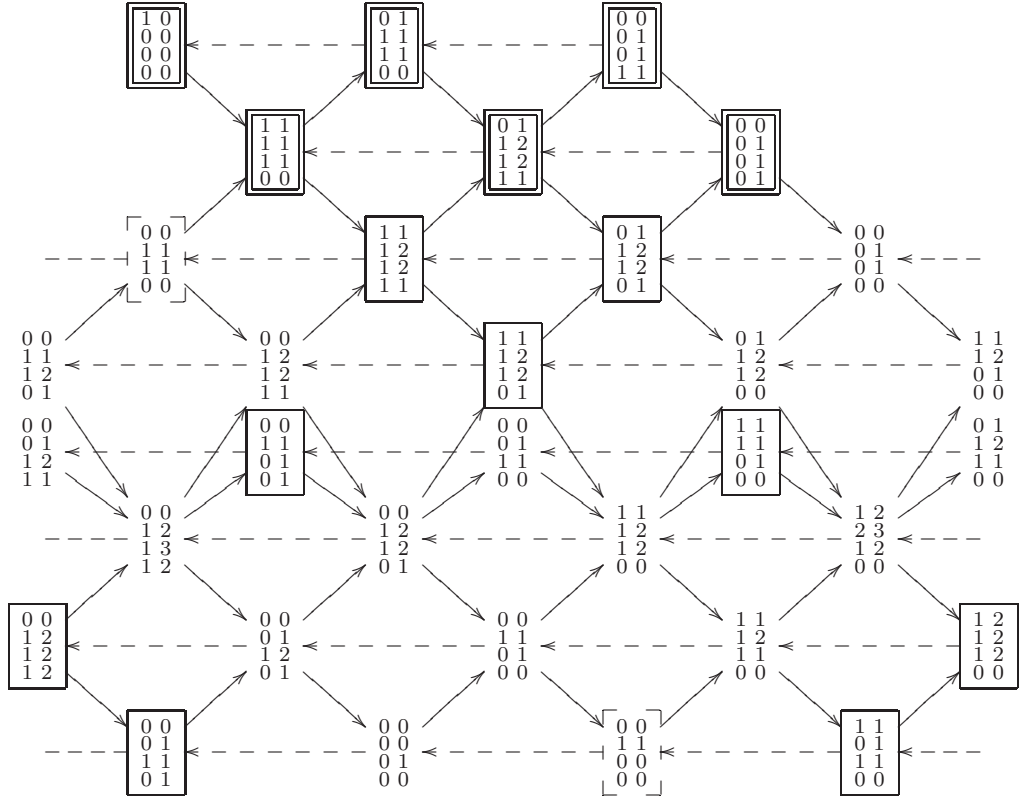

\bigskip
{\parindent0cm \bf Acknowledgements.}\,
We thank the CIRM (Luminy) for two weeks of hospitality in 
July 2013, where this work was initiated.
The first author acknowledges financial support from UNAM-PAPIIT grant 
IN108114.
The third author thanks the SFB/Transregio TR 45 for 
financial support, and the UNAM for one month of hospitality in
March 2014.
We thank W. Crawley-Boevey, H. Lenzing and C.M. Ringel for helpful comments.


\end{document}